\numberwithin{equation}{section}
\def\p{\partial}
\def\o{\overline}
\def\b{\bar}
\def\mb{\mathbb}
\def\mc{\mathcal}
\def\mr{\mathrm}
\def\ms{\mathscr}
\def\mbf{\mathbf}
\def\n{\nabla}
\def\v{\varphi}
\def\wt{\widetilde}
\def\mf{\mathfrak}
\def\op{\operatorname}
\def\ra{\rightarrow}
\def\bc{\mathbb C}
\def\br{\mathbb R}
\def\bz{\mathbb Z}
\def\bs{\boldsymbol}
\def\rro{\bs{\rho}}
\def\cal{\mathcal}
\theoremstyle{plain}
\newtheorem{thm}{Theorem}[section]
\newtheorem{lemma}[thm]{Lemma}
\newtheorem{prop}[thm]{Proposition}
\newtheorem{cor}[thm]{Corollary}
\theoremstyle{definition}
\newtheorem{rem}[thm]{Remark}
\newtheorem{ex}[thm]{Example}
\theoremstyle{definition}
\newtheorem{defn}[thm]{Definition}
\newtheorem{sthm}{Theorem}
\newcommand{\comment}[1]{}
\begin{document}

\title{Signature for flat unitary
bundles over surfaces with boundary}
\author{InKang Kim}
\author{Pierre Pansu}
\author{Xueyuan Wan}

\address{Inkang Kim: School of Mathematics, KIAS, Heogiro 85, Dongdaemun-gu Seoul, 02455, Republic of Korea}
\email{inkang@kias.re.kr}

\address{Pierre Pansu:
Universit\'e Paris-Saclay, CNRS, Laboratoire de Math\'ematiques d'Orsay\\ 91405 Orsay C\'edex, France}
\email{pierre.pansu@universite-paris-saclay.fr}

\address{Xueyuan Wan: Mathematical Science Research Center, Chongqing University of Technology, Chongqing 400054, China}
\email{xwan@cqut.edu.cn}
\begin{abstract}

This paper deals with the representations of the fundamental groups of compact surfaces with boundary into classical simple Lie groups of Hermitian type. We relate work on the signature of the associated local systems of  Atiyah-Patodi-Singer, to Burger-Iozzi-Wienhard's Toledo invariant. To measure the difference, we extend Atiyah-Patodi-Singer's rho invariant, initially defined on $\mathrm{U}(p)$, to discontinuous class functions, first on $\mathrm{U}(p,q)$, and then on other classical groups via embeddings into $\mathrm{U}(p,q)$. In this way, we present three different invariants -- signature, Toledo  and rho invariant -- in a unifying way, which is a version of the classical signature formula of Atiyah-Patodi-Singer for manifolds with boundary.
\end{abstract}

  \subjclass[2020]{14J60, 58J20, 58J28}  
  \keywords{Signature,  Toledo invariant, surface group representations, eta invariant, rho invariant, automorphisms, classical bounded symmetric domains, Milnor-Wood inequalities}
  \thanks{Research by Inkang Kim is partially supported by Grant NRF-2019R1A2C1083865 and KIAS Individual Grant (MG031408), and Xueyuan Wan is supported by  NSFC (No. 12101093), Scientific Research Foundation of Chongqing University of Technology.}

\maketitle
\tableofcontents

\section*{Introduction}

\subsection{Motivation}

We are concerned with the signature of flat (indefinite) unitary bundles over surfaces with boundary. M. Atiyah, V. Patodi and I. Singer's index theorem provides an expression of the signature involving two terms, an integral and a boundary term. The goal of the present paper is to give a modern interpretation of the integral term, and to compute the boundary term accordingly.

The main input comes from Atiyah-Patodi-Singer's theorem. It takes the form
$$
\op{sign}(M)=\int_M \text{ Hirzebruch L-class} - \eta (\partial M),
$$
where $\eta$ is a spectral invariant of an elliptic self-adjoint differential operator $A$ acting on a Hermitian vector bundle over $\partial M$. For the signature of unitary local systems over surfaces with boundary, $A$ is a first order ordinary differential operator depending in a subtle manner on the local system, not merely on its holonomy. When the holonomy belongs to $\text{SL}(2, \mathbb Z)\subset U(1,1)$, M. Atiyah \cite{Atiyah} has designed a class function on  $\text{SL}(2, \mathbb Z)$ from the eta invariant, recovering a result of H. Rademacher \cite{Ra} in relation to the study of Dedekind's $\bs{\eta}$ function
$$
\bs{\eta}(\tau)=e^{i\pi \tau/12} \Pi_{n=1}^\infty(1- e^{2\pi i n\tau}),
$$
on the upper half plane. Our expression for the boundary term will be a slightly different class function on $U(p,q)$, due to our different interpretation of the integral term.

\medskip

Let us provide more details. Our inspiration comes from the theses of G. Lusztig \cite{Lus} and W. Meyer \cite{Meyer, Meyer1}. On the $1$-cohomology of a flat Hermitian bundle $(\mc E,\Omega)$ over a closed surface $\Sigma$, there is a natural Hermitian form, the intersection form. Its signature $\op{sign}(\mc E,\Omega)$ can be expressed as the index of a first order differential operator, whence, thanks to the index theorem, an expression for $\op{sign}(\mc E,\Omega)$ as a characteristic number of $(\mc E,\Omega)$, twice the first Chern number of a specific complex line bundle $L$,
\begin{align*}
\op{sign}(\mc E,\Omega)=\int_{\Sigma}2c_1(L).
\end{align*}

M. Atiyah \cite{Atiyah} extends the discussion to compact surfaces with nonempty boundary. $c_1(L^2)$ must be thought of as a relative Chern class for a line bundle together with a trivialization $\sigma$ along the boundary depending on the boundary holonomy. He expresses $\sigma$ as a discontinuous section of a central extension of the unitary group $\mathrm{U}(p,q)$, arising from a $2$-cocycle on $\mathrm{U}(p,q)$, the \emph{signature cocycle}. M. Atiyah computes the map $\sigma$ on semi-simple elements of $\mathrm{U}(p,q)$, he highlights but leaves open the calculation on general holonomies. Completing Atiyah's program is one of the main results of the present paper.

\medskip

A second approach, which goes back to W. Goldman's thesis \cite{GoldPhD}, interprets integrals like $\int_{\Sigma}c_1(L)$ as follows. There is a universal homogeneous complex line bundle $L$ on the symmetric space of the indefinite unitary group $\mathrm{U}(p,q)$, the bounded complex domain $\mathrm{D}^{\mathrm{I}}_{p,q}$. A flat Hermitian bundle over $\Sigma$ gives rise to a homomorphism $\phi:\pi_{1}(\Sigma)\to \mathrm{U}(p,q)$. There exist equivariant maps from the universal cover $\tilde\Sigma$ to $\mathrm{D}^{\mathrm{I}}_{p,q}$ and since any two are homotopic, the first Chern class of the pulled-back bundle is thus uniquely defined. The resulting Chern number is known as a \emph{Toledo invariant} \cite{Toledo}. When $\Sigma$ is closed, it takes only finitely many values, controlled by the \emph{Milnor-Wood inequality} \cite{Milnor, Wood}. Thus the Toledo invariant is the pull-back of a certain cohomology class $\kappa$ of $\mathrm{U}(p,q)$ viewed as a discrete group.

The extension to surfaces with nonempty boundary is due to M. Burger, A. Iozzi and A. Wienhard \cite{BIW}. They observe that the cohomology class $\kappa$ is \emph{bounded}, and that the \emph{bounded cohomology} of a surface with boundary ignores the boundary, since the fundamental groups of its components are amenable. Thus they define the Toledo invariant $\mr{T}(\Sigma,\phi)$ of a homomorphism $\phi:\pi_{1}(\Sigma)\to \mathrm{U}(p,q)$ by means of bounded cohomology classes. They express it in terms of \emph{rotation numbers}, i.e. real valued continuous functions on the universal cover of $\mathrm{U}(p,q)$. Unlike his sibling for closed surfaces, this relative Toledo invariant takes all values in an interval, defined again by an avatar of the Milnor-Wood inequality, see inequality (\ref{BIW-MW}) below.

\medskip

On a compact oriented surface with nonempty boundary, one expects that the sum $\op{sign}(\mc E,\Omega)+2\mr{T}(\Sigma,\phi)$ depends on boundary holonomy. From Atiyah-Patodi-Singer's theory \cite{APS}, one expects some eta invariant to show up. However, there should be a second correction term, due to the fact that the bounded cohomology Toledo class, by construction, lives in the relative cohomology $\mr{H}^2(\Sigma,\partial\Sigma)$.

\subsection{Main result}

In the present paper, inspired by V. Koziarz and J. Maubon \cite{KM,KM2},\footnote{Circa 2008, both groups of authors Koziarz-Maubon and Burger-Iozzi-Wienhard convinced themselves that their respective avatars of Toledo invariant coincide in rank one ($p=1$), but neither group cared to publish details.}{}  
we introduce the rho invariant of the boundary, a real number $\rro_\phi$ attached to a representation $\phi:\pi_1(\partial \Sigma)\to \mathrm{U}(p,q)$, which completes the expression of signature. The notation rho is borrowed from \cite{APSII}, where the same invariant is introduced in the positive definite case. Then we show that the rho invariant of the boundary is a sum of contributions of its connected components.

\begin{sthm}
\label{0}
Let $\Sigma$ be a compact oriented surface with nonempty boundary. Let $E$ be a complex vector space equipped with a (possibly indefinite) Hermitian form $\Omega$. Let $\phi:\pi_1(\Sigma)\to \mathrm{U}(E,\Omega)$, the unitary group of the Hermitian space $(E,\Omega)$, be a homomorphism, and $\mc E$ be the corresponding flat vectorbundle over $\Sigma$. Then
\begin{align*}
\op{sign}(\mc E,\Omega)=-2\mr{T}(\Sigma,\phi)+\rro_\phi(\partial\Sigma).
\end{align*}
Furthermore, 
$$
\rro_\phi(\partial\Sigma)=\sum_{\text{boundary components }c}\rro(\phi(c)),
$$
where $\rro:\mathrm{U}(E,\Omega)\to\br$ is a discontinuous real-valued class function. 
\end{sthm}

The rho invariant, as a function on $\mathrm{U}(p,q)$, is the sum of two terms which depend on an equivariant map $\mbf{J}:\br=\widetilde{(\br/\bz)}\to\mathrm{D}^{\mathrm{I}}_{p,q}$,
\begin{align*}
\rro(L)=\iota(L,\mbf{J})+\eta(L,\mbf{J}),
\end{align*}
where $\iota$ is the integral along $\mbf{J}$ of an $L$-invariant primitive of the K\"ahler form of $\mathrm{D}^{\mathrm{I}}_{p,q}$, and $\eta$ is the eta invariant of an elliptic operator whose index gives the signature. $\iota$ depends continuously on $L\in \mathrm{U}(p,q)$ but $\eta$ does not, it jumps when $1$ arises as an eigenvalue of $L$.

\medskip

The rho invariant $\rro:\mathrm{U}(E,\Omega)\to\br$ can be computed as follows. 

\begin{sthm}
\label{1}
Let $E$ be a complex vector space equipped with a (possibly indefinite) Hermitian form $\Omega$. Let $L\in \mathrm{U}(E,\Omega)$. 
\begin{enumerate}

  \item $(E,\Omega,L)$ canonically splits into three summands,
  \begin{align*}
(E,\Omega,L)=(E_{hu},\Omega_{hu},L_{hu})\oplus (E_{eu},\Omega_{eu},L_{eu})\oplus (E_{u},\Omega_{u},L_{u}),
\end{align*}
where the hyperbolic-unipotent summand $L_{hu}$ has nonunit eigenvalues, the elliptic-unipotent summand $L_{eu}$ has unit eigenvalues different from $1$, and the unipotent summand has only $1$ as an eigenvalue. Furthermore, 
$$
\rro(L)=\rro(L_{hu})+\rro(L_{eu})+\rro(L_{u}).
$$

  \item If $L$ is hyperbolic-unipotent, $\rro(L)=0$.
  
  \item If $L$ is elliptic-unipotent, $\rro(L)$ depends only on the semi-simple part $S$ of $L$. $S$ can be uniquely written $S=\exp(2\pi i B)$ where $\op{spectrum}(B)\subset(0,1)$. Then
$$
\rro(L)=\op{sign}(\Omega)-2\op{Trace}_{\Omega}(B).
$$
(Here $\op{Trace}_{\Omega}(B)=\sum_{j}\Omega(Be_j,e_j)$ for an $\Omega$-orthonormal basis $\{e_j\}$ of eigenvectors of $B$).

  \item If $L$ is unipotent, $E$ admits an orthogonal decomposition into $L$-invariant subspaces $E_j$ which are single Jordan blocks,
$$
(E,\Omega,L)=\bigoplus_j (E_j,\Omega_j,L_j),\quad \rro(L)=\sum_{j} \rro(L_j).
$$
For a Jordan block $L$ of dimension $n$, 
$\rro(L)=0$ if $n$ is odd. If $n$ is even, write $L=\exp(2\pi B)$ where $B$ is nilpotent. Then $\rro(L)$ is minus the signature of the Hermitian form on the $1$-dimensional space $E/BE$ induced by
\begin{align*}
(u,v)\mapsto \Omega((iB)^{n-1}u,v).
\end{align*}

\end{enumerate}
\end{sthm}

As a corollary, we are 	able to complete Atiyah's determination of the signature cocycle and the section $\sigma$ alluded to above. It is built from the rho invariant and Burger-Iozzi-Wienhard's rotation numbers. The rotation number associated with twice the K\"ahler bounded cohomology class of an element $L\in\mathrm{U}(p,q)$ is an element of $\br/\bz$ that depends only on the semi-simple elliptic component of $L$. When $L=(L_+,L_-)$ belongs to the subgroup $\mathrm{U}(p)\times\mathrm{U}(q)$,
$$
e^{2\pi i\mathrm{Rot}(L)}=\left(\frac{\mathrm{det}(L_+)}{\mathrm{det}(L_-)}\right)^2.
$$
The map $\mathrm{Rot}:\mathrm{U}(p,q)\to\br/\bz$ is continuous. Let $p_2:\mathrm{U}(p,q)_2\to \mathrm{U}(p,q)$ be the central extension of $\mathrm{U}(p,q)$ defined by the signature cocycle. Then $\mathrm{Rot}$ lifts to a continuous map $\mathrm{Rot}_2:\mathrm{U}(p,q)_2\to\br$ that restricts to the identity on the kernel of $p_2$.

\begin{sthm}
\label{thmsigma}
The image of Atiyah's section $\sigma:\mathrm{U}(p,q)\to \mathrm{U}(p,q)_2$ is the zero level set of the function $\mathrm{Rot}_2+\rro\circ p_2$. The coboundary of this function (viewed as a $0$-cochain) is the pull-back by $p_2$ of Meyer and Atiyah's signature cocycle.
\end{sthm}
A concrete expression for $\sigma$ is given in Theorem \ref{thmsigmabis}.

\medskip

Theorem \ref{thmsigma} thus provides a link between three indirectly defined objects on unitary groups:
\begin{itemize}
  \item Meyer and Atiyah's integer valued signature cocycle,
  \item Burger-Iozzi-Wienhard's $\br/\bz$-valued rotation numbers,
  \item our real valued rho invariant.
\end{itemize}

\subsection{Other classical groups of Hermitian type}

The method applies simultaneously to the groups $G=\mathrm{SO}^*(2n)$, $\mathrm{Sp}(2n,\mathbb{R})$ and $\mathrm{SO}_0(n,2)$, since they embed in unitary groups. We refer to sections \ref{SO}, \ref{SP} and \ref{SO0} for the definitions of the corresponding intersection forms and bounded cohomology Toledo invariants. We define the rho invariant by composition $G\to \mathrm{U}(p,q)\to\br$.

\begin{sthm}\label{thm0.5}
For any representation $\phi:\pi_1(\Sigma)\to G$, one can define the signature $\op{sign}(\mc{E},\Omega)$ of the associated flat vector bundles. Then, 
\begin{itemize}
  \item [(i)] For $G=\mathrm{U}(p,q)$, 
  $$
  \op{sign}(\mc{E},\Omega)=-2\op{T}(\Sigma,\phi)+\rro_\phi(\p\Sigma),\quad |\op{sign}(\mc{E},\Omega)|\leq (p+q) |\chi(\Sigma)|.
  $$
  
  \item [(ii)] For $G=\mathrm{SO}^*(2n)$,
  $$
  \op{sign}(\mc{E},\Omega)=-4\op{T}(\Sigma,\phi)+\rro_\phi(\p\Sigma),\quad |\op{sign}(\mc{E},\Omega)|\leq 2n |\chi(\Sigma)|.
  $$
  
  \item [(iii)] For $G=\mathrm{Sp}(2n,\mathbb{R})$,
  $$
  \op{sign}(\mc{E},\Omega)=2\op{T}(\Sigma,\phi)+\rro_\phi(\p\Sigma),\quad |\op{sign}(\mc{E},\Omega)|\leq 2n |\chi(\Sigma)|.
  $$
  
  \item[(iv)] For $G=\mathrm{SO}_0(n,2)$,
  $$
  \op{sign}(\mc{E},\Omega)=0.
  $$
\end{itemize}
\end{sthm}

Since $\text{Hom}(\pi_1(\Sigma), G)$ is connected for surface $\Sigma$ with boundary, people also study the relative representation variety, by fixing the holonomies of boundaries $b_i, i=1,\cdots,n$. For a given $\cal C=(\cal C_1,\cdots,\cal C_n)$, a set of conjugacy classes in $G$, one defines the relative representation variety
$$\text{Hom}^{\cal C}(\pi_1(\Sigma), G)=\{\rho\in\text{Hom}(\pi_1(\Sigma),G): \rho(b_i)\in \cal C_i\},$$ which is a real semialgebraic set.
\begin{cor}
The signature is constant on connected components of the relative representation variety $\text{Hom}^{\cal C}(\pi_1(\Sigma), G)$.
\end{cor}
\begin{proof}The Toledo invariant is constant on connected components of the relative representation variety $\text{Hom}^{\cal C}(\pi_1(\Sigma), G)$ by \cite{BIW}[Cor. 8.11], and the value $\rro$ is fixed.
\end{proof}

Along the way, we obtain Milnor-Wood-type inequalities. According to \cite{BIW}, for a simple Lie group $G$ with Hermitian symmetric space $\ms X$, the Milnor-Wood inequality reads
\begin{align}
\label{BIW-MW}
|\op{T}(\Sigma,\phi)|\leq \op{rank}(\ms X)|\chi(\Sigma)|.
\end{align}
Here, we replace the Toledo invariant with signature. The inequality follows when we estimate the absolute value of the signature $|\op{sign}(\mc E,\Omega)|$ by the dimension of the vector space on which the intersection form is defined, i.e. the image of $\mr{H}^{1}(\Sigma,\partial\Sigma;\mc E)$ in $\mr{H}^{1}(\Sigma;\mc E)$,
$$
\op{Im}(\mr{H}^{1}(\Sigma,\partial\Sigma;\mc E)\to \mr{H}^{1}(\Sigma;\mc E)),
$$
which is generically equal to $\op{dim}(E)|\chi(\Sigma)|$ (a deformation argument allows to reduce to this generic case).

Note that (i) is not sharp if $p\not=q$. In other cases, when $\mr{T}$ and $\rro$ have opposite signs, our inequalities may sharpen Burger-Iozzi-Wienhard's inequality (\ref{BIW-MW}). For instance, one can always modify a homomorphism $\phi:\pi_1(\Sigma)\ra \mathrm{Sp}(2,\br)$, replacing its elliptic boundary rho invariants with their fractional parts, except for possibly one of them, without changing the Toledo invariant. This yields

\begin{prop}\label{TI3}
For every homomorphism $\phi:\pi_1(\Sigma)\ra \mathrm{Sp}(2,\br)$,  
\begin{equation*}
\op{T}(\Sigma,\phi) \leq |\chi(\Sigma)|+1 - \sum_{c\,;\, \phi(c)\text{ elliptic}}\left\{\frac{\rro(\phi(c))}{2}\right\}.
\end{equation*}
where $\{\bullet\}:=\bullet-\lfloor\bullet\rfloor$ denotes the fractional part of $\bullet$.
\end{prop}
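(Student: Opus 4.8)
The plan is to derive Proposition~\ref{TI3} from the rank-one case of Theorem~\ref{thm0.5}(iii) by replacing $\phi$ with a homomorphism having the same Toledo invariant but a larger boundary rho invariant. Since $G=\mathrm{Sp}(2,\br)$ has $\dim E=2$, Theorem~\ref{thm0.5}(iii) reads $\op{sign}(\mc E,\Omega)=2\op{T}(\Sigma,\psi)+\rro_\psi(\p\Sigma)$ with $|\op{sign}(\mc E,\Omega)|\le 2|\chi(\Sigma)|$, for every homomorphism $\psi:\pi_1(\Sigma)\to\mathrm{Sp}(2,\br)$; hence
$$
\op{T}(\Sigma,\psi)\ \le\ |\chi(\Sigma)|-\tfrac12\rro_\psi(\p\Sigma)=|\chi(\Sigma)|-\tfrac12\sum_{\text{boundary }c}\rro(\psi(c)).
$$
So it suffices to produce, from $\phi$, a homomorphism $\phi'$ with $\op{T}(\Sigma,\phi')=\op{T}(\Sigma,\phi)$ and $\sum_{c}\rro(\phi'(c))\ge 2\sum_{c:\,\phi(c)\text{ elliptic}}\{\rro(\phi(c))/2\}-2$.

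I would take $\phi'$ to differ from $\phi$ only by central twists of the boundary holonomies. Write $\pi_1(\Sigma)=\langle a_i,b_i,c_1,\dots,c_k\mid\prod_i[a_i,b_i]\prod_jc_j=1\rangle$; as $\Sigma$ has boundary this group is free on $a_i,b_i,c_1,\dots,c_{k-1}$, so one may freely prescribe $\phi'(a_i)=\phi(a_i)$, $\phi'(b_i)=\phi(b_i)$ and $\phi'(c_j)=\varepsilon_j\phi(c_j)$ for $j<k$ with $\varepsilon_j\in\{\pm\mathrm{Id}\}=Z(\mathrm{Sp}(2,\br))$, the element $\phi'(c_k)=(\prod_{j<k}\varepsilon_j)\phi(c_k)$ being then forced. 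A direct check shows the set $T\subseteq\{1,\dots,k\}$ of components on which $\phi'$ and $\phi$ differ by $-\mathrm{Id}$ can be prescribed to be any subset of \emph{even} cardinality, and nothing more. Such a twist does not change the Toledo invariant: the paper defines $\op{T}(\Sigma,\cdot)$ from the bounded K\"ahler class of $\mathrm{Sp}(2,\br)$, which --- bounded cohomology being insensitive to extension by the finite, hence amenable, central subgroup $\{\pm\mathrm{Id}\}$ --- is the pull-back of the bounded K\"ahler class of $\mathrm{PSp}(2,\br)$; hence $\op{T}(\Sigma,\psi)$ depends only on the composed representation $\bar\psi:\pi_1(\Sigma)\to\mathrm{PSp}(2,\br)$, and $\bar\phi'=\bar\phi$.

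Next I would record, using Theorem~\ref{1}(2)--(4) and the list of conjugacy classes of $\mathrm{Sp}(2,\br)\subset\mathrm{U}(1,1)$, how $\rro$ transforms under $L\mapsto -L$. A hyperbolic (hyperbolic--unipotent) $L$ has $\rro(L)=0$, and $-L$ is again hyperbolic, so $\rro(-L)=0$; likewise $\rro(\pm\mathrm{Id})=0$. A parabolic $L$ has $\rro(L)=\pm1$, while $-L$ is a single Jordan block with eigenvalue $-1$, whose semi-simple part is $-\mathrm{Id}$, so by Theorem~\ref{1}(3), $\rro(-L)=\rro(-\mathrm{Id})=0$. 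If $L$ is genuinely elliptic, conjugate to the rotation by $\theta\in(0,2\pi)$, Theorem~\ref{1}(3) gives $\rro(L)/2=\pm(1-\theta/\pi)\in(-1,1)$, and passing to $-L$ replaces $\rro(L)/2$ by its other representative mod $1$ inside $(-1,1)$; in particular $\rro(L)/2=\{\rro(L)/2\}$ already if $\rro(L)/2\ge0$, whereas $\rro(-L)/2=\{\rro(L)/2\}$ if $\rro(L)/2<0$. Thus twisting by $-\mathrm{Id}$ never decreases $\rro$ on a hyperbolic component, on $\pm\mathrm{Id}$, or on a parabolic with $\rro=-1$; it converts each elliptic holonomy with $\rro/2<0$ into one realizing $2\{\rro(\phi(c))/2\}$; and elliptic holonomies with $\rro/2\ge0$ and parabolics with $\rro=+1$ should be left alone.

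Finally I would let $T$ consist of all boundary components $c$ with $\phi(c)$ elliptic and $\rro(\phi(c))/2<0$, together with all parabolic $c$ having $\rro(\phi(c))=-1$. If $|T|$ is even, the resulting $\phi'$ satisfies $\sum_c\rro(\phi'(c))=2\sum_{c\text{ ell}}\{\rro(\phi(c))/2\}+\#\{\text{parabolic }c:\rro(\phi(c))=+1\}\ge 2\sum_{c\text{ ell}}\{\rro(\phi(c))/2\}$. If $|T|$ is odd, parity forces one extra move: adjoin to $T$ one component on which twisting costs nothing (a hyperbolic one, or one with $\phi(c)=\pm\mathrm{Id}$) if one exists, and otherwise remove one component from $T$ --- a $\rro=-1$ parabolic if possible (loss $1$), else an elliptic with $\rro/2<0$ (loss $2$). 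A short case check then gives $\sum_c\rro(\phi'(c))\ge 2\sum_{c\text{ ell}}\{\rro(\phi(c))/2\}-2$ in every case. Feeding $\phi'$ into the displayed inequality and using $\op{T}(\Sigma,\phi')=\op{T}(\Sigma,\phi)$ yields Proposition~\ref{TI3}. I expect the points requiring care to be the precise values of $\rro$ on the elliptic and parabolic classes of $\mathrm{Sp}(2,\br)$ extracted from Theorem~\ref{1}, and the parity bookkeeping producing exactly the constant $1$; the invariance of $\op{T}$ under even central twists, though the conceptual crux, is immediate once the amenability remark is in place.
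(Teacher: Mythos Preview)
Your proof is correct and follows essentially the same strategy as the paper's: both start from the signature inequality $\op{T}(\Sigma,\psi)\le|\chi(\Sigma)|-\tfrac12\sum_c\rro(\psi(c))$, modify $\phi$ by flipping the sign of the boundary holonomies on an even subset of components (leaving the projection to $\mathrm{PSp}(2,\br)$, hence the Toledo invariant, unchanged), and use the explicit values of $\rro$ on elliptic and parabolic elements of $\mathrm{Sp}(2,\br)$ to convert every negative $\rro(\phi(c))/2$ to its fractional part, except possibly one. Your parity bookkeeping is somewhat more detailed than the paper's (which simply says ``this allows to get rid of all negative boundary contributions but possibly one''), and your justification of Toledo invariance via amenability of $\{\pm\mathrm{Id}\}$ is a valid alternative to the paper's ``same action on the symmetric space''.
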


Refer to Remark \ref{improve} for the whole statement.
Examples showing that Theorem \ref{thm0.5} and Proposition \ref{TI3} are sharp will be given in Section \ref{Class}.

\subsection{Final remarks}

For compact target groups such as $\mathrm{U}(n)$, our Milnor-Wood-type inequality is nontrivial, but follows from the solution of the multiplicative Horn problem, see the Appendix, Subsection \ref{Horn}.

In the case of $SO_0(n,2)$, the statement (iv) of Theorem \ref{thm0.5} is disappointing. We expect the Toledo invariant to be related to a topological invariant, that would play the role played by the signature for the other families of simple groups of Hermitian type.

The method of parabolic Higgs bundles provides an alternative approach to flat bundles over surfaces with boundary, which possibly encompasses our results, see \cite{BGPMR, DeroinTholozan, TholozanToulisse}.

\subsection{Organization of the paper}

In Section \ref{Sym}, we introduce the first order differential operator on a flat Hermitian vector bundle over the circle which is related to the signature operator on a surface. In Section \ref{Sig}, we define the signature for flat Hermitian vector bundles, and we relate it to the index of a differential operator and express it using Atiyah-Patodi-Singer's index theorem. In Section \ref{Tol}, the first Chern class relevant to signature is related to the bounded cohomology Toledo invariant, culminating with the proof of the first part of Theorem \ref{0}. The rho invariant is defined and computed in Section \ref{Rho}, where the proofs of Theorems \ref{0} and \ref{1} are completed. It is exploited in Section \ref{Atiyahsigma}, where the connection with rotation numbers is made and the proof of Theorem \ref{thmsigma} is given. In Section \ref{AMW}, a Milnor-Wood type inequality for signature is proven for unitary groups. Variants for the other families of simple Lie groups of Hermitian type are discussed in Sections \ref{SO}--\ref{SO0}, leading to the proof of Theorem \ref{thm0.5}. In Section \ref{Class}, we prove Proposition \ref{TI3}.
  
In the Appendix, we provide a direct calculation of the eta invariant and the rho invariant for the group $\op{U}(1,1)$. This allows to double-check the calculations of Section \ref{Rho}. We check that our Milnor-Wood type inequality for the simple Lie group $\mathrm{U}(n)$ follows from results on the multiplicative Horn problem. For the reader's convenience, we include the details of a classical theorem: the classification of unipotents in unitary groups. Finally, we give a geometric proof of the true Milnor-Wood inequality (for the Toledo invariant) for general Hermitian symmetric spaces: it follows from Domic-Toledo's evaluation of the Gromov norm of the K\"ahler form.

\section{Flat Hermitian vector bundles and eta invariants over a circle}\label{Sym}

In this section, we shall consider the flat Hermitian vector bundle $E_\phi$ associated with a representation $\phi$ of the fundamental group of a circle $S^1$ into the group $\mathrm{U}(p,q)$. We shall define a first order elliptic self-adjoint differential operator $A_{\mbf{J}}$, and  recall the definition of eta invariant $\eta(A_{\mbf{J}})$ for the operator. 

Let $E=\mb{C}^{p+q}$ be a complex vector space of dimension $p+q$. Let 
\begin{align}\label{Hermitian form}
\begin{split}
  \Omega=|dz^1|^2+\cdots+|dz^p|^2-|dz^{p+1}|^2-\cdots-|dz^{p+q}|^2
 \end{split}
\end{align}
 be a non-degenerate Hermitian form with the signature $(p,q)$. Denote by $\mathrm{U}(E,\Omega)$ the space of all linear transformations on $E$ preserving the Hermitian form $\Omega$, it is  called the $\mathrm{U}(p,q)$-group.
 For any representation 
 $$\phi:\pi_1(S^1)\to \op{U}(E,\Omega)$$
from the fundamental group of circle $S^1$ into the $\op{U}(p,q)$-group $\op{U}(E,\Omega)$. Denote $L:=\phi(\gamma_0)$, where $\gamma_0$ denotes the generator of $\pi_1(S^1)$, which is given by $\gamma_0(x)=e^{ix},0\leq x\leq 2\pi$. The representation $\phi$ gives rise to a flat vector bundle 
$$E_\phi:=\mb{R}\times_{\phi}E=(\mb{R}\times E)/\sim$$
over $S^1$, where $(x_1,e_1)\sim (x_2,e_2)$ if $x_2=x_1+2\pi k,k\in \mb{Z}$ and $e_2=L^{-k}(e_1)$. Each global section of $E_\phi$ is equivalent to a smooth map $s:\mb{R}\to E$ satisfying the $\phi$-equivariant condition $s(x+2\pi)=L^{-1}s(x)$.

Let $\mc{J}(E,\Omega)(\subset\op{U}(E,\Omega))$ be the space of all linear transformations in $\op{U}(E,\Omega)$ such that $J^2=-\op{Id}$ and $i\Omega(\cdot,J\cdot)$ is positive definite.
Denote by $\mc{J}(E_\phi,\Omega)=C^{\infty}(S^1,\mb{R}\times_\phi \mc{J}(E,\Omega))$ the space of all $\phi$-equivariant sections with values in  $ \mc{J}(E,\Omega)$.
{For any  $L\in \op{U}(E,\Omega)$ which can be written as  $L=\pm e^{i\theta}\exp(2\pi B)$, where $0\leq \theta<2\pi$ and
 $B\in\mf{u}(E,\Omega)$, i.e. $B^*\Omega+\Omega B=0$, we can find a canonical element $\mbf{J}\in \mc{J}(E_\phi,\Omega)$ for any given $J\in \mc{J}(E,\Omega)$.
 In fact, for any $J\in\mc{J}(E,\Omega)$, we  define }
 $$\mbf{J}(x)=\exp(-xB)J\exp(xB)\in \mc{J}(E,\Omega),$$
which satisfies $\mbf{J}(x+2\pi)=L^{-1}\mbf{J}(x)L$ and thus $\mbf{J}\in \mc{J}(E_\phi,\Omega)$.   { But in general, $L$ cannot be written as $L=\pm e^{i\theta}\exp(2\pi B)$ except for the group $\op{U}(1,1)$, hence one needs to choose another $\mbf{J}$ on $E_\phi$.}

There exists a canonical flat connection $d$ on $E_\phi$, which is induced from the trivial vector bundle $\mb{R}\times E\to \mb{R}$. The holonomy representation of the flat connection $d$ is just the representation $\phi$.
 Denote by $A^{0}(S^1,E_\phi)$ the space of all smooth sections of $E_\phi$, which can be identified with the space $A^0(\mb{R},E)^L$ of all $\phi$-equivariant smooth maps $s:\mb{R}\to E$. There is a standard $\mathrm{L}^2$-metric on the space $A^{0}(S^1,E_\phi)\cong A^0(\mb{R},E)^L$ with respect to the inner product $i\Omega(\cdot,\mbf{J}\cdot)$ and the metric $dx\otimes dx$ on $S^1$, i.e. 
$
 	\int_{S^1}i\Omega(\cdot,\mbf{J}\cdot)dx.
$

Consider the following $\mb{C}$-linear first order 
differential operator
\begin{align}\label{ABJ1}
A_{\mbf{J}}:=\mbf{J}\frac{d}{dx},
\end{align}
which acts on the space  $A^0(\mb{R},E)^L\cong A^0(S^1,E_\phi)$. 
\begin{prop}\label{prop2.0}
$A_{\mbf{J}}$ is a $\mb{C}$-linear formally self-adjoint elliptic first order differential operator  in the space $A^0(S^1,E_\phi)$.
\end{prop}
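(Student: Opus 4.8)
The plan is to verify three properties of $A_{\mbf{J}} = \mbf{J}\frac{d}{dx}$ in turn: $\mb{C}$-linearity, ellipticity, and formal self-adjointness with respect to the $\mathrm{L}^2$-inner product $\int_{S^1} i\Omega(\cdot,\mbf{J}\cdot)\,dx$. The first is immediate, since $\mbf{J}$ acts $\mb{C}$-linearly and $\frac{d}{dx}$ commutes with scalar multiplication by constants in $\mb{C}$; there is nothing to do beyond noting that $A_{\mbf{J}}$ is a genuine operator on $A^0(S^1,E_\phi)$, i.e. that it preserves the $\phi$-equivariance condition $s(x+2\pi)=L^{-1}s(x)$, which follows by differentiating that identity and using $\mbf{J}(x+2\pi)=L^{-1}\mbf{J}(x)L$.

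For ellipticity, I would compute the principal symbol: for a cotangent vector $\xi\,dx$ the symbol of $A_{\mbf{J}}$ is $i\xi\,\mbf{J}(x)$, and since $\mbf{J}(x)$ is invertible (indeed $\mbf{J}(x)^2=-\op{Id}$), this is invertible for all $\xi\neq 0$. Hence $A_{\mbf{J}}$ is a first order elliptic operator.

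The substantive point is formal self-adjointness. Here I would fix the $\mathrm{L}^2$-metric $\langle s,t\rangle = \int_{S^1} i\Omega(s,\mbf{J}t)\,dx$ and integrate by parts. Writing out $\langle A_{\mbf{J}}s, t\rangle = \int_{S^1} i\Omega(\mbf{J}s', \mbf{J}t)\,dx$, I would use that $\mbf{J}(x)\in\mathrm{U}(E,\Omega)$, so $\Omega(\mbf{J}u,\mbf{J}v)=\Omega(u,v)$, reducing this to $\int_{S^1} i\Omega(s',t)\,dx$. On the other side, $\langle s, A_{\mbf{J}}t\rangle = \int_{S^1} i\Omega(s,\mbf{J}\mbf{J}t')\,dx = \int_{S^1} i\Omega(s,-t')\,dx = -\int_{S^1} i\Omega(s,t')\,dx$; taking complex conjugates and using Hermitian symmetry of $\Omega$, $\overline{i\Omega(s,t')} = \overline{i}\,\overline{\Omega(s,t')} = -i\,\Omega(t',s)$, so after conjugation and relabeling this matches $\int_{S^1} i\Omega(t',s)\,dx$. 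Integration by parts on the circle (no boundary term, as everything is a genuine smooth section of $E_\phi$ over the closed manifold $S^1$) gives $\int_{S^1} i\Omega(s',t)\,dx + \int_{S^1} i\Omega(s,t')\,dx = \frac{d}{dx}\int \cdots$, hmm, more precisely $\frac{d}{dx}\Omega(s,t) = \Omega(s',t)+\Omega(s,t')$ integrates to zero, which is exactly the identity relating the two expressions and yields $\langle A_{\mbf{J}}s,t\rangle = \langle s, A_{\mbf{J}}t\rangle$. The only place care is needed is bookkeeping the factors of $i$ and the non-$\mb{C}$-linearity of $\Omega$ in its second slot, but no term-by-term calculation beyond this is required. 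The main (mild) obstacle is thus just keeping the conjugation conventions straight; there is no real analytic difficulty, since $S^1$ is closed and $A_{\mbf{J}}$ has smooth coefficients.
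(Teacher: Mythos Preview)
Your proposal is correct and follows essentially the same route as the paper. The paper dismisses $\mb{C}$-linearity and ellipticity as obvious and then computes $\langle A_{\mbf{J}}s_1,s_2\rangle - \langle s_1,A_{\mbf{J}}s_2\rangle = \int_{S^1}\bigl(\Omega(s_1',s_2)+\Omega(s_1,s_2')\bigr)\,dx = \int_{S^1} d\,\Omega(s_1,s_2) = 0$, which is exactly the integration-by-parts identity you land on. Your digression into complex conjugation is unnecessary: once you have $\langle A_{\mbf{J}}s,t\rangle = \int i\Omega(s',t)\,dx$ and $\langle s,A_{\mbf{J}}t\rangle = -\int i\Omega(s,t')\,dx$, their difference is directly $i\int_{S^1}\frac{d}{dx}\Omega(s,t)\,dx=0$, with no conjugates needed.
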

\begin{proof}
It is obvious that $A_{\mbf{J}}$ is $\mb{C}$-linear, first order and elliptic, so we just need to prove $A_{\mbf{J}}$ is formally self-adjoint. For any $s_1,s_2\in  A^0(S^1,E_\phi)$, one has 
\begin{align*}
	&\quad \langle A_{\mbf{J}}s_1,s_2\rangle-\langle s_1,A_{\mbf{J}}s_2\rangle\\
	&=\int_{S^1}\left(\Omega\left({\frac{d}{dx}}s_1,{s_2}\right)+\Omega\left(s_1,{\frac{d}{dx}}{s_2}\right)\right)dx\\
	&=\int_{S^1}d\left(\Omega(s_1,{s_2})\right)=0,
\end{align*}
which completes the proof.
\end{proof}
\begin{rem}
The operator $A_{\mbf{J}}$ has a natural extension in the Hilbert space $\mathrm{L}^2(S^1,E_\phi)$, we also denote it by $A_{\mbf{J}}$, see e.g.  \cite[Definition 7.1 in Appendix]{Kodaira}. From Proposition \ref{prop2.0}, $A_{\mbf{J}}$ is formally self-adjoint and elliptic, so $A_{\mbf{J}}$ is self-adjoint in the Hilbert space $\mathrm{L}^2(S^1,E_\phi)$, see e.g.  \cite[Theorem 7.2 in Appendix]{Kodaira}.   
\end{rem}

For every elliptic self-adjoint differential operator $A$, which acts on a Hermitian vector bundle over a closed manifold, the operator $A$ has a discrete spectrum with real eigenvalues. 
 Let $\lambda_j$ run over the eigenvalues of $A$, then the eta function of $A$ is defined as 
$$\eta_A(s)=\sum_{\lambda_j\neq 0}\frac{\op{sgn}\lambda_j}{|\lambda_j|^s},$$
where $s\in \mb{C}$. The eta function admits a meromorphic continuation to the whole complex plane and is holomorphic at $s=0$. The special value $\eta_A(0)$ is then called the {\it eta invariant} of the operator $A$, and we denote the eta invariant by
\begin{align}\label{eta invariant}
\eta(A)=\eta_A(0).
\end{align}
One can refer to \cite{APS, Muller} for the definition of eta invariant. 

The  complex vector bundle $E_\phi=\wt{\Sigma}\times_\phi E$ is a Hermitian vector bundle over $S^1$ with the Hermitian metric $i\Omega(\cdot,\mbf{J}\cdot)$, and the operator $A_{\mbf{J}}$ is an elliptic operator which is formally self-adjoint with respect to the inner product $\int_{S^1}i\Omega(\cdot,\mbf{J}\cdot)dx$, then $A_{\mbf{J}}$ has discrete spectrum consisting of real eigenvalues $\lambda$ of finite multiplicity, and the eta invariant $\eta(A_{\mbf{J}})$ of $A_{\mbf{J}}$  is defined by \eqref{eta invariant}.

\begin{ex}
Given a representation $\phi:\pi_1(S^1)\to \op{U}(1,1)$, consider the operator $$A_{\mbf{J}}=\mbf{J}\frac{d}{dx},$$
	where $\mbf{J}:=\exp(-xB)J\exp(xB)$, and  $L=\pm e^{i\theta}\exp(2\pi B)\in \op{U}(1,1)$ denotes the representation of the generator of $\pi_1(S^1)$, then the eta invariant $\eta(A_{\mbf{J}})$ is calculated in Appendix \ref{Appeta}, see \eqref{etadim2}.
\end{ex}

\section{The signature of a flat Hermitian vector bundle}\label{Sig}

In this section, we will define the signature of a flat $\op{U}(p,q)$-Hermitian vector bundle, and express it as the difference of the $L^2$-indices of two operators $d^+$ and $d^-$ on a completion $\hat\Sigma$ of $\Sigma$ with cylindrical ends, see Subsection \ref{indices}. Atiyah-Patodi-Singer's formula for the $L^2$-index involves two boundary terms, the eta invariant and half the dimension of the kernel of the operator induced on the boundary, see paragraph \ref{APSindex}. We shall show that the dimensions of the relevant kernels for $d^+$ and $d^-$ are equal (paragraph \ref {limiting}), therefore such terms do not appear in the formula for signature in Theorem \ref{thmsign2}.

\medskip

Let $\Sigma$ be a connected oriented surface with smooth boundary $\p\Sigma$, each component of $\p\Sigma$ is homeomorphic to $S^1$, $\iota:\p\Sigma\to \Sigma$ denotes the natural inclusion. Let $(E,\Omega)$ be a Hermitian vector space, where $E=\mb{C}^{p+q}$ is a complex vector space of dimension $p+q$, and $\Omega$ is a non-degenerate Hermitian form (possibly indefinite) with signature $(p,q)$, $p\geq 0,q\geq 0$.  Let $\phi:\pi_1(\Sigma)\to \mathrm{U}(E,\Omega)$ be a representation from the fundamental group $\pi_1(\Sigma)$ of $\Sigma$ into the $\mathrm{U}(p,q)$-group $\mathrm{U}(E,\Omega)$. The representation $\phi$ gives rise to a flat vector bundle $$\mc{E}=\wt{\Sigma}\times_\phi E$$ over $\Sigma$. Any element of $A^*(\Sigma,\mc{E})$ can be viewed as a $\phi$-equivariant element in $A^*(\wt{\Sigma},\mb{R})\otimes E$, where $\phi$-equivariant means $(\gamma^{-1})^*\omega\otimes \phi(\gamma)v=\omega\otimes v$ for $\omega\in A^*(\wt{\Sigma},\mb{R})$ and $v\in E$. There exists a {\it canonical flat connection} $d$ on the flat bundle $\mc{E}$, which is defined by $d(\omega\otimes v):=d\omega\otimes v$. One can also refer to \cite[Section 1.1]{BM} for the representations, flat bundles and the canonical flat connection.

\subsection{Definition of signature}\label{Definition of signature}

Let $\mathrm{H}^*(\Sigma,\mc{E})$ (resp. $\mr{H}^*(\Sigma,\p\Sigma,\mc{E})$) denote the (resp. relative) twisted singular cohomology, one can refer to \cite[Chapter 5]{DK} for its definitions. Set
$$\widehat{\mathrm{H}}^1(\Sigma;\mc{E}):=\mathrm{Im}(\mathrm{H}^1(\Sigma,\p\Sigma;\mc{E})\to \mr{H}^1(\Sigma;\mc{E})).$$
There exists a natural quadratic form 
\begin{align*}
\begin{split}
  &Q:\widehat{\mr{H}}^1(\Sigma;\mc{E})\times \widehat{\mr{H}}^1(\Sigma;\mc{E})\to \mb{C}\\
  &Q([a],[b])=\int_\Sigma \Omega([a]\cup[b]).
 \end{split}
\end{align*}
By the same argument as in \cite[Page 65]{APS}, the form $Q$ is non-degenerate due to Poincar\'e duality. Moreover, $Q$ is a skew-Hermitian form, i.e. $Q([a],[b])=-\o{Q([b],[a])}$, then $iQ$ is a Hermitian form. If $\widehat{\mr{H}}^1(\Sigma,\mc{E})=\mathscr{H}^+\oplus \mathscr{H}^-$ such that $iQ$ is  positive definite on $\mathscr{H}^+$ and negative definite on $\mathscr{H}^-$, the signature of the flat Hermitian vector bundle $(\mc{E},\Omega)$ is defined as the signature of the Hermitian form $iQ$. Then 
\begin{align*}
\begin{split}
  \op{sign}(\mc{E},\Omega):=\op{sign}(iQ)=\dim\mathscr{H}^+-\dim\mathscr{H}^-.
 \end{split}
\end{align*}

\subsection{Relation to indices of operators}
\label{indices}

Suppose that on the collar neighborhood $I\times \p\Sigma\subset \Sigma$ of $\p\Sigma$, $I=[0,1]$, the Riemannian metric of $\Sigma$ is equal to the product metric $g_\Sigma=du^2+g_{\p\Sigma}$. Let 
$$\widehat{\Sigma}=\Sigma\cup((-\infty,0]\times \p\Sigma)$$
be the complete manifold obtained from $\Sigma$ by gluing the negative half-cylinder $(-\infty,0]\times \p\Sigma$ to the boundary of $\Sigma$.
 \begin{center}
\begin{tikzpicture}[x=1cm,y=1cm]

\begin{scope}[shift={(2,0)}, thick]
\clip(-1.8,-2)rectangle(3,2);
\draw (0,0) circle [x radius=2, y radius=1];
\end{scope}

\draw[shift={(2,0)}, yscale=cos(70), thick] (-1,0) arc (-180:0:1);
\draw[shift={(2,0)}, yscale=cos(70), thick] (-0.7,-0.6) arc (180:0:0.7);

\draw[thick] (-4,-0.3) -- (0,-0.3);
\draw[thick] (-4,0.3) -- (0,0.3);

\draw [xscale=cos(70), dashed, thick] (0,-0.3) arc (-90:90:0.3);
\draw [xscale=cos(70), thick] (0,0.3) arc (90:270:0.3);

\draw [shift={(-0.7,0)}, xscale=cos(70), dashed, thick] (0,-0.3) arc (-90:90:0.3);
\draw [shift={(-0.7,0)}, xscale=cos(70), thick] (0,0.3) arc (90:270:0.3);

\draw [thick] (0,0.3) .. controls (0.1,0.3) and (0.1,0.34) .. (0.21,0.446);
\draw [thick] (0,-0.3) .. controls (0.1,-0.3) and (0.1,-0.34) .. (0.21,-0.446);

\draw (-4,-0.5) node{{\tiny $-\infty$}};
\draw (-0.7,-0.5) node{{\tiny $0$}};
\draw (-0.35,-0.6) node{{\tiny $I$}};
\draw (-1.1,0) node{{\tiny $\partial \Sigma$}};
\draw (0,-0.5) node{{\tiny $1$}};
\draw (2.7, 0.5) node{{\tiny $\Sigma$}};
\draw (0,-1.3) node{{$\widehat{\Sigma}$}};

\end{tikzpicture}
\end{center}

For any $a,b\in \wedge^*T^*\Sigma$, the Hodge $*$ operator is defined as 
\begin{align}\label{star}
\begin{split}
  a\wedge *b=g_\Sigma(a,b)\op{Vol}_g
 \end{split}
\end{align}
where the volume element $\mr{Vol}_g:=\sqrt{\det(g_{ij})}dx^1\wedge dx^2$. One can check that $*^2a=(-1)^{|a|}a$. Denote by $\mc{J}(\mc{E},\Omega)$ the space of all smooth sections  $\mbf{J}$ of $\mr{End}(\mc{E})$ preserving $\Omega$, such that $i\Omega(\cdot,\mbf{J}\cdot)$ is a positive definite Hermitian form and $\mbf{J}^2=-\op{Id}$. For any $\mbf{J}\in \mc{J}(\mc{E},\Omega)$, there exists a natural inner products in $A^*(\Sigma,\mc{E})$ by 
$$(\alpha\otimes e_1,\beta\otimes e_2)_x:=g_\Sigma(\alpha,\beta)\cdot i\Omega(e_1,\mbf{J}e_2),\quad \alpha,\beta\in \wedge^*T^*\Sigma|_x,e_1,e_2\in \mc{E}_x$$ 
and 
$$\left\langle \cdot,\cdot \right\rangle =\int_\Sigma \left(\cdot,\cdot\right)\op{Vol}_g.$$
 Denote by $d^*$ the formally adjoint operator of $d$ with respect to $\left\langle\cdot,\cdot\right\rangle$. Then 
\begin{equation}
  d^*=\mbf{J}*d*\mbf{J}.
\end{equation}
Moreover, one has $*\mbf{J}=\mbf{J}*$ and $(*\mbf{J})^2=\op{Id}$ on the space $\wedge^1 T^*\Sigma\otimes\mc{E}$, that is, $*\mbf{J}$ is an involution on the space $\wedge^1T^*\Sigma\otimes\mc{E}$. Let
\begin{equation*}
  \pi^\pm:=\frac{1\pm*\mbf{J}}{2}:\wedge^1T^*\Sigma\otimes \mc{E}\to \wedge^\pm
\end{equation*}
denote the natural projections onto the $\pm 1$-eigenspaces $\wedge^\pm$ of $*\mbf{J}$, 
and set
\begin{equation*}
  d^\pm=\pi^\pm\circ d.
\end{equation*}
From Corollary \ref{cor4.2}, the operators $d^+,d^-$ have the form 
\begin{equation*}
  d^+=\sigma^+(\frac{\p}{\p u}+A^+_{\mbf{J}}),\quad d^-=\sigma^-(\frac{\p}{\p u}+A^-_{\mbf{J}}),
\end{equation*}
where both $\sigma^+:\mc{E}\to\wedge^+$ and $\sigma^-:\mc{E}\to\wedge^-$ are bundle isomorphisms, and $A^+_{\mbf{J}}$, $A^-_{\mbf{J}}$ are the first order elliptic formally self-adjoint operators on the boundary $\p\Sigma$.

For any $a\in \wedge^+$ and $b\in \wedge^-$, one has
\begin{align*}
\begin{split}
  \Omega(a\wedge *\mbf{J}{b})=-\Omega(a\wedge b)=-\Omega(\mbf{J}*a\wedge b)=\Omega(*a\wedge \mbf{J}b)=-\Omega(a\wedge *\mbf{J}b),
 \end{split}
\end{align*}
from which it follows that $ \Omega(a\wedge *\mbf{J}{b})=0$. Thus the Hermitian inner product of $a$ and $b$ satisfies
\begin{align*}
\begin{split}
  (a,b)\mr{Vol}_g=ig_\Sigma(\Omega(a,\mbf{J}b))\mr{Vol}_g=i\Omega(a\wedge *\mbf{J}b)=0,
 \end{split}
\end{align*}
that is, the two subspaces $\wedge^+$ and $\wedge^-$ are orthogonal to each other. Hence $(d^{\pm})^*=d^*$ on $\wedge^\pm$.
Note that \cite[Proposition 4.9]{APS} works as well for the cohomology with local coefficients, that is
\begin{align}
	\widehat{\mathrm{H}}^*(\Sigma,\mc{E})\cong \mathscr{H}^*(\widehat{\Sigma},\mc{E}):=\{\phi\in \mathrm{L}^2(\widehat{\Sigma},\mc{E}): (d+d^*)\phi=0\}.
\end{align}
By identification with the above two groups, $iQ$ is also a Hermitian quadratic form on $\mathscr{H}^1(\widehat{\Sigma},\mc{E})$. We can extend the bundle $\mc{E}$ and the connection $d$ to $\widehat{\Sigma}$, we still denote it by $\mc{E}$ and $d$.
\begin{prop}\label{prop3}
The splitting
$$\mathscr{H}^1(\widehat{\Sigma},\mc{E})=\op{Ker}(d^+)^*\cap \mathrm{L}^2(\widehat{\Sigma},\wedge^+)\oplus \op{Ker}(d^-)^*\cap \mathrm{L}^2(\widehat{\Sigma},\wedge^-)$$ is such that $iQ$ is positive definite on $\op{Ker}(d^+)^*\cap \mathrm{L}^2(\widehat{\Sigma},\wedge^+)$ and negative definite on $\op{Ker}(d^-)^*\cap \mathrm{L}^2(\widehat{\Sigma},\wedge^-)$, the decomposition is orthogonal with respect to $iQ$.
\end{prop}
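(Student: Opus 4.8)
The plan is to argue entirely with $\mathrm{L}^2$-harmonic $\mc{E}$-valued $1$-forms on $\widehat{\Sigma}$, using only the pointwise facts already assembled above: $*\mbf{J}$ is an involution with $*\mbf{J}=\mbf{J}*$; $d^*=\mbf{J}*d*\mbf{J}$; $(d^\pm)^*=d^*$ on $\wedge^\pm$; and, for $\mc{E}$-valued $1$-forms $a,b$, the identities $i\,\Omega(a\wedge *\mbf{J}b)=(a,b)_x\,\mr{Vol}_g$ (immediate from the definitions of $(\cdot,\cdot)_x$ and of $*$) and $\Omega(a\wedge b)=0$ when $a\in\wedge^+$, $b\in\wedge^-$. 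I shall also use, as already granted through the local-coefficients version of \cite[Proposition~4.9]{APS}, that under $\widehat{\mathrm{H}}^1(\Sigma,\mc{E})\cong\mathscr{H}^1(\widehat{\Sigma},\mc{E})$ the form $Q$ is computed on $\mathrm{L}^2$-harmonic representatives by $Q(\phi,\psi)=\int_{\widehat{\Sigma}}\Omega(\phi\wedge\psi)$.

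First I would prove the splitting. For $\phi\in\mathrm{L}^2(\widehat{\Sigma},\wedge^1T^*\widehat{\Sigma}\otimes\mc{E})$, the equation $(d+d^*)\phi=0$ is equivalent to $d\phi=0$ and $d^*\phi=0$, as $d\phi$ and $d^*\phi$ sit in different exterior degrees. Write $\phi=\phi^++\phi^-$ with $\phi^\pm=\tfrac12(1\pm *\mbf{J})\phi$; since $*\mbf{J}$ is a pointwise isometry, $\phi^\pm\in\mathrm{L}^2(\widehat{\Sigma},\wedge^\pm)$. From $d^*=\mbf{J}*\,d(*\mbf{J}\,\cdot)$ and invertibility of $\mbf{J}*$ one gets $d^*\phi=0\iff d(*\mbf{J}\phi)=0$; adding and subtracting this to $d\phi=0$ gives $d\phi^+=0=d\phi^-$, whence also $d^*\phi^\pm=\pm\mbf{J}*d\phi^\pm=0$. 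So each $\phi^\pm$ is separately $\mathrm{L}^2$-harmonic, and since $(d^\pm)^*=d^*$ on $\wedge^\pm$ we conclude $\phi^\pm\in\op{Ker}(d^\pm)^*\cap\mathrm{L}^2(\widehat{\Sigma},\wedge^\pm)$. Conversely, any $\psi\in\op{Ker}(d^+)^*\cap\mathrm{L}^2(\widehat{\Sigma},\wedge^+)$ satisfies $d^*\psi=(d^+)^*\psi=0$, hence $d(*\mbf{J}\psi)=0$, i.e. $d\psi=0$ because $*\mbf{J}\psi=\psi$; thus $\psi\in\mathscr{H}^1(\widehat{\Sigma},\mc{E})$, and symmetrically on $\wedge^-$. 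Since $\wedge^+\cap\wedge^-=0$ fibrewise, the decomposition $\phi=\phi^++\phi^-$ is unique, which gives the asserted internal direct sum.

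Next, the two summands are $iQ$-orthogonal: for $\phi^+\in\op{Ker}(d^+)^*\cap\mathrm{L}^2(\widehat{\Sigma},\wedge^+)$ and $\phi^-\in\op{Ker}(d^-)^*\cap\mathrm{L}^2(\widehat{\Sigma},\wedge^-)$ one has $\Omega(\phi^+\wedge\phi^-)\equiv 0$ pointwise, so $Q(\phi^+,\phi^-)=\int_{\widehat{\Sigma}}\Omega(\phi^+\wedge\phi^-)=0$, and likewise $Q(\phi^-,\phi^+)=0$. For definiteness, on $\wedge^+$ we have $*\mbf{J}\phi^+=\phi^+$, hence $i\,\Omega(\phi^+\wedge\phi^+)=i\,\Omega(\phi^+\wedge *\mbf{J}\phi^+)=(\phi^+,\phi^+)_x\,\mr{Vol}_g\ge 0$ pointwise, with strict inequality wherever $\phi^+\ne 0$ (the metric $i\Omega(\cdot,\mbf{J}\cdot)$ being positive definite); integrating, $iQ(\phi^+,\phi^+)=\int_{\widehat{\Sigma}}(\phi^+,\phi^+)_x\,\mr{Vol}_g>0$ for $\phi^+\ne 0$ (which, by elliptic regularity, is a smooth nonzero $\mathrm{L}^2$ form). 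On $\wedge^-$ the relation $*\mbf{J}\phi^-=-\phi^-$ reverses the sign, so $iQ(\phi^-,\phi^-)=-\int_{\widehat{\Sigma}}(\phi^-,\phi^-)_x\,\mr{Vol}_g<0$ for $\phi^-\ne 0$.

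The point I expect to need the most care is the one taken for granted above: that the cup-product intersection form $Q$ on $\widehat{\mathrm{H}}^1(\Sigma,\mc{E})$ transports, under the isomorphism with $\mathscr{H}^1(\widehat{\Sigma},\mc{E})$, to the integrated wedge pairing $\int_{\widehat{\Sigma}}\Omega(\cdot\wedge\cdot)$ of $\mathrm{L}^2$-harmonic representatives, and that this integral converges because $\mathrm{L}^2$-harmonic forms decay exponentially along the cylindrical end. This is the local-coefficients analogue of the discussion in \cite[\S 4]{APS}, already recorded above; granting it, the rest is the familiar self-dual/anti-self-dual dichotomy, the only extra bookkeeping being that of $\Omega$ and of $\mbf{J}$.
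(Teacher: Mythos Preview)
Your proof is correct and follows essentially the same route as the paper: split via the $\pm 1$-eigenspaces of $*\mbf{J}$, then use the pointwise identity $i\,\Omega(a\wedge *\mbf{J}b)=(a,b)_x\,\mr{Vol}_g$ for orthogonality and definiteness. If anything, your argument is slightly tidier: you explicitly establish the reverse inclusion $\mathscr{H}^1\subset\op{Ker}(d^+)^*\cap L^2(\wedge^+)\oplus\op{Ker}(d^-)^*\cap L^2(\wedge^-)$ by decomposing an arbitrary $L^2$-harmonic $1$-form, whereas the paper only verifies $\op{Ker}(d^\pm)^*\cap L^2(\wedge^\pm)\subset\mathscr{H}^1$ and leaves the other direction implicit; and you obtain $d^*a=0\Rightarrow da=0$ on $\wedge^\pm$ by the purely algebraic identity $d(*\mbf{J}a)=\pm da$, without invoking the exponential decay from \cite[Proposition~3.11]{APS} as the paper does at that step.
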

\begin{proof}
Firstly, we show that $\op{Ker}(d^\pm)^*\cap \mr{L}^2(\widehat{\Sigma},\wedge^{\pm})\subset\mathscr{H}^1(\widehat{\Sigma},\mc{E})$. If $a\in \op{Ker}(d^\pm)^*\cap \mr{L}^2(\widehat{\Sigma},\wedge^{\pm})$, then $d^*a=0$ and $a\in \mr{L}^2(\widehat{\Sigma},\wedge^{\pm})$. From \cite[Proposition 3.11]{APS}, the $L^2$ sections in $\op{Ker}(d^\pm)^*\cap \op{L}^2(\widehat{\Sigma},\wedge^{\pm})$ are exponentially decaying as $t\to -\infty$, so 
$$d^*a=*\mbf{J}d*\mbf{J}a=\pm *\mbf{J}da.$$
Thus, $d^*a=0$ implies that $da=0$, which means that $a$ is harmonic. Hence $\op{Ker}(d^\pm)^*\cap \mr{L}^2(\widehat{\Sigma},\wedge^{\pm})\subset\mathscr{H}^1(\widehat{\Sigma},\mc{E})$. On the other hand, for any nonzero $a\in \op{Ker}(d^+)^*\cap \op{L}^2(\widehat{\Sigma},\wedge^{+})$ and $b\in \op{Ker}(d^-)^*\cap \op{L}^2(\widehat{\Sigma},\wedge^{-})$, one has
\begin{align*}
\begin{split}
  Q(a,b)=\int_{\widehat{\Sigma}}\Omega(a\wedge b)=- \int_{\widehat{\Sigma}}\Omega(a\wedge *\mbf{J}b)=0
 \end{split}
\end{align*}
since $\wedge^+$ and $\wedge^-$ are orthogonal to each other, and 
\begin{align*}
\begin{split}
  iQ(a,a)=i\int_{\widehat{\Sigma}}\Omega(a\wedge a)=i \int_{\widehat{\Sigma}}\Omega(a\wedge *\mbf{J}a)>0,
 \end{split}
\end{align*}
\begin{align*}
\begin{split}
  iQ(b,b)=i\int_{\widehat{\Sigma}}\Omega(b\wedge b)=-i \int_{\widehat{\Sigma}}\Omega(b\wedge *\mbf{J}b)<0.
 \end{split}
\end{align*}
 The proof is complete.
\end{proof}

The $\op{L}^2$-index of $d^\pm$ is well-defined and is given by
$$\op{L}^2\mathrm{Index}(d^\pm):=\dim \op{Ker}(d^\pm)\cap \op{L}^2(\widehat{\Sigma},\mc{E})-\dim \op{Ker}(d^{\pm})^*\cap \op{L}^2(\widehat{\Sigma},\wedge^\pm).$$
Note that 
\begin{align}\label{H0}
\begin{split}
  \op{Ker}(d^\pm)\cap \op{L}^2(\widehat{\Sigma},\mc{E})=\mathscr{H}^0(\widehat{\Sigma},\mc{E}).
 \end{split}
\end{align}
In fact, if $d^-a=0$, then $da=d^+a$, and so $da=*\mbf{J}da$, which follows that $(d^+)^*d^+a=d^*da=0$. From \cite[Proposition 3.15]{APS}, the $\op{L}^2$-solutions of $d^+$ and $(d^+)^*d^+$ are coincide, so $da=d^+a=0$. Therefore,
$$\op{L}^2\mathrm{Index}(d^\pm)=\dim \mathscr{H}^0(\widehat{\Sigma},\mc{E})-\dim \op{Ker}(d^{\pm})^*\cap \op{L}^2(\widehat{\Sigma},\wedge^\pm).$$
By \eqref{dP}, one can define the operators $d^{\pm}_P$ by the restriction of $d^{\pm}$. From \cite[Proposition 3.11]{APS}, $\op{Ker} d^\pm_P$ is isomorphic to the space of $\op{L}^2$-solutions of $d^\pm\varphi=0$ on $\widehat{\Sigma}$ and $\op{Ker}(d^{\pm}_P)^*$ is isomorphic to the space of extended $\op{L}^2$-solutions of $(d^\pm)^*\varphi=0$ on $\widehat{\Sigma}$, where extended solution means that on $(-\infty,0]\times\p\Sigma$, $\varphi$ can be written as $\varphi=\phi+\psi$ with $\phi\in \op{Ker}\sigma^{\pm}A^{\pm}_{\mbf{J}}(\sigma^{\pm})^{-1}$ and $\psi\in \op{L}^2$. The section $\phi$ is called the limiting value of the extended solution $\varphi$.

We denote by $h_{\infty}(\wedge^\pm)$ the dimension of the subspace of $\op{Ker}(A^\pm_{\mbf{J}})$ consisting of limiting values of extended $\op{L}^2$-sections $a$ of $\wedge^\pm$ satisfying $(d^{\pm})^*a=0$. Therefore, 

\begin{prop}\label{prop3.2}
	The signature of $(\mc{E},\Omega)$ is given by
	\begin{align*}
		\op{sign}(\mc{E},\Omega)&=\mathrm{L}^2\op{Index}(d^{-})-\mathrm{L}^2\op{Index}(d^{+})\\
		&=\op{Index}(d^-_P)-\op{Index}(d^+_P)+h_{\infty}(\wedge^-)-h_{\infty}(\wedge^+).
	\end{align*}
\end{prop}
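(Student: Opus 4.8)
The plan is to chain together three identifications. First I would express $\op{sign}(\mc E,\Omega)$ in terms of $\op{L}^2$-indices using Proposition \ref{prop3}: since $\mathscr H^1(\widehat\Sigma,\mc E)$ splits $iQ$-orthogonally into the negative-definite piece $\op{Ker}(d^-)^*\cap \op{L}^2(\widehat\Sigma,\wedge^-)$ and the positive-definite piece $\op{Ker}(d^+)^*\cap \op{L}^2(\widehat\Sigma,\wedge^+)$, the signature equals $\dim\op{Ker}(d^+)^*\cap\op{L}^2(\widehat\Sigma,\wedge^+)-\dim\op{Ker}(d^-)^*\cap\op{L}^2(\widehat\Sigma,\wedge^-)$. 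On the other hand, by \eqref{H0} both $d^+$ and $d^-$ have the same kernel $\mathscr H^0(\widehat\Sigma,\mc E)$ on $\op{L}^2$ sections, so in the difference $\op{L}^2\op{Index}(d^-)-\op{L}^2\op{Index}(d^+)$ the $\mathscr H^0$ terms cancel and one is left precisely with $\dim\op{Ker}(d^+)^*\cap\op{L}^2(\widehat\Sigma,\wedge^+)-\dim\op{Ker}(d^-)^*\cap\op{L}^2(\widehat\Sigma,\wedge^-)$, which is the signature. This gives the first displayed equality.

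Next I would convert $\op{L}^2$-indices into indices of the APS boundary-value problems $d^\pm_P$. The operators $d^\pm$ have the cylindrical form $\sigma^\pm(\partial/\partial u+A^\pm_{\mbf J})$ near the end, so Atiyah--Patodi--Singer's analysis applies verbatim (the coefficient bundle being flat Hermitian rather than trivial changes nothing in the relevant functional analysis). By \cite[Proposition 3.11]{APS}, $\op{Ker} d^\pm_P\cong\{\op{L}^2\text{-solutions of }d^\pm\varphi=0\}$, while $\op{Ker}(d^\pm_P)^*$ is isomorphic to the space of \emph{extended} $\op{L}^2$-solutions of $(d^\pm)^*\varphi=0$; the difference between extended and genuine $\op{L}^2$-solutions is exactly measured by the dimension $h_\infty(\wedge^\pm)$ of limiting values. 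Hence
\begin{align*}
\op{L}^2\op{Index}(d^\pm)=\op{Index}(d^\pm_P)-h_\infty(\wedge^\pm),
\end{align*}
where on the kernel side $\op{L}^2$ and extended solutions agree because, by \eqref{H0}, the kernel is $\mathscr H^0$, whose elements decay exponentially (their limiting values would have to lie in $\op{Ker}(A^\pm_{\mbf J})$ and be $\op{L}^2$ on the full cylinder, forcing them to vanish). Substituting into the first equality,
\begin{align*}
\op{sign}(\mc E,\Omega)=\op{L}^2\op{Index}(d^-)-\op{L}^2\op{Index}(d^+)=\op{Index}(d^-_P)-\op{Index}(d^+_P)+h_\infty(\wedge^-)-h_\infty(\wedge^+),
\end{align*}
which is the claimed second equality.

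The main obstacle I anticipate is bookkeeping rather than a deep difficulty: one must be careful that the sign conventions in the definition of $\wedge^\pm$ (eigenspaces of the involution $*\mbf J$) match the sign in the claimed formula, i.e. that the \emph{minus} index minus the \emph{plus} index reproduces $\op{sign}(iQ)=\dim\mathscr H^+-\dim\mathscr H^-$ rather than its negative; this hinges on the positivity computations $iQ(a,a)>0$ for $a\in\wedge^+$ and $iQ(b,b)<0$ for $b\in\wedge^-$ already established in Proposition \ref{prop3}. A secondary point requiring care is the identification \eqref{H0} and the fact, drawn from \cite[Proposition 3.15]{APS}, that the $\op{L}^2$-kernels of $d^+$ and of $(d^+)^*d^+$ coincide, so that $\op{Ker} d^\pm\cap\op{L}^2=\mathscr H^0$ genuinely holds and the harmonic space does not contribute to the index difference. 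Once these normalizations are pinned down, the proof is just the concatenation of the three displayed identities above.
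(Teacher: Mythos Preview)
Your approach is the same as the paper's, which simply chains together Proposition \ref{prop3}, the observation \eqref{H0} that $\op{Ker}(d^\pm)\cap\op{L}^2=\mathscr H^0(\widehat\Sigma,\mc E)$, and \cite[Corollary 3.14]{APS}. However, your intermediate displayed formula has the sign of $h_\infty$ wrong. Since $\op{Ker}(d^\pm_P)^*$ is isomorphic to the space of \emph{extended} $\op{L}^2$-solutions of $(d^\pm)^*\varphi=0$, which is \emph{larger} than the genuine $\op{L}^2$-kernel by exactly $h_\infty(\wedge^\pm)$, one gets
\[
\op{Index}(d^\pm_P)=\dim\op{Ker}d^\pm_P-\dim\op{Ker}(d^\pm_P)^*=\op{L}^2\op{Index}(d^\pm)-h_\infty(\wedge^\pm),
\]
i.e.\ $\op{L}^2\op{Index}(d^\pm)=\op{Index}(d^\pm_P)+h_\infty(\wedge^\pm)$, not $-h_\infty(\wedge^\pm)$. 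With the corrected sign the substitution does yield the claimed final equality; with your sign you would obtain $\op{Index}(d^-_P)-\op{Index}(d^+_P)-h_\infty(\wedge^-)+h_\infty(\wedge^+)$, the opposite of what you wrote. So the bookkeeping worry you flagged at the end is in fact the one place the argument slipped.

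Your aside about the kernel side (``$\op{L}^2$ and extended solutions agree because $\mathscr H^0$ decays exponentially'') is unnecessary: \cite[Proposition 3.11]{APS} already identifies $\op{Ker}d^\pm_P$ with the genuine $\op{L}^2$-kernel of $d^\pm$ directly, so no $h_\infty$ correction ever arises on that side and there is nothing to argue away.
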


\begin{proof}
By the definition of signature, Proposition \ref{prop3} and \cite[Corollary 3.14]{APS}, one has
\begin{align*}
\begin{split}
  \op{sign}(\mc{E},\Omega) &=\dim \op{Ker}(d^{+})^*\cap \op{L}^2(\widehat{\Sigma},\wedge^+)-\dim \op{Ker}(d^{-})^*\cap \op{L}^2(\widehat{\Sigma},\wedge^-)\\
  &=\op{L}^2\op{Index}(d^-)-\op{L}^2\op{Index}(d^+)\\
  &=\op{Index}(d^-_P)-\op{Index}(d^+_P)+h_{\infty}(\wedge^-)-h_{\infty}(\wedge^+).
 \end{split}
\end{align*}
	The proof is complete.
\end{proof}

\subsection{A formula for signature}

In this subsection, by using Atiyah-Patodi-Singer's index theorem, we will give a formula for the signature of flat Hermitian bundles. 

\subsubsection{The Atiyah-Patodi-Singer index theorem}
\label{APSindex}

Let $\Sigma$ be a connected oriented surface with smooth boundary $\p\Sigma$. Let $g_\Sigma$ be a Riemannian metric on $\Sigma$ such that $g_\Sigma=du^2+g_{\p\Sigma}$ on the collar neighborhood $\p\Sigma\times I\subset\Sigma$ of $\p\Sigma$, $I=[0,1]$. The bundles $\mc{E}$ and $\wedge^{\pm}$ are Hermitian vector bundles, and 
$$d^{\pm}:A^0(\Sigma,\mc{E})\to A^0(\Sigma,\wedge^\pm)$$
are two first order elliptic differential operators. On $\p\Sigma\times I$, the volume element is given by
$$\op{Vol}_g=\frac{dx\wedge du}{|dx|}.$$
From the definition of $*$ \eqref{star}, one has 
$$*(dx\otimes e)=|dx|du\otimes e,\quad*(du\otimes e)=-\frac{1}{|dx|}dx\otimes e$$
for any $e\in \mc{E}$. Thus 
\begin{align*}
*\left(\frac{dx}{|dx|}+idu\right)=-i\left(\frac{dx}{|dx|}+idu\right),\quad 	*\left(\frac{dx}{|dx|}-idu\right)=i\left(\frac{dx}{|dx|}-idu\right).
\end{align*}
Let $\mc{E}=\mc{E}^+\oplus \mc{E}^-$ be the decomposition of $
\mc{E}$ into $\pm i$-eigenspaces of $\mbf{J}$. Then 
\begin{align*}
\wedge^+=\mb{C}\left\{\frac{dx}{|dx|}-idu\right\}\otimes\mc{E}^-\oplus\mb{C}\left\{\frac{dx}{|dx|}+idu\right\}\otimes\mc{E}^+	
\end{align*}
and 
\begin{align*}
\wedge^-=\mb{C}\left\{\frac{dx}{|dx|}-idu\right\}\otimes\mc{E}^+\oplus\mb{C}\left\{\frac{dx}{|dx|}+idu\right\}\otimes\mc{E}^-.	
\end{align*}
Note that 
\begin{equation*}
  \dim \mc{E}^+=p,\quad \dim\mc{E}^-=q.
\end{equation*}
Thus $\wedge^{\pm}\cong\mc{E}$ and the isomorphisms are given by 
$$\sigma^+:\mc{E}\to\wedge^+, \sigma^+(e)=-\frac{i}{2}\left(\frac{dx}{|dx|}+idu\right)\otimes e^++\frac{i}{2}\left(\frac{dx}{|dx|}-idu\right)\otimes e^-$$
and 
$$\sigma^-:\mc{E}\to\wedge^-, \sigma^-(e)=\frac{i}{2}\left(\frac{dx}{|dx|}-idu\right)\otimes e^+-\frac{i}{2}\left(\frac{dx}{|dx|}+idu\right)\otimes e^-.$$
The maps $\pi^{\pm}:\wedge^1T^*\Sigma\otimes\mc{E}\to\wedge^\pm$ can be expressed as
$$\pi^+\left(\frac{dx}{|dx|}\otimes e\right)=\frac{1}{2}\left(\frac{dx}{|dx|}+idu\right)\otimes e^++\frac{1}{2}\left(\frac{dx}{|dx|}-idu\right)\otimes e^-=\sigma^+(i(e^+-e^-)),$$
$$\pi^-\left(\frac{dx}{|dx|}\otimes e\right)=\frac{1}{2}\left(\frac{dx}{|dx|}-idu\right)\otimes e^++\frac{1}{2}\left(\frac{dx}{|dx|}+idu\right)\otimes e^-=\sigma^-(-i(e^+-e^-)),$$
$$\pi^+(du\otimes e)=-\frac{i}{2}\left(\frac{dx}{|dx|}+idu\right)\otimes e^++\frac{i}{2}\left(\frac{dx}{|dx|}-idu\right)\otimes e^-=\sigma^+(e),$$
$$\pi^-(du\otimes e)=\frac{i}{2}\left(\frac{dx}{|dx|}-idu\right)\otimes e^+-\frac{i}{2}\left(\frac{dx}{|dx|}+idu\right)\otimes e^-=\sigma^-(e).$$
\begin{prop}\label{prop4.1}
For any $C\in A^0(\p\Sigma\times I,\op{End}(\mc{E}))$, one has
$$\pi^\pm(d+Cdx)=\sigma^{\pm}\left(\frac{\p}{\p u}\pm |dx|\mbf{J}\left(\frac{\p}{\p x}+C\right)\right).$$	
\end{prop}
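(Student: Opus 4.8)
The plan is to prove the identity by a direct computation in the product coordinates $(x,u)$ on the collar $\p\Sigma\times I$. Since the asserted identity is local, I may work in a flat local frame for $\mc{E}$ over the collar, in which the canonical connection $d$ acts as the ordinary exterior differential on the components (as in Section \ref{Sym}); the endomorphisms $\mbf{J}$ and $C$ are then matrix-valued functions of $(x,u)$. The only inputs I need are the four explicit formulas for $\pi^{\pm}$ applied to $\tfrac{dx}{|dx|}\otimes e$ and $du\otimes e$ recorded just before the statement, together with the decomposition $\mc{E}=\mc{E}^{+}\oplus\mc{E}^{-}$ into the $(\pm i)$-eigenspaces of $\mbf{J}$.

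First I would apply $d+C\,dx$ to an arbitrary section $e$ of $\mc{E}$ and separate the $dx$- and $du$-parts, rewriting the $dx$-part using the unit covector $\tfrac{dx}{|dx|}$:
$$(d+C\,dx)(e)=\Big(\frac{\p e}{\p x}+Ce\Big)dx+\frac{\p e}{\p u}\,du=|dx|\,\Big(\frac{dx}{|dx|}\otimes w\Big)+du\otimes\frac{\p e}{\p u},\qquad w:=\frac{\p e}{\p x}+Ce.$$
Then I would push this through $\pi^{\pm}$ termwise. Using $\pi^{+}\big(\tfrac{dx}{|dx|}\otimes e\big)=\sigma^{+}(i(e^{+}-e^{-}))$, $\pi^{+}(du\otimes e)=\sigma^{+}(e)$, and the analogous identities $\pi^{-}\big(\tfrac{dx}{|dx|}\otimes e\big)=\sigma^{-}(-i(e^{+}-e^{-}))$, $\pi^{-}(du\otimes e)=\sigma^{-}(e)$, linearity gives
$$\pi^{\pm}(d+C\,dx)(e)=\sigma^{\pm}\Big(\frac{\p e}{\p u}\pm i|dx|\,(w^{+}-w^{-})\Big).$$

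It then remains to identify $i(w^{+}-w^{-})$ with $\mbf{J}w$, which is immediate from the definition of $\mc{E}^{\pm}$ as the $(\pm i)$-eigenspaces of $\mbf{J}$: writing $w=w^{+}+w^{-}$ one has $\mbf{J}w=iw^{+}-iw^{-}=i(w^{+}-w^{-})$. Substituting $w=\tfrac{\p e}{\p x}+Ce$ and letting $e$ vary yields $\pi^{\pm}(d+C\,dx)=\sigma^{\pm}\big(\tfrac{\p}{\p u}\pm|dx|\,\mbf{J}(\tfrac{\p}{\p x}+C)\big)$, as claimed. The computation is entirely forced, so there is no genuine obstacle; the only points that require care are the bookkeeping of the factor $|dx|$ produced when $dx$ is rewritten as $|dx|\cdot\tfrac{dx}{|dx|}$, and keeping the two $\mbf{J}$-eigencomponents of $w$ straight so that the signs match those already built into $\sigma^{\pm}$ and $\pi^{\pm}$.
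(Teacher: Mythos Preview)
Your proof is correct and follows essentially the same approach as the paper: both use the four explicit formulas for $\pi^{\pm}$ on $\tfrac{dx}{|dx|}\otimes e$ and $du\otimes e$, together with the identity $\mbf{J}w=i(w^{+}-w^{-})$. The only cosmetic difference is direction: the paper starts from $\sigma^{\pm}\big(\tfrac{\p}{\p u}\pm|dx|\mbf{J}(\tfrac{\p}{\p x}+C)\big)e$ and unwinds to $\pi^{\pm}(d+C\,dx)e$, whereas you start from the left-hand side and work forward.
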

\begin{proof}
For any local smooth section $e$ of $\mc{E}$, one has
	\begin{align*}
&\quad\sigma^{\pm}\left(\frac{\p}{\p u}\pm |dx|\mbf{J}\left(\frac{\p}{\p x}+C\right)\right)e\\
	&=\sigma^{\pm}\left(\frac{\p e}{\p u}\pm i|dx|\left(\left(\frac{\p e}{\p x}\right)^+-\left(\frac{\p e}{\p x}\right)^-\right)\pm i|dx|\left((Ce)^+-(Ce)^-\right)\right)\\
	&=\pi^{\pm}\left(\frac{\p e}{\p u}du+dx\otimes\frac{\p e}{\p x}+dx\otimes Ce\right)\\
	&=\pi^{\pm}(d+Cdx)e,
	\end{align*}
which completes the proof.
\end{proof}
As a corollary, we get
\begin{cor}\label{cor4.2}
$d^{\pm}=\sigma^{\pm}(\frac{\p}{\p u}+A^{\pm}_{\mbf{J}})$, where $A^{\pm}_{\mbf{J}}=\pm |dx|\mbf{J}\frac{\p}{\p x}$.	
\end{cor}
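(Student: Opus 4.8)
The corollary is the specialization of Proposition \ref{prop4.1} to the zero endomorphism $C=0$. Recall from Subsection \ref{indices} that $d^\pm$ is \emph{defined} as $\pi^\pm\circ d$, where $d$ is the canonical flat connection on $\mc E$. So the only thing to do is to recognize the plain de Rham differential $d$ as the member $C=0$ of the family $d+C\,dx$ appearing in Proposition \ref{prop4.1}, and then read off the tangential part of the resulting operator.

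\textbf{Steps.} First I would note that on the collar $\p\Sigma\times I$, with $u$ the normal coordinate and $x$ the coordinate along $\p\Sigma$, the flat connection splits as $d=du\otimes\frac{\p}{\p u}+dx\otimes\frac{\p}{\p x}$ in the chosen flat frame (this is exactly $d+C\,dx$ with $C=0$; the product-metric hypothesis $g_\Sigma=du^2+g_{\p\Sigma}$ guarantees there is no cross term to worry about). Then I would apply Proposition \ref{prop4.1} with $C=0$ to get
$$
d^\pm=\pi^\pm\circ d=\pi^\pm(d+0\cdot dx)=\sigma^\pm\Bigl(\frac{\p}{\p u}\pm|dx|\,\mbf J\bigl(\tfrac{\p}{\p x}+0\bigr)\Bigr)=\sigma^\pm\Bigl(\frac{\p}{\p u}\pm|dx|\,\mbf J\tfrac{\p}{\p x}\Bigr).
$$
Finally, setting $A^\pm_{\mbf J}:=\pm|dx|\,\mbf J\frac{\p}{\p x}$ yields the asserted form $d^\pm=\sigma^\pm(\frac{\p}{\p u}+A^\pm_{\mbf J})$, and one observes that $A^\pm_{\mbf J}$ is (up to the positive scalar $|dx|$) the operator $A_{\mbf J}=\mbf J\frac{d}{dx}$ of \eqref{ABJ1}, hence first order, elliptic and formally self-adjoint by Proposition \ref{prop2.0}.

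\textbf{Main obstacle.} There is essentially none: all the analytic content—the explicit descriptions of $\wedge^\pm$, of the isomorphisms $\sigma^\pm$, and of the projections $\pi^\pm$ on the collar—has already been established in the computation preceding Proposition \ref{prop4.1}, so the corollary is a one-line substitution $C=0$. The only point requiring a moment's care is bookkeeping of the factor $|dx|$ and the sign, i.e. checking that the $+$ (resp. $-$) eigenspace convention matches the sign in $A^\pm_{\mbf J}=\pm|dx|\mbf J\frac{\p}{\p x}$, which is immediate from the formulas for $\pi^\pm(du\otimes e)$ and $\pi^\pm(\frac{dx}{|dx|}\otimes e)$ recorded just above.
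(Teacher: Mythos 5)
Your proof is correct and is exactly what the paper intends: Corollary \ref{cor4.2} is the specialization $C=0$ of Proposition \ref{prop4.1}, using that $d^\pm$ is defined as $\pi^\pm\circ d$. The paper leaves this substitution unwritten precisely because it is this immediate, so your reasoning matches the intended argument.
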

When restricted on $\p\Sigma$, the metric is $g_{\p\Sigma}=g(x)dx\otimes dx$, and $|dx|=\frac{1}{\sqrt{g(x)}}$. By taking an another parameter 
$x'=\int_0^x\sqrt{g(\ell)}d\ell,$
then $dx'=\sqrt{g(x)}dx$, and so the metric $g_{\p\Sigma}=dx'\otimes dx'$, the operator $A_{\mbf{J}}$ is 
$$A_{\mbf{J}}=\mbf{J}|dx|\frac{d}{d x}= \mbf{J}\frac{d}{d x'}.$$
which is exactly the operator defined by \eqref{ABJ1}.

 Let $P_{\pm}$ denote the orthogonal projections of $\mathrm{L}^2(\p\Sigma,\mc{E})$ onto the subspace spanned by all eigenfunctions of $A^{\pm}_{\mbf{J}}$ with eigenvalues $\lambda>0$. Then $P_\pm$ are  pseudo-differential operators. Let $A^0(\Sigma,\mc{E};P_\pm)$ be the subspaces of $A^0(\Sigma,\mc{E})$ consisting of all sections $\v$ which satisfying the boundary conditions
$$P_\pm(\v|_{\p\Sigma})=0.$$
Denote by 
\begin{align}\label{dP}
	d^{\pm}_{P}:A^0(\Sigma,\mc{E}; P_\pm)\to A^0(\Sigma,\wedge^{\pm})
\end{align}
the restriction of $d^{\pm}$. By Atiyah-Patodi-Singer's index theorem \cite[Theorem 3.10]{APS}, $d^{+}_P, d^-_P$ are Fredholm operators and 
\begin{align}\label{APSformula}
\op{Index}(d^{\pm}_P)=\int_{\Sigma}\alpha_{{\pm}}(z)d\mu_g-\frac{\eta(A_{\mbf{J}}^{\pm})+\dim\op{Ker}A^{\pm}_{\mbf{J}}}{2},	
\end{align}
where $d\mu_g$ denotes the volume form of the Riemannian metric $g$ on $\Sigma$,
and $\alpha_{\pm}(z)$ is the constant term in the asymptotic expansion  (as $ t\to 0$) of
$$
\sum e^{-t \mu^{\prime}_\pm}\left|\phi_{\mu_\pm}^{\prime}(x)\right|^{2}-\sum e^{-t \mu_\pm^{\prime \prime}}\left|\phi_{\mu_\pm}^{\prime \prime}(x)\right|^{2},
$$
where $\mu^{\prime}_\pm, \phi_{\mu_\pm}^{\prime}$ denote the eigenvalues and eigenfunctions of $(d^\pm)^{*} d^\pm$ on the double $\Sigma\cup_{\p\Sigma}\Sigma$ of $\Sigma,$ and $\mu_\pm^{\prime \prime},$ $\phi_{\mu_\pm}^{\prime \prime}$ are the corresponding objects for $d^\pm (d^\pm)^{*}$.

 Since $\eta(A_{\mbf{J}}^{\pm})=\eta(\pm A_{\mbf{J}})=\pm\eta(A_{\mbf{J}})$ and
 \begin{align}\label{KerAJ}
 	\op{Ker}A^{\pm}_{\mbf{J}}=\op{Ker}|dx|\mbf{J}\frac{\p}{\p x}=\op{Ker}|dx| \frac{\p}{\p x}=\op{Ker}d|_{A^0(\p\Sigma,\mc{E})}=\mathrm{H}^0(\p\Sigma,\mc{E}),
 \end{align}
so we obtain
\begin{align*}
\op{Index}(d^-_{P})-\op{Index}(d^+_P)=\int_{\Sigma}\alpha_-(z)d\mu_g-\int_{\Sigma}\alpha_+(z)d\mu_g+\eta(A_{\mbf{J}}).
\end{align*}
By Proposition \ref{prop3.2}, the signature of the flat Hermitian vector bundle $(\mc{E},\Omega)$ is 
\begin{align}\label{sign}
\begin{split}
\op{sign}(\mc{E},\Omega)&=\int_{\Sigma}\alpha_-(z)d\mu_g-\int_{\Sigma}\alpha_+(z)d\mu_g\\
&\quad+h_{\infty}(\wedge^-)-h_{\infty}(\wedge^+)+\eta(A_{\mbf{J}}).
\end{split}
\end{align}

\subsubsection{The Atiyah-Singer integrands}

In this subsection, we will deal with the Atiyah-Singer integrands $\int_{\Sigma}\alpha_-(z)d\mu_g$ and $\int_{\Sigma}\alpha_+(z)d\mu_g$.

Let
  $$g_\Sigma=g(x,y)(dx^2+dy^2)=\frac{g(z)}{2}(dz\otimes d\b{z}+d\b{z}\otimes dz)$$ be a Riemannian metric on the surface $\Sigma$, and is a  product metric on the collar neighborhood $\p\Sigma\times I$ of $\p\Sigma$, 
  where $z=x+iy$ denotes the holomorphic coordinate of $\Sigma$, and one has
$*dz=-idz$ and $*d\b{z}=id\b{z}$. 

 Following \cite{APS}, we will consider the double $\Sigma\cup_{\p\Sigma}\Sigma$ of $\Sigma$, which is a closed surface.
\begin{center}
\begin{tikzpicture}[x=1cm,y=1cm]

\begin{scope}[shift={(2,0)}, thick]
\clip(-1.8,-2)rectangle(3,2);
\draw (0,0) circle [x radius=2, y radius=1];
\end{scope}

\draw[shift={(2,0)}, yscale=cos(70), thick] (-1,0) arc (-180:0:1);
\draw[shift={(2,0)}, yscale=cos(70), thick] (-0.7,-0.6) arc (180:0:0.7);

\draw [xscale=cos(70), dashed, thick] (0,-0.3) arc (-90:90:0.3);
\draw [xscale=cos(70), thick] (0,0.3) arc (90:270:0.3);

\draw [thick] (0,0.3) .. controls (0.1,0.3) and (0.1,0.34) .. (0.21,0.446);
\draw [thick] (0,-0.3) .. controls (0.1,-0.3) and (0.1,-0.34) .. (0.21,-0.446);

\begin{scope}[shift={(-4,0)}, thick]
\clip(-3,-2)rectangle(1.8,2);
\draw (0,0) circle [x radius=2, y radius=1];
\end{scope}

\draw[shift={(-4,0)}, yscale=cos(70), thick] (-1,0) arc (-180:0:1);
\draw[shift={(-4,0)}, yscale=cos(70), thick] (-0.7,-0.6) arc (180:0:0.7);

\draw [shift={(-2,0)}, xscale=cos(70), dashed, thick] (0,-0.3) arc (-90:90:0.3);
\draw [shift={(-2,0)}, xscale=cos(70), thick] (0,0.3) arc (90:270:0.3);

\draw [thick,shift={(-2,0)}] (0,0.3) .. controls (-0.1,0.3) and (-0.1,0.34) .. (-0.21,0.446);
\draw [thick,shift={(-2,0)}] (0,-0.3) .. controls (-0.1,-0.3) and (-0.1,-0.34) .. (-0.21,-0.446);

\draw[thick] (-2,-0.3) -- (0,-0.3);
\draw[thick] (-2,0.3) -- (0,0.3);

\draw [shift={(-1,0)}, xscale=cos(70), dashed, thick] (0,-0.3) arc (-90:90:0.3);
\draw [shift={(-1,0)}, xscale=cos(70), thick] (0,0.3) arc (90:270:0.3);

\draw (-1,-0.5) node{{\tiny $0$}};
\draw (-1.4,0) node{{\tiny $\partial \Sigma$}};
\draw (0,-0.5) node{{\tiny $1$}};
\draw (-2,-0.5) node{{\tiny $1$}};
\draw (2.7, 0.5) node{{\tiny $\Sigma$}};
\draw (-4.7, 0.5) node{{\tiny $\Sigma$}};
\draw (-1,-1.3) node{{$\Sigma\cup_{\partial\Sigma}\Sigma$}};

\end{tikzpicture}
\end{center}
The vector bundle $\mc{E}$ and the  operators $d^{+}, d^-$ can be extended canonically on the double $\Sigma\cup_{\p\Sigma}\Sigma$. 
Let $\mc{F}=\mc{F}^+\oplus\mc{F^-}$ be a $\mb{Z}_2$-graded vector bundle over the double $\Sigma\cup_{\p\Sigma}\Sigma$, where 
$$\mc{F}^+:=\mc{E},\quad \mc{F}^-:=\wedge^-.$$
Let $D:\Gamma(\Sigma\cup_{\p\Sigma}\Sigma,\mc{F})\to \Gamma(\Sigma\cup_{\p\Sigma}\Sigma,\mc{F})$ be an  operator defined as follows:
\begin{align}\label{Dirac operator}
D=\left(\begin{array}{cc}
0 & D^-=(d^-)^*	\\
D^+=d^- &0
\end{array}
\right):\Gamma(\Sigma\cup_{\p\Sigma}\Sigma,\mc{F}^{\pm})\to \Gamma(\Sigma\cup_{\p\Sigma}\Sigma,\mc{F}^{\mp}).
\end{align}
 \begin{prop}
 $D$ is a self-adjoint Dirac operator.
  \end{prop}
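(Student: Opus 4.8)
The plan is to verify the two defining properties of a Dirac-type operator for the operator $D$ in \eqref{Dirac operator}: first that $D$ is self-adjoint with respect to the natural $\mathrm{L}^2$-inner product on $\Gamma(\Sigma\cup_{\p\Sigma}\Sigma,\mc{F})$ built from $i\Omega(\cdot,\mbf{J}\cdot)$ and $g_\Sigma$, and second that $D$ is a first-order operator of Dirac type, i.e. its principal symbol $\sigma_\xi(D)$ satisfies $\sigma_\xi(D)^2=-|\xi|^2\op{Id}$ (equivalently, $D^2$ is a generalized Laplacian). Since the bundle $\mc{E}$, the metric $g_\Sigma$ and the involution $*\mbf{J}$ all extend canonically across $\p\Sigma$ to the closed double, it suffices to carry out both checks locally on $\Sigma$ and invoke the extension.

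For self-adjointness, I would note that $D$ is off-diagonal with entries $D^+=d^-$ and $D^-=(d^-)^*$. By construction $(d^-)^*$ is the formal adjoint of $d^-$ with respect to $\langle\cdot,\cdot\rangle$ — this is already established in Subsection \ref{indices}, where one has $(d^{\pm})^*=d^*$ on $\wedge^\pm$ and $d^*=\mbf{J}*d*\mbf{J}$. Hence for $s=(s^+,s^-)$ and $t=(t^+,t^-)$ sections of $\mc{F}^+\oplus\mc{F}^-$,
\begin{align*}
\langle Ds,t\rangle=\langle d^-s^+,t^-\rangle+\langle (d^-)^*s^-,t^+\rangle=\langle s^+,(d^-)^*t^-\rangle+\langle s^-,d^-t^+\rangle=\langle s,Dt\rangle,
\end{align*}
so $D$ is formally self-adjoint; as $D$ is a first-order elliptic operator (ellipticity of $d^-$ was recorded above, and $D^2$ is diagonal with entries $(d^-)^*d^-$ and $d^-(d^-)^*$), it is essentially self-adjoint on the closed manifold $\Sigma\cup_{\p\Sigma}\Sigma$.

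For the Dirac property, I would compute the principal symbol. From Proposition \ref{prop4.1} and Corollary \ref{cor4.2}, $d^-$ has the form $\sigma^-(\tfrac{\p}{\p u}+A^-_{\mbf{J}})$ near the boundary, but more invariantly $d^-=\pi^-\circ d$, so its symbol at $\xi\in T^*_x$ is $\pi^-\circ(i\xi\wedge\cdot):\mc{E}_x\to\wedge^-_x$; since $\pi^-$ restricted to $\xi\wedge\mc{E}_x$ is an isomorphism onto $\wedge^-_x$ (the computation of $\pi^\pm$ on $\tfrac{dx}{|dx|}\otimes e$ and $du\otimes e$ in \ref{APSindex} shows this explicitly, and it is $|\xi|$ times an isometry up to the factors of $i$), one gets $\sigma_\xi(d^-)^*\sigma_\xi(d^-)=|\xi|^2\op{Id}$ on $\mc{E}_x$ and symmetrically $|\xi|^2\op{Id}$ on $\wedge^-_x$. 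Therefore $\sigma_\xi(D)^2=-|\xi|^2\op{Id}$ on $\mc{F}_x$ (the sign from the off-diagonal Clifford-type structure), which is exactly the Dirac condition. The main obstacle here is purely bookkeeping: tracking the metric normalizations and the factors of $i$ coming from the explicit formulas for $\sigma^\pm$, $\pi^\pm$ and $*$ so that the symbol squares to $-|\xi|^2$ rather than some positive multiple; once the local model near $\p\Sigma$ is matched with the interior formula $d^\pm=\pi^\pm\circ d$ and $(d^\pm)^*=\mbf{J}*d*\mbf{J}$, everything follows. I would organize the proof so that the symbol computation is done once in a local unitary frame adapted to the splitting $\mc{E}=\mc{E}^+\oplus\mc{E}^-$, where all the maps are diagonalized by the formulas already displayed, and then quote that the construction is natural under the doubling.
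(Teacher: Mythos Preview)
Your proposal is correct and follows essentially the same approach as the paper: self-adjointness is immediate from $D^-=(D^+)^*$, and the Dirac property is checked by showing $D^2$ is a generalized Laplacian. The only difference is cosmetic---the paper computes the second-order part of $(d^-)^*d^-$ directly in holomorphic coordinates (obtaining $-2g(z)^{-1}\partial_z\partial_{\bar z}$), whereas you phrase the same computation in terms of principal symbols; note however that in your convention $\sigma_\xi(D)^2$ should come out as $+|\xi|^2\op{Id}$ (the sign of a generalized Laplacian), not $-|\xi|^2$, so the parenthetical about ``the sign from the off-diagonal Clifford-type structure'' is a slip.
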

\begin{proof}
Since $D^-=(D^+)^*$, so $D$ is self-adjoint. On the other hand, one has
\begin{align*}
D^2=\left(\begin{array}{cc}
	(d^-)^*d^- &0\\
	0&d^-(d^-)^*
\end{array}
\right)	
\end{align*}
so $D^2$ is a generalized Laplacian. In fact, for any local section $s=fe$ of $\mc{E}$, where $e$ is a local parallel section, i.e.  $d e=0$. Then 
\begin{align*}
(d^-)^*d^-s &=(d^-)^*\frac{1-*\mbf{J}}{2}(\p f+\b{\p}f)e\\
&=\frac{1}{2}(d^-)^*\left((\p f+\b{\p}f)e-(-i\p f+i\bar{\p} f)\mbf{J}e\right),
\end{align*}
 since $(d^-)^*=-2g(z)^{-1}(\p_{z}i_{\frac{\p}{\p\b{z}}}+\p_{\b{z}}i_{\frac{\p}{\p z}})+\text{zeroth terms}$, so the second order terms of $(d^-)^*d^-s$ is 
 $-2g(z)^{-1}\frac{\p^2f}{\p z\p{\b{z}}}e$. Similar for the local sections of $\wedge^-$. Thus $D$ is a Dirac operator.
\end{proof}
By the definition of $\alpha_\pm(z)$, one has
\begin{align*}
\int_{\Sigma}\alpha_-(z)d\mu_g &=\lim_{t\to 0}\int_{\Sigma}\left(\operatorname{tr}\left(e^{-t (d^{-})^{*} d^{-}}(z, z)\right)-\operatorname{tr}\left(e^{-t d^{-}  (d^{-})^*}(z, z)\right)	\right)\\
&=\lim_{t\to 0}\int_{\Sigma}\op{Str}\langle z|e^{-tD^2}|z\rangle d\mu_g,
\end{align*}
where $d\mu_g=\frac{i}{2}g(z)dz\wedge d\b{z}$ and $\op{Str}$ denotes the supertrace, see e.g. \cite[Section 1.5]{BGV} for its definition.

For any $\mbf{J}\in \mc{J}(\mc{E},\Omega)$, denote by $\mc{E}=\mc{E}^+\oplus \mc{E}^-$ the decomposition corresponding to the $\pm i$-eigenspaces of $\mbf{J}$.
\begin{defn}\label{connection}
	A connection $\n$ on $\mc{E}$ is called a {\it peripheral connection} if it satisfies the following conditions on a collar neighborhood of $\p\Sigma$:
	\begin{itemize}
\item[(i)] 	$\n=d+C(x)dx$ for some $C=C(x)\in A^0(\p\Sigma,\op{End}(\mc{E}))$;
\item[(ii)] $[\n,\mbf{J}]=0$;
\item[(iii)] $\n$ preserves the Hermitian form $\Omega$.
\end{itemize}
\end{defn}

\begin{rem}
Like in the proof of Proposition \ref{prop2.0}, the condition (iii) is equivalent to  $C(x)\in \mf{u}(p,q)$, i.e., $\o{C(x)}^\top\Omega+\Omega C(x)=0$, which implies that   
 the operator $|dx|\mbf{J}(\frac{\p}{\p x}+C(x))$ is a $\mb{C}$-linear formally self-adjoint elliptic first order differential operator  in the space $A^0(\p\Sigma,\mc{E})$.	
\end{rem}

\begin{rem}\label{remexample}
Note that $\mc{E}^+$ is a Hermitian vector bundle with a Hermitian metric $2i\Omega(\cdot,\mbf{J}\cdot)=2\Omega(\cdot,\cdot)$. Let $\n^+$ be a Hermitian connection on $\mc{E}^+$ which depends only on $x$ on a collar neighborhood of $\p\Sigma$. This can be done since we can take $\mbf{J}=\mbf{J}(x)$ near $\p\Sigma$. Similarly, there exists such a Hermitian connection $\n^-$ on the Hermitian vector bundle $(\mc{E}^-,-\Omega)$. Hence $\n=\n^+\oplus \n^-$ defines a connection on $\mc{E}=\mc{E}^+\oplus \mc{E}^-$. One can check easily that $\n$ is a peripheral connection on $\mc{E}$.
\end{rem}

Let $\n$ be any peripheral connection on $\mc{E}$, when restricted to a small collar neighborhood of $\p\Sigma$,  we assume it has the form $\n=d+C(x)dx$. Without loss of generality, we assume that it satisfies the above conditions $(i)-(iii)$ on $\p\Sigma\times [0,1)$. Denote 
$\n^{\mc{E}^+}:=\n|_{\mc{E}^+},\quad \n^{\mc{E}^-}:=\n|_{\mc{E}^-}$, and set
$$\n^{\mc{E}}:=\n^{\mc{E}^+}\oplus \n^{\mc{E}^-}.$$
Then $\n^{\mc{E}}$ is a connection on $\mc{E}$, and $\n^{\mc{E}}=\n$ on $\p\Sigma\times [0,1)$.

Now we consider the operator
$$D^{\mc{F}}=\pi^-\n^{\mc{E}}+(\pi^-\n^{\mc{E}})^*$$
on the superbundle $\mc{F}=\mc{E}\oplus \wedge^-$. One can check that  $D^{\mc{F}}$ is also a self-adjoint Dirac operator. Moreover, by Proposition \ref{prop4.1}, one has
$$\pi^-\n^{\mc{E}}=\pi^-\n=\sigma^-\left(\frac{\p}{\p u}-\mbf{J}|dx|\left(\frac{\p}{\p x}+C(x)\right)\right)$$
 on $\p\Sigma\times[0,1)$, and the operator $-\mbf{J}|dx|(\frac{\p}{\p x}+C(x))$ is a formally self-adjoint elliptic first order differential operator.
\begin{lemma}\label{lemma4.7}
It holds
	$$\lim_{t\to 0}\int_{\Sigma}\op{Str}\langle z|e^{-tD^2}|z\rangle d\mu_g=\lim_{t\to 0}\int_{\Sigma}\op{Str}\langle z|e^{-t(D^{\mc{F}})^2}|z\rangle d\mu_g.$$
\end{lemma}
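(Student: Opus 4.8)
The plan is to deduce the equality of the two interior integrals from an equality of indices on the closed double $M:=\Sigma\cup_{\p\Sigma}\Sigma$, exploiting the reflection symmetry that already underlies the Atiyah--Patodi--Singer doubling construction.

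First I would record the structural relation between the two operators. Both $D$ and $D^{\mc{F}}$ are self-adjoint Dirac operators on the same $\mb{Z}_2$-graded Clifford module $\mc{F}=\mc{E}\oplus\wedge^-$ over $M$, and their odd parts are $D^+=d^-=\pi^-d$ and $(D^{\mc{F}})^+=\pi^-\n^{\mc{E}}$. Since $d$ and $\n^{\mc{E}}$ are two connections on $\mc{E}$, their difference $d-\n^{\mc{E}}$ is a $1$-form with values in $\op{End}(\mc{E})$, so $D^+-(D^{\mc{F}})^+=\pi^-\circ(d-\n^{\mc{E}})$ is a bundle homomorphism $\mc{E}\to\wedge^-$. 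In particular $D-D^{\mc{F}}$ is a zeroth order operator, $D^+$ and $(D^{\mc{F}})^+$ have the same (elliptic) principal symbol, and hence $\op{Index}(D^+)=\op{Index}((D^{\mc{F}})^+)$ by homotopy invariance of the Fredholm index.

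Next, on the closed manifold $M$ the McKean--Singer formula gives, for every $t>0$,
$$\int_M\op{Str}\langle z|e^{-tD^2}|z\rangle\,d\mu_g=\op{Str}(e^{-tD^2})=\op{Index}(D^+),\qquad\int_M\op{Str}\langle z|e^{-t(D^{\mc{F}})^2}|z\rangle\,d\mu_g=\op{Index}((D^{\mc{F}})^+).$$
It then remains to pass from $\int_M$ to $\int_\Sigma$. Following \cite{APS}, I would set up the double so that the metric and all the bundle data --- $\mc{E}$, $\mbf{J}$, the flat connection $d$, and the connection $\n^{\mc{E}}$ --- are invariant under the canonical isometric involution $\tau$ of $M$ that interchanges the two copies of $\Sigma$; this is possible because all these data are pulled back from $\Sigma$, and near $\p\Sigma$ the connection $\n^{\mc{E}}$ has the form $d+C(x)dx$ with $C$ independent of the normal variable. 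Then $D$ and $D^{\mc{F}}$ commute with $\tau$, so the scalar heat densities $z\mapsto\op{Str}\langle z|e^{-tD^2}|z\rangle$ and $z\mapsto\op{Str}\langle z|e^{-t(D^{\mc{F}})^2}|z\rangle$ are $\tau$-invariant functions on $M$; since $\tau$ preserves $d\mu_g$ and exchanges the two halves of $M$, each integral over $M$ is twice the corresponding integral over $\Sigma$. Combining the displayed equalities,
$$\int_\Sigma\op{Str}\langle z|e^{-tD^2}|z\rangle\,d\mu_g=\tfrac12\op{Index}(D^+)=\tfrac12\op{Index}((D^{\mc{F}})^+)=\int_\Sigma\op{Str}\langle z|e^{-t(D^{\mc{F}})^2}|z\rangle\,d\mu_g$$
for every $t>0$, and letting $t\to0$ gives the lemma (indeed, with equality already before taking the limit).

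The one point that needs care is the reduction to the closed double: one must check that the doubling can be carried out $\tau$-equivariantly, so that the pointwise heat-supertrace densities are genuinely symmetric across $\p\Sigma$ --- this is implicit in the APS interior-parametrix argument but deserves to be spelled out in the present local-coefficient setting, and one should also note that $d^-$, $(D^{\mc{F}})^+$ are indeed elliptic so that the index comparison applies. A more hands-on alternative, which avoids the doubling altogether, is to establish the pointwise identity $\lim_{t\to0}\op{Str}\langle z|e^{-tD^2}|z\rangle=\lim_{t\to0}\op{Str}\langle z|e^{-t(D^{\mc{F}})^2}|z\rangle$ by Getzler rescaling: writing $D=D^{\mc{F}}+V$ with $V=\pi^-\circ(d-\n^{\mc{E}})$ an odd zeroth order endomorphism of $\mc{F}$, the terms produced by $V$ all die in the rescaled $t\to0$ limit, so $V$ does not affect the local index density. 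This pointwise version is closer to what is needed for the Chern--Weil evaluation of $\int_\Sigma\alpha_\pm(z)\,d\mu_g$ performed afterwards, at the cost of a somewhat heavier computation.
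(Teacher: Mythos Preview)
Your main argument has a genuine gap at the step where you pass from $\int_M$ to $2\int_\Sigma$. The reflection $\tau$ on the double is an orientation-reversing isometry; since $\wedge^\pm$ are the $\pm 1$-eigenbundles of $*_M\mbf{J}$ and $\tau^*(*_M\alpha)=-*_M(\tau^*\alpha)$ on $1$-forms while $\mbf{J}$ is $\tau$-invariant, pullback by $\tau$ exchanges $\wedge^+$ and $\wedge^-$. Thus $\tau$ does not even act on the superbundle $\mc{F}=\mc{E}\oplus\wedge^-$, the assertion that ``$D$ and $D^{\mc{F}}$ commute with $\tau$'' is not well-posed, and the pointwise $\tau$-invariance of $z\mapsto\op{Str}\langle z|e^{-tD^2}|z\rangle$ at finite $t$ does not follow. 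What $\tau$ actually yields is an intertwining of the $d^-$-heat density on the second copy with the $d^+$-heat density on the first, which is not what you need. This is also not ``implicit in APS'': the APS interior term is computed from the local short-time expansion without any halving argument, and no reflection symmetry of the density is ever used or claimed there. You flag this as ``the one point that needs care'', but it is not a detail to be spelled out; as stated it is false.

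The paper takes a different, purely local route that sidesteps the double entirely at this step. It linearly interpolates $\n_s=(1-s)d+s\n^{\mc{E}}$, hence $D_s=(1-s)D+sD^{\mc{F}}$, checks that the induced boundary operators $-\mbf{J}|dx|\bigl(\partial_x+sC(x)\bigr)$ remain formally self-adjoint and elliptic for all $s$, and then invokes the variation formula \cite[Corollary~2.50]{BGV} at the level of short-time asymptotics:
\[
\lim_{t\to 0}\frac{\partial}{\partial s}\int_\Sigma\op{Str}\langle z|e^{-tD_s^2}|z\rangle\,d\mu_g
=\lim_{t\to 0}\Bigl(-t\int_\Sigma\op{Str}\bigl\langle z\bigm|\tfrac{\partial D_s^2}{\partial s}\,e^{-tD_s^2}\bigm|z\bigr\rangle\,d\mu_g\Bigr)=0,
\]
so the limit is independent of $s$. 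Your ``hands-on alternative'' via Getzler rescaling --- that the zeroth-order odd perturbation $V=\pi^-\circ(d-\n^{\mc{E}})$ contributes nothing to the local index density --- is essentially this same mechanism and would succeed if carried out; but your primary McKean--Singer/symmetry route does not.
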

\begin{proof}
	Denote  $\n_s=(1-s)d+s\n^{\mc{E}}$, $s\in [0,1]$. Then 
	$$\pi^-\n_s=(1-s)\pi^-d+s\pi^-\n^{\mc{E}}=\sigma^-\left(\frac{\p}{\p u}-\mbf{J}|dx|\frac{\p}{\p x}-s\mbf{J}|dx|C(x)\right).$$ 
The first order operator $-\mbf{J}|dx|\frac{\p}{\p x}-s\mbf{J}|dx|C(x)$ is formally self-adjoint and elliptic.
	The adjoint operator of $\pi^-\n_s$ is given by 
	$(\pi^-\n_s)^*=(1-s)(\pi^-d)^*+s(\pi^-\n^{\mc{E}})^*$, so
	$$D_s=(1-s)D+sD^{\mc{F}}.$$
From   \cite[Corollary 2.50]{BGV}, one has 
	\begin{align*}
		\lim_{t\to 0}\frac{\p}{\p s}\int_{\Sigma}\op{Str}\langle z|e^{-tD^2_s}|z\rangle d\mu_g=\lim_{t\to 0}-t\int_{\Sigma}\op{Str}\left\langle z|\frac{\p D_s^2}{\p s}e^{-tD_s^2}|z\right\rangle d\mu_g=0.
	\end{align*}
Hence
$$\lim_{t\to 0}\int_{\Sigma}\op{Str}\langle z|e^{-tD^2}|z\rangle d\mu_g=\lim_{t\to 0}\int_{\Sigma}\op{Str}\langle z|e^{-t(D^{\mc{F}})^2}|z\rangle d\mu_g.$$
\end{proof}
Now we will calculate the term  $\lim_{t\to 0}\int_{\Sigma}\op{Str}\langle z|e^{-t(D^{\mc{F}})^2}|z\rangle d\mu_g$.  Firstly, we need find the Clifford connection $\n^{\mc{F}}$ on $\mc{F}$ such that $D^{\mc{F}}=c\circ\n^{\mc{F}}$. The adjoint operator $(\n^{\mc{E}})^*$ is given by
$$(\n^{\mc{E}})^{*}=-2g(z)^{-1}i_{\frac{\p}{\p z}}\n^{\mc{E}}_{\frac{\p}{\p \b{z}}}-2g(z)^{-1}i_{\frac{\p}{\p \b{z}}}\n^{\mc{E}}_{\frac{\p}{\p {z}}}.$$
The Dirac operator $D^{\mc{F}}$ induces a Clifford action of $T^*\Sigma$ on $\mc{F}$ by
$$[D^{\mc{F}},f]=c(df)$$
for any smooth function $f$. Note that $\wedge^-=\wedge^{1,0}T^*\Sigma\otimes \mc{E}^{-}\oplus\wedge^{0,1}T^*\Sigma\otimes \mc{E}^+$, so
$$c(df)=-2g(z)^{-1}\frac{\p f}{\p\b{z}}i_{\frac{\p}{\p z}}-2g(z)^{-1}\frac{\p f}{\p z}i_{\frac{\p}{\p\b{z}}}$$
when acting on $\wedge^-$, and 
$$c(df)=\p f\otimes p^-+\b{\p}f\otimes p^+$$
when acting on $\mc{E}=\mc{E}^+\oplus \mc{E}^-$. Since 
$$\mc{F}=\mc{E}\oplus \wedge^-=\mc{E}\oplus \wedge^{1,0}T^*\Sigma\otimes \mc{E}^{-}\oplus\wedge^{0,1}T^*\Sigma\otimes \mc{E}^+,$$
there exists a natural connection on $\mc{F}$ induced from the connections on $\mc{E}$ and $T^*\Sigma$, we denote this connection by $\n^{\mc{F}}$.
\begin{lemma}
$\n^{\mc{F}}$ is a Clifford connection and
$$D^{\mc{F}}=c(dz)\n^{\mc{F}}_{\frac{\p}{\p z}}+c(d\b{z})\n^{\mc{F}}_{\frac{\p}{\p\b{z}}}.$$ 	
\end{lemma}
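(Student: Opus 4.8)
The plan is to prove the two assertions by working in a local holomorphic coordinate $z=x+iy$ and using the orthogonal splitting
$$\mc{F}=\mc{E}^+\oplus\mc{E}^-\oplus\big(\w^{1,0}T^*\Sigma\otimes\mc{E}^-\big)\oplus\big(\w^{0,1}T^*\Sigma\otimes\mc{E}^+\big),$$
with respect to which $\n^{\mc{F}}$ is block diagonal: it restricts on $\mc{F}^+=\mc{E}$ to the connection $\n^{\mc{E}}=\n^{\mc{E}^+}\oplus\n^{\mc{E}^-}$ entering the definition of $D^{\mc{F}}$, and on $\w^{1,0}T^*\Sigma\otimes\mc{E}^-$, resp.\ $\w^{0,1}T^*\Sigma\otimes\mc{E}^+$, to the tensor-product connections $\n^{g_\Sigma}\otimes 1+1\otimes\n^{\mc{E}^-}$, resp.\ $\n^{g_\Sigma}\otimes 1+1\otimes\n^{\mc{E}^+}$, where $\n^{g_\Sigma}$ is the Levi-Civita connection of $g_\Sigma$. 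For the conformal metric $g_\Sigma=g(z)|dz|^2$ the only nonvanishing Christoffel symbols are $\Gamma^z_{zz}=\p_z\log g$ and $\Gamma^{\b z}_{\b z\b z}=\p_{\b z}\log g$, so in particular $\n^{g_\Sigma}_{\p/\p\b z}dz=0$ and $\n^{g_\Sigma}_{\p/\p z}d\b z=0$; these vanishings will be used repeatedly.

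First I would make the Clifford action explicit from the identity $[D^{\mc{F}},f]=c(df)$ established above: $c(dz)$ acts as exterior multiplication $dz\otimes p^-\colon\mc{E}\to\w^{1,0}T^*\Sigma\otimes\mc{E}^-$ and as $-2g(z)^{-1}i_{\p/\p\b z}\colon\w^{0,1}T^*\Sigma\otimes\mc{E}^+\to\mc{E}^+$ (and annihilates $\w^{1,0}T^*\Sigma\otimes\mc{E}^-$), while $c(d\b z)$ acts as $d\b z\otimes p^+\colon\mc{E}\to\w^{0,1}T^*\Sigma\otimes\mc{E}^+$ and as $-2g(z)^{-1}i_{\p/\p z}\colon\w^{1,0}T^*\Sigma\otimes\mc{E}^-\to\mc{E}^-$; these are precisely the Clifford generators attached to the $\mc{E}$-twisted Dolbeault complex of $\Sigma$. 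To prove that $\n^{\mc{F}}$ is a Clifford connection it then suffices to check $[\n^{\mc{F}}_{X},c(dz)]=c(\n^{g_\Sigma}_{X}dz)$ and the analogous identity for $d\b z$, for $X\in\{\p/\p z,\p/\p\b z\}$, on each of the four summands. Each verification is the Leibniz rule: the block-diagonal connection commutes with the projections $p^\pm$ (parallel, since $\mc{E}^\pm$ are $\n^{\mc{E}}$-invariant) and with the contractions $i_{\p/\p z},i_{\p/\p\b z}$ up to the covariant derivative of the contracted form, and the Christoffel computation supplies exactly the term $c(\n^{g_\Sigma}_X dz)$. This is the classical statement that a Hermitian connection tensored with Levi-Civita is a Clifford connection for $\o\p+\o\p^*$; the only new feature is the twisting of the $(1,0)$-part by $\mc{E}^-$ and the $(0,1)$-part by $\mc{E}^+$, which is harmless because $\n^{\mc{E}}$ preserves both subbundles.

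It then remains to identify $c\circ\n^{\mc{F}}$ with $D^{\mc{F}}=\pi^-\n^{\mc{E}}+(\pi^-\n^{\mc{E}})^*$. On $\mc{F}^+=\mc{E}$, using $\pi^-(dz\otimes e)=dz\otimes e^-$ and $\pi^-(d\b z\otimes e)=d\b z\otimes e^+$ (immediate from $*dz=-idz$, $*d\b z=id\b z$ and $\mbf{J}e^\pm=\pm ie^\pm$) together with the block-diagonality of $\n^{\mc{E}}$, one gets $\pi^-\n^{\mc{E}}s=dz\otimes\n^{\mc{E}^-}_{\p/\p z}(s^-)+d\b z\otimes\n^{\mc{E}^+}_{\p/\p\b z}(s^+)$, which is exactly $c(dz)\n^{\mc{F}}_{\p/\p z}s+c(d\b z)\n^{\mc{F}}_{\p/\p\b z}s$. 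On $\mc{F}^-=\w^-$, since $\w^+\perp\w^-$ one has $(\pi^-\n^{\mc{E}})^*=(\n^{\mc{E}})^*$ on $\w^-$; plugging a decomposable element $dz\otimes e$ with $e\in\mc{E}^-$, resp.\ $d\b z\otimes e$ with $e\in\mc{E}^+$, into the formula $(\n^{\mc{E}})^*=-2g(z)^{-1}i_{\p/\p z}\n^{\mc{E}}_{\p/\p\b z}-2g(z)^{-1}i_{\p/\p\b z}\n^{\mc{E}}_{\p/\p z}$ recalled above, and using $\n^{g_\Sigma}_{\p/\p\b z}dz=0$, resp.\ $\n^{g_\Sigma}_{\p/\p z}d\b z=0$, collapses it to $-2g(z)^{-1}\n^{\mc{E}^-}_{\p/\p\b z}e$, resp.\ $-2g(z)^{-1}\n^{\mc{E}^+}_{\p/\p z}e$; the same two quantities are produced by $c(d\b z)\n^{\mc{F}}_{\p/\p\b z}$ on $\w^{1,0}T^*\Sigma\otimes\mc{E}^-$ and by $c(dz)\n^{\mc{F}}_{\p/\p z}$ on $\w^{0,1}T^*\Sigma\otimes\mc{E}^+$. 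Since $\mc{F}=\mc{F}^+\oplus\mc{F}^-$, this yields $D^{\mc{F}}=c(dz)\n^{\mc{F}}_{\p/\p z}+c(d\b z)\n^{\mc{F}}_{\p/\p\b z}$.

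The main obstacle is not conceptual but one of bookkeeping: keeping the identification $\w^-\cong\w^{1,0}T^*\Sigma\otimes\mc{E}^-\oplus\w^{0,1}T^*\Sigma\otimes\mc{E}^+$ and all normalization constants (the factors $-2g(z)^{-1}$ in the contractions, the projections $p^\pm$, and the constants appearing in $\sigma^\pm$) mutually consistent, and carrying out the adjoint identification $(\pi^-\n^{\mc{E}})^*=(\n^{\mc{E}})^*$ on $\w^-$ cleanly. Once the conventions are pinned down, both claims reduce to the Leibniz rule and the conformal-coordinate Christoffel symbol formula.
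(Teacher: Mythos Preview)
Your proposal is correct and follows essentially the same approach as the paper: both verify the Clifford compatibility by checking $[\n^{\mc{F}}_X,c(dz)]=c(\n^{g_\Sigma}_X dz)$ and the analogue for $d\bar z$ via the conformal Christoffel symbols $\Gamma^z_{zz}=\p_z\log g$, and both identify $D^{\mc{F}}=c\circ\n^{\mc{F}}$ by computing separately on $\mc{F}^+=\mc{E}$ (where it reduces to $\pi^-\n^{\mc{E}}$) and on the two pieces $dz\otimes\mc{E}^-$, $d\bar z\otimes\mc{E}^+$ of $\mc{F}^-$ (where it reduces to the adjoint formula $(\n^{\mc{E}})^*=-2g^{-1}i_{\p/\p z}\n^{\mc{E}}_{\p/\p\bar z}-2g^{-1}i_{\p/\p\bar z}\n^{\mc{E}}_{\p/\p z}$). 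The only cosmetic difference is that the paper also spells out the commutator on the product $c(dz)c(d\bar z)$ explicitly, which you subsume under the Leibniz rule.
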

\begin{proof}
	$\n^{\mc{F}}$ is a Clifford connection if $[\n^{\mc{F}}_X,a]=\n_Xa$ for any local section $a$ of Clifford bundle $ C(\Sigma\cup_{\p\Sigma}\Sigma)$. Let $\sigma=c(\bullet)\cdot 1:C(\Sigma\cup_{\p\Sigma}\Sigma)\to \wedge^*T^*\Sigma$ denote the symbol map, which identifies $C(\Sigma\cup_{\p\Sigma}\Sigma)$ with $\wedge^*T^*\Sigma$.
	By a direct checking, one has 
	$$[\n^{\mc{F}}_{\frac{\p}{\p z}},c(dz)]=-\p_z\log g(z)c(dz)=\n_\frac{\p}{\p z}c(dz),\quad [\n^{\mc{F}}_{\frac{\p}{\p z}},c(d\b{z})]=0=\n_{\frac{\p}{\p z}}c(d\b{z}),$$
	and so
	\begin{align*}
	[\n^{\mc{F}}_{\frac{\p}{\p z}},c(dz) c(d\b{z})]	
	=[\n^{\mc{F}}_{\frac{\p}{\p z}},c(dz)] c(d\b{z})=\n_\frac{\p}{\p z}c(dz)c(d\b{z})=\n_\frac{\p}{\p z}(c(dz)c(d\b{z})).
	\end{align*}
	For any smooth function $f$, one has 
$$[\n^{\mc{F}}_X,c(f)]=\n_X c(f).$$	
Thus $\n^{\mc{F}}$ is a Clifford connection. If $s\in \Gamma(\Sigma\cup_{\p\Sigma}\Sigma,\mc{E})$, then 
\begin{align*}
c(dz)\n^{\mc{F}}_{\frac{\p}{\p z}}s+c(d\b{z})\n^{\mc{F}}_{\frac{\p}{\p\b{z}}}s&=dz\otimes \n^{\mc{F}}_{\frac{\p}{\p z}}s^-+d\b{z}\otimes \n^{\mc{F}}_{\frac{\p}{\p\b{z}}}s^+\\
&=\pi^-\n^\mc{F}s=D^{\mc{F}}s.
\end{align*}
If $dz\otimes s^-\in\wedge ^-$, then 
\begin{align*}
	&\quad c(dz)\n^{\mc{F}}_{\frac{\p}{\p z}}(dz\otimes s^-)+c(d\b{z})\n^{\mc{F}}_{\frac{\p}{\p\b{z}}}(dz\otimes s^-)\\
	&=-2g(z)^{-1}i_{\frac{\p}{\p z}}\n^{\mc{F}}_{\frac{\p}{\p \b{z}}}dz\otimes s^-\\
	&=-2g(z)^{-1}\n^{\mc{F}}_{\frac{\p}{\p \b{z}}}s^-=(\n^{\mc{F}})^*(dz\otimes s^-)=D^{\mc{F}}(dz\otimes s^-).
\end{align*}
	Similarly, for $d\b{z}\otimes s^+\in \wedge^-$, one has $$(c(dz)\n^{\mc{F}}_{\frac{\p}{\p z}}+c(d\b{z})\n^{\mc{F}}_{\frac{\p}{\p\b{z}}})(d\b{z}\otimes s^+)=D^{\mc{F}}(d\b{z}\otimes s^+).$$
	The proof is complete.
\end{proof}
The Clifford module $\mc{F}$ has the following decomposition
\begin{align*}
\mc{F}=\mc{F}^+\oplus \mc{F}^-=\mc{E}^+\oplus\mc{E}^-\oplus \mc{E}^+\otimes \wedge^{0,1}T^*\Sigma\oplus \mc{E}^-\otimes \wedge^{1,0}T^*\Sigma,	
\end{align*}
where $\mc{F}^+=\mc{E}^+\oplus\mc{E}^-$ and $\mc{F}^-=\mc{E}^+\otimes \wedge^{0,1}T^*\Sigma\oplus \mc{E}^-\otimes \wedge^{1,0}T^*\Sigma$. Denote
$$S=S^+\oplus S^-=\mb{C}\oplus\wedge^{0,1}T^*\Sigma=\wedge^{0,*}T^*\Sigma.$$
 Then the complex module $\mc{F}$ is isomorphic to
$$\mc{F}\cong \mc{W}\otimes S$$
where $\mc{W}=\mc{E}^+\oplus \mc{E}^-\otimes (\wedge^{0,1}T^*\Sigma)^*\cong \mc{E}^+\oplus \mc{E}^-\otimes\wedge^{1,0}T^*\Sigma$. Let $\Gamma$ be the chirality operator, which is an element  in $C(\Sigma)\otimes \mb{C}\cong \op{End}(S)$, and is $+\op{Id}$ when acting on $S^+=\mb{C}$, is $-\op{Id}$ when acting on $S^-=\wedge^{0,1}T^*\Sigma$. Thus it induces an endomorphism of $\mc{F}\cong \mc{W}\otimes S$ by the action
$\op{Id}_{\mc{W}}\otimes \Gamma$, we also denote it by $\Gamma$. Thus 
\begin{align*}
\Gamma=\op{Id}_{\mc{E}^+}\oplus-\op{Id}_{\mc{E}^-}\oplus -\op{Id}_{\mc{E}^+\otimes \wedge^{0,1}T^*\Sigma}	\oplus \op{Id}_{\mc{E}^-\otimes \wedge^{1,0}T^*\Sigma}.
\end{align*}
The Clifford connection $\n^{\mc{F}}$ is given by 
\begin{multline*}
\n^{\mc{F}}=\n^{\mc{E}^+}\oplus \n^{\mc{E}^-}\oplus (\n^{\mc{E}^+}\otimes \op{Id}_{\wedge^{0,1}T^*\Sigma}+\op{Id}_{\mc{E}^+}\otimes \n^{\wedge^{0,1}T^*\Sigma})\\\oplus ( \n^{\mc{E}^-}\otimes \op{Id}_{\wedge^{1,0}T^*\Sigma}+\op{Id}_{\mc{E}^-}\otimes \o{ \n^{\wedge^{0,1}T^*\Sigma}}),
\end{multline*}
and the curvature is 
\begin{multline*}
	(\n^{\mc{F}})^2=R^{\mc{E}^+}\oplus R^{\mc{E}^-}\oplus (R^{\mc{E}^+}\otimes \op{Id}_{\wedge^{0,1}T^*\Sigma}+R^{T^{1,0}\Sigma}\cdot\op{Id}_{\mc{E}^+\otimes\wedge^{1,0}T^*\Sigma} )\\\oplus ( R^{\mc{E}^-}\otimes \op{Id}_{\wedge^{1,0}T^*\Sigma}-R^{T^{1,0}\Sigma}\cdot\op{Id}_{\mc{E}^-\otimes\wedge^{1,0}T^*\Sigma} ),
\end{multline*}
where $R^{T^{1,0}\Sigma}$ is a two-form on $\Sigma$, and $\frac{i}{2\pi}R^{T^{1,0}\Sigma}$ denotes the first Chern form of $T^{1,0}\Sigma$.
Denote by $R^{\mc{F}}$ the action of the Riemannian curvature $R$ of $\Sigma$ on the bundle $\mc{F}$, which is given by
\begin{align*}
R^{\mc{F}}&:=\frac{1}{4}\left(R\frac{\p}{\p z},\frac{\p}{\p\b{z}}\right)c(dz)c(d\b{z})+\frac{1}{4}\left(R\frac{\p}{\p\b{z}},\frac{\p}{\p z}\right)	c(d\b{z})c(dz)\\
&=\frac{g(z)}{8}R^{T^{1,0}\Sigma}(c(dz)c(d\b{z})-c(d\b{z})c(dz))\\
&=\frac{1}{4}R^{T^{1,0}\Sigma}(-\op{Id}_{\mc{E}^+}\oplus \op{Id}_{\mc{E}^-}\oplus\op{Id}_{\mc{E}^+\otimes \wedge^{0,1}T^*\Sigma}\oplus-\op{Id}_{\mc{E}^-\otimes \wedge^{1,0}T^*\Sigma}).
\end{align*}
From \cite[Proposition 3.43]{BGV}, the curvature $F^{\mc{F}/S}$ is given by
\begin{align*}
F^{\mc{F}/S}&=(\n^{\mc{F}})^2-R^{\mc{F}}\\
&=	(R^{\mc{E}^+}+\frac{1}{4}R^{T^{1,0}\Sigma}\op{Id}_{\mc{E}^+})\oplus(R^{\mc{E}^-}-\frac{1}{4}R^{T^{1,0}\Sigma}\op{Id}_{\mc{E}^-})\\
&\quad\oplus (R^{\mc{E}^+}\otimes \op{Id}_{\wedge^{0,1}T^*\Sigma}+\frac{3}{4}\op{Id}_{\mc{E}^+\otimes \wedge^{0,1}T^*\Sigma} R^{T^{1,0}\Sigma})\\
&\quad \oplus( R^{\mc{E}^-}\otimes \op{Id}_{\wedge^{1,0}T^*\Sigma}-\frac{3}{4}\op{Id}_{\mc{E}^-\otimes \wedge^{1,0}T^*\Sigma} R^{T^{1,0}\Sigma}).
\end{align*}
Thus $\Gamma F^{\mc{F}/S}$ is 
\begin{align*}
\Gamma F^{\mc{F}/S}
&=	(R^{\mc{E}^+}+\frac{1}{4}R^{T^{1,0}\Sigma}\op{Id}_{\mc{E}^+})\oplus(-R^{\mc{E}^-}+\frac{1}{4}R^{T^{1,0}\Sigma}\op{Id}_{\mc{E}^-})\\
&\quad\oplus (-R^{\mc{E}^+}\otimes \op{Id}_{\wedge^{0,1}T^*\Sigma}-\frac{3}{4}\op{Id}_{\mc{E}^+\otimes \wedge^{0,1}T^*\Sigma} R^{T^{1,0}\Sigma})\\
&\quad \oplus( R^{\mc{E}^-}\otimes \op{Id}_{\wedge^{1,0}T^*\Sigma}-\frac{3}{4}\op{Id}_{\mc{E}^-\otimes \wedge^{1,0}T^*\Sigma} R^{T^{1,0}\Sigma}).
\end{align*}
Hence the supertrace $\op{Str}_{\mc{F}/S}(F^{\mc{F}/S})$ is 
\begin{align*}
	\op{Str}_{\mc{F}/S}(F^{\mc{F}/S})&=\frac{1}{2}\op{Str}_{\mc{F}}(\Gamma F^{\mc{F}/S})=\frac{1}{2}\op{Tr}_{\mc{F}^+}(\Gamma F^{\mc{F}/S})-\frac{1}{2}\op{Tr}_{\mc{F}^-}(\Gamma F^{\mc{F}/S})\\
	&=\frac{1}{2}\left(\op{Tr}(R^{\mc{E}^+})+\frac{1}{4}p R^{T^{1,0}\Sigma}\right)+\frac{1}{2}\left(-\op{Tr}(R^{\mc{E}^-})+\frac{1}{4}q R^{T^{1,0}\Sigma}\right)\\
	&\quad -\frac{1}{2}\left(-\op{Tr}(R^{\mc{E}^+})-\frac{3}{4}p R^{T^{1,0}\Sigma}\right)-\frac{1}{2}\left(\op{Tr}(R^{\mc{E}^-})-\frac{3}{4}q R^{T^{1,0}\Sigma}\right)\\
	&=\op{Tr}(R^{\mc{E}^+})-\op{Tr}(R^{\mc{E}^-})+\frac{p+q}{2}R^{T^{1,0}\Sigma}.
\end{align*}
By the local index theorem, see e.g. \cite[Theorem 8.34]{Melrose}, one has 
\begin{align*}
	&\quad \lim_{t\to 0}\op{Str}\langle z|e^{-t(D^{\mc{F}})^2}|z\rangle d\mu_g \\
	&=\left[(2\pi i)^{-1}\det\left(\frac{R/2}{\sinh(R/2)}\right)\op{Str}_{\mc{F}/S}(\exp(-F^{\mc{F}/S}))\right]^{(1,1)}\\
	&=\frac{i}{2\pi}\op{Str}_{\mc{F}/S}(F^{\mc{F}/S}),
\end{align*}
since $\widehat{A}(\Sigma)=\det\left(\frac{R/2}{\sinh(R/2)}\right)\in A^{4*}(\Sigma,\mb{R})$. Thus,
\begin{align}\label{Ind-}
\begin{split}
\int_{\Sigma}\alpha_-(z)d\mu_g &=\frac{i}{2\pi}\int_{\Sigma}	(\op{Tr}(R^{\mc{E}^+})-\op{Tr}(R^{\mc{E}^-})+\frac{p+q}{2}R^{T^{1,0}\Sigma})\\
&=\int_\Sigma \left(c_1(\mc{E}^+,\n^{\mc{E}^+})-c_1(\mc{E}^-,\n^{\mc{E}^-})+\frac{p+q}{2}c_1(T^{1,0}\Sigma,\n^{T^{1,0}\Sigma})\right).
\end{split}
\end{align}
Similarly, one has
\begin{align}\label{Ind+}
\int_{\Sigma}\alpha_+(z)d\mu_g =\int_\Sigma\left(-c_1(\mc{E}^+,\n^{\mc{E}^+})+c_1(\mc{E}^-,\n^{\mc{E}^-})+\frac{p+q}{2}c_1(T^{1,0}\Sigma,\n^{T^{1,0}\Sigma})\right).
\end{align}
Therefore,
\begin{align}\label{4.1}
	\int_{\Sigma}\alpha_{-}(z)d\mu_g-\int_{\Sigma}\alpha_{+}(z)d\mu_g=2\int_\Sigma\left(c_1(\mc{E}^+,\n^{\mc{E}^+})-c_1(\mc{E}^-,\n^{\mc{E}^-})\right).
\end{align}

Note that on $\p\Sigma\times[0,1]$, $\n^{\mc{E}^+}=\n|_{\mc{E}^+}$ is a flat connection, which follows that $c_1(\mc{E}^+,\n^{\mc{E}^+})=0$ on $\p\Sigma\times [0,1)$. Denote by
\begin{align}\label{FCC}
	[c_1(\mc{E}^+,\n^{\mc{E}^+})]_c\in \mathrm{H}^2_{\text{dR,comp}}(\Sigma_o ,\mb{R})
\end{align}
the de Rham cohomology class of $c_1(\mc{E}^+,\n^{\mc{E}^+})$ with compact support, see e.g. \cite[Chapter 1]{BT} for the definition of de Rham cohomology with compact support,  where $\Sigma_o:=\Sigma\backslash\p\Sigma$. 
On the other hand, since $$\int_{\Sigma}c_1(T^{1,0}\Sigma,\n^{T^{1,0}\Sigma})=\chi(\Sigma)=2-2g-n,$$ where $n$ denotes the number of components in $\p\Sigma$, so

\begin{prop}\label{prop410}
	For  any peripheral connection $\n$ on $\mc{E}$, one has
	\begin{equation*}
  \int_{\Sigma}\alpha_{\pm}(z)d\mu_g=\mp\int_\Sigma \left(c_1(\mc{E}^+,\n|_{\mc{E}^+})-c_1(\mc{E}^-,\n|_{\mc{E}^-})\right)+\frac{\dim E}{2}\chi(\Sigma).
  \end{equation*}     
\end{prop}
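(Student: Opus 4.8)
The plan is to read off the statement directly from the identities \eqref{Ind-} and \eqref{Ind+} established above, since the proposition is just their repackaging. First I would note that, by the very definition of the connection $\n^{\mc{E}}=\n^{\mc{E}^+}\oplus\n^{\mc{E}^-}$ associated to a peripheral connection $\n$, one has $\n^{\mc{E}^\pm}=\n|_{\mc{E}^\pm}$; hence the Chern forms appearing in \eqref{Ind-}--\eqref{Ind+} are precisely $c_1(\mc{E}^+,\n|_{\mc{E}^+})$ and $c_1(\mc{E}^-,\n|_{\mc{E}^-})$.

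The second step is to feed in the two remaining ingredients. One is the trivial identity $\dim E=p+q$. The other is the Gauss--Bonnet theorem for the surface with boundary: because the metric $g_\Sigma$ is a product near $\p\Sigma$ the boundary is totally geodesic, so there is no boundary term and $\int_\Sigma c_1(T^{1,0}\Sigma,\n^{T^{1,0}\Sigma})=\chi(\Sigma)=2-2g-n$. Substituting these into \eqref{Ind-} and \eqref{Ind+} respectively produces the two instances (for the sign $-$ and the sign $+$) of the asserted equality, the Chern-form integral entering with opposite signs in the two cases.

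I do not anticipate any genuine difficulty: all the analytic content --- Lemma \ref{lemma4.7}, the local index theorem evaluation of $\op{Str}_{\mc{F}/S}(F^{\mc{F}/S})$, and the derivation of \eqref{Ind-}--\eqref{Ind+} --- has already been carried out, so the proposition is merely a rewriting of the result in terms of $\chi(\Sigma)$ and $\dim E$. The one point I would keep in mind, although it is not needed for the equality as stated, is that conditions (i)--(ii) in the definition of a peripheral connection force $c_1(\mc{E}^+,\n|_{\mc{E}^+})$ to vanish on the collar $\p\Sigma\times[0,1)$, so that it defines a compactly supported class in $\mathrm{H}^2_{\text{dR,comp}}(\Sigma_o,\mb{R})$ as in \eqref{FCC}; this is the feature that will make the formula useful in the next section.
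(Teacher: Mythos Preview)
Your proposal is correct and mirrors exactly what the paper does: the proposition is stated immediately after \eqref{Ind-}--\eqref{Ind+}, and the only bridge the paper supplies is the line $\int_{\Sigma}c_1(T^{1,0}\Sigma,\n^{T^{1,0}\Sigma})=\chi(\Sigma)=2-2g-n$, together with the tacit identifications $\n^{\mc{E}^\pm}=\n|_{\mc{E}^\pm}$ and $p+q=\dim E$. Your remark about the product metric making the boundary geodesic (so no boundary correction in Gauss--Bonnet) and your observation about compact support of the Chern form are both accurate and consistent with the paper's use of these facts.
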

 
Substituting \eqref{4.1} into \eqref{sign}, one gets
\begin{align}\label{sign1}
\op{sign}(\mc{E},\Omega)=2\int_\Sigma\left(c_1(\mc{E}^+,\n^{\mc{E}^+})-c_1(\mc{E}^-,\n^{\mc{E}^-})\right)+h_{\infty}(\wedge^-)-h_{\infty}(\wedge^+)+\eta(A_{\mbf{J}}).	
\end{align}

\subsubsection{Limiting values of extended $\mathrm{L}^2$-sections}
\label{limiting}

In this subsection, we will calculate the terms $h_{\infty}(\wedge^{+})$, $h_{\infty}(\wedge^{-})$, and show that $h_{\infty}(\wedge^-)=h_{\infty}(\wedge^+)$.

By Atiyah-Patodi-Singer's index theorem \cite[Theorem 3.10]{APS}, one has
\begin{align}\label{3.8}
\op{Index}(d^-_P)=	\int_{\Sigma}\alpha_{-}(z)dz-\frac{\dim \mathrm{H}^0(\p\Sigma,\mc{E})- \eta(A_{\mbf{J}})}{2}.
\end{align}
On the other hand, we have
\begin{align}\label{3.9}
\op{Index}(d^-_P)+h_{\infty}(\wedge^-)=\mathrm{L}^2\op{Index}(d^-).	
\end{align}
Following \cite[(3.20)--(3.25)]{APS},  we consider the operator $(d^-)^*$, then 
$$(d^-)^*=-(\sigma^-)^{-1}\left(\frac{\p}{\p u}+\sigma^-A_{\mbf{J}}(\sigma^-)^{-1}\right).$$
Since $\eta(\sigma^-A_{\mbf{J}}(\sigma^-)^{-1})=\eta(A_{\mbf{J}})$, so
\begin{align*}
\op{Index}(d^-)^*_P+h_{\infty}(\mc{E})=\mathrm{L}^2\op{Index}(d^-)^*=-\mathrm{L}^2\op{Index}(d^-),	
\end{align*}
\begin{align*}
	\op{Index}(d^-)^*_P=-\int_{\Sigma}\alpha_-(z)dz-\frac{\dim \mathrm{H}^0(\p\Sigma,\mc{E})+\eta(A_{\mbf{J}})}{2}.
\end{align*}
Combining with the above equalities, we have
\begin{equation}\label{hinfE}
  h_{\infty}(\mc{E})+h_{\infty}(\wedge^-)=\dim \mathrm{H}^0(\p\Sigma,\mc{E}).
\end{equation}

Denote by $\ms{K}^-$ the set of all  extended $\mathrm{L}^2$-solutions of $(d^-)^*\phi=0$ in  $\wedge^-$, that is,  for any $\phi\in \ms{K}^-$, one has  $d^*\phi=0$ and $\phi$ is with valued in $\wedge^-$, and in the cylinder $\p\Sigma\times (-\infty,u_0]$ for some large negative $u_0$, we can write 
$$\phi=\psi+\theta$$
where $\psi=\psi_0+\psi_1du\in \op{Ker}(\sigma^-A_{\mbf{J}}(\sigma^-)^{-1})$ and $\theta\in \op{Ker}(d^-)^*\cap \mathrm{L}^2(\widehat{\Sigma},\wedge^-)$ is a $\mathrm{L}^2$-section in $\wedge^-$ (hence decaying exponentially). From Proposition \ref{prop3}, one has $d\theta=0$. For any $\mbf{J}\in \mc{J}(\mc{E},\Omega)$ and extend it to the vector bundle  $\mc{E}$ over $\widehat{\Sigma}$ such that  $\mbf{J}=\mbf{J}(x)$ on $\p\Sigma\times (-\infty,u_0]$. By the definition of $\sigma^-$, then $[d,\sigma^-]=0$. Since $\psi\in \op{Ker}(\sigma^-A_{\mbf{J}}(\sigma^-)^{-1})$ and by \eqref{KerAJ} so $(\sigma^-)^{-1}\psi\in \op{Ker}(A_{\mbf{J}})=\op{Ker}d$. Hence $d\psi=d\sigma^-(\sigma^-)^{-1}\psi=\sigma^-d((\sigma^-)^{-1}\psi)=0$, which follows all elements of $\ms{K}^-$ are harmonic.
 Denote by $\delta^-:\mathscr{K}^-\to \mathrm{H}^1(\Sigma,\mc{E})$ the natural map, then  
$$*\mbf{J}(\psi_0+\psi_1du)=-(\psi_0+\psi_1du),\quad *\mbf{J}(\theta)=-\theta.$$
If moreover, $\psi_0=0$, then $*\mbf{J}(\psi_1du)=-\psi_1du$. However,  since $*du=-\frac{dx}{|dx|}$, so we conclude that 
$\psi_1=0$, and so $\phi=\theta\in\op{Ker}(d^-)^*\cap \mathrm{L}^2(\widehat{\Sigma},\wedge^-)$, then 
$$\op{Ker}(\iota^*\delta^-)=\op{Ker}(d^-)^*\cap \mathrm{L}^2(\widehat{\Sigma},\wedge^-),$$
where $\iota^*: \mathrm{H}^{1}(\Sigma,\mc{E})\to \mathrm{H}^{1}(\p\Sigma,\mc{E})$ is the  induced map on cohomology by restriction.
By the definition of limiting values of extended $\mathrm{L}^2$-sections \cite{APS}, $h_{\infty}(\wedge^-)$ is the dimension of subspace of all  $\psi$, so $h_{\infty}(\wedge^-)=\dim (\mathscr{K}^-/\op{Ker}(d^-)^*\cap \mathrm{L}^2(\widehat{\Sigma},\wedge^-))$, and we have 
\begin{align*}
h_{\infty}(\wedge^-)=\dim (\mathscr{K}^-/\op{Ker}(\iota^*\delta^-))=\dim\op{Im}(\iota^*\delta^-)\leq \dim\op{Im}(\iota^*).
\end{align*}
\begin{lemma}
We have 
$$\dim\op{Im}(\iota^*)=\dim \mathrm{H}^0(\p\Sigma,\mc{E})-\dim \mathrm{H}^0(\Sigma,\mc{E}). $$	
\end{lemma}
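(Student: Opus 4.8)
The plan is to read the identity off the long exact cohomology sequence of the pair $(\Sigma,\p\Sigma)$ with coefficients in the flat bundle $\mc{E}$, combined with Poincar\'e--Lefschetz duality. First I would write the relevant segment
\begin{align*}
\mr{H}^0(\Sigma,\p\Sigma;\mc{E})&\to\mr{H}^0(\Sigma;\mc{E})\xrightarrow{\,r\,}\mr{H}^0(\p\Sigma;\mc{E})\xrightarrow{\,\delta\,}\mr{H}^1(\Sigma,\p\Sigma;\mc{E})\\
&\xrightarrow{\,j\,}\mr{H}^1(\Sigma;\mc{E})\xrightarrow{\,\iota^*\,}\mr{H}^1(\p\Sigma;\mc{E}).
\end{align*}
Since $\Sigma$ is connected with $\p\Sigma\neq\emptyset$, $\mr{H}^0(\Sigma,\p\Sigma;\mc{E})=0$, so $r$ is injective and $\dim\op{Im}(\delta)=\dim\mr{H}^0(\p\Sigma;\mc{E})-\dim\mr{H}^0(\Sigma;\mc{E})$. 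Exactness gives $\op{Im}(\delta)=\ker(j)$ and $\op{Im}(j)=\ker(\iota^*)$, whence $\dim\op{Im}(j)=\dim\mr{H}^1(\Sigma,\p\Sigma;\mc{E})-\dim\op{Im}(\delta)$ and
\begin{align*}
\dim\op{Im}(\iota^*)&=\dim\mr{H}^1(\Sigma;\mc{E})-\dim\op{Im}(j)\\
&=\bigl(\dim\mr{H}^1(\Sigma;\mc{E})-\dim\mr{H}^1(\Sigma,\p\Sigma;\mc{E})\bigr)+\dim\op{Im}(\delta).
\end{align*}
So the lemma reduces to the dimension equality $\dim\mr{H}^1(\Sigma;\mc{E})=\dim\mr{H}^1(\Sigma,\p\Sigma;\mc{E})$.

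For this I would use Poincar\'e--Lefschetz duality for the oriented surface $\Sigma$, which gives $\mr{H}^1(\Sigma,\p\Sigma;\mc{E})\cong\mr{H}_1(\Sigma;\mc{E})$, together with the \emph{self-duality} of $(\mc{E},\Omega)$: the non-degenerate $\phi$-invariant Hermitian form $\Omega$ provides an isomorphism of local systems $\mc{E}\cong\o{\mc{E}}{}^{*}$, equivalently $\mc{E}^{*}\cong\o{\mc{E}}$, where $\mc{E}^{*}$ is the dual (contragredient) local system and $\o{\mc{E}}$ its complex conjugate. Over $\bc$ the universal coefficient pairing gives $\dim\mr{H}_1(\Sigma;\mc{E})=\dim\mr{H}^1(\Sigma;\mc{E}^{*})$, and conjugating the coefficient system does not change Betti numbers, so $\dim\mr{H}^1(\Sigma;\mc{E}^{*})=\dim\mr{H}^1(\Sigma;\o{\mc{E}})=\dim\mr{H}^1(\Sigma;\mc{E})$, as required. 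An equivalent route compares Euler characteristics: $\chi(\Sigma;\mc{E})=\chi(\Sigma,\p\Sigma;\mc{E})=(\dim E)\chi(\Sigma)$ since $\chi(\p\Sigma)=0$; using that $\Sigma$ retracts onto a $1$-complex (so $\mr{H}^2(\Sigma;\mc{E})=0$, $\mr{H}^0(\Sigma,\p\Sigma;\mc{E})=0$) and $\mr{H}^2(\Sigma,\p\Sigma;\mc{E})\cong\mr{H}_0(\Sigma;\mc{E})$, the two expressions reduce the claim to $\dim E^{\pi_1(\Sigma)}=\dim\mr{H}_0(\Sigma;\mc{E})$, which holds because $\Omega$ identifies $E^{\pi_1(\Sigma)}$ with the $\Omega$-orthogonal complement of the subspace spanned by all $v-\phi(\gamma)v$.

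I expect the only delicate point to be precisely this dimension equality $\dim\mr{H}^1(\Sigma;\mc{E})=\dim\mr{H}^1(\Sigma,\p\Sigma;\mc{E})$, i.e. making rigorous that an invariant non-degenerate but possibly indefinite Hermitian form already forces $\mc{E}$ to be a self-dual local system; the remainder is the standard long exact sequence of a pair plus elementary rank bookkeeping. Once the lemma is in hand it combines with the inequality $h_{\infty}(\wedge^-)=\dim\op{Im}(\iota^*\delta^-)\leq\dim\op{Im}(\iota^*)$ and with \eqref{hinfE} to control $h_{\infty}(\wedge^-)$, and symmetrically $h_{\infty}(\wedge^+)$, en route to the equality $h_{\infty}(\wedge^-)=h_{\infty}(\wedge^+)$ announced at the start of the subsection.
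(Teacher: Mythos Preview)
Your argument is correct. Both your proof and the paper's rest on the long exact sequence of the pair $(\Sigma,\p\Sigma)$ together with Poincar\'e--Lefschetz duality and the self-duality of $\mc{E}$ furnished by $\Omega$, but you read the sequence from opposite ends. The paper continues \emph{past} $\iota^*$,
\[
\mr{H}^1(\Sigma;\mc{E})\xrightarrow{\iota^*}\mr{H}^1(\p\Sigma;\mc{E})\to\mr{H}^2(\Sigma,\p\Sigma;\mc{E})\to\mr{H}^2(\Sigma;\mc{E})\to 0,
\]
and invokes $\dim\mr{H}^1(\p\Sigma;\mc{E})=\dim\mr{H}^0(\p\Sigma;\mc{E})$, $\dim\mr{H}^2(\Sigma,\p\Sigma;\mc{E})=\dim\mr{H}^0(\Sigma;\mc{E})$, and $\mr{H}^2(\Sigma;\mc{E})=0$. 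You instead go \emph{backward} from $\iota^*$ and reduce to $\dim\mr{H}^1(\Sigma;\mc{E})=\dim\mr{H}^1(\Sigma,\p\Sigma;\mc{E})$, which is the same Poincar\'e-plus-self-duality input expressed in degree~$1$ rather than in degrees~$0$ and~$2$. The paper's route is marginally shorter because the vanishing $\mr{H}^2(\Sigma;\mc{E})=0$ closes the computation immediately, whereas you must argue the middle-degree dimension equality; on the other hand your write-up makes explicit where the non-degeneracy of $\Omega$ enters (the isomorphism $\mc{E}^*\cong\o{\mc{E}}$), which the paper leaves implicit under the words ``Poincar\'e duality''.
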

\begin{proof}
From the following exact sequence
\begin{align*}
\cdots \to \mathrm{H}^{1}(\Sigma,\mc{E}) \stackrel{\iota^{*}}{\longrightarrow} \mathrm{H}^{1}(\p\Sigma,\mc{E}) \stackrel{\alpha^{*}}{\rightarrow} \mathrm{H}^{2}(\Sigma,\p\Sigma,\mc{E}) \stackrel{\beta^{*}}{\rightarrow} \mathrm{H}^{2}(\Sigma,\mc{E}) \to 0, 
\end{align*}
one has
$$ \mathrm{H}^1(\p\Sigma,\mc{E})/\op{Im}\iota^*\simeq \dim \mathrm{H}^1(\p\Sigma,\mc{E})/\op{Ker}\alpha^*\simeq \op{Im}\alpha^*\simeq \op{Ker}\beta^*,$$
and 
$$\mathrm{H}^2(\Sigma,\p\Sigma,\mc{E})/\op{Ker}\beta^*\simeq\op{Im}\beta^*\simeq \mathrm{H}^2(\Sigma,\mc{E}).$$
Hence 
\begin{align*}
\dim\op{Im}(\iota^*)
&=\dim \mathrm{H}^1(\p\Sigma,\mc{E})-(\dim \mathrm{H}^2(\Sigma,\p\Sigma,\mc{E})-\dim \mathrm{H}^2(\Sigma,\mc{E}))\\
&=\dim \mathrm{H}^0(\p\Sigma,\mc{E})-(\dim \mathrm{H}^0(\Sigma,\mc{E})-\dim \mathrm{H}^2(\Sigma,\mc{E}))\\	
&=	\dim \mathrm{H}^0(\p\Sigma,\mc{E})-\dim \mathrm{H}^0(\Sigma,\mc{E}),
\end{align*}
where the second equality uses Poincar\'e duality, and the last equality follows from the fact that $\dim \mathrm{H}^2(\Sigma,\mc{E})=0$.	
\end{proof}
Hence
\begin{align}\label{hinf}
	h_{\infty}(\wedge^-)\leq \dim\op{Im}(\iota^*)=\dim \mathrm{H}^0(\p\Sigma,\mc{E})-\dim \mathrm{H}^0(\Sigma,\mc{E}).
\end{align}

On the other hand, we can also consider the term $h_{\infty}(\mc{E})$. Denote by $\mathscr{K}_0^-$ the set of all extended  $\mathrm{L}^2$ solutions of $d^-\phi=0$ in $\mc{E}$, so that for any $\phi\in \mathscr{K}_0^-$, one has $d\phi=0$. In the cylinder $\p\Sigma\times (-\infty,u_0]$, we can write
$\phi=\psi+\theta$
where $\psi\in\op{Ker}(A_{\mbf{J}})$ is a harmonic section on $\p\Sigma$ and $\theta$ is a $\mathrm{L}^2$ harmonic section. 
From \eqref{H0} and \eqref{KerAJ}, one has $d\psi=d\theta=0$. Hence $d\phi=0$ for any $\phi\in \ms{K}^-_0$.
 Denote by $\delta_0^-:\mathscr{K}_0^-\to \mathrm{H}^0(\Sigma,\mc{E})$ the natural map, then 
$$\op{Ker}(\iota_0^*\delta_0^-)=\op{Ker}(d)\cap \mathrm{L}^2(\widehat{\Sigma},\mc{E}),$$
where $\iota_0^*$ is defined by
$$
\cdots \to0 \stackrel{\beta^{*}}{\rightarrow} \mathrm{H}^{0}(\Sigma,\mc{E}) \stackrel{\iota_0^{*}}{\longrightarrow} \mathrm{H}^{0}(\p\Sigma,\mc{E}) \rightarrow \cdots
$$
Since $h_{\infty}$ is the dimension of the space of all $\psi$, so
 $h_{\infty}(\mc{E})=\dim (\mathscr{K}_0^-/\op{Ker}(d)\cap \mathrm{L}^2(\widehat{\Sigma},\mc{E}))$, and we have
\begin{align}\label{hE}
\begin{split}
h_{\infty}(\mc{E})&=\dim (\mathscr{K}_0^-/\op{Ker}(\iota_0^*\delta_0^-))=\dim\op{Im}(\iota^*_0\delta_0^-)\\
&\leq \dim\op{Im}(\iota_0^*)=\dim \mathrm{H}^0(\Sigma,\mc{E}).
\end{split}
\end{align}
From \eqref{hinfE}, \eqref{hinf} and \eqref{hE}, we obtain
\begin{equation*}
  h_{\infty}(\wedge^-)=\dim \mathrm{H}^0(\p\Sigma,\mc{E})-\dim \mathrm{H}^0(\Sigma,\mc{E})
\end{equation*}
Similarly, one has 
\begin{equation}\label{hpm}
  h_{\infty}(\wedge^+)=\dim \mathrm{H}^0(\p\Sigma,\mc{E})-\dim \mathrm{H}^0(\Sigma,\mc{E})=h_{\infty}(\wedge^-).
\end{equation}
Substituting \eqref{hpm} into \eqref{sign1}, we obtain a formula for signature:
\begin{thm}\label{thmsign2}
	The signature is given by
	\begin{equation}\label{sign2}
	\op{sign}(\mc{E},\Omega)= 2\int_\Sigma\left(c_1(\mc{E}^+,\n|_{\mc{E}^+})-c_1(\mc{E}^-,\n|_{\mc{E}^-})\right)+\eta(A_{\mbf{J}}).
\end{equation}
\end{thm}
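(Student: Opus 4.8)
The plan is to read off Theorem \ref{thmsign2} from the two facts that have just been assembled: the provisional signature formula \eqref{sign1} and the equality \eqref{hpm} of limiting-value dimensions $h_\infty(\wedge^+)=h_\infty(\wedge^-)$. Concretely, \eqref{sign1} already has the desired shape,
\[
\op{sign}(\mc E,\Omega)=2\int_\Sigma\left(c_1(\mc E^+,\n|_{\mc E^+})-c_1(\mc E^-,\n|_{\mc E^-})\right)+h_\infty(\wedge^-)-h_\infty(\wedge^+)+\eta(A_{\mbf J}),
\]
so once the $h_\infty$-difference is shown to vanish we are done. It is worth recalling how \eqref{sign1} itself is produced, since the whole argument rests on it: Proposition \ref{prop3.2} writes the signature as $\op{Index}(d^-_P)-\op{Index}(d^+_P)+h_\infty(\wedge^-)-h_\infty(\wedge^+)$; the Atiyah--Patodi--Singer formula \eqref{APSformula} turns the difference of Fredholm indices into $\int_\Sigma(\alpha_--\alpha_+)\,d\mu_g+\eta(A_{\mbf J})$, the boundary kernel contributions $\dim\op{Ker}A^\pm_{\mbf J}=\dim\mathrm H^0(\p\Sigma,\mc E)$ cancelling because of \eqref{KerAJ}; and the local index theorem for the Dirac operator $D^{\mc F}$, together with Lemma \ref{lemma4.7} and the supertrace computation, identifies $\int_\Sigma(\alpha_--\alpha_+)\,d\mu_g$ with $2\int_\Sigma(c_1(\mc E^+,\n|_{\mc E^+})-c_1(\mc E^-,\n|_{\mc E^-}))$ for an arbitrary peripheral connection $\n$ (equation \eqref{4.1}).

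The real content of the proof is therefore the identity \eqref{hpm}, and I would establish it by evaluating $h_\infty(\wedge^-)$, $h_\infty(\wedge^+)$ and $h_\infty(\mc E)$ \emph{exactly} rather than just bounding them. On one side, the map $\iota^*\circ\delta^-$ --- send an extended $\mathrm L^2$-solution of $(d^-)^*\phi=0$ to its cohomology class in $\mathrm H^1(\Sigma,\mc E)$ and then restrict to $\p\Sigma$ --- has kernel precisely the genuinely $\mathrm L^2$ solutions, so $h_\infty(\wedge^-)=\dim\op{Im}(\iota^*\delta^-)\le\dim\op{Im}(\iota^*)$, and the long exact sequence of the pair $(\Sigma,\p\Sigma)$ combined with Poincar\'e duality and $\dim\mathrm H^2(\Sigma,\mc E)=0$ gives $\dim\op{Im}(\iota^*)=\dim\mathrm H^0(\p\Sigma,\mc E)-\dim\mathrm H^0(\Sigma,\mc E)$, which is \eqref{hinf}. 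The parallel analysis of $d^-\phi=0$ in $\mc E$ yields \eqref{hE}, $h_\infty(\mc E)\le\dim\mathrm H^0(\Sigma,\mc E)$. Finally the APS bookkeeping \eqref{hinfE}, $h_\infty(\mc E)+h_\infty(\wedge^-)=\dim\mathrm H^0(\p\Sigma,\mc E)$, forces both inequalities to be equalities; running the same computation with $\wedge^+$ in place of $\wedge^-$ produces the identical value, whence $h_\infty(\wedge^+)=h_\infty(\wedge^-)$.

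Substituting \eqref{hpm} into \eqref{sign1} kills the term $h_\infty(\wedge^-)-h_\infty(\wedge^+)$ and leaves exactly \eqref{sign2}. The main obstacle in the whole chain is not this last substitution but the exact evaluation of the $h_\infty$ quantities: the homological long exact sequence gives only an inequality, the APS decay estimates give only another inequality, and it is the rigidity of matching these against the index identity \eqref{hinfE} that pins them down. Everything else --- the symmetry $\eta(A^\pm_{\mbf J})=\pm\eta(A_{\mbf J})$, the cancellation of the $\dim\op{Ker}A^\pm_{\mbf J}$ terms, and the Chern-form expression of the Atiyah--Singer integrand --- is routine once the operators $d^\pm$ have been put in cylinder-adapted form $d^\pm=\sigma^\pm(\p_u+A^\pm_{\mbf J})$ by Corollary \ref{cor4.2}.
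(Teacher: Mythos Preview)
Your proposal is correct and follows essentially the same approach as the paper: the theorem is obtained by substituting the equality \eqref{hpm}, $h_\infty(\wedge^+)=h_\infty(\wedge^-)$, into the provisional formula \eqref{sign1}, and the paper establishes \eqref{hpm} exactly as you describe, by combining the inequalities \eqref{hinf} and \eqref{hE} with the APS identity \eqref{hinfE} to force both to be equalities.
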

\begin{rem}
The above theorem was proven by Atiyah \cite[(3.1)]{Atiyah} under the assumption that the representation on each component of the boundary is elliptic. He then proceeded to prove the existence and uniqueness of a section of a covering space of the unitary group that specifies a trivialization of a suitable line bundle whose relative first Chern number is equal to the signature, \cite[Theorem 2.13]{Atiyah}. However, this existential statement does not easily provide a formula for the value of this discontinuous section on arbitrary elements. This will be done in Section \ref{Atiyahsigma}.
\end{rem}

\section{Toledo invariants}\label{Tol}

In this section, we recall the definition of the Toledo invariant for surfaces with boundary, which is given by Burger,  Iozzi and Wienhard \cite[Section 1.1]{BIW}. We show that 
the Toledo invariant can be expressed as the integration of first Chern forms with compact support over the surface.

\subsection{Definition of Toledo invariant}\label{Toledo}

Let $\Sigma$ be a connected oriented surface with boundary $\p\Sigma$, and $\phi:\pi_1(\Sigma)\to G$ be a surface group representation into a Lie group $G$ which is of Hermitian type. Burger,  Iozzi and  Wienhard \cite[Section 1.1]{BIW} introduced the definition of Toledo invariant $\op{T}(\Sigma,\phi)$, which generalizes
 the Toledo invariant for closed surfaces. 

A Lie group $G$ is of {\it Hermitian type} if it is connected, semisimple with finite center and no compact factors, and if the associated symmetric space is Hermitian. Let $G$ be a group of Hermitian type so that in particular the associated symmetric space $\mathscr{X}$ is Hermitian of noncompact type, then $\mathscr{X}$ carries a unique Hermitian (normalized) metric of minimal holomorphic sectional curvature $-1$. The associated K\"ahler form $\omega_{\mathscr{X}}$ is in $\Omega^2(\ms{X})^G$ the space of $G$-invariant $2$-forms on $\ms{X}$. A Lie group $G$ is of type $(\mathrm{RH})$ if it is connected reductive with compact center and the quotient $G / G_{c}$ by the largest connected compact normal subgroup $G_{c}$ is of Hermitian type.
 By the van Est isomorphism \cite{Van},
$\Omega^2(\ms{X})^G\cong \mathrm{H}^2_c(G,\mb{R})$,  
where $\mathrm{H}^{\bullet}_c(G,\mb{R})$ denotes the continuous cohomology of the group $G$ with $\mb{R}$-trivial coefficients, there exists a unique class $\kappa_G\in \mathrm{H}^2_c(G,\mb{R})$ corresponding to the K\"ahler form $\omega_{\ms{X}}$, and thus gives rise to a bounded K\"ahler class $\kappa^b_G\in \mathrm{H}^2_{c,b}(G,\mb{R})$ by the  isomorphism \cite{BO},
$\mathrm{H}^2_{c}(G,\mb{R})\cong \mathrm{H}^2_{c,b}(G,\mb{R})$, 
where $\mathrm{H}^{\bullet}_{c,b}(G,\mb{R})$ denotes the bounded continuous cohomology. In fact, $\kappa^b_G\in  \mathrm{H}^2_{c,b}(G,\mb{R})$ is defined by a bounded cocycle 
\begin{equation}\label{bounded cocyle}
  c(g_0,g_1,g_2)=\frac{1}{2\pi}\int_{\triangle(g_0x, g_1x, g_2x)} \omega_{\ms{X}},
\end{equation}
 where $\triangle(g_0x, g_1x, g_2x)$ is a geodesic triangle with ordered vertices $g_0x, g_1x,g_2x$ for some base point $x\in \ms{X}$.

 By Gromov isomorphism \cite{Gromov}, one has 
$$\phi_b^*(\kappa^b_G)\in \mathrm{H}^2_b(\pi_1(\Sigma),\mb{R})\cong \mathrm{H}^2_b(\Sigma,\mb{R}).$$
The canonical map $j_{\p\Sigma}:\mathrm{H}^2_b(\Sigma,\p\Sigma,\mb{R})\to \mathrm{H}^2_b(\Sigma,\mb{R})$ from singular bounded cohomology relative to $\p\Sigma$ to singular bounded cohomology is an isomorphism. Then the Toledo invariant is defined as 
$$\op{T}(\Sigma,\phi)=\langle j^{-1}_{\p\Sigma}\phi_b^*(\kappa^b_G),[\Sigma,\p\Sigma]\rangle,$$
where   $j^{-1}_{\p\Sigma}\phi_b^*(\kappa^b_G)$ is considered as an ordinary relative cohomology class and $[\Sigma,\p\Sigma]\in \mr{H}_2(\Sigma,\p\Sigma,\mb{Z})\cong\mb{Z}$ denotes the relative fundamental class.

\subsection{Invariant K\"ahler potentials}\label{IKP}

In this subsection, we introduce a family of differential $1$-forms $\alpha_W$ on the symmetric space $\op{D}^{\op{I}}_{p,q}$, parametrized by points $W$ on the closure $\o{\mr{D}^{\op{I}}_{p,q}}$. The form $\alpha_W$ is a primitive of the K\"ahler form and is invariant under the stabilizer of $W$ in $\mr{U}(p,q)$. The key feature is that $\alpha_W$ defines a bounded $1$-cochain. Therefore it can be used to modify the pull-back of the K\"ahler form by an equivariant map in order to make it compactly supported, without changing its bounded cohomology class.

In order to find formula (\ref{invariant Kahler potential}) for $\alpha_W$, we start from the classical formula for the K\"ahler potential invariant under the stabilizer of a point $J$ of the symmetric space. Then we let $J$ tend to infinity and observe that, up to an additive constant, it converges as $J$ converges to a boundary point $W$. 

\medskip

Every $L\in \mr{U}(p,q)$ acts on the bounded symmetric domain of type I
$$\op{D}^{\op{I}}_{p,q}=\{W\in M(p,q,\mb{C}), I_q-W^*W>0\}$$
holomorphically, and the action extends continuously to $\o{\mr{D}^{\op{I}}_{p,q}}$. By Brouwer's fixed point theorem, there exists a fixed point in $\o{\mr{D}^{I}_{p,q}}$. Let 
\begin{align}\label{kahlermetric}
\omega_{\op{D}^{\op{I}}_{p,q}}=	-2i\p\b{\p}\log\det(I-W^*W).
\end{align}
denote the invariant K\"ahler metric (Bergman metric) on $\op{D}^{\op{I}}_{p,q}$ with minimal holomorphic sectional curvature $-1$, see e.g. \cite{KM1, Mok}.

For any point $W_0\in \o{\mr{D}^{\mr{I}}_{p,q}}$, denote by  
\begin{align*}
\begin{split}
  K_{W_0}:=\{L\in \mr{U}(p,q):L(W_0)=W_0\}
 \end{split}
\end{align*}
the isotropy group of $W_0$. 
For any $L\in \mr{U}(p,q)$, we write 
\begin{align*}
\begin{split}
  L:=\left(\begin{matrix}
  a& b\\
  c& d
\end{matrix}\right)
 \end{split}\in \mr{U}(p,q),
\end{align*}
i.e. $L^*\left(\begin{matrix}
  I_p&0 \\
  0&-I_q 
\end{matrix}\right)L=\left(\begin{matrix}
  I_p&0 \\
  0&-I_q 
\end{matrix}\right)$,
from which it follows that 
\begin{align}\label{relation on U(p,q)}
\begin{split}
   L^{-1}=\left(\begin{matrix}
  I_p&0 \\
  0&-I_q 
\end{matrix}\right)L^*\left(\begin{matrix}
  I_p&0 \\
  0&-I_q 
\end{matrix}\right)=\left(\begin{matrix}
  a^*&-c^* \\
  -b^*&d^* 
\end{matrix}\right).
 \end{split}
\end{align}
It acts on $W\in \op{D}^{\op{I}}_{p,q}$ by 
$$L(W)=(aW+b)(cW+d)^{-1}.$$
One can refer to \cite[Page 65-68, Section (2.2)]{Mok} for the bounded symmetric domain $\op{D}^I_{p,q}$. If moreover, $L(W_0)=W_0$, then $L^{-1}(W_0)=W_0$, and so 
$$a^*W_0-c^*=W_0(-b^*W_0+d^*).$$
The complex conjugate transpose gives
\begin{align}\label{fix condition}
\begin{split}
  W_0^*a-c=-(W_0^*b-d)W_0^*.
 \end{split}
\end{align}
Now we define a smooth function $\psi_{W_0}=\psi_{W_0}(W)$ on $\mr{D}^I_{p,q}$ by 
\begin{equation}\label{invariant Kahler potential}
  \psi_{W_0}:=-\log\left(|\det(W_0^*W-I_q)|^{-2}\det(I_q-W^*W)\right),
\end{equation}
which is a smooth real function on $\mr{D}^I_{p,q}$. Moreover, it satisfies 
\begin{equation*}
  i\p\b{\p}\psi_{W_0}=\frac{1}{2}\omega_{\op{D}^{\op{I}}_{p,q}},
\end{equation*}
i.e. $\psi_{W_0}$ is a K\"ahler potential of the K\"ahler form of $\frac{1}{2}\omega_{\op{D}^{\op{I}}_{p,q}}$. On the other hand, since
\begin{align*}
\begin{split}
&\quad|\det(W_{0}^*L(W)-I_q)|^{-2}\det(I_q-L(W)^*L(W))\\
&=\left(|\det(W_{0}^*(aW+b)-(cW+d))|^{-2}|\det(cW+d)|^2\right)\\
&\quad \cdot\left( \det(cW+d)|^{-2}\det(I_q-W^*W)\right)\\
&=|\det((W_0^*a-c)W+W_0^*b-d)|^{-2}\det(I_q-W^*W)\\
&=|\det(-(W_0^*b-d)W_0^*W+W_0^*b-d)|^{-2}\det(I_q-W^*W)\\
&=|\det(d-W_0^*b)|^{-2}|\det(W_0^*W-I_q)|^{-2}\det(I_q-W^*W),
 \end{split}
\end{align*}
where  the third equality follows from \eqref{fix condition}, the first equality follows from the fact $\det(I_q-L(W)^*L(W))=|\det(cW+d)|^{-2}\det(I_q-W^*W)$. In fact, 
\begin{align*}
\begin{split}
  &\quad \det(I_q-L(W)^*L(W))\\
  &=\det(I_q-(W^*c^*+d^*)^{-1}(W^*a^*+b^*)(aW+b)(cW+d)^{-1}) \\
  &=\det((W^*c^*+d^*)^{-1})\det((cW+d)^{-1})\det((W^*c^*+d^*)(cW+d)-(W^*a^*+b^*)(aW+b))\\
  &=|\det(cW+d)|^{-2}\det(W^*(c^*c-a^*a)W+W^*(c^*d-a^*b)+(d^*c-b^*a)W+d^*d-b^*b)\\
  &=|\det(cW+d)|^{-2}\det(I_q-W^*W),
 \end{split}
\end{align*}
where the last equality follows from \eqref{relation on U(p,q)}.
Hence
\begin{align*}
\begin{split}
  (L^*\psi_{W_0})(W)&=\psi_{W_0}(L(W))\\
  &=-\log \left(|\det(W_0^*L(W)-I_q)|^{-2}\det(I_q-L(W)^*L(W))\right)\\
  &=-\log|\det(d-W_0^*b)|^{-2}-\log|\det(W_0^*W-I_q)|^{-2}\det(I_q-W^*W)\\
  &=\log|\det(d-W_0^*b)|^{2}+\psi_{W_0}(W),
 \end{split}
\end{align*}
which means that $\psi_{W_0}$ is a $L$-invariant (up to a constant) function. In one word, we have
\begin{prop}\label{invariant prop}
For any $W_0\in \o{\mr{D}^{\mr{I}}_{p,q}}$, there exists a $K_{W_0}$-invariant (up to a constant) K\"ahler potential $\psi_{W_0}$ for $\frac{1}{2}\omega_{\op{D}^{\op{I}}_{p,q}}$.	
\end{prop}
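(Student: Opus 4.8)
The plan is to verify Proposition~\ref{invariant prop} by providing an explicit formula for $\psi_{W_0}$ and then checking, in two separate steps, that it is a K\"ahler potential for $\tfrac12\omega_{\op{D}^{\op{I}}_{p,q}}$ and that it is $K_{W_0}$-invariant up to an additive constant. The candidate is exactly formula (\ref{invariant Kahler potential}),
$$
\psi_{W_0}(W)=-\log\left(|\det(W_0^*W-I_q)|^{-2}\det(I_q-W^*W)\right),
$$
which is well defined because $\det(I_q-W^*W)>0$ on $\op{D}^{\op{I}}_{p,q}$, and because $W_0^*W-I_q$ is invertible there: when $W_0\in\op{D}^{\op{I}}_{p,q}$ this follows from $\|W_0^*W\|<1$, and when $W_0\in\partial\op{D}^{\op{I}}_{p,q}$ one uses $\|W_0\|\le1$ together with $\|W\|<1$, so the eigenvalues of $W_0^*W$ still have modulus $<1$. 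That settles that $\psi_{W_0}$ is a smooth real-valued function on $\op{D}^{\op{I}}_{p,q}$.

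First I would establish the potential property. Writing $\psi_{W_0}=-\log\det(I_q-W^*W)+\log|\det(W_0^*W-I_q)|^{2}$, the first term is, by (\ref{kahlermetric}), a potential for $\tfrac12\omega_{\op{D}^{\op{I}}_{p,q}}$ since $-2i\p\b\p\log\det(I-W^*W)=\omega_{\op{D}^{\op{I}}_{p,q}}$ gives $i\p\b\p\big(-\log\det(I-W^*W)\big)=\tfrac12\omega_{\op{D}^{\op{I}}_{p,q}}$. For the second term one observes that $\log|\det(W_0^*W-I_q)|^{2}=\log\det(W_0^*W-I_q)+\log\det(\overline{W_0^*W-I_q})$ is locally the sum of a holomorphic and an antiholomorphic function (the determinant $\det(W_0^*W-I_q)$ is holomorphic in $W$ and nonvanishing, so a local holomorphic logarithm exists), hence is pluriharmonic: $i\p\b\p\log|\det(W_0^*W-I_q)|^{2}=0$. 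Therefore $i\p\b\p\psi_{W_0}=\tfrac12\omega_{\op{D}^{\op{I}}_{p,q}}$.

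Next I would verify the invariance, and here the bulk of the work is already carried out in the displayed computation in the excerpt. For $L=\left(\begin{smallmatrix}a&b\\c&d\end{smallmatrix}\right)\in K_{W_0}$, combining the transformation rule $\det(I_q-L(W)^*L(W))=|\det(cW+d)|^{-2}\det(I_q-W^*W)$ (proved in the displayed chain using (\ref{relation on U(p,q)})) with the fixed-point relation (\ref{fix condition}), namely $W_0^*a-c=-(W_0^*b-d)W_0^*$, one gets
$$
(L^*\psi_{W_0})(W)=\psi_{W_0}(L(W))=\log|\det(d-W_0^*b)|^{2}+\psi_{W_0}(W),
$$
so $\psi_{W_0}\circ L-\psi_{W_0}$ is the constant $\log|\det(d-W_0^*b)|^{2}$, which is finite and nonzero precisely because $d-W_0^*b$ is invertible; this invertibility is itself forced by (\ref{fix condition}) together with the invertibility of $W_0^*W-I_q$ and $cW+d$. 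This proves $\psi_{W_0}$ is $K_{W_0}$-invariant up to a constant. The one point that requires a little care, and is the only genuine obstacle, is the \emph{uniform} control needed when $W_0$ lies on the boundary: one must ensure all the determinants appearing remain nonvanishing and the local holomorphic logarithms exist throughout $\op{D}^{\op{I}}_{p,q}$; this is handled once and for all by the spectral-radius estimate $\rho(W_0^*W)\le\|W_0\|\,\|W\|<1$ noted above, after which the pluriharmonicity argument and the invariance computation go through verbatim for $W_0\in\o{\mr{D}^{\op{I}}_{p,q}}$.
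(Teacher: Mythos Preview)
Your proof is correct and follows essentially the same approach as the paper: you use the explicit formula (\ref{invariant Kahler potential}), verify the K\"ahler potential property via the pluriharmonicity of the added $\log|\det(W_0^*W-I_q)|^2$ term, and establish $K_{W_0}$-invariance by the same transformation computation using (\ref{relation on U(p,q)}) and the fixed-point identity (\ref{fix condition}). Your additional care about well-definedness on the boundary via the spectral-radius estimate $\rho(W_0^*W)\le\|W_0\|\|W\|<1$ is a welcome clarification that the paper leaves implicit.
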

{\begin{rem}\label{zero}
In fact, for any classical Hermitian symmetric space $\mathscr{X}$, and for any $W\in \overline{\mathscr{X}}$, there exists a ($\mr{Stab}_W=K_W$)-invariant (up to a constant) 
 K\"ahler potential $\psi_W$ (Prop. \ref{invariant prop} and corresponding paragraphs in the other cases) with the following property:
 for $\alpha=d^c\psi_W$, $\int_\gamma \alpha=0$ for any geodesic $\gamma$ passing through $W$. 
 \begin{prop}\label{potential}For $\alpha=d^c\psi_W$, where $d^c:=-i(\p-\b{\p})$, then $\int_\gamma \alpha=0$ for any geodesic $\gamma$ passing through $W$ 
 and          $||\alpha||_\infty\leq \mathrm{rank}(\mathscr{X})\pi$.
 \end{prop}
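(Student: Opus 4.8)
The plan is to deduce both assertions from three facts about $\alpha:=\alpha_W:=d^c\psi_W$: that it is $K_W$-invariant, that $d\alpha=\omega_{\ms X}$, and that the family $\{\alpha_W\}_{W\in\o{\ms X}}$ is $\mr{U}(p,q)$-equivariant. The first two are immediate: $d^c$ annihilates additive constants, so $L^*\psi_W=\psi_W+\mathrm{const}$ for $L\in K_W$ (Proposition \ref{invariant prop} and its analogues in the other cases) yields $L^*\alpha=\alpha$, while $d\alpha=dd^c\psi_W=2i\p\b\p\psi_W=\omega_{\ms X}$ by the normalization $i\p\b\p\psi_W=\frac12\omega_{\ms X}$. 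For the third I would observe that the determinant computation of \S\ref{IKP} extends verbatim to any $L\in\mr{U}(p,q)$: running it with $W_1:=L^{-1}(W)$ in place of $W$ gives $L^*\psi_W=\psi_{W_1}+\mathrm{const}$, hence $L^*\alpha_W=\alpha_{L^{-1}W}$; in particular, for interior $W$ we may reduce to the base point $W=0$.

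Next I would establish the stronger statement that $\alpha_W$ \emph{vanishes identically} on every geodesic through $W$, so that $\int_\gamma\alpha_W=0$ becomes trivial. For interior $W$, take $W=0$. Every geodesic through $0$ is carried by a suitable element of $K_0=\mr{U}(p)\times\mr{U}(q)$ (which fixes $\psi_0$, hence $\alpha_0$) into the standard maximal totally geodesic polydisc $\mb{D}^r\hookrightarrow\mr{D}^{\mr{I}}_{p,q}$ through $0$, $r=\mathrm{rank}(\ms X)$, on which $\psi_0=-\sum_{j=1}^r\log(1-|z_j|^2)$; therefore $\alpha_0=d^c\psi_0$ restricts there to $\sum_{j=1}^r\frac{2|z_j|^2}{1-|z_j|^2}\,d\theta_j$, and along any geodesic through $0$ each $\theta_j$ is constant, so the restriction vanishes. (Equivalently, $\p\psi_0$ takes real values along geodesics through $0$, so $d^c\psi_0=2\,\op{Im}(\p\psi_0)$ vanishes on them; the domains for $\mr{SO}^*(2n)$, $\mr{Sp}(2n,\br)$ and $\mr{SO}_0(n,2)$, Sections \ref{SO}--\ref{SO0}, are handled in the same way with their own invariant potentials.) For $W\in\p\ms X$ there is no geodesic of $\ms X$ through $W$, so the identity is vacuous.

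For the norm I would read $\|\alpha_W\|_\infty$ as the norm of the bounded $1$-cochain defined by $\alpha_W$ (\S\ref{IKP}), namely $\sup_{x,y\in\ms X}\big|\int_{[x,y]}\alpha_W\big|$ over geodesic segments. Take $W$ interior first; given $x,y$, let $\triangle$ be the geodesic triangle with ordered vertices $x,y,W$. Its sides $[y,W]$ and $[W,x]$ lie on geodesics through $W$, so these two edge integrals vanish by the previous step, and Stokes' theorem with $d\alpha_W=\omega_{\ms X}$ gives
\[
\int_{[x,y]}\alpha_W=\int_{\p\triangle}\alpha_W=\int_{\triangle}\omega_{\ms X}.
\]
By Domic and Toledo's bound on the integral of the K\"ahler form over a geodesic triangle (see also the Appendix), $\big|\int_{\triangle}\omega_{\ms X}\big|\le\mathrm{rank}(\ms X)\,\pi$, hence $\|\alpha_W\|_\infty\le\mathrm{rank}(\ms X)\,\pi$. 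For $W\in\p\ms X$ the same bound follows by choosing interior points $W_j\to W$ and using that $\alpha_{W_j}\to\alpha_W$ locally uniformly, so $\int_{[x,y]}\alpha_{W_j}\to\int_{[x,y]}\alpha_W$ for every $x,y$.

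I expect the one genuine obstacle to be the \emph{pointwise} vanishing of $\alpha_W$ along geodesics through $W$. The $K_W$-invariance by itself, through the holomorphic geodesic symmetry $s_W\in K_W$ reversing such geodesics, only forces $t\mapsto\alpha_W(\dot\gamma(t))$ to be odd about $W$; this kills the integral over a \emph{complete} geodesic --- which already settles the first assertion --- but not over the half-geodesics $[W,x]$, $[W,y]$ that enter the Stokes step. Upgrading to the pointwise statement, either via the polydisc computation above or via the explicit primitives in each classical case, is where the work lies; the remaining ingredients (equivariance bookkeeping, Stokes, invoking Domic--Toledo, and the limiting argument for boundary $W$) are routine.
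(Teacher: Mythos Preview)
Your limiting argument for the norm bound is correct and is a genuine alternative to the paper's route. The paper instead proves the \emph{pointwise} vanishing of $\alpha_W$ along every geodesic asymptotic to $W$ (also when $W$ is ideal), then applies Stokes directly on the possibly ideal triangle $\triangle(W,Q,R)$; you bypass the ideal Stokes step by proving the bound for interior $W_j$ and passing to the limit, which works because $\psi_{W_0}$ and hence $\alpha_{W_0}$ depend continuously on $W_0\in\overline{\ms X}$ uniformly on compacta of $\ms X$.

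However, there is a real gap in your treatment of the first assertion. The phrase ``geodesic $\gamma$ passing through $W$'' is \emph{not} meant literally: the proposition is stated for all $W\in\overline{\ms X}$, and when $W$ is a boundary point the intended meaning is $\gamma(\infty)=W$. This ideal case is the one the paper calls ``the difficult case'', and it is actually used later (e.g.\ in the geometric proof of the Milnor--Wood inequality, where for an axial boundary holonomy one needs $\int_l\alpha=0$ along the invariant axis $l$, whose endpoints are ideal fixed points). Declaring this case vacuous leaves the first assertion unproven precisely where it matters. The paper's argument here is Lie-theoretic: with $K_W=MAN$ (or $M'A'N'$ for singular $W$), one observes that $J\dot\gamma(t)$ is tangent to the horospherical orbit $N\cdot\gamma(t)$, and since $\psi_W$ is $K_W$-invariant (hence constant on $N$-orbits) this gives $d^c\psi_W(\dot\gamma(t))=d\psi_W(J\dot\gamma(t))=0$ pointwise. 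Your polydisc computation handles the interior case nicely, but does not extend to ideal $W$; neither does the oddness-under-$s_W$ observation you mention, nor does your limiting argument, since the geodesics through the approximating interior points $W_j$ do not converge to a given ray asymptotic to $W$. To close the gap you would need either the paper's horospherical argument or a direct computation in a model for a neighborhood of the ideal point.
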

 \begin{proof} For any $W\in \overline{\mathscr{X}}$, let $\psi_W$ be any $K_W$-invariant (up to a constant) K\"ahler potential. Let $\gamma(t)$ be a geodesic with $\gamma(0)=Q\in \mathscr{X}$ and $\gamma(\infty)=W$. Then $\gamma$ is contained in a maximal flat $F$ which is a totally real subspace. The following argument is basically due to Domic-Toledo \cite{DT}. The difficult case is when $W$ is an ideal point. If $W$ is regular,   $K_W$ is $MAN$ where $N$ is a minimal parabolic group. If $J$ denotes a complex structure, then $J\gamma'(t)$ is tangent to the orbit $N\gamma(t)$ since $J\gamma'(t)$ is orthogonal to the geodesic and $N\gamma(t)$ contains all the directions orthogonal to $F$.
 If $W$ is singular,  $K_W$ is $M'A'N'$ where $N'$ contains $N$ \cite{Eb}. Hence $J\gamma'(t)$ is tangent to the orbit $N'\gamma(t)$.
 Then $$d^c\psi_W(\gamma'(t))=d\psi_W(J\gamma'(t))=0$$ since $\psi_W$ is $K_W$-invariant. This shows that $\alpha=d^c\psi_W$ is zero along $\gamma$.
 
 Hence, for any $Q,R\in \mathscr{X}$ with geodesic $\gamma(Q,R)$ connecting them,
 $$\int_{\gamma(Q,R)}\alpha=\int_{\gamma(Q,R)} d^c\psi_W=\int_{\triangle(W,Q,R)} dd^c\psi_W=\int_{\triangle(W,Q,R)} \omega.$$
By \cite[Theorem 1]{DT}, one has
 $$\left|\int_{\gamma(Q,R)} \alpha\right|=\left|\int_{\triangle(W,Q,R)} \omega \right|\leq \sup_{\triangle}\left|\int_\triangle \omega\right|\leq \mathrm{rank}(\mathscr{X})\pi.$$
 This shows that $||\alpha||_\infty\leq \mathrm{rank}(\mathscr{X})\pi$, where the $\ell^\infty$-norm $\|\bullet\|_\infty$ is defined by \eqref{infinity norm}.
 \end{proof}
\end{rem}}
\subsection{Relation to the pullback forms with compact support}

Firstly, we will recall some definitions on the cohomology group of a topological space with a group action, we refer to \cite{Kim} and the references therein. Let $X$ be a topological space and $G$ be a group acting continuously on $X$.  For any $k>0$, one can define the space
$$F^k_{\text{alt}}(X,\mb{R})=\{f:X^{k+1}\to \mb{R}| f \text{ is alternating}\}.$$
Let $F^k_{\text{alt}}(X,\mb{R})^G$ denote the subspace of $G$-invariant functions, where the action of $G$ on $F^k_{\text{alt}}(X,\mb{R})$ is given by 
$$(g\cdot f)(x_0,\ldots,x_k)=f(g^{-1}x_0,\ldots,g^{-1}x_k),$$
for any $f\in F^k_{\text{alt}}(X,\mb{R})$ and $g\in G$. The natural coboundary operator $\delta_k:F^k_{\text{alt}}(X,\mb{R})\to F^{k+1}_{\text{alt}}(X,\mb{R})$ is given by
$$(\delta_k f)(x_0,\ldots,x_{k+1})=\sum_{i=0}^{k+1}(-1)^if(x_0,\cdots,\hat{x}_i,\ldots,x_{k+1}),$$
which also gives a coboundary operator on the complex $F^*_{\text{alt}}(X,\mb{R})^G$. The cohomology $\mathrm{H}^*(X;G,\mb{R})$ is defined as the cohomology of this complex. Define $F^*_{\text{alt},b}(X,\mb{R})$ as the subspace of $F^*_{\text{alt}}(X,\mb{R})$ consisting of bounded alternating functions. The coboundary operator restricts to the complex $F^*_{\text{alt},b}(X,\mb{R})^G$ and so it defines a cohomology, denoted by $\mathrm{H}^*_b(X;G,\mb{R})$,
 see \cite{Dupre} and also \cite[Section 3]{Kim}. In particular, for a manifold $X$, $\mathrm{H}^*_b(\wt{X};\pi_1(X),\mb{R})\cong \mathrm{H}^*_b(\pi_1(X),\mb{R})$.
 
 Similarly, if $G$ is a semisimple Lie group and $X$ is the associated symmetric space, one can also define the complex for the continuous (resp. bounded) and alternating functions, we denote this complex by $C^*_c(X,\mb{R})_{\text{alt}}$ (resp. $C^*_{c,b}(X,\mb{R})_{\text{alt}}$). Then the continuous cohomology $\mathrm{H}^*_c(G,\mb{R})$ (resp. $\mathrm{H}^*_{c,b}(G,\mb{R})$) can be isomorphically computed by the cohomology of $G$-invariant complex $C^*_c(X,\mb{R})_{\text{alt}}^G$ (resp. $C^*_{c,b}(X,\mb{R})_{\text{alt}}^G$), see \cite[Chapitre III]{Gui} and \cite[Corollary 7.4.10]{Monod}. 

If $X$ is a countable CW-complex, then one can define the cohomology  groups $\mathrm{H}^*_b(X,\mb{R})$ and  $\mathrm{H}^*_b(X,A,\mb{R})$ associated with the complex $C^*_b(X,\mb{R})$ of  bounded real-valued cochains on $X$ and  the subcomplex 
$C^*_b(X,A,\mb{R})$ of the bounded cochains  that vanish on simplices with image contained in $A$, repsectively.
Let $C^k_b(\wt{X},\mb{R})_{\text{alt}}$ denote the complex of bounded, alternating real-valued Borel functions on $\wt{X}^{k+1}$, then the cohomology of the $\pi_1(X)$-invariant complex $C^*_b(\wt{X},\mb{R})^{\pi_1(X)}_{\text{alt}}$
is isomorphic to $\mathrm{H}^*_b(X,\mb{R})$, see \cite{Iva} and also \cite[Section 2]{Kim}.

Let $\Sigma$ be a  connected oriented surface with boundary $\p\Sigma$, $\Sigma_o:=\Sigma\backslash \p\Sigma$. Consider a representation $\phi:\pi_1(\Sigma)\to G$ where $G=\op{U}(p,q)$. Denote by $\mathscr{X}:=G/K$ the associated symmetric space, which can identified with the bounded symmetric domain $\op{D}^{\op{I}}_{p,q}$ of type $\op{I}$, we denote $\omega=\omega_{\op{D}^{\op{I}}_{p,q}}$ for  simplicity.  The K\"ahler form $\omega$ gives the cohomology classes $\kappa_G\in \mathrm{H}^2_c(G,\mb{R})$ and $\kappa_G^b\in \mathrm{H}^2_{c,b}(G,\mb{R})$ which both correspond to the cochain $c_\omega$ defined by \eqref{bounded cocyle}.

The natural inclusion $C^*_{c,b}(\mathscr{X},\mb{R})_{\text{alt}}\subset F^*_{\text{alt},b}(\mathscr{X},\mb{R})$ induces a homomorphism $i_G:\mathrm{H}^*_{c,b}(G,\mb{R})\to \mathrm{H}^*_b(\mathscr{X};G,\mb{R})$. Then we have the following commutative diagram:
\begin{equation*}
\begin{CD}
\mathrm{H}^2_b(\mathscr{X};G,\mb{R}) @>f^*_b>> \mathrm{H}^2_b(\wt{\Sigma};\pi_1(\Sigma),\mb{R})\cong \mathrm{H}^2_b(\Sigma,\mb{R})\\
@AA i_GA @AA i_\Sigma A\\
\mathrm{H}^2_{c,b}(G,\mb{R}) @>\phi^*_b>> \mathrm{H}^2_b(\pi_1(\Sigma),\mb{R})
\end{CD}
\end{equation*}
see \cite[Page 58]{Kim}, where $f^*_b$ is induced from any $\phi$-equivariant map $f:\wt{\Sigma}\to\mathscr{X}$, $i_\Sigma$ is the Gromov isomorphism. 
 Then the cochain  representing the class $i_\Sigma\phi^*_b(\kappa_G^b)=f^*_bi_G(\kappa_G^b)$ is given by
{\begin{equation}\label{strsimplex}
   \frac{1}{2\pi}\int_{\op{Str}(f)(\sigma)}\omega,
\end{equation}}
where $\sigma\in C_2(\wt{\Sigma},\mb{R})$ is any two dimensional singular simplex on $\wt{\Sigma}$, $\op{Str}(f)(\sigma):=\Delta(fv_1,fv_2,fv_3)$ denotes the geodesic $2$-simplex, and $v_1,v_2,v_3$ are the vertices of $\sigma$. 
Denote $$[f^*\omega]_b:=2\pi i_\Sigma\phi^*_b(\kappa_G^b)=2\pi f^*_bi_G(\kappa_G^b).$$

We assume that $\p\Sigma=\cup_{i=1}^n c_i$, where each $c_i$ is a connected component of the boundary $\p\Sigma$. For any representation $\phi:\pi_1(\Sigma)\to \mr{U}(p,q)$, we denote by
$L_i:=\phi(c_i)\in \op{U}(p,q)$ the representation of boundary component $c_i$, and 
\begin{equation}\label{Fixpoint2}
  \alpha_i:=d^c\psi_{i},
\end{equation}
where $d^c:=-i(\p-\b{\p})$, $dd^c=2i\p\b{\p}$, and $\psi_{i}$ is given by \eqref{invariant Kahler potential}. By Proposition \ref{invariant prop}, each $\alpha_i$ is $L_i$-invariant.
 Let $\chi_i=\chi_i(u):\Sigma\to[0,1]$ be any smooth cut-off function on $\Sigma$, which is equal to $1$ near $c_i$ and vanishes outsides a small neighborhood of $c_i$. For example, one can take $\chi_i(u)$ satisfying
\begin{align*}
\chi_i(u)=
\begin{cases}
& 1,\quad u\in c_i\times [0,1/2];\\
&0, \quad u\in\Sigma\backslash (c_i\times [0,3/4]).	
\end{cases}
\end{align*}
For any $\phi$-equivariant map $f:\wt{\Sigma}\to \mr{D}^{\mr{I}}_{p,q}$, the differential form 
$$f^*\omega-\sum_{i=1}^n d(\chi_i f^*\alpha_i)$$
descends to a well-defined form on $\Sigma$, and has compact support in $\Sigma_o$. Hence it defines a class 
$$\left[f^*\omega-\sum_{i=1}^nd(\chi_i f^*\alpha_i)\right]_c\in \mr{H}^2_{\text{dR,comp}}(\Sigma_o,\mb{R})$$
in the de Rham cohomology group with compact support.

 On the other hand, for any $\phi$-equivariant map $f:\wt{\Sigma}\to \mr{D}^{\mr{I}}_{p,q}$, and for each $i$, let $f_i:\wt{\Sigma}\to \op{D}^I_{p,q}\cup\{W_i\}$ be a $\phi(c_i)$-equivariant smooth map   such that $f_i=f$ in a small neighborhood $\wt{I_{1/2}}$ of $\wt{c_i}$ and is constant $W_i$ outside $\wt{I_{3/4}}$, where $I_a:=c_i\times [0,a)$ and $W_i$ is a fixed point of $L_i$, where $\wt{\bullet}=\pi^{-1}(\bullet)$ denotes the lifting of $\bullet$, $\pi:\wt{\Sigma}\to\Sigma$ is the covering map. In fact, if $W_i$ is a fixed point of $\phi(c_i)$, then $W_i$ gives a constant section of the associated bundle $\wt{I_1}\times_{\phi(c_i)}(\op{D}^I_{p,q}\cup \{W_i\})\to I_1$. The $\phi$-equivariant map $f$ also gives a section 
 $f|_{I_1}:I_1\to \wt{I_1}\times_{\phi(c_i)}\op{D}^I_{p,q}\subset \wt{I_1}\times_{\phi(c_i)}(\op{D}^I_{p,q}\cup \{W_i\})$ by restriction. Hence we can construct a smooth section $f_i:I_1\to \wt{I_1}\times_{\phi(c_i)}(\op{D}^I_{p,q}\cup \{W_i\})$ such that $f_i=f$ near  $c_i$, and $f_i\equiv W_i$ outside a small collar neighborhood of $c_i$, which also can be viewed as a $\phi(c_i)$-equivariant map $f_i:\wt{I_1}\to\op{D}^I_{p,q}\cup\{W_i\}$. Moreover, the equivariant map $f_i$ can be chosen such that the norm of the differential $(f_i)_*$ is exponentially decaying near the boundary  $\partial(f_i^{-1}(W_i))$ of  $f_i^{-1}(W_i)$. Hence $(f_i^*\alpha_i)(p), p\in \wt{I_1}\backslash f_i^{-1}(W_i)$ converges to zero as $p$ goes to $\partial f_i^{-1}(W_i)$, which can be extended to a $\phi$-equivariant one-form on $\wt{\Sigma}$ by  zero extension, we denote this one-form also by $f_i^*\alpha_i$ for convenience. 
 
 \begin{rem}
Here is how $f_i$ is constructed. 
Let $W_0$ be a fixed point of $L_i=\phi(c_i)$.  If $W_0 \in \mathscr{X}$, then $L_i$
is elliptic, and one can construct $f_i$ as a constant map. Hence suppose that $W_0$ is an ideal point.  

Consider a Busemann function $B=B_{W_0}$ based at $W_0$ with the corresponding geodesic flow $\Phi_t$ pointing toward $W_0$, i.e.,
$$\frac{d(\Phi_t)}{dt}=\nabla B,$$ and $|\nabla B|=1$.

We give coordinates on $\tilde{I_1}$ as $(s, t)$ where $s$ is  the parametrization of $\tilde{c_i}$ and $t$ is a parameter for $I_t$.

\begin{align*}
\begin{split}
f_i(s,t)  = \begin{cases}
 \Phi_{\frac{t-\frac{1}{2}}{\frac{3}{4}-t}}\circ f(s,t)	& t\in [\frac{1}{2}, \frac{3}{4})\\
 	f(s,t)	&t \in [0, \frac{1}{2}]\\
 	W_0   & t\in [\frac{3}{4},1].
 \end{cases}
 \end{split}
\end{align*}
 Then at $t=1/2$, $f_i=f$ and as $t\to \frac{3}{4}$, 
$\lim_{t\ra \frac{3}{4}}f_i(s,t)=\Phi_\infty\circ f(s,\frac{3}{4})=W_0$.
Geometrically, $f_i$ maps a segment $s \times [0,\frac{3}{4}]$ to an infinite arc
from $f(s, 0)$ to $W_0$.

Since we can take any $\phi$-eqivariant map, by perturbing $f$ a little bit near $c_i$, 
we may assume that $\tilde{I_1}$ is mapped into the orbit  $N_{W_0}f(\tilde c_i)$ where $N_{W_0}$ is the horospherical subgroup fixing $W_0$.
Then $(f_i^* \alpha_i)(v)=\alpha_i((\Phi_u)_*(f_*(v)))$ on $t\in [\frac{1}{2}, \frac{3}{4})$, and since the flow lines of $\Phi_u$ are geodesics converging to $W_0$, by  (\ref{contraction}), $(\Phi_u)_*(f_*(v))$ tends to zero exponentially fast as $t\ra\frac{3}{4}$.
This shows that $f_i^*\alpha_i$ is supported on a small neighborhood of $\tilde{c_i}$ and zero elsewhere.
 \end{rem}
 
 Note that the form $f^*\omega-\sum_{i=1}^n d(f_i^*\alpha_i)$ is  a $\phi$-equivariant differential form on $\wt{\Sigma}$ and vanishes near $\wt{\p\Sigma}$, so it descends to a
  differential form on $\Sigma$ with compact support. Moreover, $\sum_{i=1}^n(f_i^*\alpha_i-\chi_if^*\alpha_i)$ also descends to a one-form on $\Sigma$ with compact support, hence
 \begin{align*}
\begin{split}
  \left[f^*\omega-\sum_{i=1}^n d(f_i^*\alpha_i)\right]_c=\left[f^*\omega-\sum_{i=1}^nd(\chi_i f^*\alpha_i)\right]_c\in \mr{H}^2_{\text{dR,comp}}(\Sigma_o,\mb{R}).
 \end{split}
\end{align*}
The differential form $f^*\omega-\sum_{i=1}^nd(f_i^*\alpha_i)$  defines a cochain as follows: 
\begin{align*}
\begin{split}
\left (f^*\omega-\sum_{i=1}^nd(f_i^*\alpha_i)\right)(\sigma_2)&:=\int_{\mathrm{Str}(f)(\sigma_2)}\omega-\sum_{i=1}^n\int_{\mathrm{Str}(f_i)(\sigma_2)}d\alpha_i\\
&=\int_{\mathrm{Str}(f)(\sigma_2)}\omega-\sum_{i=1}^n\int_{\mathrm{Str}(f_i)(\sigma_2)}\omega,
 \end{split}
\end{align*}
for any singular $2$-simplex $\sigma_2\in C_2(\wt{\Sigma},\mb{R})$.
The $\ell^\infty$-norm for a cochain is defined by 
\begin{align}\label{infinity norm}
\begin{split}
  \|\bullet\|_\infty:=\sup_{\sigma\in C_*(\wt{\Sigma},\mb{R})}|\bullet(\sigma)|.
 \end{split}
\end{align}
Then the $\ell^\infty$-norm of the cochain defined by $f^*\omega-\sum_{i=1}^n d( f_i^*\alpha_i)$ is given by
\begin{align*}
\begin{split}
  \left\|f^*\omega-\sum_{i=1}^nd(f_i^*\alpha_i)\right\|_\infty&=\sup_{\sigma_2\in  C_2(\wt{\Sigma},\mb{R})}\left|\left(f^*\omega-\sum_{i=1}^nd(f_i^*\alpha_i)\right)(\sigma_2)\right|\\
  &\leq \sup_{\sigma_2\in  C_2(\wt{\Sigma},\mb{R})}\left\|\int_{\mathrm{Str}(f)(\sigma_2)}\omega\right\|+ \sum_{i=1}^n\sup_{\sigma_2\in  C_2(\wt{\Sigma},\mb{R})}\left\|\int_{\mr{Str}(f_i)(\sigma_2)}\omega\right\|\\
  &\leq (n+1)\|\omega\|_{\infty}\leq (n+1)\min\{p,q\}\pi,
 \end{split}
\end{align*}
where the last inequality follows from \cite[Theorem 1]{DT}. So the cochain is bounded and defines a bounded class 
\begin{align*}
\begin{split}
  \left[f^*\omega-\sum_{i=1}^n d(f_i^*\alpha_i)\right]_b\in \mr{H}^2_b(\Sigma,\mb{R}).
 \end{split}
\end{align*}
 Note that $\sum_{i=1}^nf_i^*\alpha_i$ also defines a cochain by 
\begin{align*}
\begin{split}
  \left(\sum_{i=1}^nf_i^*\alpha_i\right)(\sigma_1)=\sum_{i=1}^n\int_{\mathrm{Str}(f_i)(\sigma_1)}\alpha_i,
 \end{split}
\end{align*}
 for any singular $1$-simplex $\sigma_1\in C_1(\wt{\Sigma},\mb{R})$.
 This cochain is also bounded, i.e. $\|\sum_{i=1}^nf_i^*\alpha_i\|_\infty<+\infty$. In fact, 
\begin{align*}
\begin{split}
  \left|\sum_{i=1}^n\int_{\mathrm{Str}(f_i)(\sigma_1)}\alpha_i\right|
 \leq \sum_{i=1}^n\left|\int_{\mathrm{Str}(f_i)(\sigma_1)}\alpha_i\right|\leq \sum_{i=1}^n\|\alpha_i\|_\infty\leq n\cdot\min\{p,q\}\pi 
  \end{split}
\end{align*}
where the last inequality follows from Proposition \ref{potential}.
Thus
\begin{align*}
\begin{split}
 [f^*\omega]_b=  \left[f^*\omega-d\left(\sum_{i=1}^n f_i^*\alpha_i\right)\right]_b=\left[f^*\omega-\sum_{i=1}^n d(f_i^*\alpha_i)\right]_b\in \mr{H}^2_b(\Sigma,\mb{R}).
 \end{split}
\end{align*}

There exist the following several natural maps 
\begin{equation*}
\mr{H}_b^2(\Sigma,\mb{R})\stackrel{j_{\p\Sigma}^{-1}}{\longrightarrow}	\mathrm{H}^2_b(\Sigma,\p\Sigma,\mb{R})\stackrel{c}{\longrightarrow}\mathrm{H}^2(\Sigma,\p\Sigma,\mb{R})\stackrel{j_o^{-1}}{\longrightarrow}\mathrm{H}^2_{\text{comp}}(\Sigma_o,\mb{R})\stackrel{D}{\longrightarrow}
\mathrm{H}^2_{\text{dR,comp}}(\Sigma_o,\mb{R})
\end{equation*}
where $j_{\p\Sigma}$ is the natural isomorphism induced from the inclusion  $C^2_b(\Sigma,\p\Sigma,\mb{R})\to C^2_b(\Sigma,\mb{R})$,  see \cite[\S 2.2, (2.e)]{BIW},
{ $c$ is the canonical map induced from the inclusion $C^2_b(\Sigma,\p\Sigma,\mb{R})\subset C^2(\Sigma,\p\Sigma,\mb{R})$, $j_o$ is the natural map from the singular cohomology with compact support to the relative  singular cohomology, which is an isomorphism.}  Recall that the singular  cohomology with compact support
is the complex  $C^*_{\text{comp}}(\Sigma,\mb{R})$ of all cochains have compact support, 
 a cochain $u \in C^{*}(\Sigma_o, \mb{R})$ has compact support if and only if there exists a compact set $K \subset \Sigma_o$ such that $u \in C^{*}(\Sigma_o, \Sigma_o-K, \mb{R})$,
see \cite[Chapter IX,\S 3]{WM}.  The map $D$ is the de Rham map between the singular cohomology with compact support and de Rham cohomology with compact support, which is also an isomorphism, see e.g. \cite[Appendix, Page 261]{WM}. Under these canonical maps  we have
\begin{align*}
\begin{split}
  Dj_o^{-1}cj^{-1}_{\p\Sigma}([f^*\omega]_b)&=Dj_o^{-1}c j^{-1}_{\p\Sigma}\left(\left[f^*\omega-\sum_{i=1}^n d(f_i^*\alpha_i)\right]_b\right)\\
  &=  \left[f^*\omega-\sum_{i=1}^n d(f_i^*\alpha_i)\right]_c\\
  &=\left[f^*\omega-\sum_{i=1}^nd(\chi_i f^*\alpha_i)\right]_c\in \mr{H}^2_{\text{dR,comp}}(\Sigma_o,\mb{R}).
 \end{split}
\end{align*}
Hence the Toledo invariant can be given by
\begin{align}\label{Toledo1}
\begin{split}
\op{T}(\Sigma,\phi)&=\langle j^{-1}_{\p\Sigma}i_\Sigma\phi^*_b(\kappa_G^b),[\Sigma,\p\Sigma]\rangle\\
&=\langle cj^{-1}_{\p\Sigma}i_\Sigma\phi^*_b(\kappa_G^b),[\Sigma,\p\Sigma]\rangle\\
&=\frac{1}{2\pi}\int_\Sigma\left(f^*\omega-\sum_{i=1}^nd(\chi_i f^*\alpha_i)\right).
\end{split}
\end{align}
 By the same proof as in \cite[Proposition-definition 4.1]{KM}, the de Rham cohomology class $[f^*\omega-\sum_{i=1}^nd(\chi_i f^*\alpha_i)]_c$ with compact 
support depends only on the conjugate class of the representation $\phi$ (independent of $f$), and following \cite{KM}, we set
\begin{equation}
  \left[\phi^*\omega\right]_c:=\left[f^*\omega-\sum_{i=1}^nd(\chi_i f^*\alpha_i)\right]_c\in \mathrm{H}^2_{\text{dR,comp}}(\Sigma_o,\mb{R}).
\end{equation}
Hence 
\begin{equation}\label{Toledo invariant compact form}
  \op{T}(\Sigma,\phi)=\frac{1}{2\pi}\int_\Sigma\left(f^*\omega-\sum_{i=1}^nd(\chi_i f^*\alpha_i)\right)=\frac{1}{2\pi}\int_\Sigma \left[\phi^*\omega_{\op{D}^{\op{I}}_{p,q}}\right]_c.
\end{equation}

\begin{rem}
When $\p\Sigma\neq\emptyset$ and $G=\op{PU}(1,m)$, Koziarz and Maubon \cite[Proposition-Definition 4.1]{KM} introduced the invariant $\frac{1}{2\pi}\int_\Sigma \left[\phi^*\omega\right]_c$ by using the de Rham cohomology with compact support. In fact, the invariant can be shown to be equal to the Toledo invariant defined by  Burger-Iozzi-Wienhard, see \cite[Remark 6]{BIW}.

\end{rem}

\subsection{Relation to the first Chern class} \label{RFCC}

Consider the bounded symmetric domain of type $\mr{I}$
$$ \op{D}^{\op{I}}_{p,q}=\{W\in M(p,q,\mb{C}), I_q-W^*W>0\}.$$
One can check that $W^*\in \op{D}^{\op{I}}_{q,p}$. Denote by $E=\mb{C}^{p+q}$ and let $\Omega$ be a Hermitian form on $E$ with matrix given by
\begin{align*}
\begin{split}
  I_{p,q}:=\left(\begin{matrix}
  I_p&0 \\
  0&-I_q 
\end{matrix}\right)
 \end{split}
\end{align*}
i.e. for any two column vectors $X,Y\in E$, $\Omega(X,Y)=X^\top I_{p,q} \o{Y}$. We consider the trivial Hermitian vector bundle 
$$(F,\Omega):=\op{D}^{\op{I}}_{p,q}\times (E,\Omega)\to \op{D}^{\op{I}}_{p,q}.$$
Recall that  $\mc{J}(F,\Omega)$ is the space of all smooth sections $\mbf{J}$ of $\mr{End}(F)$ which satisfy the following conditions:
  \begin{itemize}
  \item[(i)] $\mbf{J}^2=-\op{Id}$;
  \item[(ii)] $\Omega(\mbf{J}\cdot,\mbf{J}\cdot)=\Omega(\cdot,\cdot)$;
  \item[(iii)] $i\Omega(\cdot, \mbf{J}\cdot)$ is positive definite. 
\end{itemize}
For the trivial Hermitian vector bundle $(F,\Omega)$, there exists the following canonical linear transformation  
\begin{align*} 
\begin{split}
  \mbf{J}_{\mr{I}}(W):=i\left(\begin{matrix}
  (I_p-WW^*)^{-1}(I_p+WW^*)& -2(I_p-WW^*)^{-1}W\\
  2(I_q-W^*W)^{-1}W^*&-(I_q-W^*W)^{-1}(I_q+W^*W) 
\end{matrix}\right).
 \end{split}
\end{align*}
\begin{prop}
$\mbf{J}_{\mr{I}}\in \mc{J}(F,\Omega)$.	
\end{prop}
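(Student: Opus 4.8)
Our plan is to check the three defining conditions for $\mbf{J}_{\mr{I}}(W)$ to lie in $\mc{J}(F,\Omega)$; that $W\mapsto\mbf{J}_{\mr{I}}(W)$ is a smooth section of $\mr{End}(F)$ is clear, as each block is a smooth (in fact rational, singularity-free) function of $W\in\op{D}^{\op{I}}_{p,q}$. Write $A:=I_p-WW^*$ and $B:=I_q-W^*W$; for $W\in\op{D}^{\op{I}}_{p,q}$, equivalently $\|W\|_{\mr{op}}<1$, both are positive definite Hermitian matrices, hence invertible. The entire computation rests on the elementary identity $W^*(I_p-WW^*)=(I_q-W^*W)W^*$, i.e. $W^*A=BW^*$; inverting gives $W^*A^{-1}=B^{-1}W^*$, and taking adjoints gives $A^{-1}W=WB^{-1}$.

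Write $\mbf{J}_{\mr{I}}(W)=iM$, where a short simplification (using $I_p+WW^*=2I_p-A$ and $I_q+W^*W=2I_q-B$) gives $M=\begin{pmatrix}2A^{-1}-I_p & -2A^{-1}W\\ 2B^{-1}W^* & I_q-2B^{-1}\end{pmatrix}=-I_{p,q}+2N$ with $N=\begin{pmatrix}A^{-1}&-A^{-1}W\\ B^{-1}W^*&-B^{-1}\end{pmatrix}$. Condition (i) amounts to $M^2=\op{Id}$: using the commutation relations above one computes $N^2=\op{diag}(A^{-1},B^{-1})$ and $I_{p,q}N+NI_{p,q}=2\,\op{diag}(A^{-1},B^{-1})$, whence $M^2=I_{p,q}^2-2(I_{p,q}N+NI_{p,q})+4N^2=\op{Id}$ and therefore $\mbf{J}_{\mr{I}}(W)^2=-M^2=-\op{Id}$. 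For condition (ii): in the convention $\Omega(X,Y)=X^\top I_{p,q}\o Y$, preservation of $\Omega$ is equivalent to $M^*I_{p,q}M=I_{p,q}$; computing $M^*$ block by block and rewriting its off-diagonal blocks via $WB^{-1}=A^{-1}W$ and $W^*A^{-1}=B^{-1}W^*$ yields $M^*=I_{p,q}MI_{p,q}$, so that $M^*I_{p,q}M=I_{p,q}M^2=I_{p,q}$, i.e. $\mbf{J}_{\mr{I}}(W)$ preserves $\Omega$.

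For condition (iii) we would avoid a direct positivity computation and argue by continuity. Granting (i) and (ii), the sesquilinear form $h_W:=i\Omega(\,\cdot\,,\mbf{J}_{\mr{I}}(W)\,\cdot\,)$ is automatically Hermitian (this follows from $\mbf{J}_{\mr{I}}(W)^2=-\op{Id}$ together with $\Omega$-invariance) and nondegenerate (since $\Omega$ is nondegenerate and $\mbf{J}_{\mr{I}}(W)$ is invertible). As $W\mapsto\mbf{J}_{\mr{I}}(W)$ is continuous, $W\mapsto h_W$ is a continuous family of nondegenerate Hermitian forms on the fixed finite-dimensional space $E$, so its signature is locally constant on $\op{D}^{\op{I}}_{p,q}$; and since $\op{D}^{\op{I}}_{p,q}=\{W:\|W\|_{\mr{op}}<1\}$ is convex, hence connected, the signature is the same at every $W$. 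At $W=0$ one has $\mbf{J}_{\mr{I}}(0)=iI_{p,q}$ and $h_0(X,X)=\sum_j|X_j|^2>0$; hence $h_W$ is positive definite for every $W\in\op{D}^{\op{I}}_{p,q}$, which proves (iii) and the proposition.

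The only genuine effort is the block-matrix bookkeeping in steps (i) and (ii): there is no conceptual obstacle, but one must insert the commutation relations $A^{-1}W=WB^{-1}$ and $W^*A^{-1}=B^{-1}W^*$ at precisely the right places. An alternative, essentially equivalent route is to verify the equivariance $\mbf{J}_{\mr{I}}(L(W))=L\,\mbf{J}_{\mr{I}}(W)\,L^{-1}$ for every $L=\begin{pmatrix}a&b\\ c&d\end{pmatrix}\in\mr{U}(p,q)$ directly from $L(W)=(aW+b)(cW+d)^{-1}$ and the relations \eqref{relation on U(p,q)} on the blocks of $L$; since conditions (i)--(iii) are preserved under conjugation by elements of $\mr{U}(p,q)$ and this group acts transitively on $\op{D}^{\op{I}}_{p,q}$, the whole statement then reduces at once to the trivial case $W=0$. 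This merely replaces the computations of (i)--(ii) by one matrix computation of comparable length.
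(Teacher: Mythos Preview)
Your proof is correct. Parts (i) and (ii) follow essentially the same approach as the paper, organized slightly differently: the paper squares $M$ directly using the same commutation relation $A^{-1}W=WB^{-1}$, and for (ii) it verifies the infinitesimal condition $\mbf{J}_{\mr{I}}^*I_{p,q}+I_{p,q}\mbf{J}_{\mr{I}}=0$, which is equivalent to your $M^*=I_{p,q}MI_{p,q}$ (both say $\mbf{J}_{\mr{I}}\in\mathfrak{u}(p,q)\cap\mr{U}(p,q)$).

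Part (iii) is where you genuinely diverge. The paper proves positive definiteness explicitly by exhibiting an LDU factorization
\[
-iI_{p,q}\mbf{J}_{\mr{I}}=\begin{pmatrix}I_p&0\\-2W^*(I_p+WW^*)^{-1}&I_q\end{pmatrix}\begin{pmatrix}A^{-1}(I_p+WW^*)&0\\0&B(I_q+W^*W)^{-1}\end{pmatrix}\begin{pmatrix}I_p&-2(I_p+WW^*)^{-1}W\\0&I_q\end{pmatrix},
\]
which is of the form $C^*DC$ with $D$ positive definite. You instead argue by continuity: $h_W$ is Hermitian and nondegenerate for every $W$, its signature is locally constant, $\op{D}^{\op{I}}_{p,q}$ is connected, and $h_0$ is visibly positive definite. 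Your argument is shorter and more conceptual; the paper's is constructive and gives an explicit diagonalization. Either is perfectly adequate here, and your continuity argument is a standard and efficient way to handle this kind of verification.
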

\begin{proof}
By using $W(I_q-W^*W)^{-1}=(I_p-WW^*)^{-1}W$, one can check that 
	\begin{align*}
\begin{split}
  \left(\begin{matrix}
  -(I_p-WW^*)^{-1}(I_p+WW^*)& 2(I_p-WW^*)^{-1}W\\
  -2(I_q-W^*W)^{-1}W^*&(I_q-W^*W)^{-1}(I_q+W^*W) 
\end{matrix}\right)^2=\left(\begin{matrix}
  I_p&0 \\
  0& I_q
\end{matrix}\right).
 \end{split}
\end{align*}
Hence $\mbf{J}_{\mr{I}}^2=-\op{Id}$. By a direct checking, one has 
$${\mbf{J}}^*_{\mathrm{I}}I_{p,q}+I_{p,q}\mbf{J}_{\mr{I}}=0,$$
which is equivalent to $\mbf{J}^\top_FI_{p,q}\o{\mbf{J}_{\mr{I}}}=I_{p,q}$.
On the other hand, one has
\begin{align*}
\begin{split}
  -iI_{p,q}\mbf{J}_{\mr{I}} &=\left(\begin{matrix}
  (I_p-WW^*)^{-1}(I_p+WW^*)& -2(I_p-WW^*)^{-1}W\\
  -2(I_q-W^*W)^{-1}W^*&(I_q-W^*W)^{-1}(I_q+W^*W) 
\end{matrix}\right)\\
&=\left(\begin{matrix}
  I_p&0 \\
  -2W^*(I_p+WW^*)^{-1}& I_q
\end{matrix}\right)\cdot\\
&\left(\begin{matrix}
 (I_p-WW^*)^{-1}(I_p+WW^*)  &0 \\
 0 &  (I_q-W^*W)(I_q+W^*W)^{-1}
\end{matrix}\right)\cdot\\
&\left(\begin{matrix}
  I_p& -2(I_p+WW^*)^{-1}W \\
  0& I_q 
\end{matrix}\right),
 \end{split}
\end{align*}
which follows that $-iI_{p,q} \mbf{J}_{\mr{I}}>0$. By conjugation, one has $iI_{p,q}\o{\mbf{J}_{\mr{I}}}>0$. Hence $i\Omega(\cdot, \mbf{J}_{\mr{I}}\cdot)$ is positive definite. So $\mbf{J}_{\mr{I}}\in \mc{J}(F,\Omega)$.
\end{proof}
\begin{rem}\label{rem3.4}
The almost complex structure $\mbf{J}_{\mr{I}}$ is a smooth map $\mbf{J}_{\mr{I}}:\mr{D}^{\mr{I}}_{p,q}\to \mc{J}(E,\Omega)$, which is also an isomorphism between $\mr{D}^{\mr{I}}_{p,q}$ and $\mc{J}(E,\Omega)$. For any $Z\in \mr{U}(p,q)$ and $J\in \mc{J}(E,\Omega)$, then 
$ZJ Z^{-1}\in \mc{J}(E,\Omega)$. By the bijection $\mbf{J}_{\mr{I}}$, it induces an action on $\mr{D}^{\mr{I}}_{p,q}$ by
\begin{align*}
\begin{split}
  Z(W):=\mbf{J}^{-1}_{\mr{I}}(Z\mbf{J}_{\mr{I}}(W)Z^{-1})=(Z_1W+Z_2)(Z_3W+Z_4)^{-1},
 \end{split}
\end{align*}
where $Z=\left(\begin{matrix}
  Z_1&Z_2 \\
  Z_3&Z_4 
\end{matrix}\right)\in \op{U}(p,q)$. The isomorphism $\op{U}(p,q)/(\op{U}(p)\times \op{U}(q))\cong \mc{J}(E,\Omega)$ is given by $Z\cdot (\op{U}(p)\times \op{U}(q))\mapsto ZJ_0 Z^{-1}$, where $J_0=iI_{p,q}$ and $\op{U}(p)\times \op{U}(q)\cong \{Z\in \op{U}(p,q): ZJ_0Z^{-1}=J_0\}$.
\end{rem}
Denote
\begin{align*}
\begin{split}
  V:=\left(\begin{matrix}
  I_p & -W \\
  W^*& -I_q 
\end{matrix}\right).
 \end{split}
\end{align*}
Then $\mbf{J}_{\mr{I}}$ can be decomposed as the following form
\begin{align*}
\begin{split}
  \mbf{J}_{\mr{I}}=V\left(\begin{matrix}
  iI_p& 0\\
  0& -iI_q
\end{matrix}\right)V^{-1}.
 \end{split}
\end{align*}
\begin{rem}
Let $a$, $b$ be two Hermitian matrices satisfying
\begin{align*}
\begin{split}
  a^2=(I_p-WW^*)^{-1},\quad b^2=(I_q-W^*W)^{-1}.
 \end{split}
\end{align*}
Then the following matrix 
\begin{align*}
\begin{split}
  \wt{V}:=V\left(\begin{matrix}
  a&0 \\
  0&b 
\end{matrix}\right)=\left(\begin{matrix}
  a&-Wb \\
  W^*a&-b 
\end{matrix}\right)
 \end{split}
\end{align*}
is in $\op{U}(p,q)$ with $\wt{V}(0)=W$. Moreover $\mbf{J}_{\mr{I}}=V(iI_{p,q})V^{-1}=\wt{V}(iI_{p,q})\wt{V}^{-1}$.
\end{rem}
Now we can define a connection on $F$ by 
\begin{align*}
\begin{split}
  \n^F=V\left(\begin{matrix}
  d+(I_p-WW^*)^{-1}\b{\p}(I_p-WW^*)& 0\\
  0& d+ (I_p-WW^*)^{-1}\p(I_p-WW^*)
\end{matrix}\right)V^{-1}.
 \end{split}
\end{align*}
Then $[\n^F,\mbf{J}_{\mr{I}}]=0$. By a direct calculation, one has $\n^F=d+C$ where $C$ is given by
\begin{align*}
\begin{split}
  C=V\left(\begin{matrix}
  0&(I_p-WW^*)^{-1}dW \\
  (I_q-W^*W)^{-1}dW^*&0 
\end{matrix}\right)V^{-1}.
 \end{split}
\end{align*}
Then $C^*\Omega+\Omega C=0$, which follows that $\n^F$ preserves the Hermitian form $\Omega$.

Let ${F}=F^+\oplus F^-$ be the decomposition of $F$ corresponding to the $\pm i$-eigenspaces of $\mbf{J}_{\mr{I}}$. Denote by $\{e_1,\cdots, e_{p+q}\}$ the standard basis of $E$, and set 
\begin{align*}
\begin{split}
  (f_1,\cdots,f_{p+q}):=(e_1,\cdots, e_{p+q})V.
 \end{split}
\end{align*}
Then $\{f_1,\cdots, f_p\}$ forms a basis of $F^+$, while $\{f_{p+1},\cdots, f_{p+q}\}$ is a basis of $F^-$.
Denote $\n^{F^+}:=\n^F|_{F^+}$ and $\n^{F^-}:=\n^F|_{F^-}$. Since $[\n^F,\mbf{J}_{\mr{I}}]=0$, so $\n^F=\n^{F^+}\oplus \n^{F^-}$. The first Chern forms of $F^+$ and $F^-$ can be given by
\begin{align*}
\begin{split}
  c_1(F^+,\n^{F^+})=\frac{i}{2\pi}\p\b{\p}\log\det(I_p-WW^*)=-\frac{1}{2}\cdot\frac{1}{2\pi}\omega_{\mr{D}^I_{p,q}}
 \end{split}
\end{align*}
and 
\begin{align*}
\begin{split}
  c_1(F^-,\n^{F^-})=\frac{i}{2\pi}\b{\p}\p\log\det(I_p-WW^*)=\frac{1}{2}\cdot\frac{1}{2\pi}\omega_{\mr{D}^I_{p,q}}.
 \end{split}
\end{align*}

For any representation $\phi:\pi_1(\Sigma)\to \op{U}(E,\Omega)$, and any $\mbf{J}\in \mc{J}(\mc{E},\Omega)$, it gives a $\phi$-equivariant map from $\wt{\Sigma}$ into $\mc{J}(E,\Omega)$, which is also denoted by $\mbf{J}$. Using the following identification
$$\mbf{J}_{\mr{I}}: \mr{D}^{\mr{I}}_{p,q}\xrightarrow{\cong}\mc{J}(E,\Omega),$$
see Remark \ref{rem3.4},  $\mbf{J}\in \mc{J}(\mc{E},\Omega)$ defines a $\phi$-equivariant map by 
\begin{align*}
\begin{split}
  \wt{\mbf{J}}:\wt{\Sigma}\to \mr{D}^{\mr{I}}_{p,q},\quad z\mapsto W=\wt{\mbf{J}}(z)=\mbf{J}^{-1}_{\mr{I}}(\mbf{J}(z))
 \end{split}
\end{align*}
which is also equivalent to a smooth section of the associated bundle $\wt{\Sigma}\times_\phi \mr{D}^{\mr{I}}_{p,q}\to \Sigma$, we denote it also by $\wt{\mbf{J}}$. Consider the following vector bundle
\begin{align*}
\begin{split}
  F_\phi:=\wt{\Sigma}\times_\phi F=\wt{\Sigma}\times_\phi(\mr{D}^{\mr{I}}_{p,q}\times E)
 \end{split}
\end{align*}
over $\wt{\Sigma}\times_\phi\mr{D}^{\mr{I}}_{p,q}$. By pullback, $\wt{\mbf{J}}^*F_\phi$ is a vector bundle over $\Sigma$. The almost complex structure $\mbf{J}_{\mr{I}}$ gives rise to a canonical almost complex structure on $F_\phi$ by
\begin{align*}
\begin{split}
  \mbf{J}_{F_\phi}([z,W,e]):=[z,W,\mbf{J}_{\mr{I}}(W)e]
 \end{split}
\end{align*}
for any $[z,W,e]\in \wt{\Sigma}\times_\phi(\mr{D}^{\mr{I}}_{p,q}\times E)$. Then $(\wt{\mbf{J}}^*F_\phi,\Omega)$ is a Hermitian vector bundle over $\Sigma$, which is equipped with a pullback almost complex structure $\wt{\mbf{J}}^*\mbf{J}_{F_\phi}$. Moreover, one has the following isomorphism
\begin{align}\label{tau isomorphism}
\begin{split}
  \tau:\wt{\mbf{J}}^*F_\phi\to \mc{E},\quad \tau:(z_0,[z,W,e])\mapsto [z,e],
 \end{split}
\end{align}
where $\wt{\mbf{J}}(z_0)=[z,W]\in \wt{\Sigma}\times_\phi \mr{D}^{\mr{I}}_{p,q}$. The isomorphism commutes with the almost complex structures, i.e. $\tau\circ (\wt{\mbf{J}}^*\mbf{J}_{F_\phi})=\mbf{J}\circ \tau$.

 With respect to the almost complex structure $\mbf{J}_{F_\phi}$, the complex vector bundle $F_\phi$ has the following  decomposition
\begin{align*}
\begin{split}
  F_\phi=F_\phi^+\oplus F_{\phi}^-=(\wt{\Sigma}\times_\phi F^+)\oplus (\wt{\Sigma}\times_\phi F^-),
 \end{split}
\end{align*}
which corresponds to the $+i,-i$-eigensapces of $\mbf{J}_{F_\phi}$ respectively, where the action $\phi$ on $F^\pm$ is defined by $\phi(\gamma)(W,f):=(\phi(\gamma)W,\phi(\gamma)f)$, which is well-defined since $\mbf{J}_I(\phi(\gamma)W)=\phi(\gamma)\mbf{J}_I(W)\phi(\gamma)^{-1}$ for any $\gamma\in \pi_1(\Sigma)$. The connection $\n^F$ induces a natural connection $\n^{F_\phi}$ on $F_\phi$, in terms of the local frame $\{[z,W,f_j], i=1,\cdots,p+q\}$, the connection $\n^{F_\phi}$ is given by 
\begin{align*}
\begin{split}
 \n^{F_\phi}= \left(\begin{matrix}
  d+(I_p-WW^*)^{-1}\b{\p}(I_p-WW^*)& 0\\
  0& d+ (I_p-WW^*)^{-1}\p(I_p-WW^*)
\end{matrix}\right).
 \end{split}
\end{align*}
Similarly, $[\n^{F_\phi},\mbf{J}_{F_\phi}]=0$ and $\n^{F_\phi}$ preserves the Hermitian form $\Omega$. The first Chern form of $(F_\phi^+,\n^{F_\phi}|_{F^+_\phi})$ is 
\begin{align*}
\begin{split}
  c_1(F^+_\phi,\n^{F_\phi}|_{F^+_\phi})=\frac{i}{2\pi}\p\b{\p}\log\det(I_p-WW^*)=-\frac{1}{2}\cdot\frac{1}{2\pi}\omega_{\mr{D}^I_{p,q}}.
 \end{split}
\end{align*}
Since the two-form $\omega_{\mr{D}^I_{p,q}}$ is invariant under the $\op{U}(p,q)$-group, so it defines a well-defined two-form on $\wt{\Sigma}\times_\phi {\mr{D}^I_{p,q}}$, we denote it also by $\omega_{\mr{D}^I_{p,q}}$. Hence $\wt{\mbf{J}}^*\omega_{\mr{D}^I_{p,q}}$ is a two-form on $\wt{\Sigma}$. On the other hand, $\wt{\mbf{J}}^*\omega_{\mr{D}^I_{p,q}}$ is a $\phi$-equivariant two-form on $\wt{\Sigma}$, and also descends to a two-form on $\Sigma$. Moreover
\begin{align}\label{Chern forms 3}
\begin{split}
  \wt{\mbf{J}}^*  c_1(F^+_\phi,\n^{F_\phi}|_{F^+_\phi})=\frac{i}{2\pi}\op{Tr}_{F^+_\phi}((\wt{\mbf{J}}^*\n^{F_\phi})^2)=-\frac{1}{4\pi}\wt{\mbf{J}}^*\omega_{\mr{D}^I_{p,q}}.
 \end{split}
\end{align}

When restricted to a small collar neighborhood of $\p\Sigma$,  $\mc{E}\cong p^*(\mc{E}|_{\p\Sigma})$ near the boundary, where 
	$p:\p\Sigma\times [0,1]\to \p\Sigma$ denotes the natural projection. For any  $J\in \mc{J}(\mc{E}|_{\p\Sigma},\Omega)$, then $p^*J\in \mc{J}(\mc{E}|_{\p\Sigma\times[0,1]},\Omega)$.
	Denote 
	\begin{align}\label{J0space}
	\begin{split}
	\mc{J}_o(\mc{E},\Omega)=\{\mbf{J}\in \mc{J}(\mc{E},\Omega)|  &\mbf{J}=p^*J
	\text{ on a small collar neighborhood}\\
	&\text{ of } \p\Sigma, \text{ where } J \in \mc{J}(\mc{E}|_{\p\Sigma},\Omega)\}.	
	\end{split}
	\end{align}
	 For any $\mbf{J}\in \mc{J}_o(\mc{E},\Omega)$, one has  $\mbf{J}=\mbf{J}(x)$ depends only on $x$ near $\partial\Sigma$.
\begin{prop}\label{proppullbackconnection}
	 For any $\mbf{J}\in \mc{J}_o(\mc{E},\Omega)$, the pullback connection $\wt{\mbf{J}}^*\n^{F_\phi}$ is a peripheral connection on $\wt{\mbf{J}}^*F_\phi$.
\end{prop}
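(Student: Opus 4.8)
The plan is to carry the situation over to $\mc{E}$ via the isomorphism $\tau$ of \eqref{tau isomorphism}, which intertwines $\wt{\mbf{J}}^*\mbf{J}_{F_\phi}$ with $\mbf{J}$ and preserves the form $\Omega$, and then to verify the three defining conditions of a peripheral connection (Definition \ref{connection}) for the connection $\n:=\tau\circ(\wt{\mbf{J}}^*\n^{F_\phi})\circ\tau^{-1}$ on $\mc{E}$. Conditions (ii) and (iii) are in fact global and so hold a fortiori on a collar of $\p\Sigma$: from $[\n^{F_\phi},\mbf{J}_{F_\phi}]=0$ and the fact that $\n^{F_\phi}$ preserves $\Omega$ (both recorded in the text), the same identities survive the pullback by $\wt{\mbf{J}}$ and the transport by $\tau$, giving $[\n,\mbf{J}]=0$ and $\n$ preserving $\Omega$ on all of $\Sigma$. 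So the only condition that uses the hypothesis $\mbf{J}\in\mc{J}_o(\mc{E},\Omega)$ is (i), the local normal form $\n=d+C(x)\,dx$ near $\p\Sigma$.

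To get there, I first identify $\n$ globally. Lifting to $\wt\Sigma$, the bundle $\wt{\mbf{J}}^*F_\phi$ becomes the trivial bundle $\wt\Sigma\times E$ and $\wt{\mbf{J}}^*\n^{F_\phi}$ becomes $d+\wt{\mbf{J}}^*C$, where $d$ is the ordinary differential on $\wt\Sigma\times E$ and $C$ is the connection $1$-form on $\mr{D}^{\mr{I}}_{p,q}$ recorded in the text,
$$C=V\left(\begin{matrix} 0&(I_p-WW^*)^{-1}dW \\ (I_q-W^*W)^{-1}dW^*&0 \end{matrix}\right)V^{-1},$$
since $\n^F=d+C$ in the standard frame of $E$. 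Under the descent to $\Sigma$ and the identification $\tau$, the term $d$ becomes precisely the canonical flat connection of $\mc{E}$ (this is exactly the statement that $\tau^{-1}s$ lifts a $\phi$-equivariant $s:\wt\Sigma\to E$ to $z\mapsto(\wt{\mbf{J}}(z),s(z))$, whose $E$-component differential is $ds$), while $\wt{\mbf{J}}^*C$, being well defined on $\Sigma$ because $\n^{F_\phi}$ is, descends to an $\op{End}(\mc{E})$-valued $1$-form. Hence $\n=d+\wt{\mbf{J}}^*C$ globally.

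Finally, on a collar $\p\Sigma\times[0,1)$ the hypothesis $\mbf{J}\in\mc{J}_o(\mc{E},\Omega)$ forces $\mbf{J}=\mbf{J}(x)$ to depend only on the boundary coordinate $x$, and since $\wt{\mbf{J}}(z)=\mbf{J}_{\mr{I}}^{-1}(\mbf{J}(z))$ the equivariant map $\wt{\mbf{J}}$ depends only on $x$ there, say $\wt{\mbf{J}}(z)=W(x)$. Substituting $W=W(x)$, $dW=W'(x)\,dx$ in the formula for $C$ gives $\wt{\mbf{J}}^*C=C(x)\,dx$ for a matrix-valued function $C(x)$ which is $\phi(c)$-equivariant along each boundary circle $c$ and hence descends to an element of $A^0(\p\Sigma,\op{End}(\mc{E}))$; thus $\n=d+C(x)\,dx$ near $\p\Sigma$, which is condition (i). Combined with (ii) and (iii), this shows that $\n$, equivalently $\wt{\mbf{J}}^*\n^{F_\phi}$, is a peripheral connection. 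There is no conceptual difficulty here; the only thing to be careful about is tracking the chain of identifications ($\tau$, the lift to $\wt\Sigma$, and the descent of $\wt{\mbf{J}}^*C$) so that the ``$d$'' appearing is genuinely the canonical flat connection of $\mc{E}$.
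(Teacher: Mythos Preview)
Your proof is correct and follows essentially the same route as the paper's own argument: both verify the three conditions of Definition~\ref{connection} by noting that (ii) and (iii) are inherited from the corresponding global properties $[\n^{F_\phi},\mbf{J}_{F_\phi}]=0$ and $\n^{F_\phi}$ preserving $\Omega$, while (i) follows because $\mbf{J}\in\mc{J}_o(\mc{E},\Omega)$ makes $\wt{\mbf{J}}=\mbf{J}_{\mr{I}}^{-1}\circ\mbf{J}$ depend only on $x$ near $\p\Sigma$, so the pulled-back connection form has no $du$-component. The only cosmetic difference is that you transport everything to $\mc{E}$ via $\tau$ before checking the conditions, whereas the paper works directly on $\wt{\mbf{J}}^*F_\phi$; since $\tau$ intertwines the relevant structures this is harmless, and your extra care in identifying the ``$d$'' as the canonical flat connection is a welcome clarification.
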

\begin{proof}
	 The pullback almost complex structure $\mbf{J}$ depends only on $x$ near $\partial\Sigma$, so $\wt{\mbf{J}}^*\n^{F_\phi}$ has the form $d+C(x)dx$. Since $\n^{F_{\phi}}$ is a complex linear connection on $(F_\phi,\mbf{J}_{F_\phi})$, so 
	$$[\wt{\mbf{J}}^*\n^{F_\phi},\wt{\mbf{J}}^*\mbf{J}_{F_\phi}]=\wt{\mbf{J}}^*[\n^{F_\phi},\mbf{J}_{F_\phi}]=0.$$
 By the definition of $\n^F$, it preserves the Hermitian  form $\Omega$, so the induced connection $\n^{F_\phi}$ also preserves the Hermitian form $\Omega$ on $F_\phi$, hence the pullback connection $\wt{\mbf{J}}^*\n^{F_\phi}$ preserves $\Omega$. Thus the connection $\wt{\mbf{J}}^*\n^{F_\phi}$ is a peripheral connection on $\wt{\mbf{J}}^*F_\phi$.
\end{proof}
\begin{prop}\label{propchern}
For any $\mbf{J}\in \mc{J}_o(\mc{E},\Omega)$, then $\tau \wt{\mbf{J}}^*\n^{F_\phi}\tau^{-1}$ is a peripheral connection on $\mc{E}$ and
	\begin{align*}
		\int_{\Sigma}c_1(\mc{E}^+,\tau \wt{\mbf{J}}^*\n^{F_\phi}\tau^{-1}|_{\mc{E}^+})=-\frac{1}{4\pi}\int_\Sigma \wt{\mbf{J}}^*\omega_{\mr{D}^{\mr{I}}_{p,q}}.
	\end{align*}
\end{prop}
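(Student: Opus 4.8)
The plan is to identify the connection $\tau\,\wt{\mbf{J}}^*\n^{F_\phi}\,\tau^{-1}$ explicitly and then read off both assertions from facts already established about $C$ and about the Chern form of $F_\phi^+$.

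First I would unwind $\tau$ at the level of $\phi$-equivariant sections. A section of $\mc{E}$ is a $\phi$-equivariant map $s\colon\wt\Sigma\to E$, with canonical flat connection $d_{\mc E}s=ds$; a section of $\wt{\mbf{J}}^*F_\phi$ is, after pulling back along the $\phi$-equivariant map $z\mapsto W=\wt{\mbf{J}}(z)=\mbf{J}_{\mr I}^{-1}(\mbf{J}(z))$, again an equivariant $E$-valued map, and $\tau$ of \eqref{tau isomorphism} is precisely this matching (indeed, in the tautological trivializations over $\wt\Sigma$ it is the identity on $\wt\Sigma\times E$). Since $\n^{F}=d+C$ in the standard frame $\{e_1,\dots,e_{p+q}\}$ of $E$, with $C$ the connection form recalled above (satisfying $C^*\Omega+\Omega C=0$), pulling back along $\wt{\mbf{J}}$ gives
$$\tau\,\wt{\mbf{J}}^*\n^{F_\phi}\,\tau^{-1}=d_{\mc E}+\wt{\mbf{J}}^*C,$$
where $\wt{\mbf{J}}^*C$ is the $\op{End}(\mc E)$-valued $1$-form obtained from $C$ by the substitution $W=\wt{\mbf{J}}(z)$.

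Granting this identification, the three conditions of Definition~\ref{connection} are immediate. Condition (i): since $\mbf{J}\in\mc{J}_o(\mc{E},\Omega)$ depends only on $x$ near $\p\Sigma$, so does $W=\mbf{J}_{\mr I}^{-1}(\mbf{J}(x))$, whence $\wt{\mbf{J}}^*C=C(x)\,dx$ there. Condition (ii): $[\n^F,\mbf{J}_{\mr I}]=0$ pulls back to $[\wt{\mbf{J}}^*\n^{F_\phi},\wt{\mbf{J}}^*\mbf{J}_{F_\phi}]=0$, and $\tau$ intertwines $\wt{\mbf{J}}^*\mbf{J}_{F_\phi}$ with $\mbf{J}$ by \eqref{tau isomorphism}, so $[\tau\wt{\mbf{J}}^*\n^{F_\phi}\tau^{-1},\mbf{J}]=0$. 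Condition (iii): the relation $C^*\Omega+\Omega C=0$ survives the substitution $W=\wt{\mbf{J}}(z)$, so $d_{\mc E}+\wt{\mbf{J}}^*C$ preserves $\Omega$. (Alternatively, one can simply invoke Proposition~\ref{proppullbackconnection} together with the observation that conjugation by the $\Omega$-preserving, $\mbf{J}$-intertwining isomorphism $\tau$ carries peripheral connections to peripheral connections.)

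For the Chern number I would use naturality twice. Because $\n^{F_\phi}$ commutes with $\mbf{J}_{F_\phi}$ it preserves $F_\phi^+$, so $\wt{\mbf{J}}^*\n^{F_\phi}$ preserves $(\wt{\mbf{J}}^*F_\phi)^+=\wt{\mbf{J}}^*(F_\phi^+)$ with restriction $\wt{\mbf{J}}^*(\n^{F_\phi}|_{F_\phi^+})$, and $\tau$ maps $(\wt{\mbf{J}}^*F_\phi)^+$ isomorphically onto $\mc{E}^+$ intertwining the restricted connections. Hence, by invariance of the first Chern form under pullback by $\wt{\mbf{J}}$ and under the isomorphism $\tau$, followed by \eqref{Chern forms 3},
$$c_1\big(\mc{E}^+,\tau\wt{\mbf{J}}^*\n^{F_\phi}\tau^{-1}|_{\mc{E}^+}\big)=\wt{\mbf{J}}^*c_1\big(F_\phi^+,\n^{F_\phi}|_{F_\phi^+}\big)=-\frac{1}{4\pi}\,\wt{\mbf{J}}^*\omega_{\mr{D}^{\mr I}_{p,q}},$$
and integrating over $\Sigma$ finishes the proof. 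The computation is essentially bookkeeping; the one point requiring care is the first step, namely verifying that $\tau$ conjugates $\wt{\mbf{J}}^*\n^{F_\phi}$ precisely to $d_{\mc E}+\wt{\mbf{J}}^*C$ (in particular that $\tau$ carries the reference flat connection on $\wt{\mbf{J}}^*F_\phi$ to $d_{\mc E}$), since it is exactly this that reduces Definition~\ref{connection}(i)--(iii) to the properties of $C$ proved earlier and makes the naturality argument in the last paragraph go through.
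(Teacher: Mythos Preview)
Your proposal is correct and follows essentially the same route as the paper. The paper's proof is extremely terse: it invokes Proposition~\ref{proppullbackconnection} together with the isomorphism $\tau$ to deduce the peripheral property, and then uses naturality of $c_1$ under $\tau$ and pullback plus \eqref{Chern forms 3} for the integral formula---exactly the shortcut you offer in your parenthetical alternative, and exactly your argument for the Chern number. Your additional explicit identification $\tau\,\wt{\mbf{J}}^*\n^{F_\phi}\,\tau^{-1}=d_{\mc E}+\wt{\mbf{J}}^*C$ is a harmless (and correct) elaboration that the paper omits.
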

\begin{proof}
Since  $\wt{\mbf{J}}^*\n^{F_\phi}$ is a peripheral connection and by \eqref{tau isomorphism}, so $\tau \wt{\mbf{J}}^*\n^{F_\phi}\tau^{-1}$ is a peripheral connection on $\mc{E}$. On the other hand, one has
	\begin{align*}
		\int_{\Sigma}c_1(\mc{E}^+,\tau \wt{\mbf{J}}^*\n^{F_\phi}\tau^{-1}|_{\mc{E}^+})&=\int_\Sigma c_1(\wt{\mbf{J}}^*F^+_\phi, \wt{\mbf{J}}^*\n^{F_\phi}|_{\wt{\mbf{J}}^*F^+_\phi})=-\frac{1}{4\pi}\int_\Sigma \wt{\mbf{J}}^*\omega_{\mr{D}^{\mr{I}}_{p,q}},
	\end{align*}
which completes the proof.
\end{proof}
Similarly, we have 
\begin{align*}
		\int_{\Sigma}c_1(\mc{E}^-,\tau \wt{\mbf{J}}^*\n^{F_\phi}\tau^{-1}|_{\mc{E}^-})=\frac{1}{4\pi}\int_\Sigma \wt{\mbf{J}}^*\omega_{\mr{D}^{\mr{I}}_{p,q}}.
	\end{align*}
By \eqref{Toledo invariant compact form}, for any $\mbf{J}\in \mc{J}_o(\mc{E},\Omega)$, the Toledo invariant can be given by 
\begin{align}\label{To1}
\begin{split}
 &\quad \mr{T}(\Sigma,\phi) =\frac{1}{2\pi}\int_\Sigma\left(\wt{\mbf{J}}^*\omega_{\mr{D}^{\mr{I}}_{p,q}}-\sum_{i=1}^nd(\chi_i \wt{\mbf{J}}^*\alpha_i)\right)\\
  &=\int_{\Sigma}\left(c_1(\mc{E}^-,\tau \wt{\mbf{J}}^*\n^{F_\phi}\tau^{-1}|_{\mc{E}^-})-c_1(\mc{E}^+,\tau \wt{\mbf{J}}^*\n^{F_\phi}\tau^{-1}|_{\mc{E}^+})\right)-\frac{1}{2\pi}\sum_{i=1}^n\int_\Sigma d(\chi_i \wt{\mbf{J}}^*\alpha_i).
 \end{split}
\end{align}
From \eqref{sign2}, one has
\begin{align}\label{sign3}
\begin{split}
  \op{sign}(\mc{E},\Omega)&=-2 \mr{T}(\Sigma,\phi)-\frac{1}{\pi}\sum_{i=1}^n\int_\Sigma d(\chi_i \wt{\mbf{J}}^*\alpha_i)+\eta(A_{\mbf{J}})\\
  &=-2 \mr{T}(\Sigma,\phi)-\frac{1}{\pi}\sum_{i=1}^n\int_{c_i}\wt{\mbf{J}}^*\alpha_i+\eta(A_{\mbf{J}}).
 \end{split}
\end{align}
Inspired by the above formula, we define
\begin{defn}[Rho invariant of the boundary] 
\label{rhoboundary}
For any representation $\phi:\pi_1(\Sigma)\to \op{U}(E,\Omega)$, the \emph{rho invariant of the boundary} is defined by 
\begin{equation}\label{rho}
  \rro_\phi(\p\Sigma)=-\frac{1}{\pi}\sum_{i=1}^n\int_{c_i}\wt{\mbf{J}}^*\alpha_i+\eta(A_{\mbf{J}})
\end{equation}
	for any $\mbf{J}\in \mc{J}_o(\mc{E},\Omega)$. The rho invariant $\rro_\phi(\p\Sigma)$ is  independent of $\mbf{J}\in\mc{J}_o(\mc{E},\Omega) $ by \eqref{sign3}.
\end{defn}
\begin{rem}\label{remrho}
From the formula \eqref{sign3} for signature, the rho invariant $\rro_\phi(\p\Sigma)$ is independent of the choice of equivariant map $\mbf{J}\in \mc{J}_o(\mc{E},\Omega)$, and just depends on the representation of the boundary $\p\Sigma$.

 The rho invariant was originally introduced by Atiyah, Patodi and Singer \cite[Theorem 2.4]{APSII} for positive definite Hermitian forms. In particular, if the representation of boundary can be extended to a unitary representation of the whole manifold, then the signature of the representation can be expressed in terms of the signature of the trivial representation and the rho invariant. For the case of surfaces, the rho invariant defined in \eqref{rho} is a natural generalization to the group  $\op{U}(p,q)$. One can also refer to \cite{Bohn, KL, Toffoli} etc. for the development of rho invariant. 
\end{rem}
Therefore the proof of the first part of Theorem \ref{0} is completed.

\section{The rho invariant}\label{Rho}

In the previous section, we have encountered a global invariant of a surface, christened ``rho invariant of the boundary'' in Definition \ref{rhoboundary}. In this section, we express it as a sum of contributions of individual boundary components. We define the rho invariant of a triple $(L,\mbf{J},W)$ of an element of $L\in\mathrm{U}(E,\Omega)$, an equivariant splitting $\mbf{J}$ of the corresponding flat bundle over the circle and a fixed point $W$ of $L$ on the closure of the symmetric space of $\mathrm{U}(E,\Omega)$. It is the sum of an integral term $\iota(L,\mbf{J},W)$ and an eta invariant term $\eta(L,\mbf{J},W)$. Then we check that
\begin{itemize}
  \item $\rro(L,\mbf{J},W)$ depends only on $L$ (whereas both $\iota$ and $\eta$ depend nontrivially on $\mbf{J}$), 
  \item $\rro$ is continuous away from unitary matrices admitting $1$ as an eigenvalue,
  \item $\rro$ is a class function (i.e. it depends only on the conjugacy class of its argument).
\end{itemize}

Then we proceed to the proof of Theorem \ref{1}.

We first prove a structure theorem for unitary endomorphisms: they split as the orthogonal direct sum of three types, hyperbolic-unipotents, elliptic-unipotents, and unipotents.
\begin{itemize}
  \item The rho invariant of a hyperbolic-unipotent endomorphism vanishes for symmetry reasons.
  \item The rho invariant of a semi-simple elliptic endomorphism can be directly computed from definitions. Indeed, by a suitable choice of $\mbf{J}$, the boundary operator $A_{\mbf{J}}$ becomes a constant coefficient linear ODE, its spectrum can be explicitly computed from the spectrum of $L$.
  \item The case of elliptic-unipotent endomorphisms follows by a continuity argument.
\end{itemize}
The unipotent case is handled indirectly, by examining the dependence on $\theta\in\br$ of $\rro(e^{i\theta}L)$. Indeed, for $e^{i\theta}\not=1$, $e^{i\theta}L$ is an elliptic-unipotent whose semi-simple part $e^{i\theta}Id$ is easy to treat. Thus $\rro(L)$ appears as the discontinuity at $0$ of the signature of a finite dimensional Hermitian form $H(\theta)$ depending on $\theta$.

This discontinuity can be evaluated when $1$ is a simple eigenvalue of $L$. Indeed, in this case, the Implicit Function Theorem applies, and the unique small eigenvalue of $H(\theta)$ is a smooth function of $\theta$ whose asymptotic behavior at $\theta=0$ can be analyzed: it crosses $0$ only if the dimension is odd. So $\rro$ can be computed for certain normal forms.

Fortunately, every unipotent unitary endomorphism splits orthogonally as a direct sum of unipotents with $1$ as a simple eigenvalue, in the normal form previously alluded to. This completes the proof of Theorem \ref{1}.

\subsection{Basic properties of the rho invariant}

\subsubsection{Definition}

\begin{defn}
For $L\in \mr{U}(E,\Omega)$, defining a bundle $\mc{E}$ over $S^1$, and $\mbf{J}\in \mc{J}(\mc{E},\Omega)$, pick a fixed point $W$ of $L$ in the closure $\overline{\mathrm{D}^\mathrm{I}_{p,q}}$ of the symmetric domain $\mathrm{D}^\mathrm{I}_{p,q}$ and the associated $L$-invariant primitive $\alpha_W$. Set
\begin{equation}
  \rro(L,\mbf{J},W)=\iota(L,\mbf{J},W)+\eta(L,\mbf{J},W):=-\frac{1}{\pi}\int_{S^1}\wt{\mbf{J}}^*\alpha_W+\eta(A_{\mbf{J}}).
\end{equation} 
\end{defn}

With this notation, the previously defined ``rho invariant of the boundary'' is given by
$$
\rro_\phi(\partial\Sigma)=\sum_{\text{boundary component }c}\rro(\phi(c)).
$$
Therefore Theorem \ref{0} states that, given a compact oriented surface with nonempty boundary $\Sigma$ and a homomorphism $\phi:\pi_1(\Sigma)\to \mathrm{U}(E,\Omega)$, with corresponding flat vector bundle $\mc E$ over $\Sigma$,
\begin{align*}
\op{sign}(\mc E,\Omega)=-2\mr{T}(\Sigma,\phi)+\sum_{\text{boundary component }c}\rro(\phi(c)).
\end{align*}

\subsubsection{The rho invariant depends only on holonomy}

\begin{lemma}
On $K=S^1\times[0,1]$, for any $L\in \mr{U}(E,\Omega)$, $\op{sign}(K,\mc{E},\Omega)+2 \mr{T}(K,\phi)=0$.
\end{lemma}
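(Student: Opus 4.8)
The statement to prove is that for the cylinder $K=S^1\times[0,1]$ with flat bundle $\mc{E}$ coming from a representation sending the core circle to $L\in\mr{U}(E,\Omega)$, one has $\op{sign}(K,\mc{E},\Omega)+2\mr{T}(K,\phi)=0$. The plan is to show that \emph{both} terms vanish separately. For the signature, the key point is that $K$ deformation retracts onto its core circle $S^1$, so $\mr{H}^1(K;\mc{E})\cong\mr{H}^1(S^1;\mc{E})$, while the relative group $\mr{H}^1(K,\p K;\mc{E})$ is Poincar\'e--Lefschetz dual to $\mr{H}^1(K;\mc{E})\cong\mr{H}^1(S^1;\mc E)$; one then checks that the image $\widehat{\mr{H}}^1(K;\mc{E})=\op{Im}(\mr{H}^1(K,\p K;\mc{E})\to\mr{H}^1(K;\mc E))$ is zero. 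Concretely, the long exact sequence of the pair $(K,\p K)$ together with the fact that $\p K$ is two copies of $S^1$ each carrying the \emph{same} local system as the core (via the retraction) forces the restriction map $\mr{H}^1(K;\mc E)\to\mr{H}^1(\p K;\mc E)$ to be injective on the relevant piece, so $\widehat{\mr H}^1(K;\mc E)=0$ and hence $\op{sign}(K,\mc{E},\Omega)=\op{sign}(iQ)=0$ since $iQ$ lives on the trivial space.

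For the Toledo invariant, the cleanest route is to use the compactly-supported de Rham description \eqref{Toledo invariant compact form}: $\mr{T}(K,\phi)=\frac{1}{2\pi}\int_K\big(f^*\omega-\sum_i d(\chi_i f^*\alpha_i)\big)$ for any $\phi$-equivariant map $f:\wt K\to\mr{D}^{\mr{I}}_{p,q}$. Since $\pi_1(K)\cong\mb Z$ is generated by the core circle and $L=\phi(\text{core})$, pick a fixed point $W$ of $L$ in $\overline{\mr{D}^{\mr{I}}_{p,q}}$ (it exists by Brouwer, as recalled before \eqref{kahlermetric}) and take $f$ to be the \emph{constant} map $f\equiv W$. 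This is automatically $\phi$-equivariant because $L(W)=W$. Then $f^*\omega=0$ and $f^*\alpha_i=0$ identically, so the integrand vanishes pointwise and $\mr{T}(K,\phi)=0$. Adding the two vanishing statements gives $\op{sign}(K,\mc E,\Omega)+2\mr T(K,\phi)=0+0=0$.

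The main obstacle is the cohomological bookkeeping for the signature: one must verify carefully that $\widehat{\mr H}^1(K;\mc E)$ is indeed trivial, i.e. that no class in $\mr H^1(K;\mc E)$ extends to a relative class. I would argue this via the commutative diagram relating the long exact sequence of $(K,\p K)$ to Poincar\'e duality: the map $\mr H^1(K,\p K;\mc E)\to\mr H^1(K;\mc E)$ is dual to $\mr H^1(K;\mc E)\to\mr H^1(\p K;\mc E)$, and the composite $\mr H^1(K,\p K;\mc E)\to\mr H^1(K;\mc E)\to\mr H^1(\p K;\mc E)$ is zero by exactness; combined with the duality-induced nondegeneracy this pins down $\widehat{\mr H}^1(K;\mc E)$ to be isotropic of half-dimension on both sides, hence (since here $\mr H^1(K;\mc E)$ already injects into $\mr H^1(\p K;\mc E)$ — each boundary circle is a deformation-retract-compatible copy) it must be $0$. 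An alternative, perhaps more robust, is to invoke the $\mr L^2$-index description from Proposition \ref{prop3.2} and note that on the infinite cylinder $\widehat K$ the relevant $\mr L^2$-harmonic spaces $\mscr H^1(\widehat K,\mc E)$ vanish by a separation-of-variables argument on the product, since $A_{\mbf J}$-eigenfunctions with nonzero eigenvalue cannot be $\mr L^2$ in the $u$-direction while the zero-eigenspace contributes only to $\mscr H^0$. Either way the signature is $0$, and the Toledo vanishing is immediate from the constant equivariant map.
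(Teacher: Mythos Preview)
Your signature argument is correct and rather different from the paper's: you show directly that $\widehat{\mr H}^1(K;\mc E)=0$ via the long exact sequence of the pair (the restriction $\mr H^1(K;\mc E)\to\mr H^1(\p K;\mc E)$ is the diagonal embedding, hence injective, so its kernel---which is the image of the relative group---vanishes). That is a clean topological route.

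The Toledo argument, however, has a genuine gap. The formula \eqref{Toledo invariant compact form} is established for $\phi$-equivariant maps $f:\wt K\to\mr{D}^{\mr{I}}_{p,q}$ into the \emph{open} domain; the forms $\omega$ and $\alpha_W$ are only defined there and blow up at the boundary. Brouwer only guarantees a fixed point $W$ in the \emph{closure} $\overline{\mr{D}^{\mr I}_{p,q}}$, so if $L$ is not elliptic your constant map $f\equiv W$ lands on $\p\mr{D}^{\mr I}_{p,q}$ and the expression ``$f^*\omega$'' is undefined. The argument as written works only for elliptic $L$.

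The fix is easy and in the same spirit: take instead any equivariant $g:\br\to\mr{D}^{\mr I}_{p,q}$ for the $\bz$-action by $L$ (this exists since the domain is contractible) and set $f(x,u)=g(x)$, constant in the $[0,1]$-direction. Then $f^*\omega=0$ because $f$ factors through a $1$-manifold, and choosing the same fixed point $W$ (hence the same $\alpha_W$) for both boundary components, the two integrals $\int_{c_i}f^*\alpha_W$ cancel since $c_1$ and $c_2$ carry opposite boundary orientations while $f|_{c_1}=f|_{c_2}=g$. Alternatively, one can simply invoke amenability: $\pi_1(K)\cong\bz$ is amenable, so $\mr H^2_b(\pi_1(K),\br)=0$ and $\phi_b^*(\kappa_G^b)=0$, giving $\mr T(K,\phi)=0$ immediately.

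For comparison, the paper does not split the sum: it applies the already-proved identity $\op{sign}+2\mr T=\rro_\phi(\p K)$ from \eqref{sign3}, chooses $\mbf J$ constant in the $[0,1]$-direction, and observes that the rho contributions from the two oppositely oriented boundary circles cancel. This is quicker (one line) but uses the full machinery of Section~\ref{Sig}; your approach is more elementary and shows the stronger fact that both terms vanish separately.
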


\begin{proof}
We use a $\mbf{J}\in \mc{J}(K,\mc{E},\Omega)$ which is constant in the $[0,1]$ direction. Theorem \ref{0} gives $\op{sign}(K,\mc{E},\Omega)+2 \mr{T}(K,\phi)=\rro(\mbf{J}_{|S^1\times\{1\}},L)-\rro(\mbf{J}_{|S^1\times\{0\}},L)=0$.
\end{proof}

\begin{lemma}
On $S^1$, for any $L\in \mr{U}(E,\Omega)$ and $W$, $\rro(\mbf{J},L,W)$ does not depend on $\mbf{J}$ or $W$.
\end{lemma}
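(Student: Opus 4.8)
The plan is to derive the statement from Theorem \ref{0} (whose first part was established in Section \ref{Tol}) applied to the cylinder $K=S^1\times[0,1]$, in the same spirit as the proof of the previous lemma, but now allowing an equivariant almost complex structure that varies in the $[0,1]$-direction rather than one that is constant.

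First I would fix $L\in\mr U(E,\Omega)$, let $\mc E$ be the associated flat bundle, viewed over $S^1$ or, via $\pi_1(K)=\mb Z\to\mr U(E,\Omega)$ sending the generator to $L$, over $K$, and equip $K$ with a metric that is a product near $\p K$. Given two pairs $(\mbf J_0,W_0)$ and $(\mbf J_1,W_1)$, with $\mbf J_j\in\mc J(\mc E|_{S^1},\Omega)$ and $W_j$ a fixed point of $L$ in $\overline{\mr D^{\mr I}_{p,q}}$, I would choose $\mbf J\in\mc J_o(K,\mc E,\Omega)$ equal to the pullback of $\mbf J_0$, resp.\ $\mbf J_1$, on a collar of $S^1\times\{0\}$, resp.\ of $S^1\times\{1\}$. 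This is possible because $\mc J(\mc E|_{S^1},\Omega)$ is connected: its fibre $\mc J(E,\Omega)$ is, via $\mbf J_{\mr I}$ (Remark \ref{rem3.4}), the contractible domain $\mr D^{\mr I}_{p,q}$, so $\mbf J_0$ and $\mbf J_1$ can be joined by a path of equivariant almost complex structures, which one then extends to be constant in the two collars.

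Next I would apply Theorem \ref{0} to $(K,\phi)$ with this $\mbf J$, obtaining $\op{sign}(K,\mc E,\Omega)+2\mr T(K,\phi)=\rro_\phi(\p K)$. The left-hand side is $0$ by the previous lemma; note that $\op{sign}$ is defined from $iQ$ on $\widehat{\mr H}^1$ and $\mr T$ from bounded cohomology, so these numbers do not depend on $\mbf J$ at all. On the right-hand side, $A_{\mbf J}$ restricted to $\p K$ is the orthogonal direct sum of the corresponding operators on the two boundary circles, and the integrand in \eqref{rho} decomposes accordingly, so — by the identical orientation bookkeeping as in the proof of the previous lemma — one gets $\rro_\phi(\p K)=\rro(L,\mbf J_1,W_1)-\rro(L,\mbf J_0,W_0)$. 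Combining, $\rro(L,\mbf J_0,W_0)=\rro(L,\mbf J_1,W_1)$, and since the two pairs were arbitrary this proves independence of $\mbf J$ and of $W$ simultaneously.

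The one point needing care is the sign with which the two components of $\p K$ enter $\rro_\phi(\p K)$: they must appear with opposite signs, so that a constant $\mbf J$ recovers the trivial identity used in the previous lemma. This reflects the fact that both terms in \eqref{rho} — the integral $\int_{S^1}\wt{\mbf J}^*\alpha_W$ and the eta invariant $\eta(A_{\mbf J})=\eta(\mbf J\,d/dx)$ — are odd under reversal of the orientation of the circle, and it is exactly the convention already fixed in the preceding lemma, so no additional work is required.
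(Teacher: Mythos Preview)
Your proof is correct and follows essentially the same approach as the paper: extend $\mbf J_0,\mbf J_1$ to some $\mbf J\in\mc J_o(K,\mc E,\Omega)$ on the cylinder, apply Theorem~\ref{0} together with the previous lemma to conclude $\rro(L,\mbf J_1,W_1)-\rro(L,\mbf J_0,W_0)=\op{sign}(K,\mc E,\Omega)+2\mr T(K,\phi)=0$. Your extra remarks on the existence of the interpolating $\mbf J$ (via contractibility of $\mc J(E,\Omega)$) and on the orientation sign are fine elaborations of steps the paper leaves implicit.
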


\begin{proof}
Given $\mbf{J}_0,\mbf{J}_1\in \mc{J}(S^1,\mc{E},\Omega)$, and $L$-invariant $W_0,W_1\in \overline{\mr{D}^{\mr{I}}_{p,q}}$, extend $\mbf{J}_0,\mbf{J}_1$ into $\mbf{J}\in \mc{J}(K,\mc{E},\Omega)$. Theorem \ref{0} gives 
$$
\rro(\mbf{J}_1,L,W_1)-\rro(\mbf{J}_0,L,W_0)=\op{sign}(K,\mc{E},\Omega)+2 \mr{T}(K,\phi)=0,
$$
according to the previous Lemma.
\end{proof}

\begin{cor}
$\rro$ is a conjugacy-invariant function on $\mr{U}(E,\Omega)$.
\end{cor}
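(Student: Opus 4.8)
The plan is to invoke the preceding lemma, which says that $\rro(L,\mbf{J},W)$ depends neither on $\mbf{J}$ nor on $W$; write $\rro(L)$ for this common value. Fixing $g\in\mr{U}(E,\Omega)$ and setting $L'=gLg^{-1}$, we must show $\rro(L')=\rro(L)$. Let $\mc{E}$ and $\mc{E}'$ be the flat Hermitian bundles over $S^1$ with holonomies $L$ and $L'$. First I would note that the fibrewise map $(x,e)\mapsto(x,ge)$ on $\br\times E$ descends to an isomorphism $\Phi\colon\mc{E}\to\mc{E}'$ which is flat (it intertwines the canonical flat connections) and $\Omega$-preserving (because $g\in\mr{U}(E,\Omega)$) --- the familiar fact that conjugate holonomies give isomorphic flat bundles. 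Hence, given $\mbf{J}\in\mc{J}(\mc{E},\Omega)$ and a fixed point $W\in\o{\mr{D}^{\mr{I}}_{p,q}}$ of $L$, the section $\mbf{J}':=\Phi\circ\mbf{J}\circ\Phi^{-1}$, which is fibrewise $g\mbf{J}g^{-1}$, lies in $\mc{J}(\mc{E}',\Omega)$, and $W':=g(W)$ is a fixed point of $L'$.

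It then suffices to compare the two constituents of $\rro$ for these compatible choices. For the eta term, $\Phi$ is independent of $x$, so it conjugates $A_{\mbf{J}}=\mbf{J}\frac{d}{dx}$ to $A_{\mbf{J}'}=\mbf{J}'\frac{d}{dx}$, and this conjugation is an isometry between the $\mr{L}^2$-spaces defined by $\int_{S^1}i\Omega(\cdot,\mbf{J}\cdot)\,dx$ and $\int_{S^1}i\Omega(\cdot,\mbf{J}'\cdot)\,dx$; hence $A_{\mbf{J}}$ and $A_{\mbf{J}'}$ are unitarily equivalent and $\eta(A_{\mbf{J}'})=\eta(A_{\mbf{J}})$. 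For the integral term, by Remark \ref{rem3.4} the equivariant map attached to $\mbf{J}'$ is $g\circ\wt{\mbf{J}}$, so $\iota(L',\mbf{J}',W')=-\frac{1}{\pi}\int_{S^1}\wt{\mbf{J}}^*(g^*\alpha_{g(W)})$, and the comparison reduces to proving $g^*\alpha_{g(W)}=\alpha_W$. For this I would use that $g$ acts as a biholomorphic isometry of $(\mr{D}^{\mr{I}}_{p,q},\omega)$ extending continuously to the closure, so it sends geodesics to geodesics and geodesic triangles to geodesic triangles; combined with the identity $\int_{\gamma(Q,R)}\alpha_W=\int_{\triangle(W,Q,R)}\omega$ from Proposition \ref{potential}, this gives, for all $Q,R\in\mr{D}^{\mr{I}}_{p,q}$, $\int_{\gamma(Q,R)}g^*\alpha_{g(W)}=\int_{\triangle(W,Q,R)}g^*\omega=\int_{\triangle(W,Q,R)}\omega=\int_{\gamma(Q,R)}\alpha_W$. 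Since $\mr{D}^{\mr{I}}_{p,q}$ is contractible, the closed form $g^*\alpha_{g(W)}-\alpha_W$ equals $df$ for some smooth $f$, and the equality of all geodesic integrals forces $f(R)-f(Q)=0$ for every $Q,R$, so $f$ is constant and $g^*\alpha_{g(W)}=\alpha_W$. Thus $\iota(L',\mbf{J}',W')=\iota(L,\mbf{J},W)$.

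Combining the two comparisons gives $\rro(L')=\iota(L',\mbf{J}',W')+\eta(A_{\mbf{J}'})=\iota(L,\mbf{J},W)+\eta(A_{\mbf{J}})=\rro(L)$, which is the corollary. The step demanding the most care is the identity $g^*\alpha_{g(W)}=\alpha_W$ when $W$ is an ideal boundary point, since then $\triangle(W,Q,R)$ is a non-compact ideal triangle and Proposition \ref{potential} relies on Stokes' theorem together with the exponential decay of $\alpha_W$ along geodesic rays converging to $W$; but an isometry preserves both the ideal-triangle structure and that decay, so the argument transfers without modification. Everything else is routine.
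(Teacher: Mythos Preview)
Your proof is correct and is the natural way to justify what the paper states without proof as a corollary: once $\rro(L,\mbf{J},W)$ depends only on $L$, you transport the auxiliary data $(\mbf{J},W)$ by $g$ and check that both constituents $\iota$ and $\eta$ are unchanged. For the step $g^*\alpha_{g(W)}=\alpha_W$, an alternative to your geodesic-triangle argument is to use that $g$ is holomorphic (so $g^*$ commutes with $d^c$) and verify directly from formula~\eqref{invariant Kahler potential} that $g^*\psi_{g(W_0)}-\psi_{W_0}$ is constant, a straightforward extension of the computation already carried out in Section~\ref{IKP} without the hypothesis that $g$ fixes $W_0$.
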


\subsubsection{Continuity of the rho-invariant}

The eta invariant is continuous unless $A_{\mbf{J}}$ has a kernel, i.e. $1\in \mr{sp}(L)$. View $\rro=\rro(L,W)$ as a function on $F\times\o{\mr{D}^{\mr{I}}_{p,q}}$, where $F=\{L\in \mr{U}(E,\Omega)\,;\,1\notin \mr{sp}(L)\}$. Since $\alpha_W$ depends continuously on $W$, it is continuous. So is its restriction to the subset of pairs $(L,W)$ such that $L(W)=W$. Since there it depends only on $L$, $\rro(L)$ depends continuously on $L$ provided $1\notin \mr{sp}(L)$. 

\subsection{Splitting into types}

\begin{defn}
\label{defhyperell}
Let $(E,\Omega)$ be a complex vector space with non-degenerate Hermitian form $\Omega$. Say that $L\in \op{U}(E,\Omega)$ is \emph{hyperbolic-unipotent} if all its eigenvalues $\lambda$ satisfy $|\lambda|\neq 1$.
Say that $L$ is \emph{elliptic-unipotent} if all its eigenvalues $\lambda$ satisfy $|\lambda|=1$ and $\lambda\neq 1$. If $L$ has a single eigenvalue $1$, $L$ is \emph{unipotent}.
\end{defn}

\begin{prop}
\label{dechyperellu}
Let $(E,\Omega)$ be a complex vector space with non-degenerate Hermitian form $\Omega$ and let $L\in \op{U}(E,\Omega)$. There exists a unique $\Omega$-orthogonal decomposition 
$$
(E,\Omega,L)=(E_{hu},\Omega_{hu},L_{hu})\oplus 
(E_{eu},\Omega_{eu},L_{eu})\oplus (E_{u},\Omega_{u},L_{u})
$$
such that $L_{hu}$ is hyperbolic-unipotent, $L_{eu}$ is elliptic-unipotent and $L_u$ is unipotent. \\
Futhermore
\begin{align*}
\rro(L)=\rro(L_{hu})+\rro(L_{eu})+\rro(L_u).
\end{align*}
\end{prop}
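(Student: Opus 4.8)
The plan is to establish the orthogonal splitting first by a purely algebraic argument, then deduce the additivity of $\rro$ from the additivity established in earlier parts of the paper. For the splitting, I would work with the $L$-invariant generalized eigenspace decomposition of $E$ over $\mb{C}$. For an eigenvalue $\lambda$, let $E_\lambda$ denote the generalized eigenspace $\ker((L-\lambda)^{\dim E})$. The key observation is that, since $L$ preserves $\Omega$, one has $\Omega(E_\lambda, E_\mu)=0$ unless $\mu = 1/\overline{\lambda}$; indeed, $\Omega(Lu,Lv)=\Omega(u,v)$ forces, on generalized eigenspaces, that $E_\lambda$ pairs nontrivially only with $E_{1/\overline\lambda}$. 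Now group the eigenvalues into three classes: those with $|\lambda|\neq 1$ (which come in pairs $\lambda, 1/\overline\lambda$, neither on the unit circle), those with $|\lambda|=1$, $\lambda\neq 1$ (for which $1/\overline\lambda=\lambda$, so $E_\lambda$ is $\Omega$-nondegenerate by itself or paired with another such eigenvalue), and $\lambda=1$. Set $E_{hu}=\bigoplus_{|\lambda|\neq 1}E_\lambda$, $E_{eu}=\bigoplus_{|\lambda|=1,\lambda\neq1}E_\lambda$, $E_u=E_1$. Each summand is $L$-invariant; the cross-pairings between the three blocks vanish because a $\lambda$ in one block never has $1/\overline\lambda$ in another block; hence $\Omega$ restricts nondegenerately to each summand, and the decomposition is $\Omega$-orthogonal. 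Uniqueness is immediate since the three spaces are canonically characterized by the spectral condition on $L$ restricted to them.

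For the additivity statement $\rro(L)=\rro(L_{hu})+\rro(L_{eu})+\rro(L_u)$, I would invoke the already-available behavior of $\rro$ under $\Omega$-orthogonal direct sums. Concretely, for $(E,\Omega,L)=(E_1,\Omega_1,L_1)\oplus(E_2,\Omega_2,L_2)$, one can build the data $(\mbf{J},W)$ on $E$ as the direct sum of choices on $E_1$ and $E_2$: a fixed point $W$ of $L$ in $\overline{\mr{D}^{\mr{I}}_{p,q}}$ can be taken block-diagonal (a fixed point of $L_1$ in the first factor's domain and of $L_2$ in the second), the invariant primitive $\alpha_W$ splits as a sum $\alpha_{W_1}\oplus\alpha_{W_2}$ of the corresponding primitives, and the operator $A_{\mbf{J}}$ on $S^1$ splits as the direct sum $A_{\mbf{J}_1}\oplus A_{\mbf{J}_2}$. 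Since the eta invariant is additive over orthogonal direct sums of operators and the integral term $\iota$ is manifestly additive in this block decomposition, we get $\rro(L)=\rro(L_1)+\rro(L_2)$. Applying this twice to the three-term decomposition yields the claim. (This uses only that $\rro$ is well-defined independently of $\mbf{J}$ and $W$, already proved in the preceding lemmas, so the block-diagonal choice is legitimate.)

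The main obstacle I anticipate is a bookkeeping subtlety rather than a conceptual one: verifying that the block-diagonal fixed point $W$ really lies in the closure $\overline{\mr{D}^{\mr{I}}_{p,q}}$ of the correct ambient bounded domain (rather than only in a product of lower-rank domains), and that the primitive $\alpha_W$ on $E$ restricts correctly to the summands. One must check that $\mr{D}^{\mr{I}}_{p,q}$ for $E=E_1\oplus E_2$ with signatures $(p_1,q_1)$ and $(p_2,q_2)$ contains the product domain as a totally geodesic polydisc-type subspace stabilized by $\mr{U}(E_1,\Omega_1)\times\mr{U}(E_2,\Omega_2)$, and that $\mbf{J}_{\mr{I}}$ restricted there is the block sum of the lower-dimensional $\mbf{J}_{\mr{I}}$'s; this follows by direct inspection of the explicit formula for $\mbf{J}_{\mr{I}}$ given in Section \ref{RFCC}, since $W$ block-diagonal makes every matrix in that formula block-diagonal. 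Once this compatibility is in hand, both the $\iota$-term and the $\eta$-term visibly decompose, and the proposition follows.
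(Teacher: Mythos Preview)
Your proposal is correct and follows essentially the same route as the paper: generalized eigenspace decomposition, the orthogonality relation $\Omega(E_\lambda,E_\mu)=0$ unless $\lambda\bar\mu=1$, grouping by the three spectral types, and then additivity of $\rro$ by choosing $\mbf{J}$ and $W$ block-diagonally. The paper spells out the orthogonality argument via a polynomial-in-$k$ trick (writing $\lambda^{-k}L^k v$ as a polynomial $P_v(k)$ and analyzing $(\lambda\bar\mu)^{-k}\Omega(L^kv,L^kw)$), whereas you assert it; conversely, you are more explicit than the paper about why the block choices make both $\iota$ and $\eta$ additive, which the paper dispatches in one sentence.
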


\begin{proof}
The complex vector space $E$ splits into characteristic subspaces of $L$, $E=\bigoplus_\lambda E_\lambda$, $E_\lambda=\mathrm{ker}(L-\lambda I_m)^N$ for $N\geq\mathrm{dim}(E)=m$. 

If $\lambda,\mu$ are eigenvalues and $\lambda\bar\mu\not=1$, then $E_\lambda$ and $E_\mu$ are $\Omega$-orthogonal. Indeed, given $v\in E_\lambda$, the normalized powers $\lambda^{-k}L^k(v)=:P_v(k)$ depend polynomially on $k$. Therefore, if $w\in E_\mu$,
$$
(\lambda\bar\mu)^{-k}\Omega(L^k(v),L^k(w))=\Omega(P_v(k),P_w(k)):=Q(k)
$$
is a scalar polynomial in $k$. Since $\Omega$ is $L$-invariant, the function $k\mapsto (\lambda\bar\mu)^{k}Q(k)$ is constant. If $\lambda\bar\mu\not=1$, this can happen only if $Q$ vanishes identically. In particular, $Q(0)=\Omega(v,w)=0$.

Therefore the subspaces $\tilde E_{\lambda}:=E_\lambda +E_{1/\bar\lambda}$ are mutually orthogonal when distinct.

Let
\begin{align*}
E_{hu}=\bigoplus_{\lambda\,;\,|\lambda|\not=1}\tilde E_{\lambda},\quad 
E_{eu}=\bigoplus_{\lambda\,;\,|\lambda|=1,\,\lambda\not=1}\tilde E_{\lambda},\quad
E_u=E_1.
\end{align*}
These spaces are $L$-invariant and pairwise $\Omega$-orthogonal. In particular, the restriction of $\Omega$ to each summand is non-degenerate. Therefore the rho invariants of summands are well defined. Additivity follows from the definition, since $\mbf{J}$ and $W$ can be chosen to split accordingly.

\end{proof}

\subsubsection{The hyperbolic-unipotent case}

\begin{lemma}
Let $L$ be a hyperbolic-unipotent element of $\mathrm{U}(E,\Omega)$. Then $L$ commutes with an involution $I$ such that $I^*\Omega=-\Omega$ and $I$ fixes a point in the symmetric space of $\mathrm{U}(E,\Omega)$. 
Furthermore, $\rro(L)=0$.
\end{lemma}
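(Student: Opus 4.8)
The plan is to first produce the involution $I$ explicitly, and then to deduce $\rro(L)=0$ by a symmetry argument: conjugation by $I$ turns any admissible pair $(\mbf J,W)$ computing $\rro(L)$ into another one for which both the eta term and the integral term change sign.

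\textbf{Step 1: construction of $I$.} By the proof of Proposition \ref{dechyperellu}, since $L$ is hyperbolic-unipotent its eigenvalues $\lambda$ all satisfy $|\lambda|\neq 1$, come in pairs $\lambda\leftrightarrow 1/\bar\lambda$, and generalised eigenspaces $E_\lambda,E_\mu$ are $\Omega$-orthogonal unless $\lambda\bar\mu=1$. Hence $E_{>}:=\bigoplus_{|\lambda|>1}E_\lambda$ and $E_{<}:=\bigoplus_{|\lambda|<1}E_\lambda$ are $L$-invariant, each $\Omega$-isotropic, and $\Omega$ pairs them perfectly. I would set $I:=\op{Id}_{E_{>}}\oplus(-\op{Id}_{E_{<}})$. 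Then $I^2=\op{Id}$, $I$ commutes with $L$, and using isotropy of $E_{>},E_{<}$ one gets $\Omega(Iu,Iv)=-\Omega(u,v)$, i.e. $I^*\Omega=-\Omega$. For the fixed point, choose a basis $\{v_j\}$ of $E_{>}$ and the $\Omega$-dual basis $\{w_j\}$ of $E_{<}$ and set $Jv_j=iw_j$, $Jw_j=iv_j$; one checks $J\in\mc J(E,\Omega)$ (the $\pm i$-eigenspaces are spanned by the $v_j\pm w_j$, are $\Omega$-orthogonal, and $i\Omega(\cdot,J\cdot)>0$ on each), and $IJ=-JI$, which is exactly the statement that $I$ fixes the point of the symmetric space corresponding to $J$, since an $\Omega$-anti-isometry acts on $\mc J(E,\Omega)$ by the isometry $J\mapsto-IJI^{-1}$ (well defined because $I^*\Omega=-\Omega$).

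\textbf{Step 2: vanishing of $\rro(L)$.} Fix any $\mbf J\in\mc J(\mc E,\Omega)$ over $S^1$ and any $L$-fixed $W\in\overline{\mr D^{\mr I}_{p,q}}$, so $\rro(L)=\iota(L,\mbf J,W)+\eta(A_{\mbf J})$. Since $IL=LI$, $I$ descends to a flat automorphism of $\mc E\to S^1$, and $\mbf J'':=-I\mbf J I^{-1}\in\mc J(\mc E,\Omega)$ by the same computation as in Step 1. The anti-isometric $I$ is a unitary isomorphism $(\mc E,i\Omega(\cdot,\mbf J\cdot))\to(\mc E,i\Omega(\cdot,\mbf J''\cdot))$ intertwining $A_{\mbf J''}$ with $-A_{\mbf J}$, whence $\eta(A_{\mbf J''})=-\eta(A_{\mbf J})$. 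On the symmetric space side, $\mbf J''$ corresponds to the equivariant map $\bar I\circ\wt{\mbf J}$, where $\bar I$ is the map of $\overline{\mr D^{\mr I}_{p,q}}$ induced by $J\mapsto-IJI^{-1}$; it is an anti-holomorphic isometry (anti-holomorphic because $I$ reverses the sign of $\Omega$), so $\bar I^*\omega_{\mr D^{\mr I}_{p,q}}=-\omega_{\mr D^{\mr I}_{p,q}}$, it commutes with the $L$-action, and it carries geodesics to geodesics. Put $W'':=\bar I(W)$, again $L$-fixed. The $1$-forms $\bar I^*\alpha_{W''}$ and $-\alpha_W$ both have exterior derivative $-\omega_{\mr D^{\mr I}_{p,q}}$, are bounded (Proposition \ref{potential}), and vanish along every geodesic through $W$ (for $\bar I^*\alpha_{W''}$ this uses that $\alpha_{W''}$ vanishes along geodesics through $W''=\bar I(W)$). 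A bounded primitive of $-\omega_{\mr D^{\mr I}_{p,q}}$ vanishing along all geodesics through $W$ is unique: the difference of two such is $dg$ with $g$ Lipschitz and constant along every geodesic ray to $W$, hence constant, since two such rays are asymptotic. Therefore $\bar I^*\alpha_{W''}=-\alpha_W$, and
\[
\iota(L,\mbf J'',W'')=-\tfrac1\pi\int_{S^1}\wt{\mbf J}^*\bar I^*\alpha_{W''}=\tfrac1\pi\int_{S^1}\wt{\mbf J}^*\alpha_W=-\iota(L,\mbf J,W).
\]
Adding, $\rro(L,\mbf J'',W'')=-\rro(L,\mbf J,W)$; by the preceding Lemma (independence of $\rro(L,\mbf J,W)$ on $\mbf J$ and $W$) both sides equal $\rro(L)$, whence $\rro(L)=-\rro(L)$ and $\rro(L)=0$.

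\textbf{Expected main obstacle.} The delicate point is the sign change of the integral term, i.e. the identity $\bar I^*\alpha_{\bar I(W)}=-\alpha_W$. It relies on $\bar I$ being genuinely anti-holomorphic — so that $\bar I^*\omega=-\omega$ and $\bar I^*d^c=-d^c\bar I^*$ — together with the uniqueness of the bounded, geodesic-vanishing primitive; both must be handled with care. (Alternatively, one can verify $\bar I^*\psi_{\bar I(W)}=\psi_W$ directly from formula (\ref{invariant Kahler potential}) after writing $\bar I$ in coordinates, as in the $\op U(1,1)$ case of the Appendix.) The remaining verifications — that $\mbf J''\in\mc J(\mc E,\Omega)$, the unitary intertwining for the eta term, and the equivariance bookkeeping $\wt{\mbf J''}=\bar I\circ\wt{\mbf J}$ — are routine.
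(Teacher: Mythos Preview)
Your approach is correct in spirit and closely parallels the paper's: both exploit the symmetry under the anti-unitary involution $I$, and your construction of $I$ in Step~1 is the same as the paper's. The difference is in Step~2. The paper chooses $\mbf J$ and $W$ to be $I$-fixed from the outset: the fixed set of $J\mapsto -IJI$ on $\mc J(E,\Omega)\cong\mr D^{\mr I}_{p,q}$ is a nonempty totally geodesic sub-symmetric space (an isometric involution on a Hadamard space always has fixed points), it is $L$-invariant since $[L,I]=0$, and hence its closure contains an $L$-fixed $W$. With such $\mbf J,W$ one has $\bar I\circ\wt{\mbf J}=\wt{\mbf J}$ and $\bar I^*\alpha_W=-\alpha_W$, whence $\wt{\mbf J}^*\alpha_W=\wt{\mbf J}^*\bar I^*\alpha_W=-\wt{\mbf J}^*\alpha_W=0$ and likewise $\eta(A_{\mbf J})=0$ by symmetry of the spectrum---no appeal to the independence lemma is needed.

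Your variant---arbitrary $(\mbf J,W)$, then conjugate by $I$ and invoke independence---also works, but the obstacle you flag is genuine and your proposed resolution has a gap. The uniqueness claim (``two such rays are asymptotic'') fails in rank $\geq 2$: geodesic rays to a common ideal endpoint merely stay at bounded distance and need not converge. Moreover, boundedness of $dg$ in the $\ell^\infty$-cochain sense of \eqref{infinity norm} only gives $g$ bounded, not Lipschitz, so the argument does not conclude even in rank one. The clean fix is your own alternative: since $\bar I$ is anti-holomorphic, $\bar I^*d^c=-d^c\bar I^*$, so it suffices to check $\psi_{\bar I(W_0)}\circ\bar I=\psi_{W_0}+\mathrm{const}$, which can be read off from \eqref{invariant Kahler potential} once $\bar I$ is written in the $W$-coordinates; or simply adopt the paper's simplification and take $W\in\mr{Fix}(\bar I)$.
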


\begin{proof}
Let $\Delta\subset\op{spectrum}(L)$ be a subset that contains exactly one element of each pair of eigenvalues $\{\lambda,1/\bar\lambda\}$ of $L$. Set
\begin{align*}
F_+=\bigoplus_{\lambda\in\Delta}E_\lambda,\quad F_-=\bigoplus_{\lambda\not\in\Delta}E_\lambda .
\end{align*}
Then $F_+$ and $F_-$ are totally isotropic, and $E=F_+\oplus F_-$.
Define $I\in\op{End}(E)$ by $I=\pm1$ on $F_{\pm}$. Then $I^2=1$ and $I^*\Omega=-\Omega$. 

Since 
$$
\Omega(v, -IJ I v)=I^*\Omega(Iv, -JIv)=-\Omega(Iv, -JIv)=\Omega(Iv, J Iv),
$$
the formula $J\mapsto -IJI$ defines an action on the space $\mathcal{J}(E,\Omega)$. This action is isometric. Indeed, $I$ defines an automorphism of the group $G=\mathrm{U}(E,\Omega)$, hence an isometry of $G$ equipped with the pseudo-Riemannian structure defined by the Killing form. Also, $I$ maps maximal compact subgroups to maximal compact subgroups, thus it preserves the fibration $G\to \mathcal{J}(E,\Omega)$. Since the metric on the base is induced by the fibration, $I$ preserves it.

Since $\mathcal{J}(E,\Omega)$ is a symmetric space of noncompact type, and $I^2=1$, the fixed point set $\mr{Fix}(I)$ of $I$ is a nonempty subsymmetric space, which is $L$-invariant. One can choose the equivariant map $\mbf{J}$ to lie in $\mr{Fix}(I)$ and the fixed point $W$ to lie in the closure of $\mr{Fix}(I)$.

Since $I$ commutes with $L$, it defines an automorphism, and still denoted by $I$, of the flat bundle $\mc{E}\to S^1$. Since $I\mbf{J}I=-\mbf{J}$, this automorphism induces an orthogonal transformation on the space of sections of $\mc{E}$. One can transport the differential operator $A_{\mbf{J}}$ with $I$, and get an orthogonally equivalent operator $I^*A_{\mbf{J}}$. Since $I\mbf{J}=-\mbf{J}I$, $I^*A_{\mbf{J}}=-A_{\mbf{J}}$. Therefore the set of eigenvalues (with multiplicities) of $A_{\mbf{J}}$ is symmetric, its eta function $\eta(s)$ vanishes identically, and $\eta(A_{\mbf{J}})=0$.

The isometry $I$ of $\mathcal{J}(E,\Omega)$ is anti-holomorphic. Therefore, it changes the sign of the K\"ahler form, $I^*\omega=-\omega$. Since $I$ fixes $W$, it changes the sign of $\alpha_W$, $I^*\alpha_W=-\alpha_W$. Hence
\begin{align*}
\iota(L,\mbf{J},W)=\iota(ILI,I\mbf{J}I,IWI)=\iota(L,-\mbf{J},W)=-\iota(L,\mbf{J},W),
\end{align*} 
showing that $\iota(L,\mbf{J},W)=0$. So $\rro(L)=\iota(L,\mbf{J},W)+\eta(A_{\mbf{J}})=0$.
\end{proof}

\subsubsection{The $1$-dimensional elliptic case}

Let $E=\bc$ and $\Omega$ be a non-degenerate Hermitian form on $E$, i.e. $\Omega(z,w)=\Omega \op{Re}(z\bar w)$ where $\Omega$ is a nonzero real number. Fix $\theta\in(0,2\pi)$ and let $L\in \op{U}(1)$ denote multiplication with $e^{i\theta}$. Let $\mc E$ denote the corresponding flat complex line bundle over $S^1$. Its sections correspond to functions $s:\br\to\bc$ such that $s(x+2\pi)=e^{-i\theta}s(x)$. The space $\mc J(E,\Omega)$ has a single element, $J(z)=iz$ if $\Omega>0$ or $J(z)=-iz$ if $\Omega<0$. Since $J$ commutes with $L$, one can take a constant equivariant map $\mbf{J}(x)=J$. The operator $A_{\mbf{J}}$ is $J\frac{d}{dx}$. A real number $\lambda$ is an eigenvalue of $A_{\mbf{J}}$ if the differential equation $J\frac{ds}{dx}=\lambda s(x)$ has a nonzero solution $s$ such that $s(2\pi)=e^{-i\theta}s(0)$. The solutions are the elements of $-Ji(\frac{\theta}{2\pi}+\bz)$.

The following classical Lemma
\begin{lemma}[{\cite[Lemma 2.10]{APS}}]
\label{eta11}
$$
\lim_{s\to 0}\left[\left(\frac{\theta}{2\pi}\right)^{-s}+\sum_{k=1}^{\infty}\left(\frac{1}{|k+\frac{\theta}{2\pi}|^s}-	\frac{1}{|k-\frac{\theta}{2\pi}|^s}\right)\right]=1-\frac{\theta}{\pi},
$$
\end{lemma}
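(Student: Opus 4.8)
The plan is to prove Lemma \ref{eta11} by recognizing the bracketed limit as the value at $s=0$ of a difference of Hurwitz zeta functions, and then using the known special value $\zeta(0,a)=\tfrac12-a$. Writing $t=\tfrac{\theta}{2\pi}\in(0,1)$, the expression inside the limit is
\begin{equation*}
t^{-s}+\sum_{k=1}^{\infty}\bigl((k+t)^{-s}-|k-t|^{-s}\bigr)
=\sum_{k=0}^{\infty}(k+t)^{-s}-\sum_{k=1}^{\infty}(k-t)^{-s}
=\zeta(s,t)-\zeta(s,1-t),
\end{equation*}
valid for $\op{Re}(s)>1$, where $\zeta(s,a)=\sum_{k\geq 0}(k+a)^{-s}$ is the Hurwitz zeta function. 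The point of pulling out the $t^{-s}$ term is exactly that it supplies the $k=0$ summand needed to complete $\zeta(s,t)$; the $|k-t|$ in the statement is $(k-t)$ since $0<t<1\leq k$.

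The next step is analytic continuation: $\zeta(s,a)$ extends meromorphically to $\bc$ with a single simple pole at $s=1$, so the difference $\zeta(s,t)-\zeta(s,1-t)$ is entire, and the limit as $s\to 0$ is simply its value at $s=0$. I would then invoke the classical evaluation $\zeta(0,a)=\tfrac12-a$ (which follows, e.g., from the integral representation or from the relation to Bernoulli polynomials, $\zeta(-n,a)=-\tfrac{B_{n+1}(a)}{n+1}$ with $B_1(a)=a-\tfrac12$). This gives
\begin{equation*}
\lim_{s\to 0}\bigl[\zeta(s,t)-\zeta(s,1-t)\bigr]=\Bigl(\tfrac12-t\Bigr)-\Bigl(\tfrac12-(1-t)\Bigr)=1-2t=1-\frac{\theta}{\pi},
\end{equation*}
which is the claim.

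The only genuine subtlety — and the step I would present with a little care rather than waving at — is justifying that one may pass to the limit termwise, i.e. that the meromorphic continuation of the series $\sum_{k\geq 1}((k+t)^{-s}-(k-t)^{-s})$ really is regular at $s=0$ so that evaluation and summation commute there. This is standard: the summand is $O(k^{-s-1})$ uniformly on compacta in $k$, the series converges for $\op{Re}(s)>0$, and the Hurwitz-zeta continuation handles the leftover pole contributions, which cancel in the difference. Since the paper cites this as \cite[Lemma 2.10]{APS}, it is legitimate to keep the argument brief: decompose into two Hurwitz zeta functions, cite meromorphic continuation and the value $\zeta(0,a)=\tfrac12-a$, and conclude. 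The main obstacle is purely bookkeeping — matching the slightly asymmetric form of the stated series (the isolated $t^{-s}$, the absolute value) to the clean $\zeta(s,t)-\zeta(s,1-t)$ — and there is no deep difficulty once that identification is made.
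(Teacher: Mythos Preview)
Your proof is correct. The paper does not actually prove this lemma; it simply cites it as a classical result from \cite[Lemma 2.10]{APS} and immediately uses it. Your Hurwitz zeta function argument, rewriting the expression as $\zeta(s,t)-\zeta(s,1-t)$ and evaluating at $s=0$ via $\zeta(0,a)=\tfrac12-a$, is the standard way to establish such identities and is entirely adequate.
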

yields
\begin{align}
\label{eta1}
\eta(A_{\mbf{J}})=-Ji(1-\frac{\theta}{\pi})=\op{sgn}(\Omega)(1-\frac{\theta}{\pi}).
\end{align}

\subsubsection{The elliptic case}

\begin{lemma}\label{rhoell}
If $L\in \mathrm{U}(E,\Omega)$ is elliptic-unipotent, its semi-simple part $S$ fixes a point in the symmetric space of $\mathrm{U}(E,\Omega)$. Furthermore, there is a unique $\op{Ad}_S$-invariant element $B\in \mathfrak{u}(E,\Omega)$ such that $\exp(2\pi iB)=S$ and $B$ has all its eigenvalues in $(0,2\pi)$. Let $\{e_j\}$ be an $\Omega$-orthonormal basis of eigenvectors of $B$. Then
\begin{align*}
 \rro(S)=\op{sign}(\Omega)-2\sum_j \Omega(Be_j,e_j).
\end{align*}
\end{lemma}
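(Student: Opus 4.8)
The plan is to establish the three assertions of the lemma in order: first that $S$ fixes a point of the symmetric space, then the existence and uniqueness of the normalizing logarithm $B$, and finally the trace formula for $\rro(S)$, the last by an explicit diagonalization of the boundary operator $A_{\mbf{J}}$. Throughout, recall that $S$, being the semi-simple part of the Jordan decomposition of an element of the real algebraic group $\mr{U}(E,\Omega)$, again lies in $\mr{U}(E,\Omega)$.

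\emph{Ellipticity of $S$.} Since $L$ is elliptic-unipotent, $S$ is semi-simple with all eigenvalues on the unit circle and different from $1$. Let $E=\bigoplus_\mu E_\mu$ be the decomposition into $\bc$-eigenspaces of $S$. Running the ``polynomial in $k$'' argument from the proof of Proposition \ref{dechyperellu} with $L$ replaced by $S$ shows $E_\mu\perp_\Omega E_\nu$ whenever $\mu\bar\nu\neq1$; as all $|\mu|=1$, this means distinct summands are $\Omega$-orthogonal, so each $\Omega|_{E_\mu}$ is non-degenerate. Choosing an $\Omega$-orthonormal basis adapted to this decomposition puts $\Omega$ in the form $I_{p,q}$ and exhibits $S$ as a diagonal unitary matrix; hence $S$ lies in the maximal compact $\mr{U}(p)\times\mr{U}(q)=\op{Stab}(J_0)$, where $J_0=iI_{p,q}$ in this basis, and so $S$ fixes $J_0\in\mc{J}(E,\Omega)$.

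\emph{Existence and uniqueness of $B$.} Write each eigenvalue of $S$ uniquely as $\mu=e^{2\pi i\beta_\mu}$ with $\beta_\mu\in(0,1)$, and let $B$ act as the real scalar $\beta_\mu$ on $E_\mu$. Then $B$ is $\Omega$-self-adjoint (real on each $\Omega$-non-degenerate summand), so $iB\in\mf{u}(E,\Omega)$; it is a polynomial in $S$, hence $\op{Ad}_S$-invariant; and $\exp(2\pi iB)=S$, with $\op{spectrum}(B)\subset(0,1)$. Any other $B'$ with these properties commutes with $S$, hence preserves each $E_\mu$, and is $\Omega|_{E_\mu}$-self-adjoint there, so diagonalizable with real eigenvalues lying in $\beta_\mu+\bz$; the normalization forces $B'|_{E_\mu}=\beta_\mu\op{Id}$, whence $B'=B$.

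\emph{The formula.} By the lemma asserting that $\rro(L,\mbf{J},W)$ is independent of $\mbf{J}$ and $W$, I would use the constant section $\mbf{J}(x)\equiv J_0$ (legitimate since $SJ_0=J_0S$) together with the interior fixed point $W_0=\mbf{J}_{\mr{I}}^{-1}(J_0)\in\mr{D}^{\mr{I}}_{p,q}$. Then $\wt{\mbf{J}}\colon\br\to\mr{D}^{\mr{I}}_{p,q}$ is constant, so $\wt{\mbf{J}}^*\alpha_{W_0}=0$ and the integral term vanishes: $\rro(S)=\eta(A_{\mbf{J}})$ with $A_{\mbf{J}}=J_0\tfrac{d}{dx}$. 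Split $E_\mu=E_\mu^+\oplus E_\mu^-$ into the $\pm i$-eigenspaces of $J_0$, with $p_\mu:=\dim E_\mu^+$, $q_\mu:=\dim E_\mu^-$, so that $\sum_\mu p_\mu=p$, $\sum_\mu q_\mu=q$, $\Omega>0$ on $E_\mu^+$ and $\Omega<0$ on $E_\mu^-$. Solving $A_{\mbf{J}}s=\lambda s$ under the equivariance $s(x+2\pi)=S^{-1}s(x)$ gives, on $E_\mu^+$, exactly the eigenvalues $\lambda\in\beta_\mu+\bz$, and on $E_\mu^-$ exactly $\lambda\in-\beta_\mu+\bz$; none equals $0$ since $\beta_\mu\notin\bz$. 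Because $\lambda\mapsto-\lambda$ carries the second lattice onto the first with a sign change in $\op{sgn}\lambda/|\lambda|^s$, the eta function is $\eta_{A_{\mbf{J}}}(s)=\sum_\mu(p_\mu-q_\mu)\bigl(\beta_\mu^{-s}+\sum_{k\geq1}(|k+\beta_\mu|^{-s}-|k-\beta_\mu|^{-s})\bigr)$. Applying Lemma \ref{eta11} termwise with $\theta=2\pi\beta_\mu$ yields $\eta(A_{\mbf{J}})=\sum_\mu(p_\mu-q_\mu)(1-2\beta_\mu)=\op{sign}(\Omega)-2\sum_\mu(p_\mu-q_\mu)\beta_\mu$. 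Finally, an $\Omega$-orthonormal eigenbasis $\{e_j\}$ of $B$ satisfies $\Omega(Be_j,e_j)=\pm\beta_\mu$ according as $e_j\in E_\mu^\pm$, hence $\sum_j\Omega(Be_j,e_j)=\sum_\mu(p_\mu-q_\mu)\beta_\mu$, and the two expressions agree.

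The main obstacle is bookkeeping rather than conceptual: one must track simultaneously the $\pm i$-eigenspace splitting of $J_0$ (which fixes whether the spectral lattice attached to $E_\mu$ is $\beta_\mu+\bz$ or $-\beta_\mu+\bz$) and the sign of $\Omega$ on those subspaces, and verify that these two sources of signs combine to the single coefficient $p_\mu-q_\mu$ in front of the Atiyah--Patodi--Singer sum. The only other delicate point is the first step, where the $\Omega$-orthogonality of distinct eigenspaces is precisely what upgrades ``unimodular spectrum'' to ``elliptic''.
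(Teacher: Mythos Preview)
Your proof is correct and follows essentially the same route as the paper: reduce to a constant $\mbf{J}\equiv J_0$ fixed by $S$ so that the integral term vanishes, and then compute $\eta(A_{\mbf{J}})$ by decomposing $E$ into one-dimensional pieces and applying Lemma~\ref{eta11}. Your argument is in fact slightly more thorough than the paper's in two respects: you prove the $\Omega$-orthogonality of distinct eigenspaces directly (the paper instead invokes relative compactness of $\overline{\langle S\rangle}$ to get a fixed point), and you actually argue the uniqueness of $B$, which the paper's proof omits.
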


\begin{proof} 
Let $L=SU$ be the Jordan decomposition of $L$. Since the eigenvalues of $L$ are unit complex numbers and $S$ is semi-simple, the subgroup generated by $S$ is relatively compact. Therefore, it fixes a point $J$ in the symmetric space $\mathcal{J}(E,\Omega)$. 

One can use the constant map $\mbf{J}=J$ and $W=J$. For this choice, $\iota(L,\mbf{J},W)=0$. Since the stabilizer of $J$ is a conjugate of $\mathrm{U}(p)\times\mathrm{U}(q)$, one can assume that $S=S_+ \oplus S_- \in\mathrm{U}(p)\times\mathrm{U}(q)$ and use the expression (\ref{eta1}), found in Subsection \ref{Appeta},
\begin{align*}
\eta(S,\mbf{J},W)=\eta(S_+,\mbf{J},W)+\eta(S_-,\mbf{J},W),
\end{align*}
where, if the eigenvalues of $S_\pm$ are written $e^{i\theta_{\pm,j}}$ with $\theta_{\pm,j}\in(0,2\pi)$,
\begin{align*}
\eta(S_+,\mbf{J},W)=\sum_{j}\left(1-\frac{\theta_{+,j}}{\pi}\right),\quad \eta(S_-,\mbf{J},W)=-\sum_{j}\left(1-\frac{\theta_{-,j}}{\pi}\right).
\end{align*}
Let $\{e_{\pm,j}\}$ be an orthonormal basis of eigenvectors, i.e. $\Omega(e_{\pm,j},e_{\pm,j})=\pm 1$ and $Se_{\pm,j}=\exp(i\theta_{\pm,j})e_{\pm,j}$. Define $B\in \op{End}(E)$ by $Be_{\pm,j}=\frac{\theta_{\pm,j}}{2\pi}e_{\pm,j}$. Then $\exp(2\pi i B)=S$ and
\begin{align*}
\eta(S,\mbf{J},W)=p-q-2\sum_{j}\pm\frac{\theta_{\pm,j}}{2\pi}=p-q-2\sum_{\pm,j} \Omega(Be_{\pm,j},e_{\pm,j}).
\end{align*}
\end{proof}

\subsubsection{The elliptic-unipotent case}

By definition, an elliptic-unipotent element $L$ of $\mr{U}(p,q)$ does not have $1$ as an eigenvalue. Write $L=SU$ where $S$ is semi-simple elliptic and $U$ is unipotent and commutes with $S$. Then $S$ belongs to the closure of the conjugacy class of $L$ (one can apply \cite[Proposition 8.3]{KimPansu} to the homomorphism from $\bz$ generated by $L$), on which $\rro$ is continuous, hence constant. It follows that $\rro(L)=\rro(S)$.

\subsection{The unipotent case}

Let $L\in \mr{U}(p,q)$ be unipotent. The idea is to study $\rro$ along the curve $\theta\mapsto e^{i\theta}L$, for $\theta$ close to $0$. When $\theta\not=0$, $e^{i\theta}L$ is elliptic-unipotent. Formula \ref{rhoell} gives
\begin{align}\label{rhoellu}
\rro(e^{i\theta}L)=\begin{cases}
(p-q)\left(1-\frac{\theta}{\pi}\right) & \text{if }\theta>0, \\
(p-q)\left(1-\frac{\theta+2\pi}{\pi}\right) & \text{otherwise}.
\end{cases}
\end{align}
Note that all $e^{i\theta}L$ define the same automorphism of the domain, hence the same $1$-form $\alpha_W$ serves for all of them, so the term $-\frac{1}{\pi}\int_{S^1}\wt{\mbf{J}}^*\alpha_W$ does not depend on $\theta$. There remains to study the eta invariant term. We show that it can be expressed in terms of a finite dimensional Hermitian form.

\subsubsection{Reduction to a finite dimensional spectral problem}

Given $\epsilon>0$, for a self-adjoint first order differential operator $A$ on $S^1$, denote by
\begin{align*}
\eta^\epsilon_A(s)=\sum_{\lambda\in\mr{sp}(A),\,|\lambda|>\epsilon}\frac{\mr{sign}(\lambda)}{\lambda^s}.
\end{align*}
Then for every $A_0$, provided $\pm\epsilon$ are away from the spectrum of $A_0$, one can pick a neighborhood of $A_0$ on which $\eta^\epsilon$ is a continuous function of $s$ and $A$. On this neighborhood,
\begin{align*}
\eta(A)=\eta^\epsilon_{A}(0)+\sum_{\lambda\in\mr{sp}(A),\,0<|\lambda|<\epsilon}\mr{sign}(\lambda),
\end{align*}
where $A\mapsto\eta^\epsilon_A$ is a continuous function of $A$ and the sum has finitely many terms. Therefore the discontinuity of the eta invariant at $A_0$ has to do with the signs of the finitely many eigenvalues below level $\epsilon$.

\subsubsection{Algebraic expression of the spectrum}

Since the unipotent radical of $\mr{SU}(p,q)$ is a simply connected nilpotent Lie group, there is a unique nilpotent element $B\in\mathfrak{su}(p,q)$ such that $\exp(2\pi B)=L$. Pick a $J_0\in \mc{J}(E,\Omega)$, and set
$$
\mbf{J}(x)=\exp(-xB)J_0\exp(xB).
$$
Then the spectrum of $A_{\mbf{J},e^{i\theta}L}=\mbf{J}\frac{d}{dx}$ near zero is the set of real numbers $\sigma$ near zero with the eigenvector sections
$$s(x)=\exp(-xB) \exp(x(-\sigma J_0+B))s(0),$$ satisfying
$$s(2\pi)=\exp(-2\pi B)\exp(2\pi(-\sigma J_0+B))s(0)=e^{-i\theta I}\exp(-2\pi B) s(0),$$
such that $-\sigma J_0+B+i\frac{\theta}{2\pi}\,Id$ has a nontrivial kernel, with a multiplicity equal to the dimension of this kernel. 

 Alternatively, it is the spectrum near zero of the self-adjoint operator $-(B +i\theta\, Id)J_0$. We conclude that for $\epsilon>0$ avoiding the spectrum of $BJ_0$, the function
\begin{align*}
\theta\mapsto &\eta(A_{\mbf{J},e^{i\theta}L})-\sum_{\sigma\in\mr{sp}(-(B +i\theta\,Id)J_0),\,0<|\sigma|<\epsilon}\mr{sign}(\sigma)\\
&=\eta(A_{\mbf{J},e^{i\theta}L})+\sum_{\sigma\in\mr{sp}((B +i\theta\,Id)J_0),\,0<|\sigma|<\epsilon}\mr{sign}(\sigma)
\end{align*}
is continuous in a sufficiently small neighborhood of $0$. Since the finite sum
\begin{align*}
\sum_{\sigma\in\mr{sp}((B +i\theta\,Id)J_0),\,|\sigma|>\epsilon}\mr{sign}(\sigma)
\end{align*}
is constant in a sufficiently small neighborhood of $0$, and $\rro=\eta+$constant for our elliptic-unipotent family, we can rephrase the conclusion as follows.

\begin{prop}\label{continuous}
The function 
\begin{align*}
\theta\mapsto \rro(e^{i\theta}L)+\sum_{\sigma\in\mr{sp}((B +i\theta\,Id)J_0),\,|\sigma|>0}\mr{sign}(\sigma)
\end{align*}
is continuous at $0$.
\end{prop}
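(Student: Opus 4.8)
The plan is to combine the three facts assembled in the two preceding paragraphs, paying attention to signs. First, recall from the paragraph ``Reduction to a finite dimensional spectral problem'' that, for $\epsilon>0$ with $\pm\epsilon$ off the spectrum of $A_{\mbf{J},L}$, one has $\eta(A)=\eta^\epsilon_A(0)+\sum_{\lambda\in\mr{sp}(A),\,0<|\lambda|<\epsilon}\mr{sign}(\lambda)$ on a neighbourhood of $A_{\mbf{J},L}$, with $A\mapsto\eta^\epsilon_A(0)$ continuous there. Hence the only possible discontinuity of $\theta\mapsto\eta(A_{\mbf{J},e^{i\theta}L})$ at $0$ comes from the finite sum of signs of the eigenvalues in $(-\epsilon,\epsilon)\setminus\{0\}$, and everything reduces to controlling those.

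Second, I would feed in the algebraic description from ``Algebraic expression of the spectrum'': gauging $A_{\mbf{J},e^{i\theta}L}=\mbf{J}\frac{d}{dx}$ by $s\mapsto\exp(xB)s$ (with $\exp(2\pi B)=L$, $B\in\mathfrak{su}(p,q)$ nilpotent) makes it constant-coefficient, so a small $\sigma$ is an eigenvalue with multiplicity $\dim\ker(-\sigma J_0+B+i\tfrac{\theta}{2\pi}\,Id)$. Thus, up to an inessential rescaling of $\theta$ and conjugation, the part of $\mr{sp}(A_{\mbf{J},e^{i\theta}L})$ in $(-\epsilon,\epsilon)$ coincides, with multiplicities, with the part of $\mr{sp}(-(B+i\theta\,Id)J_0)$ there, and the latter operator is self-adjoint (for $i\Omega(\cdot,J_0\cdot)$), hence has real spectrum. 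Therefore
\[
\eta(A_{\mbf{J},e^{i\theta}L})-\sum_{\sigma\in\mr{sp}(-(B+i\theta\,Id)J_0),\,0<|\sigma|<\epsilon}\mr{sign}(\sigma)=\eta^\epsilon_{A_{\mbf{J},e^{i\theta}L}}(0)
\]
is continuous at $\theta=0$. Using the involution $\sigma\leftrightarrow-\sigma$, which identifies $\mr{sp}(-(B+i\theta\,Id)J_0)$ with $-\mr{sp}((B+i\theta\,Id)J_0)$ and reverses signs while preserving $|\sigma|$, the subtracted finite sum becomes $+\sum_{\sigma\in\mr{sp}((B+i\theta\,Id)J_0),\,0<|\sigma|<\epsilon}\mr{sign}(\sigma)$.

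Third, since $\theta\mapsto\mr{sp}((B+i\theta\,Id)J_0)$ varies continuously and $\pm\epsilon$ avoids $\mr{sp}(BJ_0)$, no eigenvalue crosses $\pm\epsilon$ for $|\theta|$ small, so $\sum_{\sigma\in\mr{sp}((B+i\theta\,Id)J_0),\,|\sigma|>\epsilon}\mr{sign}(\sigma)$ is locally constant; adding it extends the finite sum to the set of all nonzero $\sigma$. Finally, for the elliptic-unipotent family $e^{i\theta}L$ all of whose members define the same automorphism of $\mr{D}^{\mr{I}}_{p,q}$, the $L$-invariant primitive $\alpha_W$ and the equivariant map $\wt{\mbf{J}}$ can be taken independent of $\theta$, so the integral term $-\frac1\pi\int_{S^1}\wt{\mbf{J}}^*\alpha_W$ is constant in $\theta$ and $\rro(e^{i\theta}L)=\eta(A_{\mbf{J},e^{i\theta}L})+\mathrm{const}$. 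Substituting gives the continuity at $0$ of $\theta\mapsto\rro(e^{i\theta}L)+\sum_{\sigma\in\mr{sp}((B+i\theta\,Id)J_0),\,|\sigma|>0}\mr{sign}(\sigma)$, as asserted.

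The step I expect to require the most care is the spectral matching in the second paragraph: identifying with multiplicities the small eigenvalues of the ODE operator $A_{\mbf{J},e^{i\theta}L}$ with those of the finite-dimensional matrix $-(B+i\theta\,Id)J_0$, and justifying that the convergence $\eta^\epsilon_{A_{\mbf{J},e^{i\theta}L}}(0)\to\eta^\epsilon_{A_{\mbf{J},L}}(0)$ is genuinely uniform (via resolvent bounds away from $\pm\epsilon$). The sign bookkeeping of the third paragraph, although routine, also deserves to be done explicitly, since any slip there would corrupt the ensuing computation of $\rro$ on unipotent elements.
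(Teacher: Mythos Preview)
Your proposal is correct and follows essentially the same approach as the paper: you invoke the same decomposition $\eta=\eta^\epsilon+\text{(finite sum of signs)}$, the same identification of small eigenvalues of $A_{\mbf{J},e^{i\theta}L}$ with those of $-(B+i\theta\,Id)J_0$, the same sign flip to pass to $(B+i\theta\,Id)J_0$, the same addition of the locally constant large-eigenvalue sum, and the same observation that the integral term in $\rro$ is $\theta$-independent. The paper's argument is just these same steps, stated more tersely in the paragraphs preceding the proposition.
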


\subsubsection{Expression in terms of a Hermitian form}

By assumption on $J_0$, the Hermitian form $(u,v)\mapsto H(u,v)=i\Omega(u,J_0v)$ is positive definite. Therefore, the difference of the number of positive and negative eigenvalues of $(B +i\theta\,Id)J_0$ is equal to the signature of the Hermitian form $(u,v)\mapsto \Omega(i(B +i\theta\,Id)J_0 u,J_0 v)$. This suggests defining the following Hermitian form, which has the same signature (up to sign).

\begin{defn}
Given a nilpotent element $B\in\mathfrak{su}(E,\Omega)$, define the (possibly indefinite) Hermitian form $H_{B+i\theta}$ by
\begin{align*}
H_{B+i\theta}(u,v)=\Omega(i(B +i\theta\,Id)u,v).
\end{align*}
\end{defn}

\begin{prop}\label{thm}
Let $L=\exp(2\pi B)$ be a unipotent element of $\mr{U}(E,\Omega)$. Consider the function
$$
\theta\mapsto \sigma(\theta):=\op{sign}(H_{B+i\theta}).
$$
Here, the signature is the number of plus signs minus the number of minus signs among eigenvalues, irrelevant of the dimension of the kernel. Then
\begin{align*}
\rro(L)=-\sigma(0)+\sigma(0^+)+\op{sign}(\Omega)=-\sigma(0)+\sigma(0^-)-\op{sign}(\Omega).
\end{align*}
\end{prop}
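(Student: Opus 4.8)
The plan is to combine Proposition \ref{continuous}, which already confines the discontinuity of $\rro$ along the arc $\theta\mapsto e^{i\theta}L$ to an explicit finite spectral sum, with two elementary ingredients: the identification of that sum with the signature $\sigma(\theta)=\op{sign}(H_{B+i\theta})$, and the one-sided limits of $\rro(e^{i\theta}L)$ at $0$ given by (\ref{rhoellu}). Granting these, the conclusion is a two-line bookkeeping.

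\emph{Step 1 (the spectral sum is $\sigma(\theta)$).} Set $C_\theta=B+i\theta\,\op{Id}$ and $T_\theta=C_\theta J_0$, where $J_0\in\mc{J}(E,\Omega)$ is the element fixed in Proposition \ref{continuous}. Since $B$, $i\theta\,\op{Id}$ and $J_0$ are all $\Omega$-skew-adjoint (the first because $B\in\mf{u}(E,\Omega)$, the last because $J_0^2=-\op{Id}$ together with $\Omega(J_0\cdot,J_0\cdot)=\Omega(\cdot,\cdot)$), a short computation shows $H_0(T_\theta u,v)=i\Omega(u,J_0C_\theta J_0v)=H_0(u,T_\theta v)$, where $H_0(u,v):=i\Omega(u,J_0v)$ is the given positive definite Hermitian form. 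Thus $T_\theta$ has real spectrum and $E$ is the $H_0$-orthogonal sum of its eigenspaces $V_\mu$. The linear isomorphism $J_0$ pulls the form $H_{B+i\theta}$ back to $(u,v)\mapsto H_0(T_\theta u,v)$; restricted to $V_\mu$ this is $\mu\cdot H_0|_{V_\mu}$, definite of sign $\mr{sign}(\mu)$ for $\mu\neq0$ and identically zero on $\ker T_\theta$, and the $V_\mu$ are mutually orthogonal for it. Hence
\[
\sigma(\theta)=\op{sign}(H_{B+i\theta})=\sum_{\mu\in\mr{sp}(T_\theta),\ \mu\neq0}\mr{sign}(\mu),
\]
which is exactly the finite sum in Proposition \ref{continuous}. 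Moreover, $B$ nilpotent gives $\det(B+i\theta\,\op{Id})=(i\theta)^m$, so $H_{B+i\theta}$ is non-degenerate for all $\theta\neq0$ and $\sigma(\cdot)$ is constant on each of $(0,\infty)$ and $(-\infty,0)$; in particular $\sigma(0^\pm)$ are well defined.

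\emph{Step 2 (take limits).} By Proposition \ref{continuous} the function $\theta\mapsto\rro(e^{i\theta}L)+\sigma(\theta)$ is continuous at $0$, so its value $\rro(L)+\sigma(0)$ at $\theta=0$ equals each of its one-sided limits. Using (\ref{rhoellu}): as $\theta\to0^+$, $\rro(e^{i\theta}L)=(p-q)(1-\tfrac{\theta}{\pi})\to p-q=\op{sign}(\Omega)$, hence $\rro(L)+\sigma(0)=\op{sign}(\Omega)+\sigma(0^+)$; as $\theta\to0^-$, $\rro(e^{i\theta}L)=(p-q)(1-\tfrac{\theta+2\pi}{\pi})\to-(p-q)=-\op{sign}(\Omega)$, hence $\rro(L)+\sigma(0)=-\op{sign}(\Omega)+\sigma(0^-)$. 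Solving for $\rro(L)$ in each case gives the two asserted formulas (and, as a check, the relation $\sigma(0^+)-\sigma(0^-)=-2\op{sign}(\Omega)$).

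The only genuinely delicate point is the sign bookkeeping in Step 1: one must confirm that $H_0$ is the positive definite form for which $T_\theta$ — rather than $-T_\theta$ — is self-adjoint, so that the signs $\mr{sign}(\mu)$ over $\mr{sp}\bigl((B+i\theta\,\op{Id})J_0\bigr)$ that enter Proposition \ref{continuous} agree with $\op{sign}(H_{B+i\theta})$ with no stray overall sign. Because the hard analytic input (the reduction to a finite-dimensional spectral problem and Proposition \ref{continuous}) is already in hand, this is essentially the whole content of the remaining argument.
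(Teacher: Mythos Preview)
Your proof is correct and follows the same route as the paper. Step 2 is verbatim the paper's argument, while your Step 1 spells out in full the identification of $\sigma(\theta)=\op{sign}(H_{B+i\theta})$ with the spectral sum $\sum_{\mu\in\mr{sp}((B+i\theta\,\op{Id})J_0),\,\mu\neq0}\mr{sign}(\mu)$ that the paper sketches in the paragraph preceding the definition of $H_{B+i\theta}$ (where it merely notes the two forms agree ``up to sign'' and then uses equality).
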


Note that the function $\sigma$ is a conjugacy invariant of $B$.

\subsubsection{Proof of Proposition \ref{thm}}

Proposition \ref{continuous} gives that
$$
\theta\mapsto\rro(e^{i\theta}L)+\sigma(\theta)
$$
is continuous at $0$. Therefore, using Formulae \ref{rhoellu},
\begin{align*}
\rro(L)+\sigma(0)&=\rro(e^{i0^+}L)+\sigma(0^+)\\
&=p-q+\sigma(0^+).
\end{align*}
Similarly,
\begin{align*}
\rro(L)+\sigma(0)&=\rro(e^{i0^-}L)+\sigma(0^-)\\
&=-(p-q)+\sigma(0^-).
\end{align*}

\subsubsection{Examples}
\begin{itemize}
  \item[(1)] If $L$ is the identity, then $B=0$, $H_{i\theta}=i\Omega(i\theta \cdot,\cdot)=-\theta\Omega$, hence $\sigma(\theta)=-\op{sgn}(\theta)\op{sign}(\Omega)$. Here, the sign function $\op{sgn}$ takes values $-1$, $0$ and $1$. Therefore $\sigma(0)=0$, $\sigma(0^+)=-\op{sign}(\Omega)$, $\sigma(0^-)=\op{sign}(\Omega)$, hence
$$
\rro(\mr{Id})=\op{sign}(\Omega)-\op{sign}(\Omega)=0.
$$

\item[(2)] Let $L$ be unipotent in $\mathrm{U}(1,1)$. On $E=\bc^2$, choose $\Omega=\begin{pmatrix}
1 & 0 \\
0 & -1 
\end{pmatrix}$. Then $B=U^{-1}\begin{pmatrix}
0 & \mu \\
0 & 0
\end{pmatrix}U=\frac{1}{2}\begin{pmatrix}
	i\mu & i\mu\\
	-i\mu & -i\mu
\end{pmatrix}
\in\mathfrak{su}(E,\Omega)$, where $U$ is defined in \eqref{U}. The matrix of $H_{B+i\theta}$ is $-\frac{1}{2}\begin{pmatrix}
\mu+\theta & \mu   \\
\mu & \mu-\theta
\end{pmatrix}$. Its characteristic polynomial is $X^2+\mu X-\frac{1}{4}\theta^2=0$, which has roots
$$X=-\frac{\mu}{2}\pm\frac{1}{2}\sqrt{\mu^2+\theta^2},$$
 hence
\begin{itemize}
  \item if $\theta\not=0$, two eigenvalues of opposite signs, $\sigma(\theta)=0$ ;
  \item if $\theta=0$, eigenvalues $0$ and $-\mu$, $\sigma(0)=-\op{sgn}(\mu)$.
\end{itemize}
Proposition \ref{thm} gives $\rro(\exp(2\pi B))=\op{sgn}(\mu)$, which is consistent with Table (\ref{etadim2}).

\end{itemize}

\begin{rem}
\label{modZ}
We shall see in Lemma \ref{rotrho} that modulo $\bz$, the rho invariant coincides with minus twice Burger-Iozzi-Wienhard's rotation number. 
\end{rem}

\subsubsection{The rho invariant of nilpotents whose kernel is $1$-dimensional}
\label{1dimensional}

The matrix of $H_{B+i\theta}$ is $i(B+i\theta\,Id)^\top \Omega$. Its determinant is equal to $\op{det}(\Omega)(-\theta)^n$, $n=\op{dim}(E)$. When $\theta=0$, its kernel coincides with $B$'s kernel. If $\op{dim}\op{Ker}(B)=1$, when $\theta=0$, the derivative at $0$ of the characteristic polynomial 
$$
P(x,\theta):=\op{det}(xI-i(B+i\theta\,Id)^\top \Omega)
$$ 
does not vanish. Therefore, according to the Implicit Function Theorem, the unique eigenvalue $\lambda(\theta)$ which is close to $0$ varies smoothly with $\theta$ in a neighborhood of $\theta=0$. Writing
$$
P(x,\theta)=\sum_{k=0}^n a_k(\theta)x^k,
$$
we know that $a_0(\theta)=\op{det}(\Omega)\theta^n$ and $a_1(0)\not=0$. Assuming that $\lambda(\theta)=c\theta^k+o(\theta^k)$ for some $0<k<n$ and $c\not=0$, we see that
$$
0=P(\lambda(\theta),\theta)=a_1(0)c\theta^k+o(\theta^k),
$$
contradiction. Hence the Taylor expansion of $\lambda(\theta)$ starts with $\lambda(\theta)=c\theta^n+o(\theta^n)$, and 
$$
0=P(\lambda(\theta),\theta)=\op{det}(\Omega)\theta^n+a_1(0)c\theta^n+o(\theta^n).
$$
Therefore $c=-\frac{\op{det}(\Omega)}{a_1(0)}$ and $\lambda(\theta)=cf(\theta)^n$ for some smooth function $f$ such that $f(0)=0$ and $f'(0)=1$.

\subsubsection{Discussion}
\label{discussion}

If $n$ is even and $c>0$, $\lambda(\theta)$ does not change sign near $0$, it merely disappears from the count when $\theta=0$, therefore $\sigma(0^+)=\sigma(0^-)=\sigma(0)+1$. Therefore Proposition \ref{thm} implies that $\op{sign}(\Omega)=0$ and
$$
\rro(\exp(2\pi B))=1.
$$
If $n$ is even and $c<0$, $\sigma(0^+)=\sigma(0^-)=\sigma(0)-1$. In this case, $\op{sign}(\Omega)=0$ and
$$
\rro(\exp(2\pi B))=-1.
$$

If $n$ is odd and $c>0$, $\lambda(\theta)$ changes sign near $0$, $\sigma(0^+)-\sigma(0)=\sigma(0)-\sigma(0^-)=1$. Therefore $\op{sign}(\Omega)=-1$ and
$$
\rro(\exp(2\pi B))=0.
$$
If $n$ is odd and $c<0$, $\sigma(0^+)-\sigma(0)=\sigma(0)-\sigma(0^-)=-1$. Therefore $\op{sign}(\Omega)=1$ and
$$
\rro(\exp(2\pi B))=0.
$$

There remains to compute the sign of $c=c(E,\Omega,B)$.

\subsubsection{Examples of nilpotent elements of $\mathfrak{su}(p,q)$}
\label{exnil}

Let $B\in\mathfrak{su}(E,\Omega)$ be nilpotent. Let $(e_j)_{1\leq j\leq n}$ be a Jordan basis for $B$, i.e. for all $j$, $Be_j=e_{j-1}$ or $0$. Let 
$$
Z=\{j\in\{1,\ldots,n\}\,;\,Be_j=0\}.
$$
In this basis, the matrix of $\Omega$ has entries $(\omega_{k,\ell})$, and 
\begin{align*}
\forall k\notin Z,~\forall\ell\notin Z,&\quad \omega_{k,\ell-1}=-\omega_{k-1,\ell},\\
\forall k\in Z,~\forall\ell\notin Z,&\quad \omega_{k,\ell-1}=0,\\
\forall k\notin Z,~\forall\ell\in Z,&\quad \omega_{k-1,\ell}=0.
\end{align*}
For instance, if $B$ has only one Jordan block, i.e. $Z=\{1\}$, then the matrix $\Omega$ is anti-lower-triangular, and along each nonzero antidiagonal, the same number arises with alternating signs. The simplest examples are the anti-diagonal matrix with entries alternatively equal to $i$ and $-i$ (if $n$ is even) or with entries alternatively equal to $1$ and $-1$ with $1$ (resp. $-1$) on the diagonal (if $n$ is odd). Its signature is $0$ (if $n$ is even) or $1$ (resp. $-1$) (if $n$ is odd). We shall see in the next paragraph that every indecomposable nilpotent element of some $\mathfrak{su}(p,q)$ is conjugate under $\mr{SU}(p,q)$ to one of these types.

Let us compute the relevant derivative $a_1(0)$ for those examples.

\begin{lemma}
Let $D$ be an $n\times n$ matrix whose entries $a_{j,k}$ vanish except those along the second antidiagonal, i.e. when $j+k=n+2$. Let $P(x)=\op{det}(xI-D)$ denote the characteristic polynomial of $D$. Then
$$
\frac{\partial P}{\partial x}(0)=(-1)^{n-1+\lfloor \frac{n-1}{2} \rfloor}\prod_{j=2}^n d_{j,n+2-j}.
$$
\end{lemma}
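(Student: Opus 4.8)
The plan is to read off the coefficient of $x^1$ in $P(x)=\det(xI-D)$ directly from the expansion of a characteristic polynomial in principal minors, and then to exploit the extreme sparsity of $D$. Recall that $P(x)=\sum_{k=0}^{n}(-1)^{k}s_{k}(D)\,x^{n-k}$, where $s_k(D)$ is the sum of the $k\times k$ principal minors of $D$; taking $k=n-1$ shows that $\frac{\partial P}{\partial x}(0)=(-1)^{n-1}s_{n-1}(D)=(-1)^{n-1}\sum_{i=1}^{n}\det\big(D_{\hat{\imath},\hat{\imath}}\big)$, the sum over the principal $(n-1)\times(n-1)$ submatrices obtained by deleting the $i$-th row and the $i$-th column.

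Next I would use that the nonzero entries of $D$ sit only at the positions $(j,n+2-j)$ with $2\le j\le n$. In particular the first row and the first column of $D$ both vanish (the entries $(1,n+1)$ and $(n+1,1)$ are out of range). Hence for $i\neq 1$ the submatrix $D_{\hat{\imath},\hat{\imath}}$ still contains the zero first row, so $\det(D_{\hat{\imath},\hat{\imath}})=0$, and only the term $i=1$ contributes. The remaining matrix $D_{\hat{1},\hat{1}}$ has rows and columns indexed by $\{2,\dots,n\}$, and in each row $j$ its unique possibly nonzero entry lies in column $\pi(j):=n+2-j$, which again belongs to $\{2,\dots,n\}$. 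Thus $\pi$ is the order-reversing involution of the $(n-1)$-element set $\{2,\dots,n\}$, and $\det(D_{\hat{1},\hat{1}})=\operatorname{sgn}(\pi)\prod_{j=2}^{n}d_{j,n+2-j}$.

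Finally I would compute $\operatorname{sgn}(\pi)$: the order-reversing permutation of an $m$-element set is a product of $\lfloor m/2\rfloor$ transpositions (pair the first with the last, the second with the second-to-last, etc.), so with $m=n-1$ we get $\operatorname{sgn}(\pi)=(-1)^{\lfloor (n-1)/2\rfloor}$. Combining this with the factor $(-1)^{n-1}$ from the principal-minor formula gives $\frac{\partial P}{\partial x}(0)=(-1)^{n-1+\lfloor (n-1)/2\rfloor}\prod_{j=2}^{n}d_{j,n+2-j}$, as claimed.

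There is no genuine obstacle in this argument; it is a short linear-algebra computation. The only point requiring care is the sign bookkeeping — the exponent $n-1+\lfloor (n-1)/2\rfloor$ must be assembled correctly from the $(-1)^{n-1}$ in the characteristic-polynomial expansion and the sign of the order-reversing permutation on $n-1$ letters — and it is worth running the small cases $n=2,3,4$ as a sanity check before finalizing.
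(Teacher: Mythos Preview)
Your proof is correct and follows essentially the same route as the paper's. The paper phrases step one via Jacobi's formula $\frac{\partial P}{\partial x}(0)=\operatorname{Trace}(\operatorname{adj}(-D))$ rather than the principal-minor expansion, but these are the same identity; thereafter both arguments observe that only the minor obtained by deleting row and column $1$ survives (since row $1$ of $D$ is zero) and compute it as the sign of the order-reversing permutation on $n-1$ letters times $\prod_{j=2}^{n}d_{j,n+2-j}$.
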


\begin{proof}
In general,
$$
\frac{\partial P}{\partial x}(0)=\op{Trace}(\op{adj}(-D)),
$$
where the entries of the adjugate $\op{adj}(M)$ of a matrix $M$ are signed cofactors. Here, we need to compute only diagonal cofactors $\op{adj}(M)_{j,j}=\op{det}(M_j)$, which all come with a plus sign. Each $(n-1)\times (n-1)$-matrix $M_j$ is anti-triangular. All but the first one $M_1$ have at least one zero entry on their diagonal, so only $\op{det}(M_1)$ can be nonzero. Its diagonal entries are all entries of the nonzero antidiagonal, whence the announced formula, up to a sign. When all $m_{j,n+2-j}=1$, $M_{1}$ is the matrix of the permutation that exchanges $j$ and $n-1-j$. It has $\lfloor \frac{n-1}{2} \rfloor$ $2$-cycles, plus a $1$-cycle if $n$ is even, therefore its signature is $(-1)^{\lfloor (n-1)/2 \rfloor}$. When we substitute $M$ with $-D$, an extra factor $(-1)^{n-1}$ shows up.
\end{proof}

When $\Omega$ has only one nonzero antidiagonal,
$$
\omega_{j,n+1-j}=(-1)^{j+1} \epsilon,
$$
where $\epsilon\in\{1,i,-1,-i\}$, the Hermitian matrix $D=iB^\top\Omega$ has only one nonzero antidiagonal, the second one, with nonzero entries $d_{j,n+2-j}=(-1)^{j}i\epsilon$. Therefore
$$
a_1(0)=\frac{\partial P}{\partial x}(0)=(-1)^{n-1+\lfloor \frac{n-1}{2} \rfloor}(-1)^{\lfloor \frac{n-1}{2} \rfloor}(i\epsilon)^{n-1}=(-i\epsilon)^{n-1}.
$$
Also,
\begin{align*}
\op{det}(\Omega)=\epsilon^n,
\end{align*}
and $\epsilon=\omega_{1,n}=\Omega(B^{n-1}e_n,e_n)$, so
\begin{align}
\label{c}
\begin{split}
c&=-\frac{\op{det}(\Omega)}{\frac{\partial P}{\partial x}(0)}=-\frac{(-\epsilon)^n}{(i\epsilon)^{n-1}}=-(i^{n-1}\epsilon)=-(i^{n-1}\Omega(B^{n-1}e_n,e_n))\\
&=-\Omega((iB)^{n-1}e_n,e_n).
\end{split}
\end{align}

\subsubsection{Nilpotent conjugacy classes in $\mathfrak{su}(p,q)$}

The following result is borrowed from N. Burgoyne and R. Cushman's work \cite{BurgoyneCushman}.

\begin{prop}
\label{BurgoyneCushman}
Consider complex vector spaces $E$ equipped with non-degenerate Hermitian forms $\Omega$ and nilpotent skew-hermitian endomorphisms $N$. 
\begin{enumerate}

  \item Any such triple $(E,\Omega,N)$ is a direct sum of indecomposables. 
  
  \item A triple is indecomposable if and only if $(E,N)$ is a single Jordan block. Then $\bar E=E/NE$ is $1$-dimensional. If $n=\op{dim}(E)$, the Hermitian form
$$
(u,v)\mapsto \tau_{n-1}(u,v)=\Omega((iN)^{n-1}u,v)
$$
induces a non-degenerate Hermitian form $\bar\tau$ on $\bar E$.
  
  \item Two indecomposable triples are isomorphic if and only if they have the same dimension $n$ and the quotients $(\bar E,\bar\tau)$ have equal signatures.

\end{enumerate}

\end{prop}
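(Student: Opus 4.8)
The plan is to prove the three assertions of Proposition~\ref{BurgoyneCushman} by combining the $\Omega$-skew-symmetry of $N$ with a Gram--Schmidt-type normalisation of a cyclic vector, in the spirit of Burgoyne--Cushman. The basic identity to record first is that, since $N\in\mf{u}(E,\Omega)$, one has $\Omega(Nu,v)=-\Omega(u,Nv)$, hence $\Omega(N^ku,v)=(-1)^k\Omega(u,N^kv)$, so that each form
\[
\tau_k(u,v):=\Omega((iN)^ku,v)
\]
is Hermitian; and if $m$ is the size of the largest Jordan block of $N$, nondegeneracy of $\Omega$ gives $\op{rad}(\tau_{m-1})=\ker N^{m-1}\supseteq NE$. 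Also $\tau_{m-1}\not\equiv0$, hence (polarisation over $\bc$) it is non-isotropic modulo its radical.

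For (1) and the forward direction of (2) I would induct on $\dim E$. If $N=0$, diagonalise $\Omega$ and each summand is a one-dimensional (indecomposable) block. If $N\neq0$, pick $e$ with $\tau_{m-1}(e,e)\neq0$, equivalently $a_{m-1}:=\Omega(e,N^{m-1}e)\neq0$ (in particular $N^{m-1}e\neq0$), and set $V=\op{span}(e,Ne,\dots,N^{m-1}e)$, an $N$-invariant single Jordan block of dimension $m$. In the basis $(N^ke)_k$ the Gram matrix of $\Omega|_V$ equals $((-1)^ka_{k+j})$ with $a_\ell=0$ for $\ell\ge m$, hence is anti-triangular with nonzero antidiagonal entries $(-1)^ka_{m-1}$, hence invertible; so $E=V\perp V^\perp$ with $V^\perp$ again $N$-invariant (skew-symmetry), and one recurses. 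This exhibits $(E,\Omega,L)$ as an $\Omega$-orthogonal sum of single Jordan blocks, so in particular of indecomposables, and shows an indecomposable triple \emph{must} be a single Jordan block. Conversely a single Jordan block is indecomposable because its only $N$-submodules are the $N^kE$, which cannot form a direct sum decomposition; and for such a block $NE=\ker N^{n-1}$, so $\bar E=E/NE$ is one-dimensional and $\tau_{n-1}$ descends to a nondegenerate Hermitian form $\bar\tau$ there.

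For (3), the forward implication is immediate: an isomorphism intertwines $N$ with $N'$ (hence $n=n'$) and $\tau_{n-1}$ with $\tau'_{n-1}$, hence induces an isometry $(\bar E,\bar\tau)\cong(\bar E',\bar\tau')$, so the one-dimensional signatures agree. For the converse I would show every single-block triple of dimension $n$ is isomorphic to one of the two normal forms $(\bc^n,\Omega_{n,\epsilon},N_0)$ of \S\ref{exnil}, indexed by $\epsilon\in\{+,-\}$, with $\epsilon$ determined by $\op{sign}(\bar\tau)$ and $n$. Starting from an arbitrary cyclic vector $e$ the Gram matrix is the anti-triangular Hankel matrix $((-1)^ka_{k+j})$, where Hermiticity of $\Omega$ forces $a_\ell$ real for $\ell$ even and purely imaginary for $\ell$ odd, and $a_{n-1}\neq0$ by nondegeneracy. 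Replacing $e$ by $e+cN^re$ successively for $r=1,2,\dots,n-1$ changes $a_{n-1-r}$ into $a_{n-1-r}+(\bar c+(-1)^rc)\,a_{n-1}$ while leaving every $a_j$ with $j\ge n-r$ unchanged; since $a_{n-1-r}$ and $a_{n-1}$ lie in opposite parity classes exactly when $r$ is odd, the available term $(\bar c+(-1)^rc)a_{n-1}$ always lies in the class of $a_{n-1-r}$, so a suitable real $\op{Re}(c)$ or $\op{Im}(c)$ kills $a_{n-1-r}$ without disturbing the previously normalised entries. After these steps $a_0=\dots=a_{n-2}=0$; a rescaling of $e$ makes $|a_{n-1}|=1$, leaving exactly the two normal forms. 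Finally $\bar\tau(\bar e,\bar e)=(-i)^{n-1}a_{n-1}$ is real with sign equal to $\epsilon$ up to a factor depending only on $n$, so $(n,\op{sign}\bar\tau)$ determines $\epsilon$, hence the isomorphism class.

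\textbf{Main obstacle.} The only genuine computation is the normalisation recursion in part (3): tracking which $a_j$ are affected by $e\mapsto e+cN^re$ and verifying that the index parity always permits killing the offending entry with a legitimate (real, resp.\ purely imaginary) scalar. Everything else reduces to the skew-symmetry identity, invertibility of anti-triangular matrices, and polarisation of Hermitian forms over $\bc$.
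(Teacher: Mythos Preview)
Your argument is correct and is essentially the Burgoyne--Cushman strategy, organised a bit more directly than in the paper. The paper's Appendix first splits $E$ into a ``uniform'' summand of maximal height and a lower-height remainder (Lemma~\ref{P1}), then within the uniform summand constructs a complement $F$ of $NE$ on which all $\tau_j$ but $\tau_m$ vanish (Lemma~\ref{P2}) and diagonalises $\tau_m$ on $F$ to split into single blocks (Lemma~\ref{P4}); you instead split off one Jordan block at a time by choosing a $\tau_{m-1}$-anisotropic vector $e$ and passing to $V^\perp$. Your recursion $e\mapsto e+cN^re$ in part~(3) is exactly the one-dimensional case of the paper's modification $F\mapsto(\mathrm{Id}+(iN)^{m-k}\phi)(F)$ in Lemma~\ref{P2}, and your parity analysis of $(\bar c+(-1)^rc)a_{n-1}$ versus $a_{n-1-r}$ is the verification that the symmetric scalar $\phi$ can be chosen. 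Both routes yield the same antidiagonal normal forms of~\S\ref{exnil}; yours avoids the intermediate ``uniform'' step at the cost of re-running the height argument at each stage of the induction.
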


For the reader's convenience, a detailed proof of Proposition \ref{BurgoyneCushman} is provided in the Appendix, Subsection \ref{nilpotentsu}.

\subsubsection{Rho invariants of nilpotent elements of $\mathfrak{su}(p,q)$}

\begin{defn}
\label{defsign}
Let $B\in\mathfrak{su}(E,\Omega)$ be a single Jordan block of even dimension $n$. Its \emph{sign} $\op{sgn}(E,\Omega,B)\in\{-1,1\}$ is the signature of the non-degenerate Hermitian form induced by $\tau_{n-1}:(u,v)\mapsto \Omega((iB)^{n-1}u,v)$ on the $1$-dimensional space $E/BE$.
\end{defn}

Piecing together the above results and computations, we get

\begin{thm}
\label{value}
Let $B\in\mathfrak{su}(E,\Omega)$ be nilpotent. Then $E$ admits an orthogonal decomposition into $B$-invariant subspaces $E_j$ which are single Jordan blocks,
$$
(E,\Omega,B)=\bigoplus_j (E_j,\Omega_j,B_j).
$$
Furthermore,
$$
\rro(\exp(2\pi B))=\sum_{\op{dim}(E_j)\,even} -\op{sgn}(E_j,\Omega_j,N_j).
$$
In particular, if the signature of $\Omega$ is $(p,q)$, $|\rro(\exp(2\pi B))|\leq\min\{p,q\}$.
\end{thm}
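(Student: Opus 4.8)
The plan is to reduce the whole statement to the single Jordan block case, which has essentially been settled in Subsections~\ref{1dimensional}--\ref{exnil}. First I would invoke Proposition~\ref{BurgoyneCushman}(1)--(2): the triple $(E,\Omega,B)$ splits as an $\Omega$-orthogonal direct sum of indecomposables $(E_j,\Omega_j,B_j)$, each of which is a single Jordan block. Since the $E_j$ are $B$-invariant and pairwise $\Omega$-orthogonal, $\Omega$ restricts non-degenerately to each of them, so every $\rro(\exp(2\pi B_j))$ is defined; and, exactly as in the proof of Proposition~\ref{dechyperellu} (choose $\mbf{J}$ and the fixed point $W$ compatibly with the splitting), $\rro$ is additive, $\rro(\exp(2\pi B))=\sum_j\rro(\exp(2\pi B_j))$. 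This reduces the formula to computing $\rro(\exp(2\pi B_j))$ for a single block.

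For that computation I would use that a single Jordan block $B_j$ on $E_j$ of dimension $n_j$ has $1$-dimensional kernel, so the Implicit Function Theorem analysis of Subsection~\ref{1dimensional} applies: the unique eigenvalue of $H_{B_j+i\theta}$ near $0$ has the form $c\,f(\theta)^{n_j}$ with $f(0)=0$, $f'(0)=1$ and $c=c(E_j,\Omega_j,B_j)\neq 0$. The sign discussion of Subsection~\ref{discussion} then yields $\rro(\exp(2\pi B_j))=0$ if $n_j$ is odd, and $\rro(\exp(2\pi B_j))=\op{sgn}(c)$ if $n_j$ is even (with $\op{sign}(\Omega_j)=0$ in the even case). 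To identify $\op{sgn}(c)$ with a sign of the block, I would use that both $\rro(\exp(2\pi B_j))$ and $\op{sgn}(E_j,\Omega_j,B_j)$ are conjugacy invariants (the former since $\rro$ is conjugacy-invariant), so by Proposition~\ref{BurgoyneCushman}(3) I may replace $B_j$ by one of the normal forms of Subsection~\ref{exnil}; there formula~\eqref{c} gives $c=-\Omega_j((iB_j)^{n_j-1}e_{n_j},e_{n_j})$, i.e.\ $\op{sgn}(c)=-\op{sgn}(E_j,\Omega_j,B_j)$ in the sense of Definition~\ref{defsign}. Hence $\rro(\exp(2\pi B_j))=-\op{sgn}(E_j,\Omega_j,B_j)$ for even $n_j$, and summing over $j$ gives the displayed formula, the odd blocks contributing nothing.

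For the bound $|\rro(\exp(2\pi B))|\leq\min\{p,q\}$ I would argue by counting. By the even case above, each even-dimensional block has $\op{sign}(\Omega_j)=0$, hence contributes $n_j/2\geq 1$ to each of $p$ and $q$; each odd-dimensional indecomposable has signature $\pm1$ and so contributes non-negatively to both $p$ and $q$. Therefore the number of even-dimensional blocks is at most $\min\{p,q\}$, and since each of them contributes $\pm1$ to $\rro(\exp(2\pi B))$ while the odd ones contribute $0$, we get $|\rro(\exp(2\pi B))|\leq\min\{p,q\}$.

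I do not expect a serious obstacle: the analytic heart of the matter (the eta-invariant calculations, the Implicit Function Theorem asymptotics, additivity over orthogonal sums) is already available from the preceding subsections, so Theorem~\ref{value} is really a bookkeeping assembly. The only point needing care is the sign matching between $c$ in~\eqref{c} and $\op{sgn}(E_j,\Omega_j,B_j)$, together with the observation---used both for consistency with Proposition~\ref{thm} and for the counting argument---that a single even-dimensional Jordan block carries a form of signature $0$ while an indecomposable odd block contributes $\pm1$; this is exactly what makes $p-q$ be read off the odd blocks and leaves the even blocks to be counted against $\min\{p,q\}$.
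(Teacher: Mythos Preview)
Your proposal is correct and follows essentially the same route as the paper's proof: split via Proposition~\ref{BurgoyneCushman}, use additivity of $\rro$, reduce each block to the normal forms of Subsection~\ref{exnil}, and read off $\op{sgn}(c)=-\op{sgn}(E_j,\Omega_j,B_j)$ from~\eqref{c} together with the case analysis of Subsection~\ref{discussion}. Your counting argument for the bound (even blocks contribute at least $1$ to each of $p$ and $q$) spells out what the paper compresses into the single remark that the contributing pieces have vanishing signature.
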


\begin{proof}
Split $(E,\Omega,B)$ into indecomposable blocks $(E_j,\Omega_j,B_j)$, according to Proposition \ref{BurgoyneCushman}. Then $\rro(\exp(2\pi B))=\sum_j \rro(E_j,\Omega_j,\exp(2\pi B_j))$. Each $(E_j,\Omega_j,B_j)$ is isomorphic to one of the examples of paragraph \ref{exnil}. Its rho invariant is equal to $-1$, $0$ or $1$ according to the discussion of paragraph \ref{discussion}.
\begin{itemize}
\item If $n_j=\op{dim}(E_j)$ is odd, $\rro(E_j,\Omega_j,\exp(2\pi B_j))=0$.
\item If $n_j$ is even, then $\rro(E_j,\Omega_j,B_j)$ is the sign of the parameter $c(E_j,\Omega_j,B_j)$ introduced in paragraph \ref{1dimensional}. This parameter is given in Equation \ref{c}: $c=-\tau_{n_j-1}(e,e)$ for some nonzero vector $e$, hence the sign of $c$ is opposite to the sign defined in \ref{defsign}.
\end{itemize}

Finally, the bound on the rho invariant follows from that fact that the pieces which contribute to $\rro$ have vanishing signature.
\end{proof}

\begin{rem}
We note that for $L\in \op{U}(p,q)$, $\rro(L)$ is an integer if and only if the semi-simple part of the elliptic-unipotent summand admits a conjugate in a subgroup $\mathrm{SU}(p')\times \mathrm{SU}(q')\subset \mathrm{U}(p,q)$. 
\end{rem}

\section{Atiyah's signature cocycle and section $\sigma$}
\label{Atiyahsigma}

In this section, we relate Meyer and Atiyah's cocycle with rotation numbers and rho invariants, and we compute Atiyah's section $\sigma$ for the group $\mr{U}(p,q)$.

\subsubsection{The signature cocycle}\label{cocycle}

Let $\Sigma_3$ be the three-hole sphere. Let us fix an orientation of $\Sigma_3$, a base-point $*\in \Sigma_3$ and loops $c_1,c_2,c_3$ based at $*$ which represent the oriented boundary components in such a way that $c_1c_2c_3$ is null homotopic. Given $A$ and $B\in \mr{U}(E,\Omega)$, let $\phi:\pi_1(\Sigma_3,*)\to\mr{U}(E,\Omega)$ be a representation with $\phi(c_1)=A^{-1}$, $\phi(c_2)=B^{-1}$ and $\phi(c_3)=(A^{-1}B^{-1})^{-1}=BA$.\footnote{Atiyah's convention for turning a representation into a flat bundle differs from ours. This is why we specify $A^{-1}$, $B^{-1}$ instead of $A$ and $B$ in the representation.}{}
Atiyah's \emph{signature cocycle} is the symmetric function $\mr{sign}:\mr{U}(E,\Omega)\times \mr{U}(E,\Omega)\to \bz$ defined as follows: the integer $\mr{sign}(A,B)$ is the signature of the flat unitary bundle $(\mc{E},\Omega)$ on $\Sigma_3$ associated with $\phi$.

\subsubsection{Cocycles and central extensions}

Atiyah's section $\sigma$ arises from the following general fact relating sections of central extensions, $2$-cocycles and $1$-cochains.

\begin{lemma}
\label{b}
Let $Z$ be an abelian group, let $0\to Z\stackrel{j}{\to} H\stackrel{p}{\to} G\to 1$ be a central extension of groups. 
\begin{enumerate}
  \item If $\sigma:G\to H$ is an arbitrary section of $p$, the function $c:G\times G\to Z$ defined by
  \begin{align*}
c(g,g')=j^{-1}(\sigma(g)\sigma(g')\sigma(gg')^{-1})
\end{align*}
is a $2$-cocycle, and any other choice of section leads to a cohomologous $2$-cocycle. Hence the cohomology class $[c]\in H^2(G,Z)$ that classifies the extension is well defined. Conversely, every $2$-cocycle in the classifying class is obtained from some section $\sigma:G\to H$.
  \item If $c:G\times G\to Z$ is a $2$-cocycle classifying the extension, then the $2$-cocycle $p^*c$ is exact. If in addition $\mr{Hom}(H,Z)=0$, there is a unique $1$-cochain $b:H\to Z$ such that $db=-p^*c$. Furthermore, $c$ is obtained from the section $\sigma$ uniquely determined by the condition $b\circ\sigma=0$. 
  \item If $Z=\bz$, if no nonzero multiple of $c$ is a coboundary and $\mr{Hom}(G,\bz)=0$, then $\mr{Hom}(H,\bz)=0$.
\end{enumerate}
\end{lemma}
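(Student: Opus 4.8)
The plan is to prove that every homomorphism $f:H\to\bz$ is trivial, which is exactly the assertion $\mr{Hom}(H,\bz)=0$. The first step is to inspect the restriction of $f$ to the central subgroup: $f\circ j:\bz\to\bz$ is multiplication by some integer $n$, i.e. $f(j(1))=n$. The whole argument then divides into the two cases $n=0$ and $n\neq 0$.

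If $n=0$, then $f$ vanishes on $\ker p=j(\bz)$, so it factors as $f=\bar f\circ p$ for a homomorphism $\bar f:G\to\bz$; the hypothesis $\mr{Hom}(G,\bz)=0$ forces $\bar f=0$, hence $f=0$. If $n\neq 0$, I would derive a contradiction with the hypothesis that no nonzero multiple of $c$ is a coboundary. Fix a set-theoretic section $\sigma:G\to H$ of $p$ whose associated $2$-cocycle is $c$, as in part (1); by the very definition of $c$ one has $j(c(g,g'))=\sigma(g)\sigma(g')\sigma(gg')^{-1}$, that is, $\sigma(g)\sigma(g')=j(c(g,g'))\,\sigma(gg')$. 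Define a $1$-cochain $\beta:G\to\bz$ by $\beta(g):=f(\sigma(g))$. Applying the homomorphism $f$ to the last identity, splitting the product on both sides, and using that $f\circ j$ is multiplication by $n$ so that $f(j(c(g,g')))=n\,c(g,g')$, I obtain
\[
\beta(g)+\beta(g')=n\,c(g,g')+\beta(gg'),
\]
i.e. $\delta\beta=n\,c$ in the inhomogeneous (bar) complex of $G$ with trivial $\bz$-coefficients. Thus $n\,c$ is a coboundary with $n\neq 0$, contradicting the hypothesis. Hence $n=0$ is the only possibility, and the first case then gives $f=0$; therefore $\mr{Hom}(H,\bz)=0$.

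I do not expect a genuine obstacle: this is elementary group cohomology in degrees $\leq 2$, and the computation with $\beta$ is essentially the same bookkeeping that underlies parts (1)--(2). The only points requiring care are keeping the cocycle identity $\sigma(g)\sigma(g')=j(c(g,g'))\,\sigma(gg')$ consistent with the sign conventions fixed in part (1), and using the centrality of $j(\bz)$ in $H$ so that $f$ pulls through the product cleanly; the sign in $\delta\beta=\pm n\,c$ is immaterial for the contradiction.
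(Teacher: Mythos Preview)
Your argument for part (3) is correct and matches the paper's proof essentially step for step: both restrict the homomorphism to the center to get multiplication by some integer $n$, push the homomorphism through the section to produce a $1$-cochain $\beta(g)=f(\sigma(g))$ whose coboundary is $n\,c$, conclude $n=0$ from the hypothesis, and then factor through $G$. The only cosmetic difference is that the paper writes $H$ in coordinates $(z,g)\in Z\times G$ and defines $\beta(g)=\psi(0,g)$, whereas you work with an abstract section; the content is identical.
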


\begin{proof}
1. The given section defines a bijection of $H$ with $Z\times G$. In these coordinates, the multiplication reads
\begin{align*}
(z,g)(z',g')=(z+z'+c(g,g'),gg').
\end{align*} 
The associativity of this law is equivalent to the cocycle equation. An other section $\sigma'$ can be written $\sigma'=(j\circ f)\sigma$ where $f:G\to Z$ is arbitrary. Its $2$-cocycle is
\begin{align*}
c'(g,g')&=j^{-1}((j\circ f)(g)\sigma(g)(j\circ f)(g')\sigma(g')(j\circ f)(gg')\sigma(gg')^{-1})\\
&=j^{-1}((j\circ f)(g)(j\circ f)(g')(j\circ f)(gg')^{-1})j^{-1}(\sigma(g)\sigma(g')\sigma(gg')^{-1})\\
&=f(g)+f(g')-f(gg')+c(g,g')=df(g,g')+c(g,g').
\end{align*}
Conversely, every $2$-cocycle in the classifying class is of the form $c+df$, hence arises from the section $(j\circ f)\sigma$.

2. By assumption, $H$ is the set $Z\times G$ equipped with the multiplication $(z,g)(z',g')=(z+z'+c(g,g'),gg')$.
Define $b:H\to Z$ by $b(z,g)=z$. Then 
\begin{align*}
db((z,g),(z',g'))&=b(z,g)+b(z',g')-b((z,g)(z',g'))\\
&=z+z'-z-z'-c(g,g')=-c(g,g').
\end{align*}
The section $\sigma$ defined by $b\circ\sigma=0$ is $\sigma(g)=(0,g)$ in our notation. The map $j$ is $j(z)=(z,e)$. The corresponding $2$-cocycle is
\begin{align*}
j^{-1}(\sigma(g)\sigma(g')\sigma(gg')^{-1})
&=j^{-1}((0,g)(0,g')(0,gg')^{-1})\\
&=j^{-1}((c(g,g'),gg')(-c(gg',g'^{-1}g^{-1}),g'^{-1}g^{-1}))\\
&=j^{-1}((c(g,g')-c(gg',g'^{-1}g^{-1})+c(gg',g'^{-1}g^{-1}),e))\\
&=c(g,g').
\end{align*}

If $b'$ is an other $1$-cochain on $H$ such that $db'=-c$, then $d(b'-b)=0$, $b'-b:H\to Z$ is a homomorphism. If $\mr{Hom}(H,Z)=0$, $b'=b$, whence the uniqueness of $b$. 

3. Up to adding a coboundary, one can assume that $c(e,e)=0$, and hence that $c(e,g)=0$ for all $g\in G$. Let $\psi:H\to\bz$ be a homomorphism. Then $\psi\circ j:\bz\to\bz$ is a homomorphism, so there exists $n\in\bz$ such that $\psi\circ j=n\,Id$. For all $z\in \bz$ and $g\in G$, 
\begin{align*}
\psi(z,g)=\psi(z+c(e,g),g)=\psi((z,e)(0,g))=\psi(j(z)(0,g)=\psi\circ j(z)+\psi(0,g).
\end{align*}
For $g,g'\in G$,
\begin{align*}
\psi(c(g,g'),gg')=\psi((0,g)(0,g'))=\psi(0,g)+\psi(0,g').
\end{align*}
Let $f(g)=\psi(0,g)$. Then
\begin{align*}
df(g,g')&=\psi(0,g)+\psi(0,g')-\psi(0,gg')\\
&=\psi(c(g,g'),gg')-\psi(0,gg')\\
&=\psi\circ j(c(g,g'))=n\,c(g,g').
\end{align*}
By assumption, $n\,c(g,g')$ is not a coboundary unless $n=0$. Thus $n=0$, $\psi$ descends to a homomorphism $G\to\bz$, which vanishes by assumption, so $\psi=0$. We conclude that $Hom(H,\bz)=0$. 

\end{proof}

The item (3) of Lemma \ref{b} applies to our situation, since $H^2(\mathrm{U}(p,q),\bz)$ is a free abelian group, and the signature cocycle represents a nonzero cohomology class, see paragraph \ref{b2} below. Furthermore, $\mr{Hom}(\mr{U}(p,q),\bz)=0$.

\begin{defn}[Atiyah]
\label{defatiyah}

The signature cocycle $\mr{sign}$ defined in paragraph \ref{cocycle} determines a central extension 
$$
0\to \bz\stackrel{j_2}{\longrightarrow}\mr{U}(p,q)_2\stackrel{p_2}{\longrightarrow} \mr{U}(p,q)\to 1.
$$
There is a unique section $\sigma:\mr{U}(p,q)\to \mr{U}(p,q)_2$ such that for all $L,L'\in \mr{U}(p,q)$,
\begin{align*}
\mr{sign}(L,L')=j_2^{-1}(\sigma(L)\sigma(L')\sigma(LL')^{-1}).
\end{align*}
We shall call it \emph{Atiyah's section $\sigma$}.
\end{defn}

According to Lemma \ref{b}, 
\begin{enumerate}

  \item The pulled-back cocycle $p_2^*\mr{sign}$ on $\mr{U}(p,q)_2$ is a coboundary. There is a unique $1$-cochain $b_2$ on $\mr{U}(p,q)_2$ such that
$$
p_2^*\mr{sign}=-db_2.
$$

  \item Atiyah's section $\sigma$ is uniquely determined by the requirement $b_2\circ\sigma=0$.
  
  \item On fibers of $p_2$, $b_2$ restricts to isomorphisms to $\bz$,
$$
\forall n\in\bz,~\forall g\in \mr{U}(p,q)_2, \quad b_2(j_2(n)g)=n+b_2(g).
$$

\end{enumerate}

The goal of the next paragraphs is to determine $b_2$. This will be achieved in paragraph \ref{b2}.

\subsubsection{Rotation numbers}

The general notion, for locally compact groups $G$, is due to \cite{BIW}. Given a bounded Borel cohomology class $\kappa\in \hat{\mr{H}}_{cb}^2(G,\bz)$, the corresponding \emph{rotation number} is a continuous map $\mr{Rot}_\kappa: G\to  \br/\bz$ defined as follows. For $g\in G$, let $B$ denote the closed subgroup generated by $g$. Since $\mr{H}^2_{cb}(B,\br)=0$, the long exact sequence arising from the exponential short exact sequence $0\to\bz\to\br\to \br/\bz\to 0$ gives $\hat{\mr{H}}_{cb}^2(B,\bz)\simeq \mr{Hom}_{c}(B,\br/\bz)$, whence a homomorphism $f_{B,\kappa}:B\to\br/\bz$, and a number
$$
\mr{Rot}_\kappa(g)=f_{B,\kappa}(g).
$$

\subsubsection{Integrality}

Let $(E,\Omega)$ be a complex vector space equipped with a nondegenerate indefinite Hermitian form $\Omega$. We are interested in the rotation number associated with a suitable multiple of the bounded Borel cohomology class $\kappa$ of the K\"ahler form $\omega$ of the Hermitian symmetric space of $\mr{U}(E,\Omega)$. From general principles, it follows that some multiple of $\kappa$ is integral (see \cite[bottom of page 526 and Proposition 7.7]{BIW}). Theorem \ref{0} suggests that $2\kappa$ is integral. Indeed, it indicates that the $2$-cocycle $\mathrm{sign}+d\rro$ is a representative of $-2\kappa$. So does $\mathrm{sign}$, which is integer-valued. This is indeed the case.

\begin{lemma}\label{integrality}

Let the symmetric space $\ms{X}$ of $G=\mr{U}(E,\Omega)$ be equipped with the invariant metric whose minimal holomorphic sectional curvature is equal to $-1$. Then 
twice
the K\"ahler bounded cohomology class is integral, i.e. $$
2\kappa\in\hat{\mr{H}}_{cb}^2(\mr{U}(E,\Omega),\bz).
$$

\end{lemma}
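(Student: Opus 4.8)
The plan is to exhibit an explicit integral $2$-cocycle representing $2\kappa$ on $\mathrm{U}(E,\Omega)$, or equivalently to show directly that the pairing of $2\kappa$ with every integral $2$-cycle is an integer. The most natural route, and the one suggested by the paragraph preceding the lemma, is to identify $2\kappa$ (up to sign) with the class of Atiyah and Meyer's signature cocycle $\mathrm{sign}$, which is integer-valued by construction. Concretely, I would proceed as follows.

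First I would recall that the bounded Borel cohomology class $\kappa$ is represented by the cocycle $c_\omega$ of \eqref{bounded cocyle}, so that $2\kappa$ is represented by $g\mapsto \frac{1}{\pi}\int_{\triangle(g_0x,g_1x,g_2x)}\omega_{\ms X}$. Next, for the three-holed sphere $\Sigma_3$ with boundary loops $c_1,c_2,c_3$ and a representation $\phi$ with $\phi(c_1)=A^{-1}$, $\phi(c_2)=B^{-1}$, $\phi(c_3)=BA$, Theorem \ref{0} (in the form already proved, namely the first part, equation $\op{sign}(\mc E,\Omega)=-2\mr T(\Sigma,\phi)+\rro_\phi(\p\Sigma)$) gives
\begin{align*}
\mathrm{sign}(A,B)=\op{sign}(\mc E,\Omega)=-2\,\mr T(\Sigma_3,\phi)+\rro(A^{-1})+\rro(B^{-1})+\rro(BA).
\end{align*}
Since $\rro$ is a class function and $\rro(L^{-1})=\rro(L)$ (which follows from the description in Theorem \ref{1}, or from symmetry of the eta invariant under $L\mapsto L^{-1}$), the function $\delta(L):=\rro(L)$ is a $0$-cochain on $\mr U(p,q)$ and the above reads $\mathrm{sign}=-2\mr T(\Sigma_3,\cdot)+d\rro$ as a $2$-cochain on $\mr U(p,q)$. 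On the other hand, by \eqref{Toledo invariant compact form} together with the cocycle description \eqref{strsimplex} of the pulled-back K\"ahler class, $\mr T(\Sigma_3,\phi)$ is, up to the coboundary of the bounded $1$-cochain $\sum_i f_i^*\alpha_i$, equal to the evaluation of the cocycle $\frac{1}{2\pi}\int_{\Delta(\cdot)}\omega$ against the fundamental cycle of $\Sigma_3$. Hence $\mathrm{sign}$ and $-2\kappa$ differ, as bounded cohomology classes on $\mr U(p,q)$, by a coboundary; since $\mathrm{sign}$ is $\bz$-valued, this shows $2\kappa\in\hat{\mr H}^2_{cb}(\mr U(E,\Omega),\bz)$.

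The main obstacle is bookkeeping rather than conceptual: one must check carefully that the $1$-cochains introduced to relate $\mr T(\Sigma_3,\cdot)$ to a group $2$-cocycle on $\mr U(p,q)$ are genuinely bounded (so that the identity takes place in bounded cohomology, not merely ordinary cohomology), and that the image of the comparison map $\hat{\mr H}^2_{cb}(\mr U(E,\Omega),\bz)\to \mr H^2_{cb}(\mr U(E,\Omega),\br)$ is injective enough that integrality of a representing cocycle descends to integrality of the class — this last point uses that $\mr H^1_{cb}(\mr U(E,\Omega),\br/\bz)=\mr{Hom}_c(\mr U(E,\Omega),\br/\bz)=0$, which forces the Bockstein to be injective on the relevant degree. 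An alternative, more self-contained argument avoids Theorem \ref{0} entirely: one computes the restriction of $2\kappa$ to the maximal compact $\mr U(p)\times\mr U(q)$ directly and shows it equals $2c_1(\det_+)-2c_1(\det_-)$ composed with the appropriate sign, which is manifestly integral, and then invokes the fact that $\mr H^2_{cb}(\mr U(p,q),\bz)\to \mr H^2_{cb}(\mr U(p)\times\mr U(q),\bz)$ is injective (van Est plus the Cartan model). I would present the first argument as the main line since it ties the lemma to the paper's central identity, and mention the compact-restriction computation as a cross-check.
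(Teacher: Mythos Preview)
Your approach is genuinely different from the paper's, and it has a real gap. The paper does not use Theorem~\ref{0} here; instead it embeds $\mr U(E_1,\Omega_1)$ into $\mr U(E,\Omega)$ via an orthogonal splitting, observes that the normalized K\"ahler cocycle restricts correctly, and thereby reduces to the case $\mr{sign}(\Omega)=0$ (tube type), where Clerc's theorem says the K\"ahler cocycle $2\kappa_o$ limits along a $\Gamma$-radial path to the integer-valued generalized Maslov cocycle, giving an explicit integral representative. That argument is independent of the rest of Section~\ref{Atiyahsigma}.

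Your main line has two problems. First, a sign slip: $\rro(L^{-1})=-\rro(L)$, not $+\rro(L)$ (check the elliptic formula in Lemma~\ref{rhoell}, or see the remark in Subsection~\ref{Horn}); this is harmless since $\rro(A^{-1})+\rro(B^{-1})+\rro(BA)=-(d\rro)(A,B)$ is still a bounded coboundary. The substantive gap is the assertion that $(A,B)\mapsto \mr T(\Sigma_3,\phi_{A,B})$ and $\kappa_o(A,B)$ differ by a bounded \emph{group} coboundary. The object $\sum_i f_i^*\alpha_i$ you invoke is a $1$-cochain on the surface $\Sigma_3$, not a $0$-cochain on $\mr U(p,q)$; extracting from it a well-defined bounded function $G\to\br$ whose coboundary accounts for the difference is exactly what rotation numbers do, and the paper defines those only \emph{after} the present lemma (they require integrality of $2\kappa$ to exist). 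So as written your argument is either incomplete or circular. Your alternative via the maximal compact also fails as stated: $\mr U(p)\times\mr U(q)$ is amenable, so $\mr H^2_{cb}(\mr U(p)\times\mr U(q),\br)=0$ and the restriction map in real bounded cohomology is zero, not injective; any integrality detection there has to go through $\bz$ or $\br/\bz$ coefficients and the relevant injectivity is not obvious. The paper's reduction-to-tube-type plus Maslov-cocycle argument sidesteps all of this.
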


\begin{proof}
Let $(E_1,\Omega_1)$ and $(E_2,\Omega_2)$ be nondegenerate Hermitian spaces, and let $(E,\Omega)=(E_1,\Omega_1)\oplus (E_2,\Omega_2)$. The corresponding embedding $\ms{X}_1\times \ms{X}_2\to \ms{X}$ between symmetric spaces is isometric, totally geodesic and holomorphic, as is visible on Equation \ref{kahlermetric}. Hence the normalized K\"ahler form of $\ms{X}$ restricts to the normalized K\"ahler forms on the factors. 

Fix an origin $o\in \ms{X}$. For $L,L'\in \mr{U}(E,\Omega)$, let $\Delta_o(L,L')$ denote the geodesic simplex with vertices $o,Lo,LL'o$. Recall (Equation \ref{bounded cocyle}) that the K\"ahler bounded cohomology class is represented by the following bounded real valued cocycle,
\begin{align*}
\kappa_o(L,L')=\frac{1}{2\pi}\int_{\Delta_o(L,L')}\omega.
\end{align*} 
If $o$ is chosen in $\ms{X}_1$ and $L_1,L'_1\in \mr{U}(E_1,\Omega_1)$, $\Delta_o(L_1,L'_1)$ serves as a geodesic simplex for both $\ms{X}_1$ and $\ms{X}$, so the restriction of the K\"ahler bounded cohomology class $\kappa$ of $\mr{U}(E,\Omega)$ to $\mr{U}(E_1,\Omega_1)$ is the K\"ahler bounded cohomology class of $\mr{U}(E_1,\Omega_1)$. 

This reduces the integrality question to the case when $\mr{sign}(\Omega)=0$, so $\ms{X}$ is of tube type. In this case, Clerc (\cite{Clerc}) shows that 
twice 
the K\"ahler bounded cohomology class is integral. Indeed, the $2$-cocycle 
$2\kappa_o$
converges, as $o$ tends $\Gamma$-radially to a point of the Shilov boundary, to the integer-valued \emph{generalized Maslov $2$-cocycle}. Since, as $o$ varies in $\ms{X}$, all these cocycles are cohomologous, so is the limiting cocycle.

\end{proof}

\begin{defn}
For $L\in\mr{U}(E,\Omega)$, we denote the rotation number associated to twice the bounded cohomology class of the normalized K\"ahler form by $\mr{Rot}(E,\Omega,L)$, or simply by $\mr{Rot}(L)$ when the context is clear.
\end{defn}

\subsubsection{Properties}

We shall use the following properties of rotation numbers:
\begin{enumerate}
  \item If $B<\mathrm{U}(p,q)$ is a closed amenable subgroup, the restriction of $\mathrm{Rot}$ to $B$ is a group homomorphism.
  \item If $(E,\Omega,L)=(E_1,\Omega_1,L_1)\oplus (E_2,\Omega_2,L_2)$ and $\Omega_1, \Omega_2$ are indefinite Hermitian forms,
\begin{align*}
\mathrm{Rot}(E,\Omega,L)=\mathrm{Rot}(E_1,\Omega_1,L_1)+\mathrm{Rot}(E_2,\Omega_2,L_2).
\end{align*}
  \item For every $L\in\mr{U}(E,\Omega)$, $\mathrm{Rot}(E,-\Omega,L)=-\mathrm{Rot}(E,\Omega,L)$.
  \item $\mathrm{Rot}$ is a conjugacy invariant: $\mathrm{Rot}(CLC^{-1})=\mathrm{Rot}(L)$.
  \item If $L=L_eL_kL_u$ is the generalized Jordan decomposition of $L$, then (\cite[Theorem 11]{BIW})
\begin{align}\label{rotelliptic}
\mathrm{Rot}(L)=\mathrm{Rot}(L_e).
\end{align}
\end{enumerate}

\begin{proof}
Only item (2) needs some explanation. Let $L,L'\in\mr{U}(E,\Omega)$ preserve the splitting $E=E_1\oplus E_2$. As was observed in the proof of Lemma \ref{integrality}, the product of symmetric spaces $\ms{X}_1\times \ms{X}_2$ embeds in $\ms{X}$ and the ambient K\"ahler form $\omega$ restricts to $pr_1^*\omega_1+pr_2^*\omega_2$. The geodesic simplex $\Delta(L,L')\subset \ms{X}_1\times \ms{X}_2$ projects onto both factors to the geodesic simplices $\Delta(L_1,L'_1)\subset \ms{X}_1$ and $\Delta(L_2,L'_2)\subset \ms{X}_2$. If follows that
\begin{align*}
\kappa(L,L')&=\int_{\Delta(L,L')}pr_1^*\omega_1+\int_{\Delta(L,L')}pr_2^*\omega_2\\
&=\int_{\Delta(L_1,L'_1)}\omega_1+\int_{\Delta(L_2,L'_2)}\omega_2\\
&=\kappa_1(L_1,L'_1)+\kappa_2(L_2,L'_2).
\end{align*}
Let $B$ denote the closed subgroup generated by $L=L_1\oplus L_2$, and $B_i$ the corresponding subgroup for $L_i$. Then $B\subset B_1 \times B_2$. The long exact sequence yields a homomorphism
\begin{align*}
f_{B_1\times B_2,\kappa_1+\kappa_2}=f_{B_1,\kappa_1}\circ pr_1+f_{B_2,\kappa_2}\circ pr_2:B_1\times B_2\to\br/\bz,
\end{align*} 
whose restriction to $B$ is equal to $f_{B,\kappa}$. Therefore \begin{align*}
\mathrm{Rot}(L)=f_{B,\kappa}(L)=f_{B_1,\kappa_1}(L_1)+f_{B_2,\kappa_2}(L_2)=\mathrm{Rot}(L_1)+\mathrm{Rot}(L_2).
\end{align*}

\end{proof}

\subsubsection{Rotation numbers and Toledo invariants}

Let $\Sigma$ be a compact oriented surface with boundary, let $*\in\Sigma$ be a basepoint. Let $a_1,b_1,\ldots,a_g,b_g,c_1,\ldots,c_n$ be loops based at $*$ such that $c_j$ represent the oriented boundary components, in such a way that the fundamental group of $\Sigma$ be presented by 
$$
\langle a_1,b_1,\ldots,a_g,b_g,c_1,\ldots,c_n\,|\,(\prod[a_i,b_i])(\prod c_j)=1\rangle.
$$ 
Since $\pi_1(\Sigma,*)$ is free, every representation $\phi:\pi_1(\Sigma,*)\to\mr{U}(E,\Omega)$ admits a lift $\tilde\phi :\pi_1(\Sigma,*)\to\widetilde{\mr{U}(E,\Omega)}$ to the universal covering group of $\mr{U}(E,\Omega)$. The continuous function $\mr{Rot}:\mr{U}(E,\Omega)\to\br/\bz$ admits a unique continuous lift $\wt{\mr{Rot}}:\widetilde{\mr{U}(E,\Omega)}\to\br$ mapping the neutral element to $0$.

According to \cite[Theorem 12]{BIW}, the Toledo invariant of $\phi$ is given by 
\begin{align}\label{Toledo-rotation}
\begin{split}
  2\mr{T}(\Sigma,\phi)=-\sum_{j=1}^n\wt{\mr{Rot}}(\wt{\phi}(c_j)).
 \end{split}
\end{align}
(remember that $\mr{Rot}$ is associated to \emph{twice} the K\"ahler bounded cohomology class).
Let $(\mathcal{E},\Omega)$ denote the flat unitary bundle over $\Sigma$ associated to $\phi$. In combination with Theorem \ref{0}, Equation \ref{Toledo-rotation} gives 
\begin{align}\label{sign-rotation-rho}
\begin{split}
  \mr{sign}(\mathcal{E},\Omega)=\sum_{j=1}^n \tilde b(\wt{\phi}(c_j)),
 \end{split}
\end{align}
where $\tilde b:\widetilde{\mr{U}(E,\Omega)}\to\br$ is the function defined by 
\begin{align*}
\tilde b:=\wt{\mr{Rot}}+\rro\circ \tilde p,
\end{align*}
and $\tilde p:\widetilde{\mr{U}(E,\Omega)}\to\mr{U}(E,\Omega)$ is the covering map.

Equation \eqref{sign-rotation-rho} yields
\begin{align}\label{signformula}
\begin{split}
  \mr{sign}(A,B)&=-\tilde b(\tilde A)-\tilde b(\tilde B)+\tilde b(\tilde A\tilde B).
 \end{split}
\end{align}
In other words, if $\tilde b$ is viewed as an $1$-cochain on $\widetilde{\mathrm{U}(E,\Omega)}$,
\begin{align}\label{db}
p^{*}\mathrm{sign}=-d\tilde b.
\end{align}

\subsubsection{Computing rotation numbers}

\begin{lemma}\label{rotrho}
The function $b:\mr{U}(E,\Omega)\to\br/\bz$ defined by $b:=\mr{Rot}+\rro$ vanishes. It follows that $\tilde b:\widetilde{\mr{U}(E,\Omega)}\to\br$ is integer-valued.	
\end{lemma}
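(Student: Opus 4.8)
The plan is to verify that $b := \mathrm{Rot} + \rro$ vanishes by reducing, via all the additivity and continuity properties already established, to the handful of elementary building blocks for which both $\mathrm{Rot}$ and $\rro$ have been computed explicitly. First I would record that $b$ is a conjugacy-invariant function $\mathrm{U}(E,\Omega)\to\br/\bz$: indeed $\mathrm{Rot}$ is a conjugacy invariant by property (4) of the rotation number, and $\rro$ is a conjugacy invariant by the Corollary following the two Lemmas in \S\ref{Rho} (``$\rro$ is a conjugacy-invariant function''). Next, $b$ is additive under $\Omega$-orthogonal direct sums: for $\mathrm{Rot}$ this is property (2), and for $\rro$ it is Proposition \ref{dechyperellu}. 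Combining this with the canonical splitting of $(E,\Omega,L)$ into hyperbolic-unipotent, elliptic-unipotent and unipotent parts, it suffices to show $b(L)=0$ on each of the three types separately.

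For the hyperbolic-unipotent type: $\rro(L)=0$ by Proposition \ref{dechyperellu}(2)/the hyperbolic-unipotent Lemma, and $\mathrm{Rot}(L)=\mathrm{Rot}(L_e)$ by \eqref{rotelliptic}; but a hyperbolic-unipotent element has $L_e$ acting trivially on the symmetric space (its elliptic part is trivial since all eigenvalues have modulus $\ne 1$), so $\mathrm{Rot}(L)=0$ as well. For the unipotent type: again $\mathrm{Rot}(L)=\mathrm{Rot}(L_e)=0$ since $L_e=\mathrm{Id}$, and by Theorem \ref{value} (with Proposition \ref{BurgoyneCushman}) $\rro(\exp(2\pi B))$ is an integer, hence $\rro(L)\equiv 0 \pmod{\bz}$. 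For the elliptic-unipotent type, by the argument in \S\ref{Rho} ``The elliptic-unipotent case'' we have $\rro(L)=\rro(S)$ with $S$ the semi-simple part, and by \eqref{rotelliptic} $\mathrm{Rot}(L)=\mathrm{Rot}(S)$; so it remains to treat the semi-simple elliptic case. Here $S$ fixes a point of the symmetric space, so after conjugation $S=S_+\oplus S_-\in\mathrm{U}(p)\times\mathrm{U}(q)$, and by additivity (property (2) of $\mathrm{Rot}$ and Proposition \ref{dechyperellu}) it is enough to check $b$ on a single $1$-dimensional factor $\mathrm{U}(1)$ or $\mathrm{U}(0,1)$.

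On the $1$-dimensional elliptic piece the computation is explicit: with $L$ equal to multiplication by $e^{i\theta}$, $\theta\in(0,2\pi)$, Equation \eqref{eta1} gives $\rro(L)=\eta(A_{\mbf{J}})=\op{sgn}(\Omega)(1-\frac{\theta}{\pi})$ (the $\iota$ term vanishes for the constant choice $\mbf{J}=J$). On the other hand, by the formula for $\mathrm{Rot}$ on $\mathrm{U}(p)\times\mathrm{U}(q)$ recalled in the introduction, $e^{2\pi i\,\mathrm{Rot}(L)}=(\det L_+/\det L_-)^2$; for the factor $(E,\Omega)$ of signature $(1,0)$ this is $e^{2i\theta}$, so $\mathrm{Rot}(L)=\frac{\theta}{\pi}\pmod{\bz}$, while for signature $(0,1)$ it is $e^{-2i\theta}$, giving $\mathrm{Rot}(L)=-\frac{\theta}{\pi}\pmod\bz$. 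In both cases $\mathrm{Rot}(L)+\rro(L)\equiv \op{sgn}(\Omega)\cdot 1 \equiv 0\pmod\bz$, so $b=0$ on the $1$-dimensional elliptic pieces. Assembling the cases through additivity and the type decomposition yields $b\equiv 0$ on all of $\mathrm{U}(E,\Omega)$. Finally, $\tilde b=\wt{\mathrm{Rot}}+\rro\circ\tilde p$ is a continuous (indeed locally constant off the jump locus of $\rro$) $\br$-valued lift of $b=0$, hence takes values in $\bz$.

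The main obstacle I anticipate is bookkeeping the conventions: making sure the normalization of $\mathrm{Rot}$ (it is attached to \emph{twice} the K\"ahler class, hence the square in the $\mathrm{U}(p)\times\mathrm{U}(q)$ formula) matches the normalization of $\eta$/$\rho$ used in Equation \eqref{eta1}, so that the two contributions on the $1$-dimensional factor actually cancel modulo $\bz$ rather than, say, adding up to $2\op{sgn}(\Omega)$. Everything else is an assembly of results already proved; the genuinely computational input is confined to the $1$-dimensional elliptic case, which is a single line.
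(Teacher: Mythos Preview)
Your reduction to building blocks is natural, but the argument as written is circular within the paper's logic. The key computational step is your claim that on a $1$-dimensional factor $e^{2\pi i\,\mathrm{Rot}(L)}=(\det L_+/\det L_-)^2$; you cite this as ``recalled in the introduction,'' but in the paper that formula is Lemma~\ref{rotdet}, and Lemma~\ref{rotdet} is \emph{proved using} Lemma~\ref{rotrho} (its proof reads ``Lemma~\ref{rotrho} shows that $e^{2\pi i\mathrm{Rot}(L)}=e^{-2\pi i\rro(L)}$''). So you cannot invoke it here. To make your route work you would need an independent computation of $\mathrm{Rot}$ on the maximal torus directly from the definition of the K\"ahler bounded class, which the paper never carries out. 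A secondary point: property~(2) of $\mathrm{Rot}$ (additivity under direct sums) is stated only when both summands are \emph{indefinite}, so it does not directly justify splitting into $1$-dimensional (definite) factors; you should instead use property~(1) (homomorphism on amenable, in particular abelian, subgroups) to get additivity on the torus---but that still leaves you needing the values on each $\mathrm{U}(1)$ factor, i.e.\ the circular input.

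The paper's proof avoids computing $\mathrm{Rot}$ altogether. It exploits formula~\eqref{sign-rotation-rho}, which expresses the signature of a flat bundle as a sum of $\tilde b$ over boundary holonomies. Writing an arbitrary $L=\bar u\,U_1$ with $U_1\in\mathrm{SU}(E,\Omega)$ a commutator $[B,A]$, it builds a representation of the two-holed torus with boundary holonomies $L$ and $u\,\mathrm{Id}_E$; equation~\eqref{sign-rotation-rho} then gives $b(L)+b(u\,\mathrm{Id}_E)\equiv 0$. For $L\in\mathrm{SU}$ one takes $u=1$ and is done. For general $L$ one stabilizes by $\bc^2$ with an indefinite form of signature $(1,1)$, so that $L'=L\oplus u\,\mathrm{Id}_{\bc^2}\in\mathrm{SU}$, and uses that both $\mathrm{Rot}$ and $\rro$ vanish mod~$\bz$ on the $(1,1)$ piece (this last uses only the $\mathrm{sign}(\Omega)=0$ case, handled by a direct symmetry argument at the start of the proof). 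This is more indirect than your plan but uses only ingredients already established at that point.
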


\begin{proof}
The first step is to show that if $\mr{sign}(\Omega)=0$, then $\mr{Rot}$ vanishes on the center of $\mr{U}(E,\Omega)$. Let $C\in \mr{End}_\bc(E)$ be a $\bc$-linear map such that $C^*\Omega=-\Omega$. Let $L\in\mr{U}(E,\Omega)$. Then $CLC^{-1}\in\mr{U}(E,-\Omega)$ and 
$$
\mr{Rot}(E,L,\Omega)=\mr{Rot}(E,CLC^{-1},-\Omega)=-\mr{Rot}(E,CLC^{-1},\Omega).
$$
If $L=u\,\mr{Id}_E$ for some unit complex number $u$, $CLC^{-1}=L$, hence $\mr{Rot}(E,L,\Omega)=-\mr{Rot}(E,L,\Omega)$ mod $\bz$. Thus $\mr{Rot}(E,L,\Omega)=0$ or $\frac{1}{2}$ mod $\bz$. Since $\mr{Rot}$ is continuous and $\mr{Rot}(\mr{Id}_E)=0$, $\mr{Rot}(u\,\mr{Id}_E)=0$ mod $\bz$ for all unit complex numbers $u$.

Given $L\in \mr{U}(E,\Omega)$, write 
\begin{align*}
\begin{split}
L=\bar u\,U_1
 \end{split}
\end{align*}
for some $U_1\in\mr{SU}(E,\Omega)$ and $u\in\mb{C}$, $|u|=1$. Since  $\mr{SU}(E,\Omega)$ is the commutator subgroup of $\mr{U}(E,\Omega)$, there exist $A$ and $B\in\mr{U}(E,\Omega)$ such that 
	\begin{align*}
\begin{split}
  U_1=[B,A]=BAB^{-1}A^{-1}.
 \end{split}
\end{align*}
Hence 
\begin{align*}
\begin{split}
  [A,B]L(u\,\mr{Id}_{E})=\mr{Id}_{E}.
 \end{split}
\end{align*}
Let $\Sigma_1$ be the two-hole torus. Let $a$ and $b$ be a meridian and a parallel loop on the torus, let $c_1$ and $c_2$ be loops representing the oriented boundary components, in such a way that the fundamental group of $\Sigma_1$ be presented by $\langle a,b,c_1,c_2\,|\,[a,b]c_1c_2=1\rangle$. Consider the representation $\phi:\pi_1(\Sigma_1)\to\mr{U}(E,\Omega)$ with 
$$
\phi(a)=A,\quad \phi(b)=B,\quad \phi(c_1)=L,\quad \phi(c_2)=u\,\mr{Id}_{E}.
$$
Let $(\mc{E},\Omega)$ denote the associated flat unitary vector bundle over $\Sigma_1$. Let $\tilde\phi$ be a lift of $\phi$ to $\widetilde{\mathrm{U}(E,\Omega)}$, let $\tilde L=\tilde\phi(L)$ and $\widetilde{u\,\mr{Id}_{E}}=\tilde\phi(u\,\mr{Id}_{E})$. Equation \ref{sign-rotation-rho} gives 
\begin{align*}
\begin{split}
  \mr{sign}(\mc{E},\Omega)=\tilde b(\tilde L)+\tilde b(\widetilde{u\,\mr{Id}_{E}}),
 \end{split}
\end{align*}
hence
\begin{align*}
b(L)+b(u\,\mr{Id}_{E})=0 \mod\bz.
\end{align*}

If $L\in \mr{SU}(E,\Omega)$, we can take $u=1$, and $b(L)=0\mod\bz$.
For a general $L\in \mr{U}(E,\Omega)$, we let $E'=E\oplus \bc^2$, $\Omega'=\Omega\oplus\Omega''$, where $\Omega''$ is indefinite, of signature $(1,1)$, and $L'=L\oplus u \,\mr{Id}_{\bc^2}$, where the unit complex number $u$ is chosen so that $L'\in \mr{SU}(E',\Omega')$. Since both $\rro$ and $\mathrm{Rot}$ are additive under orthogonal direct sums, and both vanish mod $\bz$ on $(\bc^2,u\,\mr{Id}_{\bc^2},\Omega'')$, 
\begin{align*}
\begin{split}
\boldsymbol{\rho}(L')=\boldsymbol{\rho}(L),\quad  \mr{Rot}(L')=\mr{Rot}(L)+\mr{Rot}(u\,Id_{\bc^2})=\mr{Rot}(L)\mod\bz.
 \end{split}
\end{align*}
Hence
\begin{align*}
\begin{split}
b(L)= \mr{Rot}(L)+\boldsymbol{\rho}(L)=\mr{Rot}(L')+\boldsymbol{\rho}(L')=0\mod\bz,
 \end{split}
\end{align*}
which completes the proof.

\end{proof}

\begin{lemma}
\label{rotdet}
If $L=L_+\oplus L_-\in\mathrm{U}(p)\times\mathrm{U}(q)<\mathrm{U}(p,q)$, 
$$
e^{2\pi i\mathrm{Rot}(L)}=\left(\frac{\mathrm{det}(L_+)}{\mathrm{det}(L_-)}\right)^2.
$$
\end{lemma}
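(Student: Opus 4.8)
The plan is to derive the formula from the congruence $\mathrm{Rot}\equiv-\rro\pmod{\bz}$ established in Lemma \ref{rotrho}, together with the explicit value of the rho invariant on semi-simple elliptic elements coming from Lemma \ref{rhoell} (equivalently, item (3) of Theorem \ref{1}), and then to pass from a dense subset to all of $\mathrm{U}(p)\times\mathrm{U}(q)$ by continuity of both sides.

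First I would treat the generic case. Every $L=L_+\oplus L_-\in\mathrm{U}(p)\times\mathrm{U}(q)$ lies in a maximal compact subgroup of $\mathrm{U}(p,q)$, hence is semi-simple with all eigenvalues of modulus $1$; the subset of such $L$ for which $1\notin\op{spectrum}(L)$ is open and dense, its complement being the proper analytic subset where $\det(I-L_+)\det(I-L_-)=0$. For such an $L$ --- which is elliptic-unipotent and in fact coincides with its own semi-simple part --- Lemma \ref{rhoell} applies. Writing $L=\exp(2\pi iB)$ with $B=B_+\oplus B_-$ the unique $\op{Ad}_L$-invariant logarithm whose spectrum lies in $(0,1)$, and observing that an $\Omega$-orthonormal eigenbasis $\{e_j\}$ satisfies $\Omega(e_j,e_j)=+1$ on $E_+$ and $\Omega(e_j,e_j)=-1$ on $E_-$, so that $\op{Trace}_\Omega(B)=\op{Tr}(B_+)-\op{Tr}(B_-)$, the lemma gives
\begin{align*}
\rro(L)=(p-q)-2\big(\op{Tr}(B_+)-\op{Tr}(B_-)\big).
\end{align*}

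Next I would reduce modulo $\bz$. Since $p-q\in\bz$ and $\det(L_\pm)=\exp\big(2\pi i\op{Tr}(B_\pm)\big)$, Lemma \ref{rotrho} yields
\begin{align*}
\mathrm{Rot}(L)\equiv-\rro(L)\equiv 2\big(\op{Tr}(B_+)-\op{Tr}(B_-)\big)\pmod{\bz},
\end{align*}
whence $e^{2\pi i\mathrm{Rot}(L)}=e^{4\pi i\op{Tr}(B_+)}e^{-4\pi i\op{Tr}(B_-)}=\big(\det(L_+)/\det(L_-)\big)^2$, which is the claimed identity on the dense set of generic elements. Finally, both sides of the asserted equality are continuous functions of $L\in\mathrm{U}(p)\times\mathrm{U}(q)$ --- the left-hand side because $\mathrm{Rot}:\mathrm{U}(p,q)\to\br/\bz$ is continuous by its very definition and $t\mapsto e^{2\pi it}$ factors through $\br/\bz$, the right-hand side manifestly --- so the identity extends to all of $\mathrm{U}(p)\times\mathrm{U}(q)$.

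I do not anticipate a serious obstacle here. The only points requiring care are the bookkeeping of the $2\pi$-normalizations relating $B$, its eigenvalue-angles, and the determinants $\det(L_\pm)$, so that the exponent $2$ appears in the correct place, and the fact that the relation $\op{Trace}_\Omega(B)=\op{Tr}(B_+)-\op{Tr}(B_-)$ is exactly where the $\Omega$-orthonormality (as opposed to plain orthonormality) of the chosen eigenbasis is used.
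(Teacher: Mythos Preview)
Your proof is correct and follows essentially the same approach as the paper: compute $\rro$ explicitly via Lemma~\ref{rhoell}, invoke Lemma~\ref{rotrho} to pass to $\mathrm{Rot}$ modulo $\bz$, and exponentiate. The only cosmetic difference is that the paper reduces to the maximal torus by conjugacy invariance and writes the formula with fractional parts $\{x_j\}$ (so that, working modulo $\bz$, the eigenvalue-$1$ case needs no separate treatment), whereas you restrict to the dense open set where $1\notin\op{spectrum}(L)$ and then extend by continuity; both devices serve the same purpose and neither is more than a line.
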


With Equation \ref{rotelliptic}, in principle this determines $\mathrm{Rot}$ on all of $\mathrm{U}(p,q)$.

\begin{proof}
By conjugacy-invariance, it suffices to compute $\mathrm{Rot}$ on the maximal torus $T=U(1)\times\cdots\times U(1)$. If $L=(e^{2\pi ix_1},\ldots,e^{2\pi ix_{p+q}})$, Lemma \ref{rhoell} gives
\begin{align*}
\rro(L)=\sum_{j=1}^{p}(1-2\{x_j\})-\sum_{j=1}^{q}(1-2\{x_{p+j}\})\mod\bz,
\end{align*}
thus
\begin{align*}
e^{2\pi i\rro(L)}
&=\exp(2\pi i(-2\sum_{j=1}^{p}x_j+2\sum_{j=1}^{q}x_{p+j}))\\
&=\left(\frac{\mathrm{det}(L_-)}{\mathrm{det}(L_+)}\right)^2.
\end{align*}
Lemma \ref{rotrho} shows that 
\begin{align*}
e^{2\pi i\mathrm{Rot}(L)}=e^{-2\pi i\rro(L)}=\left(\frac{\mathrm{det}(L_+)}{\mathrm{det}(L_-)}\right)^2.
\end{align*}

\end{proof}

\subsubsection{The $1$-cochain on the central extension $\mr{U}(p,q)_2$} 
\label{b2}

\begin{prop}
\label{propsigma}
The primitive $b_2:\mr{U}(p,q)_2\to\bz$ of the pulled-back signature cocycle, 
$$
p_2^*\mr{sign}=-db_2,
$$
is given by
\begin{align}\label{bcochain}
\begin{split}
b_2=\mr{Rot}_2+\rro\circ p_2,
 \end{split}
\end{align}
where $\mr{Rot}_2:\mr{U}(p,q)_2\to\br$ is a continuous function, and $\mr{Rot}_2$ is the continuous lift of the rotation number $\mr{Rot}$ that satisfies $\mr{Rot}_2\circ j_2(z)=z$ for $z\in\bz$.

\end{prop}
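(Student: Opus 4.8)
The plan is to prove the equality $b_2=\beta$, where $\beta:=\mr{Rot}_2+\rro\circ p_2$, by exhibiting $\beta$ as a $\bz$-valued $1$-cochain on $\mr{U}(p,q)_2$ whose coboundary is $-p_2^*\mr{sign}$, and then invoking the uniqueness of $b_2$ (Lemma \ref{b}, which applies since $\mr{Hom}(\mr{U}(p,q)_2,\bz)=0$ by Lemma \ref{b}(3)). Three things have to be checked: (a) $\mr{Rot}$ admits a continuous lift $\mr{Rot}_2$ to $\mr{U}(p,q)_2$ with $\mr{Rot}_2\circ j_2=\mathrm{id}_\bz$; (b) $\beta$ is integer-valued; (c) $p_2^*\mr{sign}=-d\beta$.

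For (a) I would construct $\mr{Rot}_2$ by descent from the universal covering group. Equation \eqref{db} reads $\tilde p^*\mr{sign}=-d\tilde b$ with $\tilde b=\wt{\mr{Rot}}+\rro\circ\tilde p$ integer-valued; hence $\tilde p^*$ kills the class of $\mr{sign}$, so the central extension $\mr{U}(p,q)_2$ pulled back along $\tilde p$ splits, producing a homomorphism $q:\widetilde{\mr{U}(p,q)}\to\mr{U}(p,q)_2$ over $\mr{U}(p,q)$ (unique, since $\mr{Hom}(\widetilde{\mr{U}(p,q)},\bz)=0$). One checks that $\wt{\mr{Rot}}$ is constant on the fibres of $q$: these are translates of $\ker q$, which lies in $\pi_1(\mr{U}(p,q))$ and on which $\wt{\mr{Rot}}$ vanishes because, by Lemma \ref{rotdet}, the restriction of $\wt{\mr{Rot}}$ to $\pi_1(\mr{U}(p,q))$ has image $2\bz$. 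Thus $\wt{\mr{Rot}}$ factors through $q$; and since $q(\widetilde{\mr{U}(p,q)})$ together with $j_2(1)\,q(\widetilde{\mr{U}(p,q)})$ partitions $\mr{U}(p,q)_2$, imposing the rule $\mr{Rot}_2(j_2(n)x)=n+\mr{Rot}_2(x)$ extends the descended function to a continuous $\mr{Rot}_2$ with $\mr{Rot}_2\circ q=\wt{\mr{Rot}}$ and $\mr{Rot}_2\circ j_2=\mathrm{id}_\bz$.

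Point (b) follows from Lemma \ref{rotrho}: $\mr{Rot}_2\equiv\mr{Rot}\circ p_2$ mod $\bz$ and $\rro\equiv-\mr{Rot}$ mod $\bz$, so $\beta\equiv0$ mod $\bz$. For (c), write $-d\beta=-d\mr{Rot}_2-p_2^*(d\rro)$; the $\ker p_2$-equivariance of $\mr{Rot}_2$ forces $-d\mr{Rot}_2$ to be the pullback by $p_2$ of some $2$-cocycle $c$ on $\mr{U}(p,q)$. Pulling back along $q$ and using $\mr{Rot}_2\circ q=\wt{\mr{Rot}}$ gives $\tilde p^*c=-d\wt{\mr{Rot}}=\tilde p^*(\mr{sign}+d\rro)$ (the last equality is \eqref{db} rearranged), whence $c=\mr{sign}+d\rro$ by injectivity of $\tilde p^*$ on cochains. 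Therefore $-d\beta=p_2^*(\mr{sign}+d\rro)-p_2^*(d\rro)=p_2^*\mr{sign}$, and together with $p_2^*\mr{sign}=-db_2$ this yields $d(b_2-\beta)=0$; since $b_2-\beta$ descends to a function on $\mr{U}(p,q)$ (both cochains restrict to the identity on $\ker p_2$), it is a homomorphism $\mr{U}(p,q)\to\bz$, hence zero, and $b_2=\beta$.

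The main obstacle is point (a): $\mr{U}(p,q)_2$ is a priori only an abstract group (its defining cocycle is merely Borel), so one must specify the topology in which $p_2$ behaves like a covering map and $\mr{Rot}_2$ is continuous, and carefully track orientation and sign conventions — in particular, the compatibility of the normalization $\mr{Rot}_2\circ j_2=\mathrm{id}_\bz$ with the assertion that $\mr{Rot}_2$ lifts $\mr{Rot}$ (and not $-\mr{Rot}$) hinges on the sign of the K\"ahler form and on Atiyah's convention for associating a flat bundle to a representation. Once these conventions are pinned down, what remains is the routine transport-along-$q$ computation sketched above.
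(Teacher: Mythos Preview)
Your overall architecture matches the paper's: establish that $\beta:=\mr{Rot}_2+\rro\circ p_2$ is integer-valued with $-d\beta=p_2^*\mr{sign}$, then invoke the uniqueness of $b_2$ from Lemma~\ref{b}. Parts (b) and (c) are carried out correctly and in the same spirit as the paper.

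The gap is in (a). Your inference ``$\wt{\mr{Rot}}$ vanishes on $\ker q$ because, by Lemma~\ref{rotdet}, the restriction of $\wt{\mr{Rot}}$ to $\pi_1(\mr{U}(p,q))$ has image $2\bz$'' is a non sequitur. Knowing that $\wt{\mr{Rot}}|_{\pi_1}:\bz^2\to\bz$ has image $2\bz$ tells you its kernel is a rank-one sublattice, but says nothing about whether that sublattice coincides with $\ker q$, which is a priori a different rank-one sublattice of $\pi_1\cong\bz^2$. Likewise your partition claim (that $q(\widetilde{\mr{U}(p,q)})$ has index~$2$ in $\mr{U}(p,q)_2$) presupposes that $j_2^{-1}\circ q|_{\pi_1}$ has image $2\bz$, which you have not established for $q$. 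What is actually needed is the equality of homomorphisms $j_2^{-1}\circ q|_{\pi_1}=\wt{\mr{Rot}}|_{\pi_1}$. The paper obtains this by invoking Atiyah's determination of the extension class as $(2,-2)\in\bz\oplus\bz$, which gives the concrete model $\mr{U}(p,q)_2=\widetilde{\mr{U}(p,q)}\times_\lambda\bz$ with $\lambda(m,n)=2m-2n$, and then computing $(\mr{Rot})_\sharp=\lambda$ from Lemma~\ref{rotdet}; both $q|_{\pi_1}$ and $\wt{\mr{Rot}}|_{\pi_1}$ are then visibly equal to $\lambda$.

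There is also a self-contained fix available within your framework: observe that $b_2\circ q$ and $\tilde b$ are both integer-valued $1$-cochains on $\widetilde{\mr{U}(p,q)}$ with coboundary $-\tilde p^*\mr{sign}$, hence equal by the uniqueness clause of Lemma~\ref{b}. Evaluating on $\pi_1$ (where $\rro\circ\tilde p\equiv\rro(e)=0$ and $b_2\circ j_2=\mathrm{id}$) yields $j_2^{-1}\circ q|_{\pi_1}=\tilde b|_{\pi_1}=\wt{\mr{Rot}}|_{\pi_1}$, which is exactly the missing identity. With this in hand both the descent of $\wt{\mr{Rot}}$ through $q$ and the index-$2$ partition follow, and the rest of your argument goes through.
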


\begin{proof}
Atiyah has determined the cohomology class of $\mr{sign}$ in 
$$
\mr{H}^2(\mr{U}(p,q),\bz)\simeq \mr{Hom}(\pi_1(\mr{U}(p,q)),\bz)\simeq \mr{Hom}(\pi_1(\mr{U}(p)\times \mr{U}(q)),\bz)\simeq\bz\oplus\bz.
$$
This class has coordinates $(2,-2)$. Therefore, if $\lambda:\mb{Z}^2\to\mb{Z}$ denotes the homomorphism given by 
\begin{align*}
\begin{split}
  \lambda(m,n)=2m-2n,
 \end{split}
\end{align*}
the group $\mr{U}(p,q)_2$ can be obtained as an associated bundle 
\begin{align*}
\begin{split}
  \mr{U}(p,q)_2:=\wt{\mr{U}(p,q)}\times_\lambda \mb{Z}=\wt{\mr{U}(p,q)}\times \mb{Z}/\pi_1(\mr{U}(p,q)),
 \end{split}
\end{align*}
where $\pi_1(\mr{U}(p,q))$ acts diagonally, on $\mr{U}(p,q)$ by deck transformations, and on $\bz$ by translations via $\lambda$.

The standard generator of $\pi_1(\mr{U}(p))$ is represented by the arc $t\mapsto \exp(2\pi it/p)I_p$ followed by an arc joining $\exp(2\pi i/p)I_p$ to $I_p$ in $\mr{SU}(p)$. The determinant $\mr{det}:\mr{U}(p)\to \mr{U}(1)$ maps this homotopy class to the identity homotopy class of $\mr{U}(1)$. Therefore the expression found in Lemma \ref{rotdet} for the rotation number $\mr{Rot}$ indicates that 
$$
(\mr{Rot})_\sharp:\pi_1(\mr{U}(p,q))\to\pi_1(\br/\bz)=\bz
$$
is given by $\lambda$. It follows that there exists a continuous function $\mr{Rot}_2:\mr{U}(p,q)_2\to\br$ such that $\mr{Rot}_2\circ j_2(z)=z$ for $z\in\bz$ and
$$
\mr{Rot}\circ p_2=\mr{Rot}_2 \mod\bz.
$$

The restriction to $\wt{\mr{U}(p,q)}\times\{0\}$ of the pull-back of $\mr{Rot}_2$ to $\wt{\mr{U}(p,q)}\times \mb{Z}$ is $\wt{\mr{Rot}}$. This pull-back satisfies $\wt{\mr{Rot}}\circ\tilde j(z)=z$ for $z\in\bz$.
Then the identity $\tilde p^*\mr{sign}=-d(\wt{\mr{Rot}}+\rro\circ\tilde p)$ on $\wt{\mr{U}(p,q)}$ (Equation \ref{db}) implies that
$$
p_2^*\mr{sign}=-d(\mr{Rot}_2+\rro\circ p_2)
$$
on $\mr{U}(p,q)_2$. Thus, by uniqueness,
$$
b_2=\mr{Rot}_2+\rro\circ p_2.
$$
\end{proof}

\subsubsection{Concrete realization of the universal covering of $\mr{U}(p,q)$}\label{universal covering}

According to \cite[Theorem 3.4]{Neretin}, every $L\in \mr{U}(p,q)$ has the following unique decomposition
\begin{align*}
\begin{split}
  L=\begin{pmatrix}
  U_1&0 \\
  0&U_2 
\end{pmatrix}S(W),
 \end{split}
\end{align*}
 where  $U_1\in \mr{U}(p)$, $U_2\in \mr{U}(q)$ and 
 \begin{align*}
\begin{split}
  S(W)=\left(\begin{array}{cc}
\left(1-WW^{*}\right)^{-1 / 2} & \left(1-W W^{*}\right)^{-1 / 2} W \\
W^{*}\left(1-W W^{*}\right)^{-1 / 2} & \left(1-W^{*} W\right)^{-1 / 2}
\end{array}\right)
 \end{split}
\end{align*}
for $W\in \mr{D}_{p,q}^{\mr{I}}$.  Hence 
$\mr{U}(p,q)$ is homeomorphic to $\mr{U}(p)\times \mr{U}(q)\times \mr{D}_{p,q}^{\mr{I}}$. The universal covering $\wt{\mr{U}(p,q)}$ of $\mr{U}(p,q)$ can be given by
\begin{align*}
\begin{split}
  \wt{\mr{U}(p,q)}=\mb{R}\times\mb{R}\times \mr{SU}(p)\times \mr{SU}(q)\times \mr{D}_{p,q}^{\mr{I}}
 \end{split}
\end{align*}
 with the projection 
 \begin{align*}
\begin{split}
 & \cal P:\wt{\mr{U}(p,q)}=\mb{R}\times\mb{R}\times \mr{SU}(p)\times \mr{SU}(q)\times \mr{D}_{p,q}^{\mr{I}}\to \mr{U}(p,q)\\
 &\cal P(x,y,U_1,U_2,W)=\begin{pmatrix}
  e^{2\pi i x} U_1&0 \\
  0& e^{2\pi i y}U_2
\end{pmatrix}S(W),
 \end{split}
\end{align*}
where $U_1\in \mr{SU}(p)$ and $U_2\in \mr{SU}(q)$.
\begin{rem}
Every element $L\in\mr{U}(p,q)$ can be written
\begin{align*}
\begin{split}
  L=\begin{pmatrix}
  e^{2\pi i x} U_1&0 \\
  0& e^{2\pi i y}U_2
\end{pmatrix}S(W),
 \end{split}
\end{align*}
with $U_1\in \mr{SU}(p)$ and $U_2\in \mr{SU}(q)$. Then any lift of $L$ has the form 
\begin{align*}
\begin{split}
  (x+\frac{k_1}{p},y+\frac{k_2}{q},e^{-2\pi i\frac{k_1}{p}}U_1,e^{-2\pi i\frac{k_2}{q}}U_2,W)\in \wt{\mr{U}(p,q)}
 \end{split}
\end{align*}
for some $k_1, k_2\in\mb{Z}$.

\end{rem}

\subsubsection{Concrete realization of the central extension $\mr{U}(p,q)_2$} 

Let $\lambda:\mb{Z}^2\to\mb{Z}$ be the homomorphism given by $\lambda(m,n)=2m-2n$. Recall that
\begin{align*}
\begin{split}
  \mr{U}(p,q)_2:=\wt{\mr{U}(p,q)}\times_\lambda \mb{Z}=\wt{\mr{U}(p,q)}\times \mb{Z}/\sim.
 \end{split}
\end{align*}
We can give a concrete expression for the equivalence relation,
\begin{align*}
\begin{split}
  (x+\frac{m}{p},y+\frac{n}{q},e^{-2\pi i\frac{m}{p}}U_1,e^{-2\pi i\frac{n}{q}}U_2,W,k)\sim (x,y,U_1,U_2,W,2m-2n+k).
 \end{split}
\end{align*}
The group $\mr{U}(p,q)_2$ is a central extension of $\mr{U}(p,q)$ by $\bz$, with exact sequence
\begin{align*}
\begin{split}
\mb{Z}  \stackrel{j_2}{\longrightarrow}	\mr{U}(p,q)_2\stackrel{p_2}{\longrightarrow} \mr{U}(p,q),
 \end{split}
\end{align*}
given by 
\begin{align*}
  p_2([x,y,U_1,U_2,W,k])=
  \cal P(x,y,U_1,U_2,W)=\begin{pmatrix}
  e^{2\pi i x} U_1&0 \\
  0& e^{2\pi i y}U_2
\end{pmatrix}S(W).
  \end{align*}
The isomorphism $j_2:\mb{Z}\to \mr{Ker}(p_2)$ is given by
\begin{align*}
\begin{split}
  j_2(k)=\left[0,0,I_p,I_q,0,k\right].
 \end{split}
\end{align*}

\subsubsection{Atiyah's section $\sigma$}

Remember that Atiyah's section $\sigma$ is the unique section $\sigma:\mr{U}(p,q)\to \mr{U}(p,q)_2$ such that for all $A,B\in \mr{U}(p,q)$,
\begin{align}\label{sign-sigma}
\begin{split}
  j_2(\mr{sign}(A,B))=\sigma(A)\sigma(B)\sigma(AB)^{-1}.
 \end{split}
\end{align}

\begin{thm}
\label{thmsigmabis}
In the notation of Proposition \ref{propsigma}, Atiyah's section $\sigma:\mr{U}(p,q)\to \mr{U}(p,q)_2$ is uniquely determined by the equation $b_2\circ\sigma=0$. In the notation of paragraph \ref{universal covering}, Atiyah's section is given by the following formula. 
\begin{align}\label{sigma-section}
\begin{split}
\sigma(A)=[\wt{A},-\wt{\mr{Rot}}(\wt{A})-\boldsymbol{\rho}(A)],
 \end{split}
\end{align}
where $\wt{A}$ is an arbitrary lift of $A$ in $\wt{\mr{U}(p,q)}$. 
\end{thm}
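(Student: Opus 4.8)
The plan is to deduce the formula for $\sigma$ by feeding the explicit primitive $b_2$ of Proposition~\ref{propsigma} into the abstract characterization of Atiyah's section, and then unwinding everything in the concrete model of paragraph~\ref{universal covering}. The first assertion, that $\sigma$ is the unique section of $p_2$ with $b_2\circ\sigma=0$, is nothing but item (2) of the list following Definition~\ref{defatiyah}, itself a consequence of Lemma~\ref{b}: since $\mr{H}^2(\mr{U}(p,q),\bz)$ is free, no nonzero multiple of $\mr{sign}$ is a coboundary, and $\mr{Hom}(\mr{U}(p,q),\bz)=0$, so $\mr{Hom}(\mr{U}(p,q)_2,\bz)=0$; hence $b_2$ is the unique $1$-cochain with $p_2^*\mr{sign}=-db_2$ and $b_2\circ j_2=\mr{id}_\bz$, and it determines $\sigma$ through $b_2\circ\sigma=0$. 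So I only need to exhibit the section given by the stated formula and check that it annihilates $b_2$.

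By Proposition~\ref{propsigma}, $b_2=\mr{Rot}_2+\rro\circ p_2$, so I would evaluate each term on a class $[\wt A,k]\in\mr{U}(p,q)_2$, with $\wt A\in\wt{\mr{U}(p,q)}$ and $k\in\bz$. For the second term, the formula for $p_2$ in paragraph~\ref{universal covering} gives $p_2([\wt A,k])=\cal P(\wt A)=A$, whence $\rro(p_2([\wt A,k]))=\rro(A)$. For the first term, recall from the proof of Proposition~\ref{propsigma} that the pull-back of $\mr{Rot}_2$ to $\wt{\mr{U}(p,q)}\times\bz$ restricts to $\wt{\mr{Rot}}$ on the slice $\wt{\mr{U}(p,q)}\times\{0\}$ and satisfies $\mr{Rot}_2\circ j_2(z)=z$. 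Since $j_2(1)$ is central and $p_2(g\,j_2(1))=p_2(g)$, the function $g\mapsto \mr{Rot}_2(g\,j_2(1))-\mr{Rot}_2(g)$ is integer-valued and continuous, hence constant, with value $\mr{Rot}_2(j_2(1))-\mr{Rot}_2(e)=1$; therefore $\mr{Rot}_2([\wt A,k])=\wt{\mr{Rot}}(\wt A)+k$. Combining, $b_2([\wt A,k])=\wt{\mr{Rot}}(\wt A)+\rro(A)+k$, which vanishes exactly when $k=-\wt{\mr{Rot}}(\wt A)-\rro(A)$. As $p_2([\wt A,k])=A$ for every $k$, the map $A\mapsto[\wt A,-\wt{\mr{Rot}}(\wt A)-\rro(A)]$ is a section of $p_2$ killing $b_2$, i.e.\ it is Atiyah's section, which is formula~\eqref{sigma-section}.

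There remains to check that the right-hand side of \eqref{sigma-section} does not depend on the chosen lift $\wt A$, so that $\sigma$ is well defined. A change of lift amounts to a deck transformation indexed by some $(m,n)\in\pi_1(\mr{U}(p,q))\cong\bz^2$; by Lemma~\ref{rotdet} together with the identification $(\mr{Rot})_\sharp=\lambda$, $\lambda(m,n)=2m-2n$, established in the proof of Proposition~\ref{propsigma}, passing to the translated lift changes $\wt{\mr{Rot}}(\wt A)$ by $\lambda(m,n)$, hence changes $-\wt{\mr{Rot}}(\wt A)-\rro(A)$ by $-\lambda(m,n)$; but the equivalence relation defining $\mr{U}(p,q)_2=\wt{\mr{U}(p,q)}\times_\lambda\bz$ identifies the two resulting pairs, so the class is unchanged. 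Finally, the coboundary statement in Theorem~\ref{thmsigma} is immediate: $d(\mr{Rot}_2+\rro\circ p_2)=db_2=-p_2^*\mr{sign}$.

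The main obstacle is pinning down $\mr{Rot}_2$ on the explicit model and keeping all sign conventions straight: one must be sure that $\mr{Rot}_2([\wt A,k])=\wt{\mr{Rot}}(\wt A)+k$ holds exactly, and that the $\lambda$-twist built into $\mr{U}(p,q)_2$ precisely cancels the indeterminacy of $\wt{\mr{Rot}}$ under change of lift. Once these two points are secured, the rest is a routine assembly of Lemma~\ref{b}, Proposition~\ref{propsigma}, and Lemma~\ref{rotdet}.
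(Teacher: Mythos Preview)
Your proof is correct and follows essentially the same route as the paper: both invoke Lemma~\ref{b} for the uniqueness claim, both check well-definedness of \eqref{sigma-section} via the deck-transformation behaviour $(\mr{Rot})_\sharp=\lambda$, and both rely on Proposition~\ref{propsigma}. The only variant is in the verification step: the paper checks the defining identity $j_2(\mr{sign}(A,B))=\sigma(A)\sigma(B)\sigma(AB)^{-1}$ directly from \eqref{signformula}, whereas you compute $b_2([\wt A,k])=\wt{\mr{Rot}}(\wt A)+\rro(A)+k$ and solve $b_2\circ\sigma=0$. By Lemma~\ref{b} these are equivalent characterizations, so this is a cosmetic difference.

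One small point to tighten: your step ``integer-valued and continuous, hence constant'' presumes $\mr{U}(p,q)_2$ is connected, but since $\lambda(\bz^2)=2\bz$ the associated bundle $\wt{\mr{U}(p,q)}\times_\lambda\bz$ has two components (indexed by $k\bmod 2$). The repair is immediate and you already have all the ingredients: on each slice $\wt{\mr{U}(p,q)}\times\{k\}$ the pull-back of $\mr{Rot}_2$ is a continuous lift of $\mr{Rot}\circ\tilde p$, hence equals $\wt{\mr{Rot}}+c_k$ for some integer $c_k$; invariance under the equivalence gives $c_{k+2\ell}=c_k+2\ell$, and the two normalizations $\mr{Rot}_2(j_2(0))=0$, $\mr{Rot}_2(j_2(1))=1$ (which you use) force $c_0=0$, $c_1=1$, hence $c_k=k$ for all $k$. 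Alternatively, simply invoke item~(3) after Definition~\ref{defatiyah}, i.e.\ $b_2(j_2(n)g)=n+b_2(g)$, together with $b_2|_{\wt{\mr{U}(p,q)}\times\{0\}}=\tilde b$.
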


The first claim is a restatement of Lemma \ref{b}.

Equation \eqref{sigma-section} is unambiguous. Indeed, as in paragraph \ref{universal covering}, we can assume that
\begin{align*}
\begin{split}
 A=\begin{pmatrix}
  e^{2\pi i x} U_1&0 \\
  0& e^{2\pi i y}U_2
\end{pmatrix}S(W),
 \end{split}
\end{align*}
and a lift $\wt{A}$ of $A$ is given by
\begin{align*}
\begin{split}
  \wt{A}=(x+\frac{k_1}{p},y+\frac{k_2}{q},e^{-2\pi i\frac{k_1}{p}}U_1,e^{-2\pi i\frac{k_2}{p}}U_2,W)\in \wt{\mr{U}(p,q)}
 \end{split}
\end{align*}
 for some $k_1,k_2\in\mb{Z}$. Then 
\begin{align*}
\begin{split}
  &\quad[x+\frac{k_1}{p},y+\frac{k_2}{q},e^{-2\pi i\frac{k_1}{p}}U_1,e^{-2\pi i\frac{k_2}{q}}U_2,W, -\wt{\mr{Rot}}(\wt{A})-\boldsymbol{\rho}(A)]\\
  &=[x+\frac{k_1}{p},y+\frac{k_2}{q},e^{-2\pi i\frac{k_1}{p}}U_1,e^{-2\pi i\frac{k_2}{q}}U_2,W,-2(px+k_1-qy-k_2)-\boldsymbol{\rho}(A)]\\
  &=[x,y,U_1,U_2,W,-2(px-qy)-\boldsymbol{\rho}(A)],
 \end{split}
\end{align*}
which is independent of $k_1,k_2$. 

On the other hand, given $A,B\in \mr{U}(p,q)$, one can pick lifts $\tilde A$ and $\tilde B$ in $\wt{\mr{U}(p,q)}$. The product $\wt{AB}=\tilde A\tilde B$ can be used as a lift of $AB$. Then
\begin{align*}
\begin{split}
  \sigma(A)\sigma(B)\sigma(AB)^{-1}&=[\wt{A}\wt{B}(\wt{AB})^{-1}, \mr{sign}(A,B)]\\
  &=[e,\mr{sign}(A,B)]=j_2(\mr{sign}(A,B)),
 \end{split}
\end{align*}
which means that the section defined by \eqref{sigma-section} is exactly Atiyah's section $\sigma$.

\section{Milnor-Wood type inequality}\label{AMW}

In this section, we will prove a Milnor-Wood type inequality, in the form of an estimate on the signature of a flat unitary bundle. We first express the dimension of the vector space $\op{Im}(\mathrm{H}^1(\Sigma,\p \Sigma,\mc{E})\to \mathrm{H}^1(\Sigma,\mc{E}))$ on which the Hermitian form $iQ$ is defined in terms of the Euler characteristic and the dimension of the space of flat sections. Then we prove that this space vanishes for a dense set of representations. This provides the link between signature and Euler characteristic. In view of the Milnor-Wood inequality for closed surfaces, the right hand side $(p+q)|\chi(\Sigma)|$ does not seem to be sharp. However, by considering positive definite Hermitian forms ($q=0$), we shall see that our Milnor-Wood type estimate on signature cannot be improved to $\min\{p,q\}|\chi(\Sigma)|$ in general.

\subsection{Milnor-Wood type inequality}
Let $\phi:\pi_1(\Sigma)\to \op{U}(E,\Omega)$ be a representation into the $\mr{U}(p,q)$-group $\mr{U}(E,\Omega)$, where $E=\mb{C}^{p+q}$, we have 
$$\op{sign}(\mc{E},\Omega)=-2\op{T}(\Sigma,\phi)-\frac{1}{\pi}\sum_{i=1}^n\int_{c_i}\wt{\mbf{J}}^*\alpha_i+\eta(A_{\mbf{J}})=-2\op{T}(\Sigma,\phi)+\rro_\phi(\p\Sigma).$$
\begin{lemma}
	The indices of $d^-_P$ and $d^+_P$ can be given by
	\begin{equation*}
\op{Index}(d^{\mp}_P)= \pm\frac{1}{2}\op{sign}(\mc{E},\Omega)+\frac{\dim E}{2}\chi(\Sigma)-\frac{\dim \mathrm{H}^0(\p\Sigma,\mc{E})}{2}.
\end{equation*}
\end{lemma}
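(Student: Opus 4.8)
The plan is to assemble the formula for $\op{Index}(d^\mp_P)$ directly from ingredients already established in Section \ref{Sig}, so the proof will be essentially bookkeeping. Recall from \eqref{APSformula} that
\begin{align*}
\op{Index}(d^{\pm}_P)=\int_{\Sigma}\alpha_{\pm}(z)\,d\mu_g-\frac{\eta(A^{\pm}_{\mbf{J}})+\dim\op{Ker}A^{\pm}_{\mbf{J}}}{2},
\end{align*}
and that by \eqref{KerAJ} one has $\dim\op{Ker}A^{\pm}_{\mbf{J}}=\dim\mathrm{H}^0(\p\Sigma,\mc{E})$ while $\eta(A^{\pm}_{\mbf{J}})=\pm\eta(A_{\mbf{J}})$. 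So the first step is simply to substitute these two identities to get
\begin{align*}
\op{Index}(d^{\pm}_P)=\int_{\Sigma}\alpha_{\pm}(z)\,d\mu_g\mp\frac{\eta(A_{\mbf{J}})}{2}-\frac{\dim\mathrm{H}^0(\p\Sigma,\mc{E})}{2}.
\end{align*}

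Next I would invoke Proposition \ref{prop410}, which expresses the Atiyah--Singer integrand as $\int_{\Sigma}\alpha_{\pm}(z)\,d\mu_g=\mp\int_\Sigma\left(c_1(\mc{E}^+,\n|_{\mc{E}^+})-c_1(\mc{E}^-,\n|_{\mc{E}^-})\right)+\frac{\dim E}{2}\chi(\Sigma)$ for a peripheral connection $\n$. Combining this with Theorem \ref{thmsign2}, which says $\op{sign}(\mc{E},\Omega)=2\int_\Sigma\left(c_1(\mc{E}^+,\n|_{\mc{E}^+})-c_1(\mc{E}^-,\n|_{\mc{E}^-})\right)+\eta(A_{\mbf{J}})$, one reads off
\begin{align*}
\int_{\Sigma}\alpha_{\pm}(z)\,d\mu_g=\mp\frac{\op{sign}(\mc{E},\Omega)-\eta(A_{\mbf{J}})}{2}+\frac{\dim E}{2}\chi(\Sigma).
\end{align*}
Plugging this into the displayed formula for $\op{Index}(d^\pm_P)$, the $\pm\frac{\eta(A_{\mbf{J}})}{2}$ and $\mp\frac{\eta(A_{\mbf{J}})}{2}$ terms cancel, leaving
\begin{align*}
\op{Index}(d^{\pm}_P)=\mp\frac{1}{2}\op{sign}(\mc{E},\Omega)+\frac{\dim E}{2}\chi(\Sigma)-\frac{\dim\mathrm{H}^0(\p\Sigma,\mc{E})}{2},
\end{align*}
which is exactly the asserted identity after relabelling $\pm\leftrightarrow\mp$.

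There is essentially no obstacle here; the only point requiring a word of care is to be sure that the same peripheral connection $\n$ (equivalently the same $\mbf{J}\in\mc{J}_o(\mc{E},\Omega)$) is used consistently in Proposition \ref{prop410} and in Theorem \ref{thmsign2}, so that the two occurrences of $\int_\Sigma(c_1(\mc{E}^+)-c_1(\mc{E}^-))$ genuinely cancel; this is legitimate since both statements hold for an arbitrary such choice. One could alternatively derive the result purely from Proposition \ref{prop3.2} together with \eqref{hpm} and \eqref{3.8}, but the route through Theorem \ref{thmsign2} is shortest. I would present it as a two-line computation.
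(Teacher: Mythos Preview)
Your proof is correct and follows essentially the same route as the paper: start from the Atiyah--Patodi--Singer formula \eqref{APSformula}, substitute $\eta(A^{\pm}_{\mbf{J}})=\pm\eta(A_{\mbf{J}})$ and \eqref{KerAJ}, then combine Proposition \ref{prop410} with Theorem \ref{thmsign2} to eliminate the Chern-form integral and the eta term. The paper writes out the $d^-_P$ case explicitly and states the $d^+_P$ case as analogous, whereas you handle both signs simultaneously, but the ingredients and their order are identical.
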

\begin{proof}
For the index of $d^-_P$, by \eqref{APSformula}, one has
\begin{align*}
\op{Index}(d^{-}_P)&=\int_{\Sigma}\alpha_{{-}}(z)d\mu_g-\frac{\eta(A^-_{\mbf{J}})+\dim\op{Ker}A^{-}_{\mbf{J}}}{2}\\
&=\int_{\Sigma}\alpha_{{-}}(z)d\mu_g+\frac{\eta(A_{\mbf{J}})}{2}-\frac{\dim \mathrm{H}^0(\p\Sigma,\mc{E})}{2}\\
&=\frac{\dim E}{2}\chi(\Sigma)+\int_\Sigma \left(c_1(\mc{E}^+,\n|_{\mc{E}^+})-c_1(\mc{E}^-,\n|_{\mc{E}^-})\right)+\frac{\eta(A_{\mbf{J}})}{2}-\frac{\dim \mathrm{H}^0(\p\Sigma,\mc{E})}{2}\\
&=\frac{\dim E}{2}\chi(\Sigma)+\frac{1}{2}\op{sign}(\mc{E},\Omega)-\frac{\dim \mathrm{H}^0(\p\Sigma,\mc{E})}{2},
\end{align*}
where the second equality by $\eta(A_{\mbf{J}}^-)=-\eta(A_{\mbf{J}})$ and \eqref{KerAJ}, the third equality by Proposition \ref{prop410}, and the last equality by Theorem \ref{thmsign2}. Similarly,  we can also obtain
\begin{equation*}
  \op{Index}(d^{+}_P)=\frac{\dim E}{2}\chi(\Sigma)-\frac{1}{2}\op{sign}(\mc{E},\Omega)-\frac{\dim \mathrm{H}^0(\p\Sigma,\mc{E})}{2}.
\end{equation*}
\end{proof}
From the above lemma and using $\mathrm{L}^2\op{Index}(d^{\pm})=\op{Index}(d^{\pm}_P)+h_{\infty}(\wedge^\pm)$, one has
\begin{align*}
\begin{split}
\pm\frac{1}{2}\op{sign}(\mc{E},\Omega)&=-\frac{\dim E}{2}\chi(\Sigma)+\frac{\dim \mathrm{H}^0(\p\Sigma,\mc{E})}{2}+\op{Index}(d^{\mp}_P)\\
&=-\frac{\dim E}{2}\chi(\Sigma)+\frac{\dim \mathrm{H}^0(\p\Sigma,\mc{E})}{2}-h_{\infty}(\wedge^{\mp}) +\mathrm{L}^2\op{Index}(d^{\mp}).
\end{split}
\end{align*}
Here the $\mathrm{L}^2$-index is given by 
\begin{align*}
\begin{split}
\mathrm{L}^2\op{Index}(d^{\pm})&=\dim\op{Ker}(d^\pm)\cap \mathrm{L}^2(\widehat{\Sigma},\mc{E})-\dim\op{Ker}(d^\pm)^*\cap \mathrm{L}^2(\widehat{\Sigma},\wedge^\pm)\\
&=\dim\mathscr{H}^0(\widehat{\Sigma},\mc{E})-\dim\op{Ker}(d^\pm)^*\cap \mathrm{L}^2(\widehat{\Sigma},\wedge^\pm)\\
&=\dim\op{Im}(\mathrm{H}^0(\Sigma,\p\Sigma,\mc{E})\to \mathrm{H}^0(\Sigma,\mc{E}))-\dim\op{Ker}(d^\pm)^*\cap \mathrm{L}^2(\widehat{\Sigma},\wedge^\pm),
\end{split}
\end{align*}
where the second equality follows from (\ref{H0}) and the last equality follows from \cite[Proposition 4.9]{APS}. Since
$\mathrm{H}^0(\Sigma,\p\Sigma,\mc{E})=\{0\}$, so  $$\mathrm{L}^2\op{Index}(d^{\pm})= -\dim\op{Ker}(d^\pm)^*\cap \mathrm{L}^2(\widehat{\Sigma},\wedge^\pm).$$
On the other hand, by \eqref{hpm}, one has
$$\dim \mathrm{H}^0(\p\Sigma,\mc{E})-h_{\infty}(\wedge^{\pm})=\dim \mathrm{H}^0(\Sigma,\mc{E}).$$
Hence
\begin{multline}\label{5.6}
 \pm\frac{1}{2}\op{sign}(\mc{E},\Omega)	=-\frac{\dim E}{2}\chi(\Sigma)-\frac{\dim \mathrm{H}^0(\p\Sigma,\mc{E})}{2}\\
+\dim \mathrm{H}^0(\Sigma,\mc{E})-\dim\op{Ker}(d^\mp)^*\cap \mathrm{L}^2(\widehat{\Sigma},\wedge^\pm).
\end{multline}
\begin{rem}
From the above equality \eqref{5.6}, one gets immediately 
\begin{multline}\label{relative dimension}
	\dim\op{Im}(\mathrm{H}^1(\Sigma,\p \Sigma,\mc{E})\to \mathrm{H}^1(\Sigma,\mc{E}))\\=-\dim E\cdot\chi(\Sigma)-\dim \mathrm{H}^0(\p\Sigma,\mc{E})+2\dim \mathrm{H}^0(\Sigma,\mc{E}).
\end{multline}
In fact, by Proposition \ref{prop3.2}, one has
$$\op{sign}(\mc{E},\Omega)=\dim\op{Ker}(d^+)^*\cap \mathrm{L}^2(\widehat{\Sigma},\wedge^+)-\dim\op{Ker}(d^-)^*\cap \mathrm{L}^2(\widehat{\Sigma},\wedge^-).$$ 
 On the other hand, by Proposition \ref{prop3} and \cite[Proposition 4.9]{APS}, one has
\begin{multline*}
	\dim\op{Im}(\mathrm{H}^1(\Sigma,\p \Sigma,\mc{E})\to \mathrm{H}^1(\Sigma,\mc{E}))\\=\dim\op{Ker}(d^-)^*\cap \mathrm{L}^2(\widehat{\Sigma},\wedge^-)+\dim\op{Ker}(d^+)^*\cap \mathrm{L}^2(\widehat{\Sigma},\wedge^+).
\end{multline*}	
Hence we obtain \eqref{relative dimension} by combining with \eqref{5.6}.
\end{rem}

\begin{rem}
 If $\p\Sigma=\emptyset$, then $-\frac{1}{2}\op{sign}(\mc{E},\Omega)=\op{T}(\Sigma,\phi)$ and we have 
\begin{align*}
	\pm\op{T}(\Sigma,\phi)&=-\frac{\dim E}{2}\cdot\chi(\Sigma)+\op{Index}(d^{\pm})\\
	&=-\frac{\dim E}{2}\cdot\chi(\Sigma)+\dim \mathrm{H}^0(\Sigma,\mc{E})-\dim\op{Ker}(d^{\pm})^*.
	\end{align*}
	The second equality follows from the observation: if $d^-a=0$, then $d a=d^+a$, and so 
$d a=*\mbf{J}d a$, which follows that $(d^+)^*d^+a=d^*d a=0$. A similar argument holds for $d^+$.
	\end{rem}
\begin{lemma}\label{lemma3}
If $\Sigma$ is a  surface with  genus $g\geq 1$, then
the set of all representations $\phi$ such that $\{v\in E: \phi(a_i)v=v=\phi(b_i)v,1\leq i\leq g\}=\{0\}$ is dense in $\op{Hom}(\pi_1(\Sigma),\op{U}(E,\Omega))$, where $a_i,b_i\in \pi_1(\Sigma),1 \leq i\leq g$ denote the  generators in the interior of $\Sigma$.	
\end{lemma}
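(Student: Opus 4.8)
The plan is to prove genericity of the vanishing of common fixed vectors by a dimension count inside the representation variety, exploiting the fact that $\pi_1(\Sigma)$ is free of rank $2g+n-1$ (when $n\geq 1$) or has the standard surface presentation. First I would reduce to showing that for a single pair $(a_i,b_i)$ of generators one can perturb $\phi$ so that the commutator-block has no common fixed vector; since the condition ``$\{v: \phi(a_j)v=v=\phi(b_j)v,\ 1\leq j\leq g\}=\{0\}$'' is implied by the stronger condition ``$\phi(a_1)$ has no eigenvalue $1$'', and the latter is clearly open, it suffices to prove it is dense. Thus the core claim is: the set of $\phi\in\op{Hom}(\pi_1(\Sigma),\op{U}(E,\Omega))$ with $1\notin\op{sp}(\phi(a_1))$ is dense.

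The key steps, in order, would be: \textbf{(1)} Observe that $a_1$ is a generator, hence a free generator in a free basis of a finite-index (indeed the whole, when $\partial\Sigma\neq\emptyset$) free subgroup; more precisely, when $\partial\Sigma\neq\emptyset$ the group $\pi_1(\Sigma)$ is free on $a_1,b_1,\dots,a_g,b_g,c_1,\dots,c_{n-1}$, so $\op{Hom}(\pi_1(\Sigma),\op{U}(E,\Omega))\cong \op{U}(E,\Omega)^{2g+n-1}$ and the evaluation map $\phi\mapsto\phi(a_1)$ is simply a coordinate projection, which is open and surjective. \textbf{(2)} Since the set $\{L\in\op{U}(E,\Omega): 1\notin\op{sp}(L)\}$ is open and dense in $\op{U}(E,\Omega)$ (its complement is the proper real-analytic subvariety where $\det(L-I)=0$, and density follows because $\op{U}(E,\Omega)$ is a connected real Lie group on which the regular semisimple elements with no eigenvalue $1$ are dense — e.g. conjugates of suitable elements of a maximal torus), the preimage under the coordinate projection is open and dense, giving the claim. \textbf{(3)} In the closed case ($n=0$, genus $g\geq 1$) the variety is cut out by the single relation $\prod[a_i,b_i]=1$; here I would instead argue directly: perturb within the slice $\phi(a_1)\mapsto \phi(a_1)\exp(tX)$ keeping the other generators' images and solving for $\phi(b_g)$ (or any one generator) from the relation by the implicit function theorem near a point where $\phi$ is a regular point of the relation map, which one can arrange since $\op{U}(E,\Omega)^{2g}\to\op{U}(E,\Omega)$, $(A_i,B_i)\mapsto\prod[A_i,B_i]$, is a submersion at generic points (its differential surjects onto the Lie algebra once one pair $(A_i,B_i)$ generates an irreducible enough subgroup). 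This shows the locus with $1\notin\op{sp}(\phi(a_1))$ meets every component of the relation variety in a dense open subset.

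The main obstacle I anticipate is step \textbf{(3)}: in the closed-surface case the representation variety is not smooth everywhere and the relation map need not be a submersion at reducible representations, so the implicit-function perturbation must be set up at points where surjectivity of the differential of $(A_i,B_i)\mapsto\prod[A_i,B_i]$ holds, and one must check that such points are dense in each connected component. The standard fact that $[\mathfrak{u}(E,\Omega),\mathfrak{u}(E,\Omega)]=\mathfrak{su}(E,\Omega)$ together with the additional $\op{U}(1)$-direction coming from the boundary component $c_j$ (or from the center, in the closed case after a twist) is what makes the count work; I would invoke the well-known smoothness of the representation variety of a surface group at representations with small stabilizer, or simply note that since we only need density of an open condition it is enough to perturb one coordinate while correcting another via the local surjectivity of a single commutator map $\op{U}(E,\Omega)^2\to\op{U}(E,\Omega)$, which is elementary. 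In the bounded-boundary case, which is the one actually needed for the Milnor--Wood argument here, no obstacle arises at all because the representation variety is literally a Cartesian power of $\op{U}(E,\Omega)$.
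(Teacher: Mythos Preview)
Your proof is correct for the lemma as stated, and for the case $\partial\Sigma\neq\emptyset$ it is considerably simpler than the paper's: since $\pi_1(\Sigma)$ is free on $a_1,b_1,\dots,a_g,b_g,c_1,\dots,c_{n-1}$, the representation variety is a Cartesian power and density reduces to the elementary fact that $\{L:\det(L-I)\neq 0\}$ is open and dense in the connected group $\op{U}(E,\Omega)$.

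The paper takes a different route: it keeps the commutator $[\phi(a_1),\phi(b_1)]$ fixed and perturbs the pair $(\phi(a_1),\phi(b_1))$ inside the fiber of the commutator map, arguing that the locus $\{\det(A-I)=0=\det(B-I)\}$ is a proper subvariety of that fiber. This is more work, but it buys something your argument does not: the resulting $\phi_\epsilon$ satisfies $\phi_\epsilon(c_j)=\phi(c_j)$ for every boundary component. That extra property is exactly what is used downstream in the proof of the Milnor--Wood type inequality (Theorem~\ref{thmsign}), where one needs $\rro_{\phi_\epsilon}(\partial\Sigma)$ and $\op{T}(\Sigma,\phi_\epsilon)$ to stay constant along the family. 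Your free-group perturbation alters $\phi(c_n)$ (since $c_n$ is the word in the remaining generators), so although the lemma is established, the application would break without further repair.

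For the closed case ($n=0$) your step~(3) is incomplete as written: the implicit function theorem is unavailable precisely at the reducible or singular points you may be starting from, and you would first need a separate density argument to reach a smooth point of the relation variety. The paper's commutator-preserving perturbation, by contrast, handles both the closed and bounded cases uniformly with no smoothness hypothesis on the representation variety.
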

\begin{proof}
Suppose that the boundary of $\Sigma$ is the union $\partial \Sigma=\bigsqcup_{j=1}^q c_j$ of oriented circles, the fundamental group of $\Sigma$ is 
$$
\pi_{1}(\Sigma)=\left\langle a_{1}, b_{1}, \ldots, a_{g}, b_{g}, c_{1}, \ldots, c_{q}: \prod_{i=1}^{g}\left[a_{i}, b_{i}\right] \prod_{j=1}^{q} c_{j}=e\right\rangle.
$$
Then  $\op{Hom}(\pi_1(\Sigma),\op{U}(E,\Omega))$ is the space of all homomorphisms  with the relation 
\begin{align}\label{relation}
\prod_{i=1}^{g}\left[\phi(a_{i}), \phi(b_{i})\right] \prod_{j=1}^{q} \phi(c_{j})=\phi(e)=\op{Id}.	
\end{align}
If $1$ is an eigenvalue of $\phi(a_1)\in \op{U}(E,\Omega)$ and $\phi(b_1)\in \op{U}(E,\Omega)$, then by a small perturbation, we can find two families of  elements  $A_{\epsilon}, B_{\epsilon}\in \op{U}(E,\Omega)$  such that 	$1$ is not the an eigenvalue for $\epsilon>0$ and
	$$\lim_{\epsilon\to 0}A_{\epsilon}=\phi(a_1),\quad \lim_{\epsilon\to 0}B_{\epsilon}=\phi(b_1),\quad [A_{\epsilon},B_{\epsilon}]=[\phi(a_1),\phi(b_1)],$$
	for $\epsilon>0$ small enough. In fact, the linear transformations $\sigma\in \op{U}(E,\Omega)$ and $\tau\in \op{U}(E,\Omega)$ with $1$ as an eigenvalue is equivalent to $\det(\sigma-\mr{Id})=0$ and $\det(\tau-\mr{Id})=0$, which defines a variety $H_1$ in $\op{U}(E,\Omega)\times \op{U}(E,\Omega)$, and $[\sigma,\tau]=[\phi(a_1),\phi(b_1)]$ defines a variety $H_2$ in $\op{U}(E,\Omega)\times \op{U}(E,\Omega)$, in fact it is given by the set of solutions of these polynomial equations $\sigma\tau-[\phi(a_1),\phi(b_1)]\tau\sigma=0$. 
	From the definitions of $H_1$ and $H_2$, one sees easily that 
	$H_2$ is not contained in $H_1$, i.e. $H_2\nsubseteq H_1$, so $H_1\cap H_2\subsetneqq H_2$, which means the subvariety  $H_1\cap H_2$ is a proper subvariety of $H_2$. Thus we can find such $A_{\epsilon},B_{\epsilon}$ in $H_2\backslash (H_1\cap H_2)$.	
	
	Now we can take $\phi_{\epsilon}$ by 
	$$\phi_{\epsilon}(a_1)=A_{\epsilon},\quad \phi_{\epsilon}(b_1)=B_{\epsilon},\quad \phi_{\epsilon}=\phi,\,\text{ on other generators.}$$ 
	Then $\phi_{\epsilon}\in \op{Hom}(\pi_1(\Sigma),\op{U}(E,\Omega))$ and has no global fixed point, and 
	$\lim_{\epsilon\to 0}\phi_{\epsilon}=\phi.$
	Especially, $\{\phi_{\epsilon}\}$ is equal to $\phi$ on the boundary, i.e. $\phi_{\epsilon}(c_i)=\phi(c_i)$ for any $\epsilon>0$.
\end{proof}
For any $s\in \mathrm{H}^0(\Sigma,\mc{E})$, it can be viewed as a $\phi$-equivariant map $s=s^i(x)e_i\in E$, and 
$0=ds=ds^i(x)e_i,$
so $s^i(x)=s^i$ is constant, and $s=s^ie_i$ is a constant vector. The $\phi$-equivariant condition is 
$s^i(\gamma x)e_i=\phi(\gamma)^{-1}s^i(x)e_i,$
which follows that 
$s=\phi(\gamma)^{-1}s.$
Thus we obtain
$$\mathrm{H}^0(\Sigma,\mc{E})\cong \{s\in E:s=\phi(\gamma)s,\forall \gamma\in \pi_1(\Sigma)\}.$$	
\begin{thm}\label{thmsign}
The signature satisfies the following Milnor-Wood type inequality:
$$|\op{sign}(\mc{E},\Omega)|\leq\dim E \cdot |\chi(\Sigma)|=(p+q)|\chi(\Sigma)|. $$	
\end{thm}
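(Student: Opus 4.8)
The plan is to bound the signature by the complex dimension of the space carrying the intersection form and then read that dimension off from the cohomological identity \eqref{relative dimension} already proved above. By definition $\op{sign}(\mc{E},\Omega)$ is the signature of the Hermitian form $iQ$ on $\widehat{\mathrm{H}}^1(\Sigma;\mc{E})=\op{Im}(\mathrm{H}^1(\Sigma,\p\Sigma,\mc{E})\to\mathrm{H}^1(\Sigma,\mc{E}))$, so trivially
\begin{equation*}
|\op{sign}(\mc{E},\Omega)|\leq\dim\widehat{\mathrm{H}}^1(\Sigma;\mc{E})=-\dim E\cdot\chi(\Sigma)-\dim\mathrm{H}^0(\p\Sigma,\mc{E})+2\dim\mathrm{H}^0(\Sigma,\mc{E}),
\end{equation*}
the last equality being \eqref{relative dimension}. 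Excluding the trivial case of a disk (where $\pi_1(\Sigma)=1$ and $\op{sign}(\mc{E},\Omega)=0$), we may assume $\chi(\Sigma)\leq 0$, so that $-\dim E\cdot\chi(\Sigma)=(p+q)|\chi(\Sigma)|$, and it remains to prove $2\dim\mathrm{H}^0(\Sigma,\mc{E})\leq\dim\mathrm{H}^0(\p\Sigma,\mc{E})$ — possibly after replacing $\phi$ by a nearby representation with the same restriction to $\p\Sigma$.

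For this I would use $\mathrm{H}^0(\Sigma,\mc{E})\cong\{v\in E:\phi(\gamma)v=v\ \forall\gamma\in\pi_1(\Sigma)\}$ together with $\mathrm{H}^0(\p\Sigma,\mc{E})=\bigoplus_{i=1}^n\mathrm{H}^0(c_i,\mc{E}|_{c_i})$ and $\mathrm{H}^0(c_i,\mc{E}|_{c_i})\cong\{v\in E:\phi(c_i)v=v\}$. Since a globally $\phi$-invariant vector is in particular fixed by every $\phi(c_i)$, restriction yields injections $\mathrm{H}^0(\Sigma,\mc{E})\hookrightarrow\mathrm{H}^0(c_i,\mc{E}|_{c_i})$. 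When $\p\Sigma$ has $n\geq 2$ components this already gives $\dim\mathrm{H}^0(\p\Sigma,\mc{E})\geq 2\dim\mathrm{H}^0(\Sigma,\mc{E})$, which finishes the proof in that case.

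When $n=1$, the hypothesis $\chi(\Sigma)\leq 0$ forces the genus $g\geq 1$, and here a deformation is needed. I would invoke Theorem \ref{0}, $\op{sign}(\mc{E},\Omega)=-2\op{T}(\Sigma,\phi)+\rro_\phi(\p\Sigma)$: the Toledo invariant $\op{T}(\Sigma,\cdot)$ depends continuously on $\phi$ (e.g.\ via \eqref{Toledo-rotation}), while $\rro_\phi(\p\Sigma)$ depends only on $\phi|_{\p\Sigma}$, so the integer $\op{sign}(\mc{E},\Omega)$ is locally constant on the set of representations with prescribed restriction to $\p\Sigma$. By Lemma \ref{lemma3} one can approximate $\phi$ by a representation $\phi'$ with $\phi'|_{\p\Sigma}=\phi|_{\p\Sigma}$ admitting no nonzero vector fixed by all of $\phi'(a_1),\phi'(b_1),\dots,\phi'(a_g),\phi'(b_g)$; then $\mathrm{H}^0(\Sigma,\mc{E}_{\phi'})=0$ and $\op{sign}(\mc{E}_{\phi'},\Omega)=\op{sign}(\mc{E}_{\phi},\Omega)$. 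Applying the first paragraph to $\phi'$ gives $|\op{sign}(\mc{E}_{\phi},\Omega)|=|\op{sign}(\mc{E}_{\phi'},\Omega)|\leq(p+q)|\chi(\Sigma)|-\dim\mathrm{H}^0(\p\Sigma,\mc{E}_{\phi'})\leq(p+q)|\chi(\Sigma)|$.

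The whole argument is essentially bookkeeping around \eqref{relative dimension} and the exact sequences that produced it; the only point that is not purely formal, and therefore the main obstacle, is the deformation invariance of the signature used in the $n=1$ case — equivalently, the continuity of the Toledo invariant in the representation combined with the fact that the boundary rho invariant is unchanged when $\phi|_{\p\Sigma}$ is held fixed. Granting this, the bound collapses to the elementary dimension count above.
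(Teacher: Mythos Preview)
Your proof is correct and follows essentially the same route as the paper's: bound $|\op{sign}|$ by $\dim\widehat{\mathrm{H}}^1(\Sigma;\mc{E})$, use the dimension formula \eqref{relative dimension} (the paper phrases this via the equivalent \eqref{5.6}), handle $n\geq 2$ by the restriction inequality $\dim\mathrm{H}^0(\p\Sigma,\mc{E})\geq n\dim\mathrm{H}^0(\Sigma,\mc{E})$, and handle $n=1$, $g\geq 1$ by the deformation of Lemma~\ref{lemma3}. The only cosmetic difference is that the paper cites \cite[Corollary~8.11]{BIW} for the constancy of the Toledo invariant along the deformation, whereas you argue via continuity of $\mr{T}$ together with integrality of the signature; your version is arguably cleaner.
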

\begin{proof}
For  the number of boundary components $n\geq 2$, since 
\begin{equation}\label{5.7}
  \dim \mathrm{H}^0(\p\Sigma,\mc{E})\geq n\dim \mathrm{H}^0(\Sigma,\mc{E})\geq 2\dim \mathrm{H}^0(\Sigma,\mc{E}),
\end{equation}
 so by (\ref{5.6}),
$\pm\op{sign}(\mc{E},\Omega)\leq \dim E|\chi(\Sigma)|.$

For $n\leq 1$, by Lemma \ref{lemma3}, if the genus $g\geq 1$, for any representation $\phi$, there exists a family of representations $\phi_\epsilon$ with $\phi_\epsilon(\pi_1(\p\Sigma))=\phi(\pi_1(\p\Sigma))$, such that each $\phi_\epsilon$ has no global fixed point, and $\lim_{\epsilon\to 0}\phi_\epsilon=\phi$. Then $\dim \mathrm{H}^0(\Sigma,\mc{E}_\epsilon)=0$ for any $\epsilon>0$. 
Since the representations $\phi_\epsilon$ are fixed on the boundary, the eta invariant and $\int_{c_i}{\mbf{J}}^*\alpha_i$ are fixed, hence the rho invariant is fixed, $\rro_{\phi_\epsilon}(\p\Sigma)=\rro_\phi(\p\Sigma)$.
By \cite[Corollary 8.11]{BIW}, the Toledo invariant is also fixed, i.e.  $\op{T}(\Sigma,\phi_\epsilon)=\op{T}(\Sigma,\phi)$. Hence  the signature associated with $\phi_\epsilon$  is independent of $\epsilon$, and equals $\op{sign}(\mc{E},\Omega)$. Thus
\begin{align}\label{siginequality}
\begin{split}
\pm\frac{1}{2}\op{sign}(\mc{E},\Omega) &=
\mp\op{T}(\Sigma,\phi_\epsilon)\pm\frac{\rro_{\phi_\epsilon}(\p\Sigma)}{2}\\
& =	-\frac{\dim E}{2}\chi(\Sigma)-\frac{\dim \mathrm{H}^0(\p\Sigma,\mc{E})}{2} -\dim_{\mb{C}}\op{Ker}(d^\mp)^*\cap \mathrm{L}^2(\widehat{\Sigma},\wedge^\mp)\\
&\leq 	-\frac{\dim E}{2}\chi(\Sigma), 
\end{split}
\end{align}
from which it follows that $|\op{sign}(\mc{E},\Omega)|\leq\dim E \cdot |\chi(\Sigma)|$.  If $g=0$ and $n\leq 1$, then $\pi_1(\Sigma)$ is trivial, and so $\op{sign}(\mc{E},\Omega)=0$.

Hence, we obtain 
\begin{equation}\label{sign4}
  |\op{sign}(\mc{E},\Omega)|\leq \dim E\cdot\max\{-\chi(\Sigma),0\}\leq \dim E\cdot |\chi(\Sigma)|.
\end{equation}
The proof is complete.
\end{proof}

Without loss of generality, we assume $p\leq q$. For any representation $\phi:\pi_1(\Sigma)\to \op{U}(p,p)\times \op{U}(q-p)$, then 
\begin{align*}
\begin{split}
  \mc{E}=\mc{E}_1\oplus \mc{E}_2=(\wt{\Sigma}\times_{\phi_1} E_1)\oplus (\wt{\Sigma}\times_{\phi_2}E_2),
 \end{split}
\end{align*}
where $E=E_1\oplus E_2=\mb{C}^{2p}\oplus\mb{C}^{q-p}$, and $\phi_1:\pi_1(\Sigma)\to \op{U}(p,p)$ is defined as the projection of the image of $\phi$ on $\op{U}(p,p)$, while $\phi_2:\pi_1(\Sigma)\to\op{U}(q-p)$ is  the projection of the image of $\phi$ on $\op{U}(q-p)$. Hence 
$$\widehat{\mr{H}}^1(\Sigma,\mc{E})=\widehat{\mr{H}}^1(\Sigma,\mc{E}_1)\oplus \widehat{\mr{H}}^1(\Sigma,\mc{E}_2),$$
the flat vector bundles $\mc{E}_1$ and $\mc{E}_2$ are orthogonal to each other with respect to $\Omega$. From the definition of signature, see  Section \ref{Definition of signature}, one has 
\begin{align*}
\begin{split}
   \op{sign}(\mc{E},\Omega)= \op{sign}(\mc{E}_1,\Omega|_{\mc{E}_1})+\op{sign}(\mc{E}_2,\Omega|_{\mc{E}_2}).
 \end{split}
\end{align*}

Now we consider the case that $\Sigma$ is closed. In this case, $\op{sign}(\mc{E}_2,\Omega|_{\mc{E}_2})=0$ since the Toledo invariant $\op{T}(\Sigma,\phi_2)$ vanishes for any unitary representation. By Theorem \ref{thmsign}, one has
\begin{align*}
\begin{split}
  |\op{T}(\Sigma,\phi)|&=\frac{1}{2}|\op{sign}(\mc{E},\Omega)|=\frac{1}{2}| \op{sign}(\mc{E}_1,\Omega|_{\mc{E}_1})|\\
  &\leq \frac{1}{2}(p+p)|\chi(\Sigma)|=p|\chi(\Sigma)|=\min\{p,q\}|\chi(\Sigma)|.
 \end{split}
\end{align*}
On the other hand, by \cite[Theorem 6.7]{BGG},  the images of all maximal representations of Toledo invariant are in $\op{U}(p,p)\times \op{U}(q-p)\subset\op{U}(p,q)$. Hence, by using Atiyah-Patodi-Singer index theorem, we obtain the following Milnor-Wood ineqaulity
	\begin{align*}
\begin{split}
   |\op{T}(\Sigma,\phi)|\leq \min\{p,q\}|\chi(\Sigma)|,
 \end{split}
\end{align*}
which was originally proved by  A. Domic and D. Toledo \cite{DT}.

\begin{rem}
	For the case $\p\Sigma\neq\emptyset$ and $G$ is a group of Hermitian type, it was proved by Burger, Iozzi and  Wienhard \cite[Theorem 1 (1)]{BIW} that 
	\begin{equation*}
  |\op{T}(\Sigma,\phi)|\leq \op{rank}(G/K)|\chi(\Sigma)|.
\end{equation*}
The above inequality was also generalized to the higher dimensional case by using the isometric isomorphism of $j_{\p\Sigma}$, see \cite[Theorem 1, Corollary 2]{BBFIPP} or \cite[Theorem 1.2]{KK1}. More precisely, 
\begin{align*}
\begin{split}
  |\op{T}(\Sigma,\phi)|&=|\langle j^{-1}_{\p\Sigma}i_\Sigma\phi^*_b(\kappa_G^b),[\Sigma,\p\Sigma]\rangle|\\
  &\leq \|j^{-1}_{\p\Sigma}i_\Sigma\phi^*_b(\kappa_G^b)\|\cdot\|[\Sigma,\p\Sigma]\|_1\\
  &=\|\phi^*_b(\kappa_G^b)\|\cdot\|[\Sigma,\p\Sigma]\|_1\\
  &\leq \|\kappa_G^b\|\cdot\|[\Sigma,\p\Sigma]\|_1\\
  &=\op{rank}(G/K)\cdot|\chi(\Sigma)|,
 \end{split}
\end{align*}
 where the third equality holds since $j_{\p\Sigma}$ is an isometric isomorphism and the Gromov isomorphism $i_\Sigma$ is also isometric, the fourth inequality  by the fact that the pullback is norm decreasing, and the last equality since  $\|\kappa_G^b\|=\frac{\op{rank}(G/K)}{2}$ and $\|[\Sigma,\p\Sigma]\|_1=2|\chi(\Sigma)|$.
\end{rem}

\subsection{Surface group representations in $\mr{U}(p)\times\mr{U}(q)$}\label{ex-unitary}
 We consider a representation $\phi:\pi_1(\Sigma)\to \op{U}(p)\times \op{U}(q)\subset \op{U}(p,q)$. In this case, $\phi(\gamma)\in \op{U}(p)\times \op{U}(q)$ has the form 
 \begin{align*}
\begin{split}
  \left(\begin{matrix}
  a&0 \\
  0&d 
\end{matrix}\right),
 \end{split}
\end{align*}
where $a\in \op{U}(p)$ and $d\in \op{U}(q)$, $\gamma\in \pi_1(\Sigma)$. The flat bundle $(\mc{E},\Omega)$ is a Hermitian (indefinite) vector bundle, where 
$\Omega$ is a Hermitian form and $\Omega$ is given by the matrix $I_{p,q}$ with respect to the standard basis of $E$.
Denote $\mbf{J}:=iI_{p,q}$, then $i\Omega(\cdot,\mbf{J}\cdot)$ is positive definite. Since $[\mbf{J}, \phi(\gamma)]=0$ for any $\gamma\in \pi_1(\Sigma)$ and $\mbf{J}$ is constant, so $\mbf{J}\in \mc{J}_o(\mc{E},\Omega)$. Note that $[d,\mbf{J}]=0$, so $d$ is a peripheral connection on $\mc{E}$. Therefore
\begin{align*}
\begin{split}
  \op{sign}(\mc{E},\Omega)=2\int_\Sigma\left(c_1(\mc{E}^+,d|_{\mc{E}^+})-c_1(\mc{E}^-,d|_{\mc{E}^-})\right)+\eta(A_{\mbf{J}})=\eta(A_{\mbf{J}}),
 \end{split}
\end{align*}
since $d|_{\mc{E}^\pm}$ is flat. The Toledo invariant  and rho invariant are given by
\begin{align*}
\begin{split}
  \op{T}(\Sigma,\phi)=0,\quad \rro_\phi(\p\Sigma)=\eta(A_{\mbf{J}}).
 \end{split}
\end{align*}
For the boundary component $c_i$, we assume that
\begin{align*}
\begin{split}
  \phi(c_i)=\left(\begin{matrix}
  U&0 \\
  0&V 
\end{matrix}\right)\left(\begin{matrix}
  \op{diag}(e^{i\theta_{i,1}},\cdots, e^{i\theta_{i,p}})& 0\\
 0 &  \op{diag}(e^{i\theta_{i,p+1}},\cdots, e^{i\theta_{i,p+q}})
\end{matrix}\right)\left(\begin{matrix}
  U^{-1}&0 \\
  0&V^{-1}
\end{matrix}\right),
 \end{split}
\end{align*}
where $U\in \op{U}(p)$ and $V\in \op{U}(q)$, $\theta_{i,l}\in [0,2\pi)$, $1\leq l\leq p+q$, $1\leq i\leq n$. From Section \ref{Appeta}, the set of all eigenvalues (with multiplicities) of $A_{\mbf{J}}=\mbf{J}\frac{d}{dx}$ is given by
\begin{align*}
\begin{split}
  \left\{\frac{\theta_{i,1}}{2\pi}+k_{i,1},\cdots,\frac{\theta_{i,p}}{2\pi}+k_{i,p},-\frac{\theta_{i,p+1}}{2\pi}+k_{i,p+1},\cdots,-\frac{\theta_{i,p+q}}{2\pi}+k_{i,p+q},1\leq i\leq n,k_{i,l}\in \mb{Z}\right\}.
 \end{split}
\end{align*}
From the definition of eta invariant, then 
\begin{align}\label{sign5}
\begin{split}
  \op{sign}(\mc{E},\Omega)= \eta(A_{\mbf{J}})=\sum_{i=1}^n\left(\sum_{j\in \{\theta_{i,j}\neq 0\}}\left(1-\frac{\theta_{i,j}}{\pi}\right)-\sum_{l\in \{\theta_{i,l}\neq 0\}}\left(1-\frac{\theta_{i,l}}{\pi}\right)\right).
 \end{split}
\end{align}
From the above formula, one can obtain a bound for signature
\begin{align*}
\begin{split}
  |\op{sign}(\mc{E},\Omega)| &\leq \sum_{i=1}^n\sum_{j\in \{\theta_{i,j}\neq 0\}}\left|1-\frac{\theta_{i,j}}{\pi}\right|+\sum_{i=1}^n\sum_{l\in \{\theta_{i,l}\neq 0\}}\left|1-\frac{\theta_{i,l}}{\pi}\right|\\
  &<np+nq=n(p+q).
 \end{split}
\end{align*}
For the case of $g\geq 1$, then 
\begin{align*}
\begin{split}
   |\op{sign}(\mc{E},\Omega)|<n(p+q)\leq |2g-2+n|(p+q)=(p+q)|\chi(\Sigma)|.
 \end{split}
\end{align*}
For the case of $g=0$, in this case, one has
\begin{align*}
\begin{split}
  |\op{sign}(\mc{E},\Omega)|\leq \left|\sum_{i=1}^n\sum_{j\in \{\theta_{i,j}\neq 0\}}\left(1-\frac{\theta_{i,j}}{\pi}\right)\right|+\left|\sum_{i=1}^n\sum_{l\in \{\theta_{i,l}\neq 0\}}\left(1-\frac{\theta_{i,l}}{\pi}\right)\right|.
 \end{split}
\end{align*}
where the first term in the right hand side of the above inequality is exactly the absolute value of the signature of a flat $\op{U}(p)$-Hermitian vector bundle. Hence, by Theorem \ref{thmsign}, we can conclude the following inequality
\begin{align}\label{inequality for pant}
\begin{split}
  \left|\sum_{i=1}^n\sum_{j\in \{\theta_{i,j}\neq 0\}}\left(1-\frac{\theta_{i,j}}{\pi}\right)\right|\leq (p+0)|\chi(\Sigma)|=p|2g-2+n|=p(n-2).
 \end{split}
\end{align}
Similarly, one has 
\begin{align*}
\begin{split}
  \left|\sum_{i=1}^n\sum_{l\in \{\theta_{i,l}\neq 0\}}\left(1-\frac{\theta_{i,l}}{\pi}\right)\right|\leq q(n-2).
 \end{split}
\end{align*}
 For a surface with genus zero, then it is a $2$-sphere with $n$ discs deleted, we denote it by $\Sigma_n$. We can cut it off  into $(n-2)$ union of $\Sigma_3$, and the signature of the surface is exactly the sum of the signature of these $\Sigma_3$. Hence, the inequality \eqref{inequality for pant} follows from the case $n=3$, i.e.
 \begin{align}\label{etainequality}
\begin{split}
  \left|\sum_{i=1}^3\sum_{j\in \{\theta_{i,j}\neq 0\}}\left(1-\frac{\theta_{i,j}}{\pi}\right)\right|\leq p,
 \end{split}
\end{align}
which can be proved directly by using the results from the multiplicative Horn problem, see Subsection \ref{Horn}.

In particular, if we take $q=0$, then 
 \begin{align*}
\begin{split}
   \op{sign}(\mc{E},\Omega)=\sum_{i=1}^n\left(\sum_{j\in \{\theta_{i,j}\neq 0\}}\left(1-\frac{\theta_{i,j}}{\pi}\right)\right),
 \end{split}
\end{align*}
which does not vanish in general. Since the Toledo invariant is bounded by $\min\{p,q\}|\chi(\Sigma)|$, one might think that $| \op{sign}(\mc{E},\Omega)|$ is also bounded from above by $2\min\{p,q\}|\chi(\Sigma)|$. From the above discussion, we find that the signature can not be bounded by the constant  $2\min\{p,q\}|\chi(\Sigma)|$ in general.

\section{Surface group representations in $\op{SO}^*(2n)$}\label{SO}

In this section, we will consider the surface group representations in $\op{SO}^*(2n)$, one can refer to \cite[Page 71-74, Section (2.4)]{Mok} for the bounded symmetric domain of type $\op{II}$. Recall that 
\begin{align*}
\begin{split}
  \op{SO}^*(2n):=\left\{M\in \mathrm{SL}(2n, \mathbb{C}): {M}^{\top} M=I_{2n} ,\,M^* {J}_{{n}}{M}={J}_{n}\right\},
 \end{split}
\end{align*}
where $I_{2n}$ is the identity matrix and $J_n:=\left(\begin{matrix}
  0&I_n \\
  -I_n&0 
\end{matrix}\right)$, see \cite[Page 445]{Hel}. Each element in the group $\op{SO}^*(2n)$ leaves invariant the skew Hermitian form 
$$-z^{1} \bar{z}^{n+1}+z^{n+1} \bar{z}^{1}-z^{2} \bar{z}^{n+2}+z^{n+2} \bar{z}^{2}-\cdots-z^{n} \bar{z}^{2 n}+z^{2 n} \bar{z}^{n}.$$
This group $\op{SO}^*(2n)$ is isomorphic to the the following group 
\begin{align*}
\begin{split}
  G_o:=\left\{\mathrm{M} \in \mathrm{SL}(2 n, \mathbb{C}): M^* {I}_{n,n} M={I}_{{n}, {n}} ,\, {M}^{\top} S_{\mathrm{n}} {M}={S}_{{n}}\right\}=\mr{O}(n,\mb{C})\cap \op{SU}(n,n),
 \end{split}
\end{align*}
where $\mr{O}(n,\mb{C})$ is the complex orthogonal group with respect to $S_n=\left(\begin{matrix}
  0&I_n \\
  I_n&0 
\end{matrix}\right)$, and 
$
  I_{n,n}=\left(\begin{matrix}
  I_n&0 \\
  0&-I_n 
\end{matrix}\right).
$
The isomorphism $G_o\cong \op{SO}^*(2n)$ is given by $M\mapsto UMU^{-1}$, where $U=\frac{1}{\sqrt{2}}\left(\begin{matrix}
  I_n& iI_n \\
  iI_n&I_n 
\end{matrix}\right)$. The group $G_o$ has the following form 
\begin{align*}
\begin{split}
  G_o=\left\{M=\left(\begin{matrix}
  A& B\\
  -\o{B}&\o{A} 
\end{matrix}\right): M\in \op{SU}(n,n)\right\}.
 \end{split}
\end{align*}
The isotropy group $K\in G_o$ at the origin $o$ (i.e. the $n\times n$ zero matrix) is given by 
\begin{align*}
\begin{split}
  K=\left\{M=\left(\begin{matrix}
  U&0 \\
  0&\o{U} 
\end{matrix}\right): M\in \op{SU}(n,n)\right\}\cong \op{U}(n).
 \end{split}
\end{align*}
The  bounded symmetric domain of type $\op{II}$ is defined as 
\begin{align*}
\begin{split}
  \op{D}^{\op{II}}_n:=\{W\in \op{D}^{\op{I}}_{n,n}: W^{\top}=-W \}.
 \end{split}
\end{align*}
The group $G_o$ acts transitively on $ \op{D}^{\op{II}}_n$ with the isotropy group $K$ at the origin $0$, hence 
\begin{align*}
\begin{split}
   \op{D}^{\op{II}}_n\cong G_o/K,
 \end{split}
\end{align*}
see e.g. \cite[Page 74]{Mok}.

Let $\Sigma$ be a surface with boundary, and consider surface group representations  into the group $\op{SO}^*(2n)$. By the isomorphism $G_o\cong \op{SO}^*(2n)$, we will identify the groups $G_o$ and $\op{SO}^*(2n)$. Hence, we just need to consider the surface group representations in the group $G_o$.

Since  $G_o\subset \op{SU}(n,n)\subset \op{U}(n,n)$, so we can define the signature as in Section \ref{Definition of signature}. For any representation $\phi:\pi_1(\Sigma)\to G_o$, denote $\mc{E}=\wt{\Sigma}\times_\phi E$, where $E=\mb{C}^{2n}$, and let $\Omega$ be the Hermitian form given by \eqref{Hermitian form}, in terms of matrices, $\Omega$ is represented by the matrix $I_{n,n}$. Denote by $\op{sign}(\mc{E},\Omega)$ the signature of the flat Hermitian vector bundle $(\mc{E},\Omega)$.
From Theorem \ref{thmsign2}, the signature can be given by 
	\begin{equation*}
	\op{sign}(\mc{E},\Omega)= 2\int_\Sigma\left(c_1(\mc{E}^+,\n^{\mc{E}^+})-c_1(\mc{E}^-,\n^{\mc{E}^-})\right)+\eta(A_{\mbf{J}}),
\end{equation*}
for any $\mbf{J}\in \mc{J}(\mc{E},\Omega)$. Let 
\begin{align*}
\mc{J}_{\op{II}}(E,\Omega):=\{J\in\mc{J}(E,\Omega): J^\top S_n J=S_n\}=\mc{J}(E,\Omega)\cap G_o.	
\end{align*}
The group $G_o$ acts on $\mc{J}_{\op{II}}(E,\Omega)$ by $ZJZ^{-1}$, the action is transitive and the isotropy group at $iI_{n,n}$ is exactly $K$. Hence 
$$\mc{J}_{\op{II}}(E,\Omega)\cong G_o/K\cong \op{D}^{\op{II}}_n.$$
In fact, this isomorphism can be given by $\mbf{J}_{\mr{II}}=\mbf{J}_{\mr{I}}|_{\op{D}^{\op{II}}_n}:\op{D}^{\op{II}}_n\to \mc{J}_{\op{II}}(E,\Omega)$, i.e.
\begin{align*}
\begin{split}
  \mbf{J}_{\mr{II}}(W):=i\left(\begin{matrix}
  (I_n+W\o{W})^{-1}(I_n-W\o{W})& -2(I_n+W\o{W})^{-1}W\\
  -2(I_n+\o{W}W)^{-1}\o{W}&-(I_n+\o{W}W)^{-1}(I_n-\o{W}W) 
\end{matrix}\right)
 \end{split}
\end{align*}
for any $W\in \op{D}^{\op{II}}_{n}$. Denote $\mc{J}_{\op{II}}(\mc{E},\Omega):=C^{\infty}(\Sigma,\wt{\Sigma}\times_\phi \mc{J}_{\op{II}}(E,\Omega))$, and set
\begin{align*}
	\begin{split}
	\mc{J}_{\op{II},o}(\mc{E},\Omega)=\{\mbf{J}\in \mc{J}_{\op{II}}(\mc{E},\Omega)|  &\mbf{J}=p^*J
	\text{ on a small collar neighborhood}\\
	&\text{ of } \p\Sigma, \text{ where } J \in \mc{J}_{\op{II}}(\mc{E}|_{\p\Sigma},\Omega)\},
	\end{split}
	\end{align*}
where 
	$p:\p\Sigma\times [0,1]\to \p\Sigma$ denotes the natural projection. Let $\omega_{\op{D}^{\op{II}}_{n}}$ be the invariant K\"ahler metric on $\op{D}^{\op{II}}_{n}$ with the minimal holomorphic sectional curvature is $-1$, then 
	\begin{align*}
\begin{split}
    \omega_{\op{D}^{\op{II}}_{n}}=\frac{1}{2}\omega_{\op{D}^{\op{I}}_{n,n}}|_{\op{D}^{\op{II}}_{n}}=	-i\p\b{\p}\log\det(I_n+\o{W}W),
 \end{split}
\end{align*}
see e.g. \cite[Lemma 5.5]{KM1}.
	Similar to Section \ref{Tol} and \eqref{To1}, we obtain
\begin{align*}
\begin{split}
&\quad2 \mr{T}(\Sigma,\phi) =\frac{1}{2\pi}\int_\Sigma\left(\wt{\mbf{J}}^*(\omega_{\op{D}^{\op{I}}_{n,n}}|_{\op{D}^{\op{II}}_{n}})-\sum_{i=1}^qd(\chi_i \wt{\mbf{J}}^*\alpha_i)\right)\\
  &=\int_{\Sigma}\left(c_1(\mc{E}^-,\tau \wt{\mbf{J}}^*\n^{F_\phi}\tau^{-1}|_{\mc{E}^-})-c_1(\mc{E}^+,\tau \wt{\mbf{J}}^*\n^{F_\phi}\tau^{-1}|_{\mc{E}^+})\right)-\frac{1}{2\pi}\sum_{i=1}^q\int_\Sigma d(\chi_i \wt{\mbf{J}}^*\alpha_i).
 \end{split}
\end{align*}
for any $\mbf{J}\in \mc{J}_{\op{II},o}(\mc{E},\Omega)$, where $q$ denotes the number of connect components of $\p\Sigma$, $\wt{\mbf{J}}:\wt{\Sigma}\to \mc{J}_{\op{II}}(E,\Omega)\cong \op{D}^{\op{II}}_n$ is the $\phi$-equivariant map given by $\mbf{J}$,
 $\alpha_i=d^c\psi_i$ and 
$$\psi_i=-\log \left(|\det(\o{W_i}W+I_n)|^{-2}\det(I_n+\o{W}W)\right),$$
where $W_{i}\in \o{\op{D}^{\op{II}}_n}$ is a fixed point of $\phi(c_i)$, which is an invariant (up to a constant) K\"ahler potential under the isotropy group $K_{W_i}$ of $W_i$. Hence,
\begin{align*}
\begin{split}
  \op{sign}(\mc{E},\Omega)=-4\op{T}(\Sigma,\phi)+\rro_\phi(\p\Sigma),
 \end{split}
\end{align*}
where the rho invariant is given by 
\begin{align*}
\begin{split}
  \rro_\phi(\p\Sigma)=-\frac{1}{\pi}\sum_{i=1}^q\int_\Sigma d(\chi_i \wt{\mbf{J}}^*\alpha_i)+\eta(A_{\mbf{J}}).
 \end{split}
\end{align*}
By Theorem \ref{thmsign}, one has
\begin{align*}
\begin{split}
  | \op{sign}(\mc{E},\Omega)|\leq 2n |\chi(\Sigma)|=\dim E |\chi(\Sigma)|.
 \end{split}
\end{align*}
In particular, if the surface $\Sigma$ is closed, then 
$$|\op{T}(\Sigma,\phi)|=\frac{1}{4} | \op{sign}(\mc{E},\Omega)|\leq \frac{n}{2}|\chi(\Sigma)|.$$
In particular, if $n$ is even, then $|\op{T}(\Sigma,\phi)|\leq \frac{n}{2}|\chi(\Sigma)|=\left[\frac{n}{2}\right]|\chi(\Sigma)|$, which is exactly the Milnor-Wood inequality proved by A. Domic and D. Toledo \cite{DT}.

\section{Surface group representations in $\op{Sp}(2n,\mb{R})$}\label{SP}

 In this section, we can deal with the case of $\op{Sp}(2n,\mb{R})$  by using the results from $\op{U}(p,q)$-case and discuss some Milnor-Wood inequalities for $\mr{Sp}(2,\mb{R})$.
 
 \subsection{Signature and Toledo invariant}
 
  Recall that 
\begin{align*}
\begin{split}
  \op{Sp}(2n,\mb{R})=\{M\in \op{GL}(2n,\mb{R}): M^\top J_nM=J_n\}=\op{Sp}(2n,\mb{C})\cap \op{GL}(2n,\mb{R}),
 \end{split}
\end{align*}
where $J_n=\left(\begin{matrix}
  0&I_n \\
  -I_n&0 
\end{matrix}\right)$,
which is isomorphic to the following group
\begin{align*}
\begin{split}
  G_o=\op{Sp}(2n,\mb{C})\cap \op{SU}(n,n)=\left\{M=\left(\begin{matrix}
  A&B \\
  \o{B}&\o{A} 
\end{matrix}\right):M\in \op{SU}(n,n)\right\}.
 \end{split}
\end{align*}
This isomorphism is given by 
\begin{align*}
\begin{split}
  \Phi: G_o\to \op{Sp}(2n,\mb{R}),\quad N\mapsto UNU^{-1},
 \end{split}
\end{align*}
where $U=\left(\begin{matrix}
  -iI_n& iI_n\\
  I_n&I_n 
\end{matrix}\right)$. One can also refer to \cite[Page 68-71, Section 2.3]{Mok} on the isomorphism between $G_o$ and $\op{Sp}(2n,\mb{R})$, and some basic facts on the bounded symmetric domain of type $\op{III}$. The isotropy subgroup of $G_o$ at the origin $o$ (i.e. the $n\times n$ zero matrix) is 
\begin{align*}
\begin{split}
  K=\left\{X=\left(\begin{matrix}
  U&0 \\
  0&\o{U} 
\end{matrix}\right):U\in\op{U}(n)\right\}\cong \op{U}(n).
 \end{split}
\end{align*}
The group $G_o$ acts on the bounded symmetric domain $\op{D}^{\op{III}}_n$ of type $\op{III}$ transitively, where 
\begin{align*}
\begin{split}
  \op{D}^{\op{III}}_n:=\{W\in \op{D}^{\op{I}}_{n,n}: W^{\top}=W \}.
 \end{split}
\end{align*}
Hence 
\begin{align*}
\begin{split}
    \op{D}^{\op{III}}_n\cong G_o/K\cong \op{Sp}(2n,\mb{R})/\op{U}(n).
 \end{split}
\end{align*}

Denote by $\Omega_0$ the real symplectic form on $\mb{R}^{2n}$ with the matrix form is $J_n$. For any $\phi_0:\pi_1(\Sigma)\to \op{Sp}(2n,\mb{R})\subset \op{Sp}(2n,\mb{C})$, it induces a representation $\phi:\pi_1(\Sigma)\to G_o$, i.e. 
$$\phi(\cdot)=U^{-1}\phi_0(\cdot)U.$$
Denote $\mc{E}_0=\wt{\Sigma}\times_{\phi_0}\mb{R}^{2n}$, $\mc{E}=\wt{\Sigma}\times_\phi E$, $E=\mb{C}^{2n}$ and $(\mc{E}_0)_{\mb{C}}=\mc{E}_0\otimes \mb{C}=\wt{\Sigma}\times_{\phi_0}\mb{C}^{2n}$. The relation between $\phi_0$ and $\phi$ gives an isomorphism between the bundles $\mc{E}$ and $(\mc{E}_0)_{\mb{C}}$, i.e. 
\begin{align*}
\begin{split}
  (\mc{E}_0)_{\mb{C}}\cong \mc{E},\quad a\mapsto U^{-1}a.
 \end{split}
\end{align*}
For any $[a], [b]\in \widehat{\mr{H}}^1(\Sigma,(\mc{E}_0)_{\mb{C}})$, one has
\begin{align*}
\begin{split}
  \Omega_0([a],\o{[b]})&= (U^\top\Omega_0\o{U})([U^{-1}a],\o{[U^{-1}b]})\\
  &=(U\Omega_0\o{U})([U^{-1}a],\o{[U^{-1}b]}).
 \end{split}
\end{align*}
Since the matrix  form of $U\Omega_0\o{U}$ is 
$
  UJ_n\o{U}=-iI_{n,n},
$
which represents the Hermitian form $-i\Omega$ exactly. Hence 
\begin{align}\label{sp6.1}
\begin{split}
   \Omega_0([a],\o{[b]})=-i\Omega([U^{-1}a],[U^{-1}b]).
 \end{split}
\end{align}
The signature $\op{sign}(\mc{E}_0,\Omega_0)$ of the flat symplectic vector bundle $(\mc{E}_0,\Omega_0)$ is defined as the signature of the symmetric quadratic form $Q_{\mb{R}}(\cdot,\cdot)=\int_\Sigma \Omega_0(\cdot\cup \cdot)$ on the space $\widehat{\mr{H}}^1(\Sigma,\mc{E}_0)$. Moreover, it also can be given by 
\begin{align*}
\begin{split}
  \op{sign}(\mc{E}_0,\Omega_0)=\op{sign}((\mc{E}_0)_{\mb{C}},\Omega_0)
 \end{split}
\end{align*}
since $\widehat{\mathrm{H}}^1(\Sigma,(\mc{E}_0)_{\mb{C}})=\widehat{\mathrm{H}}^1(\Sigma,\mc{E}_0)\otimes\mb{C}$. Here the signature $\op{sign}((\mc{E}_0)_{\mb{C}},\Omega_0)$ is defined as the signature of Hermitian form $Q_{\mb{C}}(\cdot,\cdot)=\int_\Sigma \Omega_0(\cdot\cup \o{\cdot})$ on the space $\widehat{\mr{H}}^1(\Sigma,(\mc{E}_0)_{\mb{C}})$. From \eqref{sp6.1}, one has 
\begin{align*}
\begin{split}
  Q_{\mb{C}}([a],[b])=-i\int_\Sigma\Omega([U^{-1}a],[U^{-1}b]).
 \end{split}
\end{align*}
Recall that the signature $\op{sign}(\mc{E},\Omega)$ of flat Hermitian vector bundle $(\mc{E},\Omega)$ is defined as the signature of the Hermitian form $i\int_\Sigma\Omega(\cdot,\cdot)$, which follows that 
\begin{align}\label{relation of signatures}
\begin{split}
 \op{sign}(\mc{E}_0,\Omega_0)=-\op{sign}(\mc{E},\Omega). 
 \end{split}
\end{align}
For any representation $\phi_0:\pi_1(\Sigma)\to \op{Sp}(2n,\mb{R})$, and denote $\phi=U^{-1}\phi_0 U$. By Theorem \ref{thmsign2}, one has 
\begin{equation*}
	\op{sign}(\mc{E},\Omega)= 2\int_\Sigma\left(c_1(\mc{E}^+,\n^{\mc{E}^+})-c_1(\mc{E}^-,\n^{\mc{E}^-})\right)+\eta(A_{\mbf{J}}),
\end{equation*}
for any $\mbf{J}\in \mc{J}(\mc{E},\Omega)$. Let 
\begin{align*}
\mc{J}_{\op{III}}(E,\Omega):=\{J\in\mc{J}(E,\Omega): J^\top J_n J=J_n\}=\mc{J}(E,\Omega)\cap G_o.	
\end{align*}
The group $G_o$ acts on $\mc{J}_{\op{III}}(E,\Omega)$ by $ZJZ^{-1}$, the action is transitive and the isotropy group at $iI_{n,n}$ is exactly $K$. Hence 
$$\mc{J}_{\op{III}}(E,\Omega)\cong G_o/K\cong \op{D}^{\op{III}}_n.$$
In fact, this isomorphism can be given by $\mbf{J}_{\mr{III}}:=\mbf{J}_{\mr{I}}|_{\op{D}^{\op{III}}_n}:\op{D}^{\op{III}}_n\to \mc{J}_{\op{III}}(E,\Omega)$, that is
\begin{align}\label{isomorphism DIII}
\begin{split}
  \mbf{J}_{\mr{III}}(W)=i\left(\begin{matrix}
  (I_n-W\o{W})^{-1}(I_n+W\o{W})& -2(I_n-W\o{W})^{-1}W\\
  2(I_n-\o{W}W)^{-1}\o{W}&-(I_n-\o{W}W)^{-1}(I_n+\o{W}W) 
\end{matrix}\right).
 \end{split}
\end{align}
for any $W\in \op{D}^{\op{III}}_{n}$. 
\begin{rem}\label{rem5.2}
	The isomorphism $\mbf{J}_{\mr{III}}$ induces an isomorphism $$\mbf{J}_{\mr{III},0}:=-U\mbf{J}_{\mr{III}}U^{-1}:\op{D}^{\op{III}}_n\to \mc{J}(\mb{R}^{2n},\Omega_0),$$ one can check that it is a bijection, where $\mc{J}(\mb{R}^{2n},\Omega_0)$ denotes the subset of $\mr{Sp}(2n,\mb{R})$ such that $J^2=-I_{2n}$ and $\Omega_0(\cdot, J\cdot)>0$. For any $Z\in \op{Sp}(2n,\mb{R})$, then  $Z\mbf{J}_{\mr{III},0}Z^{-1}\in \mc{J}(\mb{R}^{2n},\Omega)$, and the induced action on $\op{D}^{\op{III}}_n$ is given by
	\begin{equation}\label{Action}
  Z(W)=(Z_1W+Z_2)(\o{Z_2}W+\o{Z_1})^{-1}\in \op{D}^{\op{III}}_n,
\end{equation}
 where $Z_1$ and $Z_2$ is defined by
	 $Z=U\left(\begin{matrix}
Z_1 & Z_2\\
\o{Z_2} & \o{Z_1}	
\end{matrix}
\right)U^{-1}$.
 The isomorphism $\op{Sp}(2n,\mb{R})/\op{U}(n)\cong\mc{J}(\mb{R}^{2n},\Omega)$ is given by $Z\cdot \op{U}(n)\mapsto ZJZ^{-1}$, where $J=\left(\begin{matrix}
0 & -I_n\\
I_n & 0	
\end{matrix}
\right)$ and $\op{U}(n)\cong \{Z\in \op{Sp}(2n,\mb{R}): ZJZ^{-1}=J\}$.
\end{rem}
 Denote $\mc{J}_{\op{III}}(\mc{E},\Omega):=C^{\infty}(\Sigma,\wt{\Sigma}\times_\phi \mc{J}_{\op{III}}(E,\Omega))$, and set
\begin{align*}
	\begin{split}
	\mc{J}_{\op{III},o}(\mc{E},\Omega)=\{\mbf{J}\in \mc{J}_{\op{III}}(\mc{E},\Omega)|  &\mbf{J}=p^*J
	\text{ on a small collar neighborhood}\\
	&\text{ of } \p\Sigma, \text{ where } J \in \mc{J}_{\op{III}}(\mc{E}|_{\p\Sigma},\Omega)\},
	\end{split}
	\end{align*}
where 
	$p:\p\Sigma\times [0,1]\to \p\Sigma$ denotes the natural projection. Let $\omega_{\op{D}^{\op{III}}_{n}}$ be the invariant K\"ahler metric on $\op{D}^{\op{III}}_{n}$ with the minimal holomorphic sectional curvature is $-1$, then 
	\begin{align*}
\begin{split}
    \omega_{\op{D}^{\op{III}}_{n}}=\omega_{\op{D}^{\op{I}}_{n,n}}|_{\op{D}^{\op{III}}_{n}}=	-2i\p\b{\p}\log\det(I_n-\o{W}W),
 \end{split}
\end{align*}
see e.g. \cite[Lemma 5.4]{KM1}. Similar to Section \ref{Tol} and \eqref{To1}, we obtain
\begin{align*}
\begin{split}
 \mr{T}(\Sigma,\phi) &=\frac{1}{2\pi}\int_\Sigma\left(\wt{\mbf{J}}^*(\omega_{\op{D}^{\op{I}}_{n,n}}|_{\op{D}^{\op{III}}_{n}})-\sum_{i=1}^qd(\chi_i \wt{\mbf{J}}^*\alpha_i)\right)\\
  &=\int_{\Sigma}\left(c_1(\mc{E}^-,\tau \wt{\mbf{J}}^*\n^{F_\phi}\tau^{-1}|_{\mc{E}^-})-c_1(\mc{E}^+,\tau \wt{\mbf{J}}^*\n^{F_\phi}\tau^{-1}|_{\mc{E}^+})\right)\\
  &\quad-\frac{1}{2\pi}\sum_{i=1}^q\int_\Sigma d(\chi_i \wt{\mbf{J}}^*\alpha_i).
 \end{split}
\end{align*}
for any $\mbf{J}\in \mc{J}_{\op{III},o}(\mc{E},\Omega)$, where $q$ denotes the number of connected components of $\p\Sigma$, $\wt{\mbf{J}}:\wt{\Sigma}\to \mc{J}_{\op{III}}(E,\Omega)\cong \op{D}^{\op{III}}_n$ is a $\phi$-equivariant map given by $\mbf{J}$,
 $\alpha_i=d^c\psi_i$ and 
$$\psi_i=-\log \left(|\det(\o{W_i}W-I_n)|^{-2}\det(I_n-\o{W}W)\right),$$
where $W_{i}\in \o{\op{D}^{\op{III}}_n}$ is a fixed point of $\phi(c_i)$, which is an invariant (up to a constant) K\"ahler potential under the isotropy group $K_{W_i}$ of $W_i$.
\begin{rem}\label{rempotential}
 Denote by 
\begin{align*}
\mb{H}_n=\{Z\in \mb{C}^{n\times n}|Z=Z^\top, \op{Im}Z>0\}	
\end{align*}
the Siegel upper half plane, and 
\begin{align*}
\Psi:\op{D}^{\op{III}}_n\to \mb{H}_n,\quad Z=\Phi(W)=i(I_n-W)(I_n+W)^{-1}	
\end{align*}
the identification between $\op{D}^{\op{III}}_n$ and $\mb{H}_n$. The induced action of $L\in \op{Sp}(2n,\mb{R})$ on $\mb{H}_n$ is the generalized M\"obius transformation. Moreover, 
$$\det\op{Im}Z=|\det(I_n+W)|^{-2}\det(I_n-\o{W}W).$$
 Suppose 
 \begin{align*}
 L=U\left(\begin{matrix}
a & b\\
\b{b} & \b{a}	
\end{matrix}
\right)U^{-1}=\left(\begin{matrix}
L_1 & L_2	\\
L_3 & L_4
\end{matrix}
\right)\in \op{Sp}(2n,\mb{R}),	
 \end{align*}
 where $L_3=\frac{i}{2}(a-\b{a})+\frac{i}{2}(\b{b}-b)$. Then  $W_0=-I_n$ is a fixed point of $L$  is equivalent to 
 $b-\o{b}=a-\o{a}$, and so $L_3=0$. Hence $L(-I_n)=-I_n$ if and only if $L$ has the following matrix form 
  \begin{align*}
 L=\left(\begin{matrix}
L_1 & L_2	\\
0 & L_4
\end{matrix}
\right)\in \op{Sp}(2n,\mb{R}).	
 \end{align*}
Thus  the K\"ahler potential can be given by
 \begin{equation}
  \psi=-\log\det\op{Im}Z.
\end{equation}
\end{rem}
\vspace{3mm}
 Hence,
\begin{equation}\label{Tole}
\begin{split}
\op{sign}(\mc{E}_0,\Omega_0)=-  \op{sign}(\mc{E},\Omega)=2\op{T}(\Sigma,\phi)-\rro_\phi(\p\Sigma),
 \end{split}
\end{equation}
where the rho invariant is given by 
\begin{align*}
\begin{split}
  \rro_\phi(\p\Sigma)=-\frac{1}{\pi}\sum_{i=1}^q\int_\Sigma d(\chi_i \wt{\mbf{J}}^*\alpha_i)+\eta(A_{\mbf{J}}).
 \end{split}
\end{align*}
Note that $\op{T}(\Sigma,\phi)=\op{T}(\Sigma,\phi_0)$ due to the isomorphism $G_o\cong\op{Sp}(2n,\mb{R})$ and $\phi=U^{-1}\phi_0 U$. Denote 
\begin{align}
\begin{split}
  \rro_{\phi_0}(\p\Sigma):=\frac{1}{\pi}\sum_{i=1}^q\int_\Sigma d(\chi_i \wt{\mbf{J}_0}^*\alpha_i)+\eta(A_{\mbf{J}_0})
 \end{split}
\end{align}
for any $\mbf{J}_0\in \mc{J}_o(\mc{E}_0,\Omega_0)$, where $\mc{J}_o(\mc{E}_0,\Omega_0)$ is defined as
\begin{align}\label{J0spacebis}
	\begin{split}
	\mc{J}_o(\mc{E}_0,\Omega_0)=\{\mbf{J}\in \mc{J}(\mc{E}_0,\Omega_0)|  &\mbf{J}=p^*J
	\text{ on a small collar neighborhood}\\
	&\text{ of } \p\Sigma, \text{ where } J \in \mc{J}( \mc{E}_0|_{\p\Sigma},\Omega_0)\},
	\end{split}
	\end{align}
	and $ \mc{J}(\mc{E}_0,\Omega_0):=C^{\infty}(\Sigma,\wt{\Sigma}\times_{\phi_0}\mc{J}(\mb{R}^{2n},\Omega_0))$.
	From the definition of $\mbf{J}_{\mr{III},0}$, see Remark \ref{rem5.2}, one has
\begin{align*}
\begin{split}
\mbf{J}_{\mr{III},0}(W)=-U\mbf{J}_{\mr{I}}(W)U^{-1},
 \end{split}
\end{align*}
where $\mbf{J}_{\mr{I}}(W)$ is given by \eqref{isomorphism DIII}.
For any $\mbf{J}\in \mc{J}_{\op{III},o}(\mc{E},\Omega)$, denote $$\mbf{J}_0=U(-\mbf{J})U^{-1},$$ one can check that
$
  \mbf{J}_0\in \mc{J}_o(\mc{E}_0,\Omega_0).
$
For any tangent vector $X\in T\wt{\Sigma}$, one has
 \begin{align}\label{equals of two J}
\begin{split}
  (\wt{\mbf{J}}^*\alpha)(X)&= \alpha(\wt{\mbf{J}}_*X)=\alpha((\mbf{J}^{-1}_{\mr{I}}\circ \mbf{J})_*X)\\
  &=\alpha(((\mbf{J}_{\mr{I}}^{-1}\circ(-U^{-1}\mbf{J}_0 U))_*X)\\
  &=\alpha((\mbf{J}^{-1}_{\mr{III},0}\circ \mbf{J}_0)_*X)\\
  &=\alpha((\wt{\mbf{J}}_0)_*X)=(\wt{\mbf{J}}_0^*\alpha)(X),
 \end{split}
\end{align}
which follows that $\wt{\mbf{J}}^*\alpha=\wt{\mbf{J}}_0^*\alpha$. 
For the eta invariants, one has
\begin{align}\label{-eta}
\begin{split}
  \eta(A_{\mbf{J}})=-\eta(A_{\mbf{J}_0}).
 \end{split}
\end{align}
Thus,
\begin{align}\label{-rho}
\begin{split}
   \rro_\phi(\p\Sigma)=- \rro_{\phi_0}(\p\Sigma),
 \end{split}
\end{align}
which implies that 
\begin{align*}
\begin{split}
  \op{sign}(\mc{E}_0,\Omega_0)=2\op{T}(\Sigma,\phi_0)+\rro_{\phi_0}(\p\Sigma).
 \end{split}
\end{align*}
By Theorem \ref{thmsign}, one has
\begin{align*}
\begin{split}
 |\op{sign}(\mc{E}_0,\Omega_0)|= | \op{sign}(\mc{E},\Omega)|\leq 2n |\chi(\Sigma)|=\dim E |\chi(\Sigma)|.
 \end{split}
\end{align*}
In particular, if the surface $\Sigma$ is closed, then 
$$|\op{T}(\Sigma,\phi_0)|=\frac{1}{2} | \op{sign}(\mc{E}_0,\Omega_0)|\leq n|\chi(\Sigma)|,$$
 which is exactly the Milnor-Wood inequality for the real symplectic group  proved by Turaev \cite{Tur}.
\begin{rem}\label{remunitary}
	Following \cite{APSII}, if we consider the unitary representation, i.e. $$\phi:\pi_1(\Sigma)\to \op{U}(n)=\{A+iB\in \op{U}(n)\}\cong\left\{Z=\left(\begin{matrix}
A & B\\
-B & A	
\end{matrix}
\right)\in\op{Sp}(2n,\mb{R})\right\}.$$ Then $[Z,J]=0$ where $J=\left(\begin{matrix}
0 &-I_n\\
I_n &0	
\end{matrix}
\right)$ is the standard complex structure. Hence $J\in \mc{J}_o(\mc{E}_0,\Omega_0)$, and so 
\begin{align*}
  \op{T}(\Sigma,\phi)
  &=\frac{1}{2\pi}\int_\Sigma \wt{{J}}^*\omega_{\op{D}^{\op{III}}_n}-\frac{1}{2\pi}\sum_{i=1}^q\int_{c_i}\wt{{J}}^*\alpha_i=0,
\end{align*}
and \begin{equation*}
  \rro_\phi(\p\Sigma)=\frac{1}{\pi}\sum_{i=1}^q\int_{c_i}\wt{{J}}^*\alpha_i+\eta(A_{{J}})=\eta(A_J).
\end{equation*}
Hence 
$$\op{sign}(\mc{E},\Omega)=  \rro_\phi(\p\Sigma)=\eta(A_{{J}}),$$
which is is agrement with \cite[Theorem 2.2, Theorem 2.4]{APSII}.

\end{rem}

\subsection{Improved Milnor-Wood inequalities for $\op{Sp}(2,\mb{R})$}\label{Class}

We show that for the target group $\op{Sp}(2,\mb{R})$, an upper bound on Toledo invariant can be given that sometimes improves upon previously known bounds.
We consider representations $\phi:\pi_1(\Sigma)\to \op{Sp}(2,\mb{R})$. Burger-Iozzi-Wienhard's Milnor-Wood inequality reads
$$
|\mr{T}(\Sigma,\phi)|\leq|\chi(\Sigma)|.
$$
We shall introduce a boundary contribution which makes the right hand side smaller. Here, $\{x\}=x-\lfloor x \rfloor$ denotes the fractional part of a real number $x$.

\begin{prop}\label{propmod2}
For representations $\phi:\pi_1(\Sigma)\ra \op{Sp}(2,\mb{R})$,
\begin{equation}\label{TI1}
  \op{T}(\Sigma,\phi) \leq |\chi(\Sigma)|+1 - \sum_{c\,;\,\phi(c) \text{ elliptic}}\left \{\frac{\rro(\phi(c))}{2}\right\}
\end{equation}
\end{prop}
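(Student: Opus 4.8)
The plan is to combine the exact signature formula for $\mathrm{Sp}(2,\mathbb{R})$ from Section~\ref{SP} with a modification of $\phi$ that leaves the Toledo invariant unchanged but pushes the boundary rho invariants into a favourable range. Recall from Section~\ref{SP} that for $\phi\colon\pi_1(\Sigma)\to\mathrm{Sp}(2,\mathbb{R})$ one has $\op{sign}(\mc{E}_0,\Omega_0)=2\op{T}(\Sigma,\phi)+\rro_\phi(\p\Sigma)$, where $\rro_\phi(\p\Sigma)=\sum_{c}\rro(\phi(c))$ and $\rro$ is the $\mathrm{Sp}(2,\mathbb{R})$-rho invariant (the composite $\mathrm{Sp}(2,\mathbb{R})\hookrightarrow\mathrm{U}(1,1)\to\br$), and from Theorem~\ref{thmsign} with $\dim E=2$ that $\op{sign}(\mc{E}_0,\Omega_0)\le 2|\chi(\Sigma)|$. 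Hence $\op{T}(\Sigma,\phi)\le|\chi(\Sigma)|-\tfrac12\sum_c\rro(\phi(c))$; this is already at least as strong as \eqref{TI1} except at boundary components where $\rro(\phi(c))<0$, so it suffices to get rid of those.

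Next I record the catalogue of rho invariants on $\mathrm{Sp}(2,\mathbb{R})\cong\mathrm{Sp}(2,\mathbb{C})\cap\mathrm{SU}(1,1)$, via Theorem~\ref{1} with $\op{sign}(\Omega)=0$: $\rro$ vanishes on hyperbolic elements, lies in $(-2,2)$ on elliptic (elliptic--unipotent) elements, and lies in $\{-1,0,1\}$ on unipotent elements. The key point is the effect of multiplying by the central element $-I$: since $-I=\exp(2\pi i\cdot\tfrac12\op{Id})$, a short computation with the formula of Theorem~\ref{1}(3) shows that if $L$ is elliptic with $\rro(L)<0$ then $-L$ is elliptic with $\rro(-L)=\rro(L)+2\in[0,2)$, while if $L$ is unipotent with $\rro(L)=-1$ then $-L$ is elliptic--unipotent with semisimple part $-I$, so $\rro(-L)=0$. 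Also, writing $E_-$ for the number of elliptic boundary components with $\rro(\phi(c))<0$ and using $\{r/2\}=r/2$ for $r\in[0,2)$ and $\{r/2\}=r/2+1$ for $r\in(-2,0)$, one gets $\sum_{c\,;\,\phi(c)\text{ ell}}\{\rro(\phi(c))/2\}=\tfrac12\sum_{c\,;\,\phi(c)\text{ ell}}\rro(\phi(c))+E_-$.

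Now the modification. Call a boundary component \emph{bad} if $\phi(c)$ is elliptic with $\rro(\phi(c))<0$, or unipotent with $\rro(\phi(c))=-1$. Present $\pi_1(\Sigma)=\langle a_i,b_i,c_j\mid\prod[a_i,b_i]\prod c_j=1\rangle$, a free group on $a_i,b_i,c_1,\dots,c_{n-1}$. Choose an even-cardinality subset $S'$ of the set of bad components, missing at most one bad component, and define $\phi'$ by $\phi'(a_i)=\phi(a_i)$, $\phi'(b_i)=\phi(b_i)$, $\phi'(c_j)=-\phi(c_j)$ for $c_j\in S'$ and $\phi'(c_j)=\phi(c_j)$ otherwise; because $-I$ is central and $|S'|$ is even, the surface relation still holds, so $\phi'$ is a representation into $\mathrm{Sp}(2,\mathbb{R})$. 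The crucial claim is $\op{T}(\Sigma,\phi')=\op{T}(\Sigma,\phi)$. To prove it, use \eqref{Toledo-rotation}: choose lifts to $\wt{\mathrm{Sp}(2,\mathbb{R})}$ agreeing on $a_i,b_i$ and on the unflipped $c_j$ ($j<n$), and with $\wt{\phi'(c_j)}=\wt{\phi(c_j)}\,z_0$ for $c_j\in S'$, $j<n$, where $z_0$ is a fixed lift of $-I$, which is central in $\wt{\mathrm{Sp}(2,\mathbb{R})}$. Then $\wt{\phi'(c_n)}=\wt{\phi(c_n)}\,z_0^{-k}$ with $k=|S'\cap\{1,\dots,n-1\}|$, and since $\wt{\op{Rot}}$ is a homogeneous quasimorphism, $\wt{\op{Rot}}(gz_0)=\wt{\op{Rot}}(g)+\wt{\op{Rot}}(z_0)$; the $\wt{\op{Rot}}(z_0)$-contributions cancel in $\sum_j\wt{\op{Rot}}(\wt{\phi'(c_j)})$, so $\sum_j\wt{\op{Rot}}(\wt{\phi'(c_j)})=\sum_j\wt{\op{Rot}}(\wt{\phi(c_j)})$ and hence $\op{T}(\Sigma,\phi')=\op{T}(\Sigma,\phi)$.

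Finally, apply the crude bound to $\phi'$: $\op{T}(\Sigma,\phi)=\op{T}(\Sigma,\phi')\le|\chi(\Sigma)|-\tfrac12\sum_c\rro(\phi'(c))$. Writing $E_-^{\mathrm{fl}},P_-^{\mathrm{fl}}$ for the numbers of flipped elliptic and flipped unipotent bad components (so $E_-^{\mathrm{fl}}+P_-^{\mathrm{fl}}\ge(E_-+P_-)-1$, where $P_-$ counts negative unipotent components) and $P_+$ for the number of positive unipotent components, one has $\sum_c\rro(\phi'(c))=\sum_c\rro(\phi(c))+2E_-^{\mathrm{fl}}+P_-^{\mathrm{fl}}$; substituting this and the identity from the second paragraph reduces \eqref{TI1} to the elementary inequality $(E_--E_-^{\mathrm{fl}})+\tfrac12(P_--P_-^{\mathrm{fl}})-\tfrac12 P_+\le 1$, which holds since $(E_--E_-^{\mathrm{fl}})+(P_--P_-^{\mathrm{fl}})\le 1$ and $P_+\ge 0$ (positive unipotent components only help). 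The cases $\chi(\Sigma)\ge0$ (a disk or an annulus, where $\op{T}(\Sigma,\phi)=0$) are immediate. The main obstacle is the claim that the sign-flip $\phi(c)\mapsto-\phi(c)$ on an even set of boundary components preserves the Toledo invariant: this is where the structure of the universal cover, the quasimorphism property of $\wt{\op{Rot}}$ on central elements, and the rotation-number formula \eqref{Toledo-rotation} must all be assembled correctly; the remaining arithmetic of fractional parts is routine.
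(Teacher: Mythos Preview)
Your proof is correct and follows essentially the same strategy as the paper: start from the signature inequality $\op{T}(\Sigma,\phi)\le|\chi(\Sigma)|-\tfrac12\sum_c\rro(\phi(c))$, then flip $\phi(c)\mapsto-\phi(c)$ on an even set of ``bad'' boundary components to make all but at most one rho contribution nonnegative. The one notable difference is your justification that the flip preserves the Toledo invariant: you route this through \eqref{Toledo-rotation} and the quasimorphism property of $\wt{\mr{Rot}}$ on central elements, whereas the paper simply observes that $-I$ acts trivially on the symmetric space, so $\phi$ and $\phi'$ induce the same $\mr{PSL}(2,\mb{R})$-action and hence the same pulled-back K\"ahler class---a one-line argument that avoids lifts and cancellation bookkeeping.
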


\begin{proof}
We need to understand rho invariants of elements of $\op{Sp}(2,\br)$. By definition, one embeds $\op{Sp}(2,\br)$ into $\op{U}(1,1)$ and computes the rho invariant there. An elliptic element $R(\theta)=\begin{pmatrix}
\cos\theta &  -\sin\theta  \\
\sin\theta  &  \cos\theta
\end{pmatrix}$ is mapped to a $\bc$-linear map $L$ of $\bc^2$ with eigenvalues $e^{i\theta}$ and $e^{-i\theta}=e^{i(2\pi-\theta)}$ and eigenvectors $e_+$ and $e_-$ such that $\Omega(e_+,e_+)=1$ and $\Omega(e_-,e_-)=-1$, hence its rho invariant is
\begin{align*}
\rro(L)=(1-\frac{\theta}{\pi})-(1-\frac{2\pi-\theta}{\pi})=2(1-\frac{\theta}{\pi}).
\end{align*}
A unipotent element $\exp(N)=\begin{pmatrix}
1 &  \mu  \\
0  &  1
\end{pmatrix}$ is mapped to a unipotent $\bc$-linear map $L'$ of $\bc^2$ with the same matrix. The $\op{Sp}(2,\br)$-invariant symplectic structure on $\br^2$ is the determinant. The corresponding Hermitian form on $\bc^2$ is $\Omega(u,v)=-\mathrm{det}(iu,\bar v)$. The Hermitian form $\tau$ is 
\begin{align*}
\tau(u,v)=\Omega((iN)u,v)=\mu u_2\overline{v_2},
\end{align*}
the sign of the Hermitian form $\bar\tau$ induced on $\bc^2/\op{Im}(N)$ is the sign of $\mu$, hence its rho invariant is
\begin{align*}
\rro(L')=-\op{sgn}(\mu).
\end{align*}
(for a double-check of these calculations, see the Appendix, Subsection \ref{Appeta}).

We start with the signature formula and Milnor-Wood type inequality (Theorem \ref{thm0.5})
\begin{align*}
\op{T}(\Sigma,\phi) &=\frac{1}{2}\op{sign}(\mc E)- \frac{1}{2}\sum_{c}\rro(\phi(c))\\
&\leq |\chi(\Sigma)|- \sum_{c} \frac{\rro(\phi(c))}{2}.
\end{align*}
A boundary contribution $\rro(\phi(c))$ is negative when 
\begin{itemize}
  \item either $\phi(c)$ is elliptic with angle $\theta(c)>\pi$,
  \item or $\phi(c)$ is unipotent with negative sign.
\end{itemize}
Changing $\phi(c)$ with $-\phi(c)$ replaces 
\begin{itemize}
  \item in the elliptic case, $\frac{\rro(\phi(c))}{2}=1-\frac{\theta(c)}{\pi}$ with $\frac{\rro(-\phi(c))}{2}=1-\frac{\theta(c)-\pi}{\pi}=\{\frac{\rro(\phi(c))}{2}\}$,
  \item in the unipotent case, $\frac{\rro(\phi(c))}{2}=-\frac{1}{2}$ with $0$, since $-\phi(c)$ is an elliptic-unipotent with angle $\theta=\pi$ and $\frac{\rro(-\phi(c))}{2}=0$.
\end{itemize}
The idea is to modify $\phi$ by replacing an even number of boundary holonomies $\phi(c)$ with $-\phi(c)$. This is compatible with the standard presentation \eqref{relation} of $\pi_1(\Sigma)$. This does not change the Toledo invariant, since both representations define the same action on the symmetric space. This allows us to get rid of all negative boundary contributions but possibly one. Whence the extra term of $1$ in the right-hand side.
\end{proof}

\begin{rem}\label{improve}
Let us consider the set of boundaries \(\{c_\alpha\}_{\alpha\in I}\) where \(\rho(\phi(c_\alpha))>0\). By altering the representations at the maximal even subset of these boundaries from \(\phi(c_\alpha)\) to \(-\phi(c_\alpha)\), we obtain a new representation \(\widetilde{\phi}\). This representation \(\widetilde{\phi}\) retains the same Toledo invariant as the original representation \(\phi\). Following the same argumentation presented in Proposition \ref{propmod2}, we conclude that
\begin{equation*}
\op{T}(\Sigma,\phi)\geq   -|\chi(\Sigma)|-1 + \sum_{\phi(c_k)\text{ is elliptic}}  \frac{\theta_{k}}{\pi},\quad \theta_k\in (0,\pi).
\end{equation*}
Combing with Proposition \ref{propmod2}, we obtain
\begin{equation*}
  -|\chi(\Sigma)|-1 + \sum_{\phi(c_j)\text{ is elliptic}}  \frac{\theta_{j}}{\pi}\leq\op{T}(\Sigma,\phi) \leq |\chi(\Sigma)|+1 - \sum_{\phi(c_k)\text{ is elliptic}}  \left(1-\frac{\theta_{k}}{\pi}\right),
\end{equation*}
where $\theta_k,\theta_j\in (0,\pi)$ such that $[R(\theta_k)]$ is conjugate to $[\phi(c_k)]\in \op{PSL}(2,\mb{R})=\op{SL}(2,\mb{R})/\{\pm I\}$, and $[\bullet]$ denotes the class in $\op{PSL}(2,\mb{R})$.	Note in the summation, if some
$\phi(c)=R(\theta')$ with angle $\theta'>\pi$, we take $\theta=\theta'-\pi$.
\end{rem}

\begin{rem}
	Note that the equalities in \eqref{TI1} can be attained. For example, we consider a cylinder $\Sigma=S^1\times [0,1]$, and the elliptic representation $\phi$ is given by $\phi(S^1\times \{0\})=R(\theta)$ and  $\phi(S^1\times \{1\})=R(2\pi-\theta)$ for some $\theta\in (0,\pi)$. In this case, $\op{T}(\Sigma,\phi)=\chi(\Sigma)=0$, $\theta_1=\theta$ and $\theta_2=\pi-\theta\in (0,\pi)$, and so 
	\begin{equation*}
  -|\chi(\Sigma)|-1+\sum_{k=1}^2\frac{\theta_k}{\pi}=\op{T}(\Sigma,\phi)=|\chi(\Sigma)|+1-\sum_{k=1}^2\left(1-\frac{\theta_k}{\pi}\right).
\end{equation*}
\end{rem}

\begin{rem}
Each element $L=R(\theta)\in \op{Sp}(2,\mb{R}),\theta\in (0,\pi)$	gives an automorphism $L_{\mb{D}}=e^{i(2\pi-2\theta)}$ acting on the unit disc $\mb{D}$. In fact, 
note that $\op{D}^{\op{III}}_1=\mb{D}=\{w\in \mb{C}| |w|<1\}$ is the unit disc in the complex plane, and  
\begin{align*}
L=U\left(\begin{matrix}
e^{-i\theta}	& 0\\
0& e^{i\theta}
\end{matrix}
\right)U^{-1},	
\end{align*}
 where $U$ and $U^{-1}$ are given by \eqref{U}, and by Remark \ref{rem5.2}, so 
 $$L_{\mb{D}}(w)=e^{-i\theta}we^{-i\theta}=e^{-2i\theta}w=e^{i(2\pi-2\theta)}w.$$
 If we denote $\phi_k=2\pi-2\theta_k\in (0,2\pi)$, then $\phi(c_k)_{\mb{D}}=e^{i\phi_k}$. By Proposition \ref{propmod2}, one has
 \begin{equation}\label{TI2}
  -|\chi(\Sigma)|-1 + \sum_{k=1}^q  \left(1-\frac{\phi_k}{2\pi}\right)\leq\op{T}(\Sigma,\phi) \leq |\chi(\Sigma)|+1 - \sum_{k=1}^q  \frac{\phi_k}{2\pi}.
\end{equation}

 If the representation $\phi$ is the holonomy of a cone hyperbolic surface $S$ with cone angles $\phi_k\in (0,2\pi)$,
 one can refer to \cite{Mc, Tro} for the definition of the surfaces with conical singularities,
  then $S$ can be identified with $\mb{D}/\phi(\pi_1(\Sigma))$. To get the cone point of cone angle $0<\varphi\leq 2\pi$, we need to identify the sector of angle $\varphi$ by the rotation of anlge $-(2\pi-\varphi)$, hence we need $R(\theta)$ such that $-(2\pi-\varphi)=-2\theta$. The induced representation on the boundary is conjugate to the rotation $R(\theta_k)\in \op{SL}(2,\mb{R})$ with $\theta_k\in (0,\pi)$, where $1\leq k\leq q$.   
Then the Toledo invariant is exactly  the area of $S$ and can be given by
  \begin{equation*}
  \op{T}(\Sigma,\phi)=\frac{1}{2\pi}\text{Area}(S)=-\left(\chi(\o{\Sigma})+\frac{1}{2\pi}\sum_{k=1}^q(\phi_k-2\pi)\right)=-\chi(\Sigma)-\sum_{k=1}^q\frac{\phi_k}{2\pi},
\end{equation*}
where $\o{\Sigma}\simeq S$ is a closed surface obtained by capping off the boundary of $\Sigma$.
Similarly, by conjugating $\phi$
with an orientation-reversing (anti-holomorphic) isometry $\tau$ of $\mb{D}$, then we obtain a representation $\phi_\tau$ with  $\op{T}(\Sigma,\phi_\tau)=-\op{T}(\Sigma,\phi)$. Hence,
\begin{align*}
	\op{T}(\Sigma,\phi_\tau)=\chi(\o{\Sigma})+\frac{1}{2\pi}\sum_{k=1}^q(\phi_k-2\pi)=\chi(\Sigma) + \sum_{k=1}^q  \frac{\phi_k}{2\pi}.
\end{align*}
\end{rem}

 Recall that $L$ is called parabolic if all eigenvalues of $L$ are $\pm 1$. The reason is as follows.
\begin{prop}
	If $L$ is parabolic, then it fixes a point at the Shilov boundary of $\op{D}^{\op{III}}_n$.
\end{prop}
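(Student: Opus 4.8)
The plan is to reduce the statement to a purely linear-algebraic fact about symplectic transformations. Recall (as in Remark \ref{rempotential}) that $\op{D}^{\op{III}}_n$ is biholomorphic to the Siegel upper half space $\mb{H}_n$, and that $\op{Sp}(2n,\mb{R})$ acts on the closure $\o{\op{D}^{\op{III}}_n}$ by extending the action on $\op{D}^{\op{III}}_n$. The Shilov boundary of $\op{D}^{\op{III}}_n$ is the unique closed $\op{Sp}(2n,\mb{R})$-orbit in this closure, and it is $\op{Sp}(2n,\mb{R})$-equivariantly identified with the Lagrangian Grassmannian $\Lambda_n$ of $(\mb{R}^{2n},\Omega_0)$ (a point of the Shilov boundary corresponds to a Lagrangian subspace). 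Under this identification, $L$ fixes a point of the Shilov boundary if and only if $L$ stabilizes a Lagrangian subspace of $\mb{R}^{2n}$, so it suffices to prove the latter.

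Since all eigenvalues of $L$ lie in $\{1,-1\}$, the generalized eigenspace decomposition gives an $L$-invariant splitting $\mb{R}^{2n}=E_+\oplus E_-$ with $E_\pm=\ker(L\mp\op{Id})^{2n}$. By the same argument as in the proof of Proposition \ref{dechyperellu}, adapted to the bilinear form $\Omega_0$, generalized eigenspaces for eigenvalues $\lambda,\mu$ are $\Omega_0$-orthogonal whenever $\lambda\mu\neq 1$; here the only equal-product pairs among $\{1,-1\}$ are $\{1,1\}$ and $\{-1,-1\}$, so $E_+$ and $E_-$ are $\Omega_0$-orthogonal. Hence $\Omega_0$ restricts to a nondegenerate symplectic form on each $E_\pm$; in particular $\dim E_\pm$ is even, and if we find an $L$-invariant Lagrangian $\ell_\pm\subset E_\pm$, then $\ell_+\oplus\ell_-$ is a Lagrangian subspace of $\mb{R}^{2n}$ invariant under $L$.

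Now $L|_{E_+}$ is unipotent, hence $L|_{E_+}=\exp(N_+)$ for a unique nilpotent $N_+\in\mf{sp}(E_+,\Omega_0)$; and $-L|_{E_-}$ is unipotent, hence $-L|_{E_-}=\exp(N_-)$ for a nilpotent $N_-\in\mf{sp}(E_-,\Omega_0)$, with the remark that any $N_-$-invariant subspace is automatically $L|_{E_-}$-invariant since $-\op{Id}$ preserves every subspace. Thus everything reduces to the classical fact that a nilpotent $N$ of a symplectic space $(W,\omega)$ admits an $N$-invariant Lagrangian. I would prove this by induction on $\dim W$: pick $0\neq v\in\ker N$; then the $\omega$-orthogonal $v^{\perp_\omega}$ is a hyperplane containing $v$ and is $N$-invariant, because $\omega(Nx,v)=-\omega(x,Nv)=0$ for $x\in v^{\perp_\omega}$; so $N$ descends to a nilpotent of the symplectic quotient $v^{\perp_\omega}/\mb{R}v$, an $N$-invariant Lagrangian of which (obtained by induction) pulls back to an $N$-invariant isotropic subspace of $v^{\perp_\omega}$ of dimension $\tfrac12\dim W$, i.e. a Lagrangian of $W$. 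Applying this to $(E_+,N_+)$ and $(E_-,N_-)$ produces the desired $\ell_\pm$ and completes the proof.

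I expect the only points requiring care to be the clean formulation of the $\op{Sp}(2n,\mb{R})$-equivariant identification of the Shilov boundary of $\op{D}^{\op{III}}_n$ with the Lagrangian Grassmannian (standard Siegel-domain theory, already implicit in Remark \ref{rempotential}) and the verification that $v^{\perp_\omega}/\mb{R}v$ is symplectic with the correct dimension so that the induction and the final dimension count go through; neither is a genuine obstacle.
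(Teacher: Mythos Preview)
Your proof is correct and takes a genuinely different route from the paper's. The paper invokes Gutt's symplectic normal form theorem \cite{Gutt} to put $L$, in some symplectic basis, as a symplectic direct sum of block upper-triangular matrices; then Remark \ref{rempotential} shows that each such block fixes the boundary point $-I_{r_j}$, so $L$ fixes $-I_n$ in that basis, and the conclusion follows since the Shilov boundary is a single $\op{Sp}(2n,\mb{R})$-orbit. You instead identify the Shilov boundary with the Lagrangian Grassmannian and build an $L$-invariant Lagrangian directly: split into the $\pm 1$ generalized eigenspaces (symplectically orthogonal by the bilinear analogue of the argument in Proposition \ref{dechyperellu}), pass to the nilpotent logarithm on each piece, and produce an invariant Lagrangian by the standard induction via symplectic reduction $v^{\perp_\omega}/\mb{R}v$. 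Your approach is more self-contained---it avoids the external normal form citation and makes the underlying linear algebra transparent---while the paper's approach is shorter once Gutt's theorem is granted and stays within the bounded-domain coordinates already in use in the section.
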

\begin{proof}
From \cite[Theorem 1]{Gutt}, there exists a symplectic basis such that $L$ is  symplectic direct sum of matrices of the form
$$\mbf{L}|_{\mb{R}^{2r_j}}=\left(\begin{array}{cc}J\left(\lambda, r_{j}\right)^{-1} & C\left(r_{j}, s_{j}, \lambda\right) \\ 0 & J\left(\lambda, r_{j}\right)^{\top}\end{array}\right)\in \op{Sp}(2r_j,\mb{R})$$
where $C\left(r_{j}, s_{j}, \lambda\right):=J\left(\lambda, r_{j}\right)^{-1} \operatorname{diag}\left(0, \ldots, 0, s_{j}\right)$ with $s_{j} \in\{0,1,-1\}$,  $J(\lambda, r)$ is the elementary $r \times r$ Jordan matrix associated to $\lambda$.  By Remark \ref{rempotential}, $-I_{r_j}$ is a fixed point of $L|_{\mb{R}^{2r_j}}$. Hence $-I_n$ is a fixed point of $L$. With respect to  the other basis, the matrix of $L$ is $P\mbf{L}P^{-1}$
for
 some matrix $P\in\op{Sp}(2n,\mb{R})$. Hence $P(-I_n)$ is a fixed point of $L$, which is also at the Shilov boundary of $\op{D}^{\op{III}}_n$ since  the  Shilov boundary  is an orbit of the action of $\op{Sp}(2n,\mb{R})$ on   $\op{D}^{\op{III}}_n$.                  
\end{proof}
\begin{prop}\label{propmod3} Suppose $\Sigma$ is not a cylinder.
If there exists a boundary component $c$ such that the representation $\phi(c)$	has an eigenvalue $1$, then 
$|\op{sign}(\mc{E},\Omega)|<\dim E\cdot |\chi(\Sigma)|$. 
\end{prop}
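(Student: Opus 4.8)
The plan is to combine the dimension count \eqref{relative dimension} with the crude bound $|\op{sign}(\mc{E},\Omega)|\leq\dim\widehat{\mathrm{H}}^1(\Sigma;\mc{E})$, valid because the signature of a Hermitian form on a finite-dimensional space is at most its dimension in absolute value. I would assume $\chi(\Sigma)<0$ (for $\chi(\Sigma)\geq0$ the surface is a disc or an annulus and $\op{sign}(\mc{E},\Omega)=0$). Then \eqref{relative dimension} reads
\begin{equation*}
\dim\widehat{\mathrm{H}}^1(\Sigma;\mc{E})=\dim E\cdot|\chi(\Sigma)|-\bigl(\dim\mathrm{H}^0(\p\Sigma,\mc{E})-2\dim\mathrm{H}^0(\Sigma,\mc{E})\bigr),
\end{equation*}
so it suffices to produce a representation with the same signature as $\phi$ for which $\dim\mathrm{H}^0(\p\Sigma,\mc{E})>2\dim\mathrm{H}^0(\Sigma,\mc{E})$. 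Here $\mathrm{H}^0(\Sigma,\mc{E})$ is the space of globally $\phi$-invariant vectors, $\mathrm{H}^0(\p\Sigma,\mc{E})=\bigoplus_i\ker(\phi(c_i)-\mathrm{Id})$, and each restriction $\mathrm{H}^0(\Sigma,\mc{E})\hookrightarrow\ker(\phi(c_i)-\mathrm{Id})$ is injective.

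If $g\geq1$, I would invoke Lemma \ref{lemma3} to deform $\phi$ (viewed as a representation into $\mathrm{U}(E,\Omega)$) to a representation $\phi_\epsilon$ having no nonzero globally invariant vector and agreeing with $\phi$ on $\pi_1(\p\Sigma)$; thus $\mathrm{H}^0(\Sigma,\mc{E}_\epsilon)=0$, while $\ker(\phi_\epsilon(c)-\mathrm{Id})=\ker(\phi(c)-\mathrm{Id})\neq\{0\}$ forces $\dim\mathrm{H}^0(\p\Sigma,\mc{E}_\epsilon)\geq1>0$. Since the boundary holonomy is unchanged, the rho invariant of the boundary is unchanged, and the Toledo invariant is unchanged by \cite[Corollary 8.11]{BIW}; hence $\op{sign}(\mc{E}_\epsilon,\Omega)=\op{sign}(\mc{E},\Omega)$ by Theorem \ref{thm0.5}. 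Applying the displayed identity to $\phi_\epsilon$ gives $\dim\widehat{\mathrm{H}}^1(\Sigma;\mc{E}_\epsilon)\leq\dim E\cdot|\chi(\Sigma)|-1$, which yields the claim.

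If $g=0$, then $\Sigma$ is a sphere with $n\geq3$ holes, $\pi_1(\Sigma)$ is free on $c_1,\dots,c_n$, and no deformation fixing the boundary is available; here I argue directly. Writing $h_0=\dim\mathrm{H}^0(\Sigma,\mc{E})$, injectivity of the restrictions gives $\dim\mathrm{H}^0(\p\Sigma,\mc{E})\geq n h_0$, while the distinguished component $c$ contributes at least $\max\{h_0,1\}$ to the sum; since $n\geq3$ this yields $\dim\mathrm{H}^0(\p\Sigma,\mc{E})>2h_0$ whether or not $h_0$ vanishes, and we conclude as above. Alternatively one may cut $\Sigma$ into $n-2$ pairs of pants, use additivity of the signature, and apply to the single pair of pants containing $c$ the estimate $|\op{sign}|\leq\dim E-1$, which comes from the same $\mathrm{H}^0$-count in the case $n=3$.

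The routine part is the bookkeeping with $\mathrm{H}^0$; the step requiring care is the genus-$\geq1$ deformation: one must check that Lemma \ref{lemma3} applies to the present representation, that it genuinely annihilates $\mathrm{H}^0(\Sigma,\mc{E})$ while fixing the boundary holonomy, and that invariance of the Toledo invariant under such deformations is exactly \cite[Corollary 8.11]{BIW}. For genus $0$ there is no such freedom, but this is compensated for by the hypothesis $n\geq3$, which makes the naive count of invariant vectors strong enough on its own.
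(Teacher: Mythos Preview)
Your argument is correct and follows essentially the same route as the paper: both rest on the identity \eqref{5.6}/\eqref{relative dimension} together with the deformation of Lemma~\ref{lemma3} for genus $g\geq1$. The paper simply quotes \eqref{siginequality} (which packages the deformation step and the dropping of the nonnegative kernel term) and then uses $\dim\mathrm{H}^0(\p\Sigma,\mc{E})\geq1$; your version spells out the same reasoning via $|\op{sign}|\leq\dim\widehat{\mathrm{H}}^1$.

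Your treatment of the genus~$0$ case is in fact more careful than the paper's: the paper cites \eqref{siginequality} without comment, even though that inequality, as derived in the proof of Theorem~\ref{thmsign}, invokes Lemma~\ref{lemma3} and hence needs $g\geq1$. Your direct $\mathrm{H}^0$-count with $n\geq3$ (using $\dim\mathrm{H}^0(\p\Sigma,\mc{E})\geq (n-1)h_0+\max\{h_0,1\}>2h_0$) closes this gap. Your remark that the statement is vacuous for the disc and false as written for the annulus is also correct; the proposition should be read under the standing hypothesis $\chi(\Sigma)<0$.
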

\begin{proof}
For $g\geq 1$, from (\ref{siginequality}), one has 
	\begin{align*}
\begin{split}
\pm\op{sign}(\mc{E},\Omega)&\leq -\dim E\chi(\Sigma)-\dim \mathrm{H}^0(\p\Sigma,\mc{E}).
\end{split}
\end{align*}
If  there exists a boundary component $c$ such that the representation $\phi(c)$ has an eigenvalue $1$, then $\phi(c)$ fixes a nonzero vector in $E$, and so $\dim \mathrm{H}^0(\p\Sigma,\mc{E})\geq 1$. Hence 
\begin{equation}\label{signineq0}
  \pm\op{sign}(\mc{E},\Omega)\leq-\dim E\chi(\Sigma)-1<\dim E\cdot|\chi(\Sigma)|,
\end{equation}
which completes the proof for $g\geq 1$.

If $g=0$ and the number of boundary components $n\leq 1$,  $\pi_1(\Sigma)$ is trivial and the signature vanishes. For $g=0, n \geq 3$, $$\text{dim}\ \mathrm{H}^0(\partial\Sigma,\cal E)\geq 1+\text{dim}\ \mathrm{H}^0(\partial\Sigma\setminus \{c\},\cal E)\geq 1+2 \text{dim}\ \mathrm{H}^0(\Sigma,\cal E),$$ hence by (\ref{5.6}) $$\pm \text{sign}(\cal E,\Omega)\leq -\text{dim}\ E \chi(\Sigma)-1-\text{dim}\ \mathrm{H}^0(\Sigma,\cal E)< \text{dim}\ E\cdot |\chi(\Sigma)|.$$
\end{proof}

\begin{ex}
While for a surface with one boundary component $c$ satisfies $\phi(c)$ has an eigenvalue $-1$, then the signature $\op{sign}(\mc{E},\Omega)$ may attain the maximal $\dim E\cdot|\chi(\Sigma)|$. For example, we consider a surface $\Sigma_3$ with boundary components $p_1,p_2,p_3$, which is homeomorphic to a surface a $2$-sphere with $3$ discs deleted, and consider a representation $\phi:\pi_1(\Sigma_3)\to \op{SO}(2)\subset\op{Sp}(2,\mb{R})$ such that
$$\phi(p_1)=-I_2,\quad \phi(p_2)=R(\theta),\quad \phi(p_3)=R(\pi-\theta)$$
for some $\theta\in (0,\pi)$. From Remark \ref{remunitary} and \eqref{etadim2}, one has
$$\op{T}(\Sigma_3,\phi)=0,\quad \rro_\phi(\partial \Sigma_3)=2(1-\frac{\theta}{\pi})+2(1-\frac{\pi-\theta}{\pi})=2,$$
which follows that 
$$\op{sign}(\mc{E},\Omega)=2\op{T}(\Sigma_3,\phi)+ \rro_\phi(\partial \Sigma_3)=2=\dim E\cdot|\chi(\Sigma_3)|$$
since $\dim E=2$ and $\chi(\Sigma_3)=-(2\cdot 0-2+3)=-1$.

\end{ex}

\section{Surface group representations in $\op{SO}_0(n,2)$}\label{SO0}

In this section, we will consider the surface group representations in $\op{SO}_0(n,2)$, one can refer to \cite[Page 75-78, Section 2.5]{Mok} for the group $\op{SO}_0(n,2)$ and the bounded symmetric domain $\op{D}^{\op{IV}}_n$ of type $\op{IV}$. 

Let $(x^1,\cdots, x^{n+2})$ be the  Euclidean coordinates on $\mb{R}^{n+2}$  with respect to the standard basis $\{e_i\}_{1\leq i\leq n+2}$. Let $\Omega$ be an indefinite quadratic form of signature $(n,2)$ defined by 
\begin{align*}
\begin{split}
  \Omega(x,x)=(x^1)^2+\cdots (x^n)^2-(x^{n+1})^2-(x^{n+2})^2.
 \end{split}
\end{align*}
By complexification, denote $E=\mb{R}^{n+2}\otimes\mb{C}=\mb{C}^{n+2}$ and the quadratic form $\Omega$ can be $\mb{C}$-linearly extended to $E$. Consider on $\mb{C}^{n+2}$ the space of complex lines $L$ such that $\Omega|_L\equiv 0$, this space can be identified with the hyperquadric $Q^n\subset \mb{P}^{n+1}$ defined by the homogeneous equation 
$(w^1)^2+\cdots+(w^n)^2-(w^{n+1})^2-(w^{n+2})^2=0$, where $w^j=x^j+iy^j, 1\leq j\leq n+2$ denote the complex coordinates of $\mb{C}^{n+2}$. Denote $H(\cdot,\cdot)=\Omega(\cdot,\o{\cdot})$, then $H$ is a Hermitian form on $\mb{C}^{n+2}$. Define
\begin{align*}
\begin{split}
  D_0:=\left\{L\in Q^n: H|_L<0\right\}.
 \end{split}
\end{align*}
The condition $L\in D_0$ means
\begin{align}\label{condition of D0}
\begin{split}
   \begin{cases}
 	& \sum_{1\leq i\leq n}(w^i)^2-(w^{n+1})^2-(w^{n+2})^2=0, \\
 	&\sum_{1\leq i\leq n}|w^i|^2<|w^{n+1}|^2+|w^{n+2}|^2.
 \end{cases}
 \end{split}
\end{align}
By the  transformation of coordinates  $w=Uz$, i.e. 
\begin{align}\label{DIVU}
\begin{split}
  \left(\begin{matrix}
  w^1\\
  w^2\\
  \vdots\\
  w^n\\
  w^{n+1}\\
  w^{n+2}
\end{matrix}\right)=U\left(\begin{matrix}
  z^1\\
  z^2\\
  \vdots\\
  z^n\\
  z^{n+1}\\
  z^{n+2}
\end{matrix}\right), \quad\text{where } U=\left(\begin{matrix}
  I_n&0 & 0 \\
  0& \frac{1}{\sqrt{2}}& \frac{1}{\sqrt{2}}\\
  0&-\frac{i}{\sqrt{2}} & \frac{i}{\sqrt{2}}
\end{matrix}\right),
 \end{split}
\end{align}
then the condition \eqref{condition of D0} is reduced to 
\begin{align}\label{condition1}
\begin{split}
   \begin{cases}
 	& \sum_{1\leq i\leq n}(z^i)^2-2z^{n+1}z^{n+2}=0, \\
 	&\sum_{1\leq i\leq n}|z^i|^2<|z^{n+1}|^2+|z^{n+2}|^2.
 \end{cases}
 \end{split}
\end{align}
On the subset of $Q^n$ defined by $z^{n+1}\neq 0$, we can identify $(z^1,\cdots, z^n)\in \mb{C}^n$ with the point $[z^1,\cdots, z^n,1, \frac{1}{2}\sum_{1\leq i\leq n}(z^i)^2]$. The condition \eqref{condition1} implies that $|z^{n+2}|\neq 1$, see \cite[Page 76]{Mok}. Then the bounded symmetric domain $\op{D}^{\op{IV}}_n$ of type $\op{IV}$ is defined as the connected component containing the point $[0,\cdots,0,1,0]\in Q^n$. Hence 
\begin{align*}
\begin{split}
  \op{D}^{\op{IV}}_n=\left\{z=(z^1,\cdots, z^{n})^\top\in \mb{C}^n:\|z\|^2<2\text{ and } \|z\|^2<1+\left|\frac{1}{2}\sum_{i=1}^n (z^i)^2\right|^2\right\}.
 \end{split}
\end{align*}
Let $\op{SO}(n,2)$ be the real group acting on $\mb{R}^{n+2}$ and preserving the quadratic form $\Omega$, which induces an action on $Q^n$. Denote by $G_o=\op{SO}_0(n,2)$ the identity component of $\op{SO}(n,2)$. Then the group $G_o$ acts transitively on $ \op{D}^{\op{IV}}_n$ with the isotropy group $K$  isomorphic to $\op{SO}(n)\times\op{SO}(2)$. Hence 
\begin{align*}
\begin{split}
   \op{D}^{\op{IV}}_n\cong \op{SO}_0(n,2)/(\op{SO}(n)\times\op{SO}(2)).
 \end{split}
\end{align*}
For any $L\in \op{SO}_0(n,2)$, and for any $z\in \op{D}^{\op{IV}}_n$, then $$z_L=U^{-1}LU\left(\begin{matrix}
  z \\
  1\\
  \frac{1}{2} \sum_{i=1}^n(z^i)^2
\end{matrix}\right)$$
is a column vector in $\mb{C}^{n+2}$ with $z_L^{n+1}\neq 0$. Then the action of the group $G_o=\op{SO}_0(n,2)$ on $\op{D}^{\op{IV}}_n$ is given by 
\begin{align}\label{action of L}
\begin{split}
  L(z)=\left(\frac{z_L^1}{z_L^{n+1}},\cdots, \frac{z_L^n}{z_L^{n+1}}\right)^\top\in  \op{D}^{\op{IV}}_n.
 \end{split}
\end{align}
Let $\Sigma$ be a surface with boundary, and for any representation 
$\phi:\pi_1(\Sigma)\to \op{SO}_0(n,2)$, denote $\mc{E}=\wt{\Sigma}\times_\phi E=\wt{\Sigma}\times_\phi \mb{C}^{n+2}$. Then the following form 
\begin{align*}
\begin{split}
  Q_{\mb{C}}(\cdot,\cdot)=i\int_{\Sigma}H(\cdot\cup\cdot)=i\int_\Sigma\Omega(\cdot\cup\o{\cdot}),
 \end{split}
\end{align*}
is a non-degenerate Hermitian form on the space $\widehat{\mr{H}}^1(\Sigma,\mc{E})$. We define the signature $\op{sign}(\mc{E},\Omega)$ of flat vector bundle $(\mc{E},\Omega)$ associated with the representation $\phi:\pi_1(\Sigma)\to \op{SO}_0(n,2)$ as the signature of the  form $Q_{\mb{C}}(\cdot,\cdot)$. 

Note that the form $\int_\Sigma\Omega(\cdot\cup\o{\cdot})$ is real (i.e. $\o{\int_\Sigma\Omega([a]\cup\o{[b]})}=\int_{\Sigma}\Omega(\o{[a]}\cup[b])$), non-degenerate and skew-symmetric (i.e. ${\int_\Sigma\Omega([a]\cup\o{[b]})}=-\int_{\Sigma}\Omega(\o{[b]}\cup[a])$), so its eigenvalues have the form
  $$\pm\lambda_1,\pm\lambda_2,\cdots,\pm\lambda_N,$$ 
where $N=\frac{1}{2}\dim \widehat{\mr{H}}^1(\Sigma,\mc{E})$  and each $\lambda_i$ is  purely imaginary and nonzero, which follows that the numbers of positive and negative eigenvalues of the Hermitian form $Q_{\mb{C}}$ are equal. Hence 
\begin{equation}\label{complexsingature}
  \op{sign}(\mc{E},\Omega)=0.
\end{equation}

On the other hand, by using Atiyah-Patodi-Singer index  theorem, we can also give a precise formula for the signature $ \op{sign}(\mc{E},\Omega)$. 
 Denote by  $\mc{J}(E_{\mb{R}},\Omega)$ the subspace of  $\op{SO}_0(n,2)$ such that $J^2=\op{Id}$ and $\Omega(\cdot,J\o{\cdot})>0$. Set $\mc{J}(\mc{E}_{\mb{R}},\Omega)=C^{\infty}(\Sigma,\wt{\Sigma}\times_\phi\mc{J}(E_{\mb{R}},\Omega))$, $\mc{E}_{\mb{R}}:=\wt{\Sigma}\times_\phi E_{\mb{R}}$ where $E_{\mb{R}}=\mb{R}^{n+2}$. There is a canonical action of $\op{SO}_0(n,2)$ on 
$\mc{J}(E_{\mb{R}},\Omega)$ by $Z(J)=ZJZ^{-1}$ for any $Z\in \op{SO}_0(n,2)$, the action is transitive, and the isotropy group at the point $I_{n,2}\in \mc{J}(E_{\mb{R}},\Omega)$ is exactly $\op{SO}(n)\times \op{SO}(2)$. Hence 
\begin{align*}
\begin{split}
   \op{D}^{\op{IV}}_n\cong \op{SO}_0(n,2)/(\op{SO}(n)\times\op{SO}(2))\cong \mc{J}(E_{\mb{R}},\Omega).
 \end{split}
\end{align*}
For any $\mbf{J}\in \mc{J}(\mc{E}_{\mb{R}},\Omega)$, let $\mc{E}=\mc{E}^+\oplus\mc{E}^-$ be the decomposition of $\mc{E}$ corresponding to the $\pm i$-eigenspace of $i\mbf{J}$. 
With respect to $i\mbf{J}$, we call $\n$ is a peripheral connection on $\mc{E}$ if $\n$ is a real connection (i.e. $\n=\o{\n}$) and satisfies  the following conditions on a collar neighborhood of $\p\Sigma$:
	\begin{itemize}
\item[(i)] 	$\n=d+C(x)dx$ for some $C=C(x)\in A^0(\p\Sigma,\op{End}(\mc{E}))$;
\item[(ii)] $[\n,\mbf{J}]=0$;
\item[(iii)] $\n$ preserves the quadratic form $\Omega$.
\end{itemize}
Similar to Section \ref{Sig} and Theorem \ref{thmsign2}, we obtain 
\begin{align}\label{DIVsign}
\begin{split}
\op{sign}(\mc{E},\Omega)= 2\int_\Sigma\left(c_1(\mc{E}^+,\n^{\mc{E}^+})-c_1(\mc{E}^-,\n^{\mc{E}^-})\right)+\eta(A_{i\mbf{J}}),
 \end{split}
\end{align}
where $\n^{\mc{E}^+}=\n|_{\mc{E}^+}$, $\n^{\mc{E}^-}=\n|_{\mc{E}^-}$ and $\n$ is a peripheral connection on $\mc{E}$. 

For any $z\in  \op{D}^{\op{IV}}_n$, since the group $\op{SO}_0(n,2)$ acts transitively on $\op{D}^{\op{IV}}_n$, so there exists  $L\in \op{SO}_0(n,2)$ such that $L(0)=z$, where $L(0)$ is defined by \eqref{action of L} and $0$ denotes the origin in $\mb{C}^n$. We define 
\begin{align*}
\begin{split}
  \mbf{J}_{\mr{IV}}(z):=LI_{n,2}L^{-1}\in \mc{J}(E_{\mb{R}},\Omega).
 \end{split}
\end{align*}
In fact, the definition for $  \mbf{J}_{\mr{IV}}(z)$ is well-defined, for another $L'\in \op{SO}_0(n,2)$ with $L'(0)=z$, then $L^{-1}L'\in \op{SO}(n)\times \op{SO}(2)$, which follows that 
$L'I_{n,2}L'^{{-1}}=L(L^{-1}L'I_{n,2}L'^{-1}L)L^{-1}=LI_{n,2}L^{-1}$, so  $\mbf{J}_{\mr{IV}}(z)$ is independent of the choice of $L\in \op{SO}_0(n,2)$ with $L(0)=z$. One can check that $\mbf{J}_{\mr{IV}}=\mbf{J}_{\mr{IV}}(z):\op{D}^{\op{IV}}_n\to\mc{J}(E_{\mb{R}},\Omega)$ is an isomorphism. For any $z\in  \op{D}^{\op{IV}}_n$, we can take 
\begin{align*}
\begin{split}
  L=U\cdot \frac{1}{a}V\cdot U^{-1},\quad\text{where }V=\left(\begin{matrix}
  A& z& \o{z}\\
  \o{z}^\top& 1 & \frac{1}{2}\o{z^\top z}\\
  z^\top & \frac{1}{2}z^\top z & 1 
\end{matrix}\right) 
 \end{split}
\end{align*}
where $U$ is given by \eqref{DIVU}, 
$ A=a I_n+bz\o{z}^\top+b\o{z}z^\top+czz^\top+\o{c}\o{z}\o{z}^\top$
and 
\begin{align*}
\begin{split}
  a=\sqrt{1+|\frac{1}{2}z^\top z|^2-\|z\|^2},\quad b=\frac{a+1}{2(1+a-\frac{1}{2}\|z\|^2)},\quad c=-\frac{\o{z^\top z}}{4(1+a-\frac{1}{2}\|z\|^2)}.
 \end{split}
\end{align*}
One can check that $L$ is real and $L^\top I_{n,2}L=I_{n,2}$, so $L\in \op{SO}_0(n,2)$. Moreover, $L(0)=z$. Hence 
\begin{align*}
\begin{split}
  \mbf{J}_{\mr{IV}}(z)=UV I_{n,2}V ^{-1}U^{-1}.
 \end{split}
\end{align*}
Now we define a connection on the trivial bundle $F=\mr{D}^{\mr{IV}}_n\times \mb{C}^{n+2}$ by 
\begin{align*}
\begin{split}
  \n&=UV\left((d+\theta)\cdot I_{n+2}\right)V^{-1}U^{-1}\\
  &=d+UV(dV^{-1})U^{-1}+\theta I_{n+2}\\
  &=d+(UVU^{-1})d(UVU^{-1})^{-1}+\theta I_{n+2},
 \end{split}
\end{align*}
where $\theta=\frac{1}{2}d\log a^2$.
Then $\n$ is real and $[\n,\mbf{J}_{\mr{IV}}]=0$. Denote $\n=d+C$ where 
\begin{align*}
\begin{split}
  C=UV(dV^{-1})U^{-1}+\theta I_{n+2}.
 \end{split}
\end{align*}
Then 
\begin{align*}
\begin{split}
 {C}^\top I_{n,2}+I_{n,2}C&= \o{C}^\top I_{n,2}+I_{n,2}C\\
 &=UdV^{-1}VU^{-1}I_{n,2}+I_{n,2}(UV(dV^{-1})U^{-1})+2\theta I_{n,2}\\
 &=-UV^{-1}(dV I_{n,2} V+V I_{n,2} dV)V^{-1}U^{-1}+2\theta I_{n,2}\\
 &=-UV^{-1}d(VI_{n,2}V)V^{-1}U^{-1}+2\theta I_{n,2}.
 \end{split}
\end{align*}
By a direct calculation, one has $VI_{n,2}V=a^2I_{n,2}$. Hence 
\begin{align*}
\begin{split}
   {C}^\top I_{n,2}+I_{n,2}C&=-UV^{-1}da^2 I_{n,2}V^{-1}U^{-1}+2\theta I_{n,2}\\
   &=-d\log a^2 UI_{n,2}U^{-1}+2\theta I_{n,2}\\
   &=-d\log a^2I_{n,2}+2\theta I_{n,2}=0.
 \end{split}
\end{align*}
Thus, $\n$ preserves the quadratic form $\Omega$. Similar to Section \ref{RFCC}, one can define the vector bundles $F$, $F_{\phi}$, $F^{\pm}$, $F_{\phi}^{\pm}$. Denote $\n^{F^\pm}:=\n|_{F^{\pm}}$. Then the curvature of $\n^{F^\pm}$ vanishes, and
$
  c_1(F_{\phi}^+,\n^{F_{\phi}}|_{F_{\phi}^+})=0.
$
 Let 
\begin{align*}
	\begin{split}
	\mc{J}_{o}(\mc{E}_{\mb{R}},\Omega)=\{\mbf{J}\in \mc{J}(\mc{E}_{\mb{R}},\Omega)|  &\mbf{J}=p^*J
	\text{ on a small collar neighborhood}\\
	&\text{ of } \p\Sigma, \text{ where } J \in \mc{J}(\mc{E}_{\mb{R}}|_{\p\Sigma},\Omega)\},
	\end{split}
	\end{align*}
For any $\mbf{J}\in \mc{J}_{o}(\mc{E}_{\mb{R}},\Omega)$, then $\tau \wt{\mbf{J}}^*\n^{F_\phi}\tau^{-1}$ is a peripheral connection and
\begin{align}\label{pullbackc1}
\begin{split}
  c_{1}\left(\mathcal{E}^{+}, \tau \wt{\mbf{J}}^* \nabla^{F_{\phi}} \tau^{-1}|_{\mathcal{E}^{+}}\right)&=c_{1}\left(\wt{\mbf{J}}^* F_{\phi}^{+},\left.\wt{\mbf{J}}^* \nabla^{F_{\phi}}\right|_{\wt{\mbf{J}}^* F_{\phi}^{+}}\right)\\
  &=\wt{\mbf{J}}^* c_{1}\left(F_{\phi}^{+},\left.\nabla^{F_{\phi}}\right|_{F_{\phi}^{+}}\right)=\widetilde{\mathbf{J}}^{*} c_{1}\left(F^{+}, \nabla|_{F^{+}}\right).
 \end{split}
\end{align}
Here $\tau:\wt{\mbf{J}}^*F_\phi\to \mc{E}$ is an isomorphism, which is given by \eqref{tau isomorphism}. Set
$$(f_1,\cdots,f_{n+2})=(e_1,\cdots,e_{n+2})UV,$$
where $\{e_1,\cdots,e_{n+2}\}$ denotes the standard basis of $\mb{C}^{n+2}$. With respect to the basis $\{f_1,\cdots, f_{n+2}\}$, the matrix of $\mbf{J}$ is $I_{n,2}$, so that $\{f_1,\cdots, f_n\}$ forms a basis of $F^+$, while $\{f_{n+1},f_{n+2}\}$ is a basis of $F^-$. From the definition of $\nabla$, then 
\begin{align*}
\begin{split}
  \n(e_1,\cdots,e_{n+2})=(e_1,\cdots,e_{n+2})(UV(dV^{-1})U^{-1}+\theta I_{n+2}).
 \end{split}
\end{align*}
Hence 
\begin{align*}
\begin{split}
  \n(f_1,\cdots,f_{n+2})&=\n ((e_1,\cdots, e_{n+2})UV)\\
  &=(e_1,\cdots, e_{n+2})(UV(dV^{-1})U^{-1}+\theta I_{n+2})UV+(e_1,\cdots, e_{n+2})UdV\\
  &=(e_1,\cdots, e_{n+2})UV\cdot \theta=(f_1,\cdots, f_{n+2})\theta,
 \end{split}
\end{align*}
which means $\n=d+\theta \cdot \op{Id}_{F}$ with respect to the basis $\{f_1,\cdots, f_{n+2}\}$. Thus, with respect to the frame $\{f_1,\cdots, f_{n}\}$, the connection $\n|_{F^+}$ is given by 
\begin{align*}
\begin{split}
  \n|_{F^+}=d+\theta\cdot\mr{Id}_{F^+},
 \end{split}
\end{align*}
whose curvature is  
$$(d\theta+\theta\wedge \theta)\mr{Id}_{F^+}=0,$$
which follows that $c_{1}\left(F^{+}, \nabla|_{F^{+}}\right)=0$, and one has by \eqref{pullbackc1} 
\begin{align*}
\begin{split}
  c_1(\mc{E}^+,\tau \wt{\mbf{J}}^*\n^{F_\phi}\tau^{-1}|_{\mc{E}^+})=0.
 \end{split}
\end{align*}
Similarly, $ c_1(\mc{E}^-,\tau \wt{\mbf{J}}^*\n^{F_\phi}\tau^{-1}|_{\mc{E}^-})=0$. By \eqref{complexsingature} and \eqref{DIVsign}, one has 
\begin{align}\label{etazero}
\begin{split}
 \eta(A_{i\mbf{J}})=\op{sign}(\mc{E},\Omega)=0
 \end{split}
\end{align}
for any $\mbf{J}\in \mc{J}_{o}(\mc{E}_{\mb{R}},\Omega)$.
\begin{ex}
For the two groups $\mr{SL}(2,\mb{R})$ and $\mr{SO}_0(1,2)=\{(a_{ij})\in \op{SO}(1,2):a_{11}>0\}$, we have the following canonical map 
\begin{align*}
\begin{split}
  \Psi: \mr{SL}(2,\mathbb{R})\to\mr{SO}_0(1,2)
 \end{split}
\end{align*}
	\begin{align*}
\begin{split}
  \Psi\begin{pmatrix}
	a & b\\
	c& d
\end{pmatrix}=\left(\begin{matrix}
  \frac{1}{2}(a^2+b^2+c^2+d^2)& \frac{1}{2}(a^2-b^2+c^2-d^2)&-ab-cd \\
 \frac{1}{2}(a^2+b^2-c^2-d^2) & \frac{1}{2}(a^2-b^2-c^2+d^2)& cd-ab\\
 -ac-bd & bd-ac & ad+bc
\end{matrix}\right),
 \end{split}
\end{align*}
where $\begin{pmatrix}
	a & b\\
	c& d
\end{pmatrix}\in \mr{SL}(2,\mb{R})$, $ad-bc=1$.  The mapping $\Psi$ is a double covering  with kernel $\pm I_2$. It is well-known that there is a classification for the group $\mr{SL}(2,\mb{R})$, i.e. hyperbolic, elliptic and parabolic elements, see e.g.  Section \ref{Appeta}. By using the mapping $\Psi$, one can give a classification of $\mr{SO}_0(1,2)$  as follows:
\begin{itemize}
\item[(1)] $L$ is hyperbolic, i.e. $\mr{Tr}(L)>3$. In this case,  $\lambda\not\in S^1$and $\lambda\in \mb{R}$, $L$ has the normal form: $$
\Psi\left(\begin{array}{cc}
\lambda & 0 \\
0 & \frac{1}{\lambda}
\end{array}\right)=\left(\begin{matrix}
\
  \cosh\theta&\sinh\theta &0 \\
  \sinh\theta&\cosh\theta  &0\\
  0 &0 &1
\end{matrix}\right),\quad \theta=\log \lambda^2.
$$	
\item[(2)] $L$ is elliptic, i.e. $\mr{Tr}(L)\in (-1,3)$. In this case,  $L$ has the normal form
$$
\Psi\left(\begin{array}{cc}\cos\theta_1 & -\sin\theta_1 \\
\sin\theta_1 & \cos\theta_1
\end{array}\right)=\begin{pmatrix}
	1 & 0 &0\\
	0 & \cos(2\theta_1) & \sin(2\theta_1)\\
	0 & -\sin(2\theta_1) & \cos(2\theta_1)
\end{pmatrix},
$$	
where $\theta_1\in (0,\pi)\cup(\pi,2\pi)$.
\item[(3)] $L$ is parabolic, i.e. $\mr{Tr}(L)=3$. $L$ has the normal form: $$
\Psi\left(\begin{array}{ll}
\lambda & \mu \\
0 & \lambda
\end{array}\right)=\begin{pmatrix}
	\frac{\mu^2}{2}+1 & -\frac{\mu^2}{2} &-\lambda\mu\\
	\frac{\mu^2}{2}& 1-\frac{\mu^2}{2} &-\lambda\mu\\
	-\lambda\mu &\lambda\mu &1
\end{pmatrix},
$$	
where $\mu\in \mb{R}$, $\lambda=\pm 1$.
\end{itemize}
For any representation $\phi:\pi_1(S^1)\to \mr{SO}_0(1,2)$, we assume $L=\phi(S^1)\in \mr{SO}_0(1,2)$. 
If $L=\exp(2\pi B)$, then we can take 
\begin{align*}
\begin{split}
  \mbf{J}=\exp(-xB)I_{1,2}\exp(xB).
 \end{split}
\end{align*}
Similar to Section \ref{Appeta}, the eigenvalues (with multiplicities)  of $A_{i\mbf{J}}$ are given by the following equation:
	\begin{align*}
\begin{split}
  \exp(2\pi(-\sigma iI_{1,2}+B))e=e.
 \end{split}
\end{align*}

(1)  $L$ is hyperbolic, one can take $B$ as
$$B=\left(\begin{matrix}
  0&\frac{\theta}{2\pi}&0 \\
  \frac{\theta}{2\pi}&0&0\\
  0&0&0 
\end{matrix}\right).$$
Then the set of all eigenvalues (with multiplicities) of $A_{i\mbf{J}}$ is 
\begin{align*}
\begin{split}
  \left\{k, \pm\sqrt{k^2+(\frac{\theta}{2\pi})^2},k\in \mb{Z}\right\},
 \end{split}
\end{align*}
which is symmetric.
Hence $\eta(A_{i\mbf{J}})=0$.

(2) $L$ is elliptic, one can take $B$ as
\begin{align*}
\begin{split}
  B=\left(\begin{matrix}
  0&0 &0 \\
  0&0& \frac{\theta_1}{\pi}\\
  0&-\frac{\theta_1}{\pi}&0
\end{matrix}\right).
 \end{split}
\end{align*}
Then the set of eigenvalues (with multiplicities) of $A_{i\mbf{J}}$ is 
\begin{align*}
\begin{split}
  \left\{k, k\pm\frac{\theta_1}{\pi},k\in \mb{Z}\right\},
 \end{split}
\end{align*}
which is symmetric.
Hence $\eta(A_{i\mbf{J}})=0$.
 
 (3) $L$ is parabolic, one can take $B$ as
 \begin{align*}
\begin{split}
B=  \left(\begin{matrix}
  0&0 &-\frac{\lambda\mu}{2\pi}\\
  0& 0&-\frac{\lambda\mu}{2\pi}\\
  -\frac{\lambda\mu}{2\pi}&\frac{\lambda\mu}{2\pi}&0
\end{matrix}\right).
 \end{split}
\end{align*}
Then the set of eigenvalues (with multiplicities) of $A_{i\mbf{J}}$ is given by solving the equation 
$$\det(-\sigma i I_{1,2}+B+k iI_3)=0,\quad k\in \mb{Z},$$
see Remark \ref{remdetspectrum},
which is also equivalent to solve
\begin{align}\label{detequation}
\begin{split}
  \sigma^3+k\sigma^2-(k^2+2(\frac{\mu}{2\pi})^2)\sigma-k^3=0,\quad k\in\mb{Z}.
 \end{split}
\end{align}
If $\sigma_0$ is a solution of the above equation \eqref{detequation} with respect to $k=k_0\in\mb{Z}$, i.e. 
\begin{align*}
\begin{split}
  \sigma_0^3+k_0\sigma_0^2-(k_0^2+2(\frac{\mu}{2\pi})^2)\sigma_0-k_0^3=0,
 \end{split}
\end{align*}
which is also equivalent to 
\begin{align*}
\begin{split}
  (-\sigma_0)^3+(-k_0)(-\sigma_0)^2-((-k_0)^2+2(\frac{\mu}{2\pi})^2)(-\sigma_0)-(-k_0)^3=0,
 \end{split}
\end{align*}
which means that $-\sigma_0$ is  a solution of \eqref{detequation} with respect to $k=-k_0\in\mb{Z}$.
Hence the set   $ \mr{Eigen}\left(A_{i\mbf{J}}\right)$ of eigenvalues with  multiplicities
of the operator $A_{i\mbf{J}}$ is symmetric, which follows tha
$\eta(A_{i\mbf{J}})=0$.

\end{ex}

\vspace{5mm}

Denote by $\omega_{\op{D}^{\op{IV}}_n}$ the K\"ahler metric with the minimal holomorphic sectional curvature is $-1$, then 
\begin{align*}
\begin{split}
  \omega_{\op{D}^{\op{IV}}_n}=-2i\p\b{\p}\log \left(1+\left|\frac{1}{2}\sum_{i=1}^n (z^i)^2\right|^2-\sum_{i=1}^n\left|z^i\right|^2\right),
 \end{split}
\end{align*}
see e.g. \cite[Page 87]{Mok}. Similar to Section \ref{Tol} and \eqref{Toledo invariant compact form}, the Toledo invariant $\op{T}(\Sigma,\phi)$ can be given by 
\begin{align*}
\begin{split}
 \mr{T}(\Sigma,\phi) &=\frac{1}{2\pi}\int_\Sigma\left(\wt{\mbf{J}}^*\omega_{\op{D}^{\op{IV}}_{n}}-\sum_{i=1}^qd(\chi_i \wt{\mbf{J}}^*\alpha_i)\right)
 \end{split}
\end{align*}
for any $\mbf{J}\in \mc{J}_{o}(\mc{E}_{\mb{R}},\Omega)$, where $q$ denotes the number of connect components of $\p\Sigma$, $\wt{\mbf{J}}:\wt{\Sigma}\to  \op{D}^{\op{IV}}_n(\cong \mc{J}(E_{\mb{R}},\Omega))$ is the $\phi$-equivariant map given by $\mbf{J}$,
 $\alpha_i=d^c\psi_i$, $\psi_i$ is a $\phi(c_i)$-invariant (up to a constant)  K\"ahler potnetial with $d\alpha_i=\omega_{\op{D}^{\op{IV}}_n}$. In fact, for any $z_0\in \o{\op{D}^{\op{IV}}_n}$, the isotropy group of $z_0$ is  
 $K_{z_0}:=\{L\in \mr{SO}_0(n,2):L(z_0)=z_0\}$, then the $K_{z_0}$-invariant (up to a constant) K\"ahler potential $\psi_{z_0}$ with $dd^c\psi_{z_0}=\omega_{\op{D}^{\op{IV}}_n}$ can be given by
 \begin{align*}
\begin{split}
  \psi_{z_0}(z)=-\log\left(\left|-z_0^*z+1+\frac{1}{4}\o{z_0^\top z_0}\cdot z^\top z\right|^2(1+|\frac{1}{2}z^\top z|^2-\|z\|^2)\right).
 \end{split}
\end{align*}

\section{Appendix}

In this section, we will calculate the eta invariant and rho invariant for the group $\op{U}(1,1)$, and explain the classification of nilpotent conjugacy classes in $\op{U}(p,q)$. In the last subsection, we will give a geometric proof of the Milnor-Wood inequality for the bounded cohomology Toledo invariant.

\subsection{The eta invariant and the rho invariant for the group $\op{U}(1,1)$}\label{Appeta}

Every $L\in \op{U}(1,1)$ has the form 
\begin{align*}
\begin{split}
  L=e^{i\theta}L_1
 \end{split}
\end{align*}
where $\theta\in [0,2\pi)$, and  
\begin{align*}
\begin{split}
L_1:=  \left(\begin{matrix}
  a&b \\
  \b{b}&\b{a} 
\end{matrix}\right)\in \op{SU}(1,1),
 \end{split}
\end{align*}
i.e. $|a|^2-|b|^2=1$.
It is known that the group $\mr{SU}(1,1)$ is isomorphic to the special linear group $\op{SL}(2,\mb{R})$. More precisely, the isomorphism is given by
\begin{align*}
\begin{split}
  \Phi: \op{SU}(1,1)\to \op{SL}(2,\mb{R}),\quad 
   \end{split}\end{align*}
\begin{align*}
\begin{split}
\Phi\left(   \left(\begin{matrix}
  a&b \\
  \b{b}&\b{a} 
\end{matrix}\right)\right)=U \left(\begin{matrix}
  a&b \\
  \b{b}&\b{a} 
\end{matrix}\right)U^{-1}=\op{Re}\left(\begin{matrix}
a-b  & -ia-ib \\
  ia-ib&a+b 
\end{matrix}\right),
 \end{split}
\end{align*}
where 
\begin{align}\label{U}
\begin{split}
  U= \frac{1}{\sqrt{2}}\left(\begin{matrix}
  -i& i\\
  1& 1
\end{matrix}\right),\quad U^{-1}=\frac{1}{\sqrt{2}}\left(\begin{matrix}
  i& 1\\
  -i& 1
\end{matrix}\right).
 \end{split}
\end{align}

Hence, one can give the definitions of hyperbolic, parabolic and elliptic elements in $\op{SU}(1,1)$ through the isomorphism $\Phi$ and the classification of $\op{SL}(2,\mb{R})$. More precisely,  for any $L_1=\left(\begin{matrix}
  a&b \\
  \b{b}&\b{a} 
\end{matrix}\right)\in \op{SU}(1,1)$, it is called
\begin{itemize}
  \item[(i)] hyperbolic if $|\op{Re}(a)|>1$;
  \item[(ii)]elliptic if $|\op{Re}(a)|<1$;
  \item[(iii)]  parabolic if $\op{Re}(a)=\pm1$.
\end{itemize}
In this section, we always assume 
\begin{align*}
\begin{split}
  J=\left(\begin{matrix}
  i&0 \\
  0&-i 
\end{matrix}\right).
 \end{split}
\end{align*}
Note that 
\begin{align*}
\begin{split}
  \Phi(J)=UJU^{-1}=\left(\begin{matrix}
  0&1 \\
  -1&0 
\end{matrix}\right)=-\left(\begin{matrix}
  0&-1 \\
  1&0 
\end{matrix}\right)=:-J_0.
 \end{split}
\end{align*}
For any $L=e^{i\theta}L_1\in \op{U}(1,1)$, where $L_1=\left(\begin{matrix}
  a&b \\
  \b{b}&\b{a} 
\end{matrix}\right)\in \op{SU}(1,1)$,
 then 
\begin{align*}
\begin{split}
  L=\pm e^{i\theta}\exp(2\pi U^{-1}BU)=\pm e^{i\theta}U^{-1}\exp(2\pi B)U,
 \end{split}
\end{align*}
for some $B\in \mf{s}\mf{l}(2,\mb{R})$.
The canonical almost complex structure is given by 
\begin{align*}
\begin{split}
  \mbf{J}:&=\exp(-xU^{-1}BU)J\exp(xU^{-1}BU)=U^{-1}\exp(-xB)(UJU^{-1})\exp(xB)U\\
  &=-U^{-1}\exp(-xB)J_0\exp(xB) U=-U^{-1}\mbf{J}_0 U,
 \end{split}
\end{align*}
where $\mbf{J}_0:=\exp(-xB)J_0\exp(xB)$. Suppose $s\in A^0(\mb{R},E)^L=A^0(S^1,E_\phi)$ is an eigenvector of $A_{\mbf{J}}$ belongs to the eigenvalue $\sigma\in \mb{R}$, then 
$$\frac{d}{dx}s=-\sigma\mbf{J}s=U^{-1}\sigma \mbf{J}_0 Us=U^{-1}\sigma \exp(-xB)J_0\exp(xB) Us.$$
By solving the above ordinary differential equation, we obtain
\begin{align*}
\begin{split}
  s(x)=U^{-1}\exp(-xB)\exp(x(\sigma J_0+B))Us(0).
 \end{split}
\end{align*}
The $L$-equivariant condition $s(x+2\pi)=L^{-1}s(x)$ is equivalent to 
\begin{align*}
\exp(2\pi(\sigma J_0+B))Us(0)=
\begin{cases}
&e^{-i\theta}Us(0),\quad\,\,\,\, \text{ if } L=e^{i\theta}\exp(2\pi U^{-1}BU);\\
&-e^{-i\theta}Us(0),\quad\text{ if } L=-e^{i\theta}\exp(2\pi U^{-1}BU).\\ 	
\end{cases}
\end{align*}
If $s_1(x)=s_2(x)$ is an eigenvector of $A_{\mbf{J}}$, then
$$(\sigma_{s_1(x)}, s_1(0))=(\sigma_{s_2(x)}, s_2(0))\in\mb{R}\times(\mb{C}^{2}\backslash\{0\})$$
is a solution of the following  equation
\begin{align}\label{5.1}
\exp(2\pi(\sigma J_0+B))Ue=
\begin{cases}
&\exp(-{i\theta})Ue,\quad\,\,\,\, \text{ if } L=\exp({i\theta})\exp(2\pi U^{-1}BU);\\
&-\exp(-{i\theta})Ue,\quad\text{ if } L=-\exp({i\theta})\exp(2\pi U^{-1}BU).\\ 	
\end{cases}
\end{align}
For a subset $\mf{S}$ in the set of all solutions $\mb{R}\times(\mb{C}^{2}\backslash\{0\})$ of \eqref{5.1}, we call $\mf{S}$ is maximally $\mb{C}$-independent if for any eigenvalue $\sigma$, $\mf{S}_\sigma=\cup_{(\sigma,e)\in\mf{S}}\{e\}$ is  maximally  $\mb{C}$-linearly independent  in the set of all vectors associated with $\sigma$. 
	If $\mf{S}$ is a maximally $\mb{C}$-linearly independent subset in the set of all solutions of \eqref{5.1}, then the set of  eigenvalues (with multiplicities) of $A_{\mbf{J}}$  are given by the following disjoint union
	\begin{align*}
		\op{Eigen}(A_{\mbf{J}})=\bigsqcup_{(\sigma,e)\in\mf{S}}\{\sigma\}.
	\end{align*}
\begin{rem}\label{remdetspectrum}
If $\sigma$ is a solution of $\exp(2\pi(\sigma J_0+B))Ue=\exp(-i\theta)Ue$, which means that $2\pi k i$ is an eigenvalue of $2\pi(\sigma J_0+B)+i\theta I_2$, $k\in\mb{Z}$, which is equivalent to 
\begin{align}\label{detspectrum}
\begin{split}
  \det(2\pi(\sigma J_0+B)+i\theta I_2-2\pi ikI_2)=0.
 \end{split}
\end{align}
Hence, if $L=\exp({i\theta})\exp(2\pi U^{-1}BU)$, the set of eigenvalues (with multiplicities) of ${A_{\mbf{J}}}$ is given by  solving the  equation \eqref{detspectrum}. Similar for the case $L=-\exp({i\theta})\exp(2\pi U^{-1}BU)$, we just need to solve $  \det(2\pi(\sigma J_0+B)+i(\theta+\pi) I_2-2\pi ikI_2)=0.$ 
Note that $B\in \mf{s}\mf{l}(2,\mb{R})$, so $\mr{Tr}(B)=0$. If moreover, $2\pi(\sigma J_0+B)$ is diagonalizable, then  
\begin{align*}
\begin{split}
  2\pi(\sigma J_0+B)+i(\theta+2k\pi) I_2=P^{-1}\begin{pmatrix}
  \lambda_1&0 \\
  0&0 
\end{pmatrix}P.
 \end{split}
\end{align*}
Hence $\lambda_1=\mr{Tr}( 2\pi(\sigma J_0+B)+i(\theta+2k\pi) I_2)=2i(\theta+2k\pi)$, and so 
\begin{align*}
\begin{split}
  \exp(2\pi(\sigma J_0+B)+i\theta I_2)=P^{-1}\begin{pmatrix}
  e^{2\theta i}&0 \\
  0&0 
\end{pmatrix}P.
 \end{split}
\end{align*}
Thus, the $\sigma$ has the multiplicity $2$ if $\theta=0,\pi$, and $\sigma$ has the multiplicity $1$ if $\theta\neq 0,\pi$.
\end{rem}

The following lemma is useful in the calculation of eta invariant. 
\begin{lemma}\label{lemma2}
If there exists a constant $c_0>0$ such that  $(ak^2+bk+c)^{1/2}\geq l+c_0 k$ for any $k\geq 1$, where $l>0, a>0$, $s\in (0,1/2)$ and $b,c\in \mb{R}$, 
then
	$$\lim_{s\to 0}\sum_{k=1}^{\infty}\left(\frac{1}{((ak^2+bk+c)^{1/2}-l)^s}-	\frac{1}{((ak^2+bk+c)^{1/2}+l)^s}\right)=\frac{2l}{\sqrt{a}}.$$
\end{lemma}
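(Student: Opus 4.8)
The plan is to reduce the claimed identity to the classical Hurwitz-zeta computation in Lemma \ref{eta11}. First I would observe that the general summand can be compared to a simpler one: write $R_k=(ak^2+bk+c)^{1/2}$ and note that, as $k\to\infty$, $R_k=\sqrt{a}\,k+\beta+O(1/k)$ for an explicit constant $\beta=b/(2\sqrt{a})$. The point of the hypothesis $R_k\ge l+c_0k$ is precisely to guarantee that $R_k-l>0$ for all $k\ge 1$, so every term in the series is well defined and the denominators stay bounded away from $0$; I would record this at the outset.

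Next I would split the sum as
\begin{align*}
\sum_{k\ge 1}\left(\frac{1}{(R_k-l)^s}-\frac{1}{(R_k+l)^s}\right)
=\sum_{k\ge 1}\left(\frac{1}{(\sqrt{a}k-l')^s}-\frac{1}{(\sqrt{a}k+l')^s}\right)+\sum_{k\ge 1}E_k(s),
\end{align*}
where $l'$ is chosen so that $\sqrt{a}k-l'$ matches the leading asymptotics of $R_k-l$ (that is, $l'=l-\beta$), and $E_k(s)$ is the error. One checks $E_k(s)=O(k^{-1-s})$ uniformly for $s$ near $0$ — indeed each bracket is a difference of two quantities each of the form $k^{-s}(1+O(1/k))$ with the $O(1/k)$ terms differing by $O(1/k^2)$ after accounting for the shift — so $\sum_k E_k(s)$ converges uniformly on a neighbourhood of $s=0$ and is continuous there, with value $\sum_k E_k(0)=0$ since $E_k(0)=0$ for every $k$. (Here one must be a little careful: $E_k(0)$ is literally $0$ because at $s=0$ every summand in the original and comparison series equals $1-1=0$; the content is the uniform $O(k^{-1-s})$ bound near $s=0$, which follows from a one-line mean value estimate on $x\mapsto x^{-s}$.)

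For the main term, pulling out $\sqrt{a}^{-s}$ gives $\sqrt{a}^{-s}\sum_{k\ge1}\big((k-l'/\sqrt a)^{-s}-(k+l'/\sqrt a)^{-s}\big)$, and Lemma \ref{eta11} (applied with $\theta/2\pi$ replaced by $l'/\sqrt a$, after discarding the $k=0$ term which tends to $0$ as $s\to0$ when $l'/\sqrt a\neq 0$, or is absent otherwise) evaluates the limit as $s\to0$ to $2\cdot l'/\sqrt a\cdot$ — more precisely to $1-2(1-l'/\sqrt a)$-type expressions; I would just invoke the closed form from Lemma \ref{eta11}, which after simplification yields $2l'/\sqrt a$ in the relevant range, plus the $\sqrt a^{-s}\to1$ factor. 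Combining, the limit of the whole series is $2l'/\sqrt a+0$; and since the shift was designed so that $l'=l$ up to the lower-order correction $\beta$ which, upon careful bookkeeping, is reabsorbed into the error sum, the answer collapses to $2l/\sqrt a$. The main obstacle I anticipate is exactly this bookkeeping: one has to be honest about whether the subleading constant $\beta$ contributes to the limit or not, and the cleanest route is to choose the comparison series with shift $l-\beta$, show the error sum contributes $0$, and then verify that Lemma \ref{eta11} indeed produces $2(l-\beta)/\sqrt a+2\beta/\sqrt a=2l/\sqrt a$ once the $\beta$-dependent piece of the error is tracked — so the delicate point is extracting the correct first two terms of the asymptotic expansion of $R_k$ and feeding them consistently into both pieces.
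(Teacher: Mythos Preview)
Your approach --- compare to a Hurwitz-zeta-type series and invoke Lemma~\ref{eta11} --- is different from the paper's, and as written the error analysis has a genuine gap. With your choice $l'=l-\beta$, the comparison $\sqrt{a}\,k-l'$ matches $R_k-l$ to $O(1/k)$, but $\sqrt{a}\,k+l'=\sqrt{a}\,k+l-\beta$ differs from $R_k+l=\sqrt{a}\,k+l+\beta+O(1/k)$ by $2\beta+O(1/k)$, which is $O(1)$, not $O(1/k)$. Consequently $E_k(s)$ contains a piece of order $s\cdot 2\beta\,(\sqrt{a}\,k)^{-s-1}$, whose sum over $k$ behaves like $2\beta\, s\,(\sqrt{a})^{-s-1}\zeta(s+1)\to 2\beta/\sqrt{a}\ne 0$. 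Thus your claim that $\sum_k E_k(s)$ converges uniformly near $s=0$ with value $\sum_k E_k(0)=0$ is incorrect: the convergence is not uniform, and interchanging limit and sum is illegitimate. (You half-recognise this in the final sentence but do not resolve it.) A second issue is that Lemma~\ref{eta11} is stated only for $\theta/(2\pi)\in(0,1)$, whereas $l'/\sqrt{a}=(l-\beta)/\sqrt{a}$ need not lie there, or even be positive.

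The paper avoids all of this $\beta$-bookkeeping by a single integral identity:
\[
F(k)=\frac{1}{(R_k-l)^s}-\frac{1}{(R_k+l)^s}=ls\int_{-1}^{1}\frac{d\theta}{(R_k-\theta l)^{s+1}},
\]
and then splits $\sum_k F(k)$ into the main term $\sum_k 2ls\,k^{-s-1}a^{-(s+1)/2}=2ls\,a^{-(s+1)/2}\zeta(s+1)\to 2l/\sqrt{a}$ (using $s\zeta(s+1)\to 1$) and an error controlled by a one-variable Lipschitz estimate on $x\mapsto\bigl((a+bx+cx^2)^{1/2}-\theta l x\bigr)^{-(s+1)}$ on $[0,1]$, giving a remainder of size $O\bigl(s(s+1)\zeta(s+2)\bigr)\to 0$. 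The subleading constant $\beta$ never has to be isolated. If you wish to salvage your route, the correct comparison series is $(\sqrt{a}\,k+\beta-l)^{-s}-(\sqrt{a}\,k+\beta+l)^{-s}$; the error is then genuinely $O(sk^{-s-2})$ and sums to $0$, but evaluating the main term then requires a Hurwitz-zeta statement more general than Lemma~\ref{eta11}.
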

\begin{proof}
Denote
\begin{align*}
	F(k):&=\frac{1}{((ak^2+bk+c)^{1/2}-l)^s}-	\frac{1}{((ak^2+bk+c)^{1/2}+l)^s}\\
	&=ls\int_{-1}^1\frac{d\theta}{((ak^2+bk+c)^{1/2}-\theta l)^{s+1}}.
\end{align*}
Then 
\begin{align}\label{FZeta}
\begin{split}
	&\quad \sum_{k=1}^{\infty}F(k)
	=\sum_{k=1}^{\infty}ls\int^1_{-1}\frac{d\theta}{((ak^2+bk+c)^{1/2}-\theta l)^{s+1}}\\
	&=\sum_{k=1}^{\infty}\frac{ls}{k^{s+1}}\int^1_{-1}\left(\frac{k^{s+1}}{((ak^2+bk+c)^{1/2}-\theta l)^{s+1}}-(\frac{1}{\sqrt{a}})^{s+1}\right)d\theta+\sum_{k=1}^{\infty}\frac{2ls}{k^{s+1}}(\frac{1}{\sqrt{a}})^{s+1}.
	\end{split}
\end{align}
Define a continuous function $f(x)=((a+bx+cx^2)^{1/2}-\theta lx)^{-(s+1)}$, $x\in [0,1]$. By assumption, one has  $0< f(x)\leq c_0^{-(s+1)}$. Thus
\begin{align*}
|f'(x)| &=(s+1)f(x)^{\frac{s+2}{s+1}}	|\frac{1}{2}(a+bx+cx^2)^{-1/2}(b+2cx)-\theta l|\leq C(s+1),
\end{align*}
which follows that 
$|f(\frac{1}{k})-f(0)|\leq C(s+1)\frac{1}{k}$,
and so
$$\left|\frac{k^{s+1}}{((ak^2+bk+c)^{1/2}-\theta l)^{s+1}}-(\frac{1}{\sqrt{a}})^{s+1}\right|\leq C(s+1)\frac{1}{k},$$
which implies that the first term in RHS of \eqref{FZeta} vanishes since $\lim_{s\to 0} s(s+1)\zeta(s+2)=0$, where $\zeta(s)=\sum_{k=1}^{\infty}k^{-s}$ is the zeta function. 
Hence
\begin{align*}
\lim_{s\to 0}\sum_{k=1}^{\infty}F(k)=\lim_{s\to 0}\sum_{k=1}^{\infty}\frac{2ls}{k^{s+1}}(\frac{1}{\sqrt{a}})^{s+1}=2l\lim_{s\to 0}s\zeta(s+1)(\frac{1}{\sqrt{a}})^{s+1}=\frac{2l}{\sqrt{a}},
\end{align*}
where the last equality follows from the fact $\lim_{s\to 0}s\zeta(s+1)=1$.
\end{proof}

For the group $\op{SU}(1,1)$, $L_1\in \op{SU}(1,1)$ has the following normal form:
\begin{itemize}
\item[(1)] $L_1$ is hyperbolic. In this case,  $\lambda\not\in S^1$ and $\lambda\in \mb{R}$, $L_1$ has the form: $$
U^{-1}\left(\begin{array}{cc}
\lambda & 0 \\
0 & \frac{1}{\lambda}
\end{array}\right)U;
$$	
\item[(2)] $L_1$ is elliptic. In this case, the eigenvalue $\lambda\in S^1\backslash\{\pm 1\}$,  $L_1$ is given by
$$
U^{-1}R(\theta_1)U=U^{-1}\left(\begin{array}{cc}\cos\theta_1 & -\sin\theta_1 \\
\sin\theta_1 & \cos\theta_1
\end{array}\right)U,
$$	
where $\theta_1\in (0,\pi)\cup(\pi,2\pi)$.
\item[(3)] $L_1$ is parabolic. $L_1$ has the following form: $$
U^{-1}\left(\begin{array}{ll}
\lambda & \mu \\
0 & \lambda
\end{array}\right)U,
$$	
where $\mu\in \mb{R}$, $\lambda=\pm 1$.
\end{itemize}

Now we will calculate the eta invariant $\eta(A_{\mbf{J}})$ and rho invariant $\rro_\phi(S^1)$. For the bounded symmetric domain of type $\op{I}$, we have the following isomorphism 
\begin{align*}
\begin{split}
  \mbf{J}_{\mr{III},0}:\mr{D}^{\op{III}}_{1}=\op{D}^{\mr{I}}_{1,1}\to \mc{J}(\mb{R}^2,-J_0),\quad \mbf{J}_{\mr{III},0}(W)=-U\mbf{J}_{\mr{I}}(W)U^{-1}.
 \end{split}
\end{align*}
 From \eqref{equals of two J}, one has
 \begin{align*}
\begin{split}
  \wt{\mbf{J}}^*\alpha=\wt{\mbf{J}_0}^*\alpha,
 \end{split}
\end{align*}
which follows that 
\begin{align}\label{rho invariant 10}
\begin{split}
  \rro_\phi(S^1)=-\frac{1}{\pi}\int_{S^1}\wt{\mbf{J}}^*\alpha+\eta(A_{\mbf{J}})=-\frac{1}{\pi}\int_{S^1}\wt{\mbf{J}_0}^*\alpha+\eta(A_{\mbf{J}}).
 \end{split}
\end{align}

(1) $\lambda\not\in S^1$. For the case $\lambda>0$, we take  
$$B=\frac{1}{2\pi}\log |\lambda|\left(\begin{array}{cc}
1 & 0 \\
0 & -1
\end{array}\right),$$
such that $L_1=\exp(2\pi B)$, and $L=\exp({i\theta})U^{-1}L_1U$.
Then the set of all eigenvalues (with multiplicities) of $A_{\mbf{J}}$ is 
\begin{align*}
\begin{split}
  \op{Eigen}(A_{\mbf{J}})= \begin{cases}
 \left\{\pm \sqrt{(\frac{\theta}{2\pi}+k)^2+(\frac{1}{2\pi}\log|\lambda|)^2},k\in \mb{Z}\right\} 	&\theta\neq 0,\pi\\
 	\bigsqcup_{l=1}^2 \left\{\pm \sqrt{k^2+(\frac{1}{2\pi}\log|\lambda|)^2},k\in \mb{Z}_{>0}\right\} 	\cup\{\pm \frac{1}{2\pi}\log|\lambda|\}&\theta=0\\
 	\bigsqcup_{l=1}^2 \left\{\pm \sqrt{(\frac{1}{2}+k)^2+(\frac{1}{2\pi}\log|\lambda|)^2},k\in \mb{Z}_{\geq 0}\right\} &\theta=\pi. 
 \end{cases}
 \end{split}
\end{align*}
Since the set $ \op{Eigen}(A_{\mbf{J}})$ is symmetric, so $\eta(A_{\mbf{J}})=0$.

If $\lambda<0$, then $\exp(2\pi B)=-L_1$, and  the set of all eigenvalues (with multiplicities) of $A_{\mbf{J}}$ is 
\begin{align*}
\begin{split}
  \op{Eigen}(A_{\mbf{J}})= \begin{cases}
 \left\{\pm \sqrt{(\frac{\theta-\pi}{2\pi}+k)^2+(\frac{1}{2\pi}\log|\lambda|)^2},k\in \mb{Z}\right\} 	&\theta\neq 0,\pi\\
 	\bigsqcup_{l=1}^2 \left\{\pm \sqrt{(-\frac{1}{2}+k)^2+(\frac{1}{2\pi}\log|\lambda|)^2},k\in \mb{Z}_{>0}\right\} 	&\theta=0\\
 	\bigsqcup_{l=1}^2 \left\{\pm \sqrt{k^2+(\frac{1}{2\pi}\log|\lambda|)^2},k\in \mb{Z}_{>0}\right\} \cup\{\pm \frac{1}{2\pi}\log|\lambda|\} &\theta=\pi.
 \end{cases}
 \end{split}
\end{align*}
Since the set $  \op{Eigen}(A_{\mbf{J}})$ is also symmetric, so $\eta(A_{\mbf{J}})=0$.

From Remark \ref{rempotential}, then 
\begin{equation}\label{potentialalpha}
  \alpha=-d^c\log\det\op{Im}Z=-d^c\log\op{Im}Z=\frac{1}{2\op{Im}Z}(dZ+d\o{Z}).
\end{equation}
The almost complex structure $\mbf{J}_0(x)$ is given by  {
\begin{equation*}
  \mbf{J}_0(x)=\exp(-xB)J\exp(xB)=\left(\begin{matrix}
0 & -|\lambda|^{-\frac{x}{\pi}}\\
|\lambda|^{\frac{x}{\pi}} &0	
\end{matrix}
\right).
\end{equation*}}
Then
$$W\circ \mbf{J}_0(x)=(2+|\lambda|^{\frac{x}{\pi}}+|\lambda|^{-\frac{x}{\pi}})^{-1}(|\lambda|^{\frac{x}{\pi}}-|\lambda|^{-\frac{x}{\pi}}).$$
Thus
$$Z\circ\mbf{J}_0(x)=i(1-W\circ \mbf{J}_0(x))(1+W\circ \mbf{J}_0(x))^{-1}$$
is purely imaginary, which follows that 
\begin{align*}
\wt{\mbf{J}_0}^*\alpha=	\frac{1}{2\op{Im}Z}(d(Z\circ \mbf{J}(x))+\o{d(Z\circ \mbf{J}(x))})=0.
\end{align*}
 Hence 
 $$\rro_\phi(S^1)=-\frac{1}{\pi}\int_{S^1}\wt{\mbf{J}_0}^*\alpha+\eta(A_{\mbf{J}})=\eta(A_{\mbf{J}})=0.$$
\begin{rem}\label{remhyperbolic}
For any $L\in \op{Sp}(2n,\mb{R})$ with  the following matrix form
\begin{align*}
L=\pm\left(\begin{matrix}
\exp(2\pi B) & 0\\
0 & \exp(-2\pi B^\top)	
\end{matrix}
\right),	
\end{align*}
where $B\in \mf{s}\mf{p}(2n,\mb{R})$.
One can check that $Z\circ \mbf{J}_0(x)$ is also purely imaginary, and so $\int_{S^1}\wt{\mbf{J}}_0^*\alpha=0$, 
 where $\mbf{J}_0(x)=\exp(-xB)J\exp(xB)$ and $J$ is the standard complex structure. Moreover, the set of all eigenvalues (with multiplicities) of $A_{\mbf{J}_0}$ is symmetric, so $\eta(A_{\mbf{J}_0})=0$. Hence $\rro_\phi(S^1)=0$.
\end{rem}

(2) $\lambda\in S^1\backslash\{\pm 1\}$. In this case, $L_1=R(\theta_1)$ and $B=\frac{\theta_1}{2\pi}J_0$, 
$
	\exp(2\pi(\sigma J_0+B))=R{(2\pi\sigma+\theta_1)}.
$
Hence the solutions of $\exp(2\pi(\sigma J_0+B))Ue=\exp(-i\theta)Ue$ are given by 
$$\sigma=-\frac{\theta_1}{2\pi}+k\pm \frac{\theta}{2\pi},\quad k\in\mb{Z}.$$
the set of all eigenvalues (with multiplicities) of $A_{\mbf{J}}$ is 
\begin{align*}
\begin{split}
  \op{Eigen}(A_{\mbf{J}})= \begin{cases}
 \left\{-\frac{\theta_1}{2\pi}+\frac{\theta}{2\pi}+k,-\frac{\theta_1}{2\pi}-\frac{\theta}{2\pi}+k,k\in \mb{Z}\right\} 	&\theta\neq 0,\pi\\
 	\bigsqcup_{l=1}^2 \left\{-\frac{\theta_1}{2\pi}+\frac{\theta}{2\pi}+k,k\in \mb{Z}\right\} 	&\theta=0,\pi.
 \end{cases}
 \end{split}
\end{align*}
Using Lemma \ref{eta11}, the eta invariant can be given by 
\begin{align*}
\begin{split}
  \eta(A_{\mbf{J}})&=\lim_{s\to 0}\left[\op{sgn}(\theta-\theta_1)\left|\frac{\theta-\theta_1}{2\pi}\right|^{-s}+\sum_{k=1}^\infty\left(\frac{1}{\left|k+\frac{\theta-\theta_1}{2\pi}\right|^s}-\frac{1}{\left|k-\frac{\theta-\theta_1}{2\pi}\right|^s}\right)\right]\\
  &+\lim_{s\to 0}\left[\op{sgn}(2\pi-\theta-\theta_1)\left|\frac{2\pi-\theta-\theta_1}{2\pi}\right|^{-s}+\sum_{k=1}^\infty\left(\frac{1}{\left|k+\frac{2\pi-\theta-\theta_1}{2\pi}\right|^s}-\frac{1}{\left|k-\frac{2\pi-\theta-\theta_1}{2\pi}\right|^s}\right)\right]\\
  &=\op{sgn}(\theta-\theta_1)-\frac{\theta-\theta_1}{\pi}-\op{sgn}(\theta+\theta_1-2\pi)-2+\frac{\theta+\theta_1}{\pi}\\
  &=\op{sgn}(\theta-\theta_1)-\op{sgn}(\theta+\theta_1-2\pi)-2+\frac{2\theta_1}{\pi}.
 \end{split}
\end{align*}
where $\op{sgn}(\theta)$ denotes the \emph{signum} function ($\op{sgn}(0)=0$, $\op{sgn}(x)=x/|x|$ otherwise). Since $[J,B]=0$, so
$$\mbf{J}_0(x)=\exp(-xB)J\exp(xB)=J,$$
and $\wt{\mbf{J}_0}^*\alpha=0$. Hence  $\rro_\phi(S^1)=-\frac{1}{\pi}\int_{S^1}\wt{\mbf{J}_0}^*\alpha+\eta(A_{\mbf{J}})=\eta(A_{\mbf{J}})$.

(3) $\lambda=\pm1$. In this case, $L_1$ is given by 
$$
\left(\begin{array}{ll}
\lambda & \mu \\
0 & \lambda
\end{array}\right),
$$	
where $\mu\in \mb{R}$, and $L_1=\lambda \exp(2\pi B)$, $B$ is given by
$$
B=\left(\begin{array}{cc}
0 & \frac{1}{2 \pi} \frac{\mu}{\lambda} \\
0 & 0
\end{array}\right).
$$
Thus
$$
2\pi(\sigma J_0+B)=\left(\begin{array}{cc}
0& -2\pi\sigma+ \frac{\mu}{\lambda} \\
2\pi\sigma & 0
\end{array}\right).
$$
For $\lambda=1$, the solution of $\exp(2\pi(\sigma J_0+B))Ue= \exp(-i\theta)U e$ is given by
\begin{align*}
\begin{split}
  \sigma=\frac{\mu}{4\pi}\pm \sqrt{\frac{\mu^2}{16\pi^2}+\left(\frac{\theta}{2\pi}+k\right)^2}, k\in \mb{Z}
 \end{split}
\end{align*}
For $\mu\neq 0$, the  set of all eigenvalues (with multiplicities) of $A_{\mbf{J}}$ is 
\begin{align*}
\begin{split}
  \op{Eigen}(A_{\mbf{J}})= \begin{cases}
 \left\{\frac{\mu}{4\pi}\pm \sqrt{\frac{\mu^2}{16\pi^2}+\left(\frac{\theta}{2\pi}+k\right)^2},k\in \mb{Z}\right\} 	&\theta\neq 0,\pi\\
 	\bigsqcup_{l=1}^2 \left\{\frac{\mu}{4\pi}\pm \sqrt{\frac{\mu^2}{16\pi^2}+k^2},k\in \mb{Z}_{>0}\right\}\cup \{0,\frac{\mu}{2\pi}\}	&\theta=0\\
 	\bigsqcup_{l=1}^2 \left\{\frac{\mu}{4\pi}\pm \sqrt{\frac{\mu^2}{16\pi^2}+\left(\frac{1}{2}+k\right)^2},k\in \mb{Z}_{\geq 0}\right\}	&\theta=\pi.
 \end{cases}
 \end{split}
\end{align*}
Thus, if
 $\lambda=1$, $\mu> 0$ and $\theta\neq 0,\pi$, then
\begin{align*}
&\quad \eta_{A_{\mbf{J}}}(s)
=\left|\frac{\mu}{4\pi}+\frac{1}{2\pi}\sqrt{\frac{\mu^2}{4}+\theta^2}\right|^{-s}-\left|\frac{\mu}{4\pi}-\frac{1}{2\pi}\sqrt{\frac{\mu^2}{4}+\theta^2}\right|^{-s}
\\
&+\sum_{k=1}^{\infty}\left[\left(\frac{\mu}{4\pi}+\frac{1}{2\pi}\left(\frac{\mu^2}{4}+(2k\pi+\theta)^2\right)^{\frac{1}{2}}\right)^{-s}	-\left(-\frac{\mu}{4\pi}+\frac{1}{2\pi}\left(\frac{\mu^2}{4}+(2k\pi+\theta)^2\right)^{\frac{1}{2}}\right)^{-s}\right]\\
&+\sum_{k=1}^{\infty}\left[\left(\frac{\mu}{4\pi}+\frac{1}{2\pi}\left(\frac{\mu^2}{4}+(2k\pi-\theta)^2\right)^{\frac{1}{2}}\right)^{-s}	-\left(-\frac{\mu}{4\pi}+\frac{1}{2\pi}\left(\frac{\mu^2}{4}+(2k\pi-\theta)^2\right)^{\frac{1}{2}}\right)^{-s}\right].
\end{align*}
By Lemma \ref{lemma2}, the eta invariant is 
$\eta(A_{\mbf{J}})=-\frac{\mu}{\pi}$. For $\theta=0$, one has $\eta(A_{\mbf{J}})=1-\frac{\mu}{\pi}$. For $\theta=\pi$, one has $\eta(A_{\mbf{J}})=-\frac{\mu}{\pi}$. In one word, if $\lambda=1$ and $\mu>0$, then 
\begin{align*}
\begin{split}
  \eta(A_{\mbf{J}})=1-\op{sgn}(\theta)-\frac{\mu}{\pi}.
 \end{split}
\end{align*}
 Similarly, If  $\lambda=1$ and $\mu< 0$, then  
$$\eta(A_{\mbf{J}})=-1+\op{sgn}(\theta)-\frac{\mu}{\pi}.$$
Hence, for $\lambda=1$, then 
\begin{align*}
\begin{split}
  \eta(A_{\mbf{J}})=\op{sgn}(\mu)\left(1-\op{sgn}(\theta)-\frac{|\mu|}{\pi}\right).
 \end{split}
\end{align*}
For $\lambda=-1$ and $\mu\neq 0$, then the   set of eigenvalues (with multiplicities) of $A_{\mbf{J}}$ is 
\begin{align*}
\begin{split}
  \op{Eigen}(A_{\mbf{J}})= \begin{cases}
 \left\{\frac{-\mu}{4\pi}\pm \sqrt{\frac{\mu^2}{16\pi^2}+\left(\frac{\theta-\pi}{2\pi}+k\right)^2},k\in \mb{Z}\right\} 	&\theta\neq 0,\pi\\
 	\bigsqcup_{l=1}^2 \left\{\frac{-\mu}{4\pi}\pm \sqrt{\frac{\mu^2}{16\pi^2}+\left(-\frac{1}{2}+k\right)^2},k\in \mb{Z}_{\geq 0}\right\} 	&\theta=0\\
 	 	\bigsqcup_{l=1}^2 \left\{\frac{-\mu}{4\pi}\pm \sqrt{\frac{\mu^2}{16\pi^2}+k^2},k\in \mb{Z}_{>0}\right\}\cup\{0,-\frac{\mu}{2\pi}\} 	&\theta=\pi.
 \end{cases}
 \end{split}
\end{align*}
Then the eta invariant is given by
$$ \eta(A_{\mbf{J}})=\op{sgn}(-\mu)\left(1-|\op{sgn}(\theta-\pi)|-\frac{|\mu|}{\pi}\right).$$
For $\mu=0$, the set of all eigenvalues (with multiplicities) of $A_{\mbf{J}}$ is symmetric, so $\eta(A_{\mbf{J}})=0$.

In this case, the almost complex structure is 
\begin{align*}
	\mbf{J}_0(x)=\exp(-xB)J\exp(xB)=\left(\begin{matrix}
b &-b^2-1\\
1 &-b	
\end{matrix}
\right),
\end{align*}
where $b=-\frac{x}{2\pi}\frac{\mu}{\lambda}$. Then 
{$$ W\circ  \mbf{J}_0(x)=\frac{2ib-b^2}{4+b^2}$$}
$$Z\circ \mbf{J}_0(x)=i(2+bi)^{-1}(2+b^2-ib)=i+\frac{b^3+4b}{b^2+4}={ i+b}.$$
By \eqref{potentialalpha}, one has 
\begin{equation*}
 \wt{\mbf{J}_0}^* \alpha=db=-\frac{1}{2\pi}\frac{\mu}{\lambda}dx.
\end{equation*}
Hence 
$$\frac{1}{\pi}\int_{S^1}\wt{\mbf{J}_0}^*\alpha=-\frac{1}{\pi}\frac{\mu}{\lambda},$$
and the rho invariant is 
\begin{align*}
\begin{split}
  \rro_\phi(S^1)=-\frac{1}{\pi}\int_{S^1}\wt{\mbf{J}_0}^*\alpha+\eta(A_{\mbf{J}})=\frac{1}{\pi}\frac{\mu}{\lambda}+\eta(A_{\mbf{J}}).
 \end{split}
\end{align*}
Therefore,  with the convention $\theta\in [0,2\pi), \theta_1\in (0,\pi)\cup(\pi,2\pi)$,
\begin{center}
\begin{align}\label{etadim2}
\begin{tabular}{| c | c | c| }
\hline
     $\lambda,\mu$  &  $\eta(A_{\mbf{J}})$  & $\rro_\phi(S^1)$\\
\hline
   $\lambda\not\in S^1$   & $0$ & $0$\\
\hline
   $\lambda\in S^1\backslash\{\pm1\}$    & $\op{sgn}(\theta-\theta_1)-2+\frac{2\theta_1}{\pi}\atop -\op{sgn}(\theta+\theta_1-2\pi)$ &$\op{sgn}(\theta-\theta_1)-2+\frac{2\theta_1}{\pi}\atop -\op{sgn}(\theta+\theta_1-2\pi)$\\
\hline
    $\mu=0$  & $0$ & $0$\\
\hline
$\lambda=1, \mu>0$ & $1-\op{sgn}(\theta)-\frac{\mu}{\pi}$ & $1-\op{sgn}(\theta)$\\
\hline
 $\lambda=1, \mu<0$ & $-1+\op{sgn}(\theta)-\frac{\mu}{\pi}$ & $-1+\op{sgn}(\theta)$\\
\hline
$\lambda=-1,\mu>0$&$-1+|\op{sgn}(\theta-\pi)|+\frac{\mu}{\pi}$ & $-1+|\op{sgn}(\theta-\pi)|$\\
\hline
$\lambda=-1,\mu<0$&$1-|\op{sgn}(\theta-\pi)|+\frac{\mu}{\pi}$ & $1-|\op{sgn}(\theta-\pi)|$\\
\hline
\end{tabular}	
\end{align}
\end{center}
Comparing with (\ref{sign5}),  if $k_1$ and $k_2$ are unique integers such that
$$\alpha_1=\theta-\theta_1+ 2 k_1\pi\in [0, 2\pi),\ \alpha_2=\theta+\theta_1+2k_2\pi \in [0, 2\pi)$$ for an elliptic element $e^{i\theta}\left(\begin{matrix}
  e^{-i\theta_1}&0 \\
  0& e^{i\theta_1} 
\end{matrix}\right)$,
then one can check that
$$\eta(A_{\mbf{J}})=\op{sgn}(\theta-\theta_1)-\op{sgn}(\theta+\theta_1-2\pi)-2+\frac{2\theta_1}{\pi}=
 \op{sgn}(\alpha_1)(1-\frac{\alpha_1}{\pi})-\op{sgn}(\alpha_2)(1-\frac{\alpha_2}{\pi}),$$ which is consistent with (\ref{sign5}).

\begin{rem}\label{rem5.1}
If we consider the representation $\phi_0:\pi_1(S^1)\to \op{Sp}(2,\mb{R})=\op{SL}(2,\mb{R})$, then the associated eta invariant and rho invariant correspond to the case of $\theta=0$ in \eqref{etadim2}. By \eqref{-eta} and \eqref{-rho}, one has
\begin{align*}
\begin{split}
  \eta(A_{\mbf{J}_0})=-  \eta(A_{\mbf{J}}),\quad \rro_{\phi_0}(S^1)=- \rro_\phi(S^1).
 \end{split}
\end{align*}
Hence the eta invariant and rho invariant for the representation $\phi_0:\pi_1(S^1)\to \op{Sp}(2,\mb{R})$ are given by 
\begin{center}
\begin{align}\label{etasp}
\begin{tabular}{| c | c | c| }
\hline
     $\lambda,\mu$  &  $\eta(A_{\mbf{J_0}})$  & $\rro_{\phi_0}(S^1)$\\
\hline
   $\lambda\not\in S^1$   & $0$ & $0$\\
\hline
   $\lambda\in S^1\backslash\{\pm1\}$    & $2(1-\frac{\theta_1}{\pi})$&$2(1-\frac{\theta_1}{\pi})$\\
\hline
    $\mu=0$  & $0$ & $0$\\
\hline
$\lambda=1, \mu>0$ & $-1+\frac{\mu}{\pi}$ & $-1$\\
\hline
 $\lambda=1, \mu<0$ & $1+\frac{\mu}{\pi}$ & $1$\\
\hline
$\lambda=-1$&$-\frac{\mu}{\pi}$ & $0$\\
\hline
\end{tabular}	
\end{align}
\end{center}

\end{rem}

\subsection{Connection with the multiplicative Horn problem}
\label{Horn}

When the Hermitian form is positive definite, i.e. $q=0$, the signature makes sense, the Toledo invariant vanishes. Our Milnor-Wood type inequality (Theorem \ref{thm0.5}) reads
\begin{align}
\label{q=00}
|\rro_\phi(\partial \Sigma)|\leq p|\chi(\Sigma)|.
\end{align}
By additivity of the signature, it suffices to establish it in case $\Sigma$ is a triply punctured sphere. In this case, the representation $\phi$ is determined by two unitary matrices $A$ and $B$. The third boundary holonomy, $C$, satisfies $ABC=I_p$.
We check that inequality (\ref{q=00}), which in this case reads
\begin{align}
\label{q=0}
|\rro(A)+\rro(B)+\rro(C)|\leq p,
\end{align} follows from the solution of the multiplicative Horn problem, \cite{AW}.

\medskip

Let $A\in\mr{U}(p)$. Let $(e^{i\theta_j})_{j=1,\ldots,p}$ denote the eigenvalues of $A$, normalized so that $\theta_j\in[0,2\pi)$. Then
\begin{align*}
\rro(A)&=\sum_{j\,;\,\theta_j\not=0} 1-\frac{\theta_j}{\pi}\\
&=\sum_{j=0}^{p} \op{sgn}(\theta_j)(1-\frac{\theta_j}{\pi}).
\end{align*}

\subsubsection{Matrices in $\op{SU}(p)$}

Let us translate the problem into more traditional notation. 

The eigenvalues of a matrix $A\in \op{SU}(p)$ can be uniquely written $e^{2i\pi\lambda_1},\ldots, e^{2i\pi\lambda_p}$, where
\begin{itemize}
  \item $\lambda_1-\lambda_p\leq 1$,
  \item $\lambda_1\ge \cdots \ge \lambda_p$,
  \item $\sum_{j=1}^p \lambda_j=0$.
\end{itemize}
Note that $\lambda_j(A^{-1})=-\lambda_{p-j}(A)$.

In our notation $e^{i\theta_1},\ldots, e^{i\theta_p}$, 
\begin{itemize}
  \item all $\theta_j\in[0,2\pi)$,
  \item $\theta_1\ge \cdots \ge \theta_p$,
  \item $m(A)=\frac{1}{2\pi}\sum_{j=1}^p \theta_j$ is an integer between $0$ and $p-1$.
\end{itemize}
The correspondance is as follows:
\begin{align*}
\lambda_1&=\frac{\theta_{m+1}}{2\pi},\ldots,\lambda_{p-m}=\frac{\theta_{p}}{2\pi},\\
\lambda_{p-m+1}&=\frac{\theta_{1}}{2\pi}-1,\ldots,\lambda_{p}=\frac{\theta_{m}}{2\pi}-1.
\end{align*}
We note that $m(A)$ is the number of negative reals among the $\lambda_j$'s and $m(A^{-1})$ is the number of positive reals among the $\lambda_j$'s. In particular, 
$$
m(A^{-1})=p-m(A)-\nu(A),
$$
where $\nu(A)$ is the number of $\lambda_j$'s which are equal to $0$. {Note that the numbers of $\{\lambda_i=0\}$ and $\{\theta_i=0\}$ are equal.}

Denote 
\begin{align*}
\widehat{m}(A):=m(A)+\frac{1}{2}\nu(A).	
\end{align*}
Note that
\begin{align*}
\begin{split}
  m(A^{-1})=p-m(A)-\nu(A)=p-(p-m(A^{-1})-\nu(A^{-1}))-\nu(A)
 \end{split}
\end{align*}
from which it follows that
\begin{align*}
\begin{split}
  \nu(A)=\nu(A^{-1}).
 \end{split}
\end{align*}
Hence 
\begin{align}\label{eq2.1}
\widehat{m}(A)+\widehat{m}(A^{-1})=m(A)+m(A^{-1})+\nu(A)	=p.
\end{align}

In our notation,
$$
\sum_{j=1}^{p}{\mr{sgn}(\theta_j(A))}(1-\frac{\theta_j(A)}{\pi})=p-\nu(A)-2m(A)=p-2\widehat{m}(A).
$$
Inequality (\ref{q=0}) states that if $A,B,C\in SU(p)$ satisfy $ABC=I$, then
$$
|p-2\widehat{m}(A)+p-2\widehat{m}(B)+p-2\widehat{m}(C)|\leq p,
$$
or equivalently,
$$
p\leq \widehat{m}(A)+\widehat{m}(B)+\widehat{m}(C)\leq 2p.
$$
Since $C=(AB)^{-1}$ and \eqref{eq2.1}, so it is equivalent to
\begin{align}\label{eq1.2}
\begin{split}
  0\leq\widehat{m}(A)+\widehat{m}(B)-\widehat{m}(AB)\leq p
 \end{split}
\end{align}

\medskip

Since 
$$
\lambda_{p-m(A)+1}(A)+\lambda_{p-m(B)+1}(B)<0,
$$
If $m(A)+m(B)\geq p+1$, 
the left hand side of Agnihotri-Woodward's \cite[inequality (8)]{AW}, implies that 
$$
\lambda_{p-m(A)+1+p-m(B)+1-1}(AB)<0.
$$
This implies that 
$$
p\geq p-m(A)+1+p-m(B)+1-1\geq p-m(AB)+1,
$$
i.e.
\begin{align}\label{eq2.2}
\begin{split}
  m(A)+m(B)-m(AB)\leq p.
 \end{split}
\end{align}
If instead $m(A)+m(B)\leq p$, \eqref{eq2.2} also holds.

Hence
\begin{align}\label{eq2.4}
\begin{split}
	&\quad \left( m(A)+m(B)-m(AB)\right)+\left( \nu(A)+\nu(B)-\nu(AB)\right)\\
	&=p-m(A^{-1})+p-m(B^{-1})-(p-m((AB)^{-1}))\\
	&=p-\left(m(A^{-1})+m(B^{-1})-m((B^{-1}A^{-1})\right)\geq 0
	\end{split}
\end{align}
since $m(A^{-1})+m(B^{-1})-m(B^{-1}A^{-1})\leq p$ by \eqref{eq2.2}.

On the other hand, if $m(A^{-1})+m(B^{-1})\leq p-1$, we can  take $i=p-m(A)-(\nu(A)-1)$, $j=p-m(B)-(\nu(B)-1)$, then
\begin{align}
\begin{split}
  \lambda_{i+j-1}(AB)\leq \lambda_i(A)+\lambda_j(B)\leq 0.
 \end{split}
\end{align}
This implies that
\begin{align*}
\begin{split}
  p\geq p-m(A)-(\nu(A)-1)+p-m(B)-(\nu(B)-1)-1\geq p-m(AB)-\nu(AB)+1,
 \end{split}
\end{align*}
which is equivalent to 
\begin{align*}
\begin{split}
  m(A^{-1})+m(B^{-1})-m((AB)^{-1})\geq 0.
 \end{split}
\end{align*}
If $m(A^{-1})+m(B^{-1})\geq p$, then the above inequality holds obviously. 
Hence
\begin{align}\label{eq2.5}
\begin{split}
  m(A)+m(B)-m(AB)\geq 0.
 \end{split}
\end{align}

Adding \eqref{eq2.4} and \eqref{eq2.5}, one gets
\begin{align}\label{eq2.7}
\begin{split}
&\quad 2\left( \widehat{m}(A)+\widehat{m}(B)-\widehat{m}(AB)\right)\\
&=\left[\left( m(A)+m(B)-m(AB)\right)+\left( \nu(A)+\nu(B)-\nu(AB)\right)\right]\\
&\quad +\left( m(A)+m(B)-m(AB)\right)\\
&\geq 0. 
\end{split}
\end{align}
From \eqref{eq2.7}, we have
\begin{align*}
\begin{split}
  0&\leq  \widehat{m}(A^{-1})+\widehat{m}(B^{-1})-\widehat{m}((AB)^{-1})\\
  &=p-\widehat{m}(A)+p-\widehat{m}(B)-(p-\widehat{m}(AB))\\
  &=p-(\widehat{m}(A)+\widehat{m}(B)-\widehat{m}(AB)),
 \end{split}
\end{align*}
which implies that
\begin{align}\label{eq1.6}
\begin{split}
  \widehat{m}(A)+\widehat{m}(B)-\widehat{m}(AB)\leq p.
 \end{split}
\end{align}
We have established both sides, \eqref{eq1.6} and \eqref{eq2.7}, of inequality \eqref{eq1.2}.

\subsubsection{The general case: matrices in $\mathrm{U}(p)$}

We have just proven that
\begin{align}\label{eqeta}
\begin{split}
  |\rro(A)+\rro(B)+\rro(C)|\leq p
 \end{split}
\end{align}
for any $A, B, C\in \mr{SU}(p)$ with $ABC=I_p$.
Now we are aiming to prove \eqref{eqeta} for any $A,B,C\in\mr{U}(p)$. It suffices to prove that 
\begin{align}\label{eqeta1}
\begin{split}
  \rro(A)+\rro(B)+\rro(C)\leq p.
 \end{split}
\end{align}
 Indeed, note that $\rro(A^{-1})=-\rro(A)$, so 
$$\rro(A^{-1})+\rro(B^{-1})+\rro(C^{-1})=-(\rro(A)+\rro(B)+\rro(C))\geq -p,$$
which implies that 
$$\rro(A)+\rro(B)+\rro(C)\geq -p.$$

 Denote by $\theta(A)\in [0,2\pi)$ the number satisfying
 \begin{align*}
\begin{split}
  \det A=e^{-i\theta(A)}.
 \end{split}
\end{align*}
Similarly, we can define $\theta(B),\theta(C)\in [0,2\pi)$ for $B,C$. Since $ABC=I_p$, 
\begin{align*}
\begin{split}
 \theta(A)+\theta(B)+\theta(C)\in\{0,2\pi,4\pi\}. 
 \end{split}
\end{align*}

\begin{itemize}
  \item If $ \theta(A)+\theta(B)+\theta(C)=0$, then $A,B,C\in \mr{SU}(p)$, and \eqref{eqeta1} follows from \eqref{eqeta}.
  \item If $ \theta(A)+\theta(B)+\theta(C)=2\pi$ and $\theta(A)\theta(B)\theta(C)\neq 0$, we denote 
      \begin{align*}
\begin{split}
\wt{A}:=  \left(\begin{array}{c:c}
A & 0 \\
\hdashline 0 & e^{i \theta(A)}
\end{array}\right),\quad \wt{B}:=  \left(\begin{array}{c:c}
B & 0 \\
\hdashline 0 & e^{i \theta(B)}
\end{array}\right),\quad \wt{C}:=  \left(\begin{array}{c:c}
C & 0 \\
\hdashline 0 & e^{i \theta(C)}
\end{array}\right).
 \end{split}
\end{align*}
Then $\wt{A},\wt{B},\wt{C}\in \mr{SU}(p+1)$ and $\wt{A}\wt{B}\wt{C}=I_{p+1}$, so 
\begin{align*}
\begin{split}
  \rro(\wt{A})+\rro(\wt{B})+\rro(\wt{C})\leq p+1.
 \end{split}
\end{align*}
By the definitions  of $\wt{A},\wt{B},\wt{C}$, one has
\begin{align*}
\begin{split}
\rro(\wt{A})+\rro(\wt{B})+\rro(\wt{C})
 & =\rro(A)+\rro(B)+\rro(C)+\mr{sgn}(\theta(A))\\
 &\quad+\mr{sgn}(\theta(B))+\mr{sgn}(\theta(C))-\frac{\theta(A)+\theta(B)+\theta(C)}{\pi}\\
 &=\rro(A)+\rro(B)+\rro(C)+1,
 \end{split}
\end{align*}
which implies \eqref{eqeta1}.

\item If $ \theta(A)+\theta(B)+\theta(C)=2\pi$ and $\theta(A)\theta(B)\theta(C)= 0$, without loss of generality, we assume that $\theta(A)=0$, then $\theta(B)>0, \theta(C)>0$. In this case, we denote 
\begin{align*}
\begin{split}
  \wt{A}:=\left(\begin{array}{c:cc}
A & 0 & 0 \\
\hdashline 0 & e^{i \pi} & 0 \\
0 & 0 & e^{i \pi}
\end{array}\right),\wt{B}=\left(\begin{array}{c:cc}
B & 0 & 0 \\
\hdashline 0 & e^{i \frac{\theta(B)}{2}} & 0 \\
0 & 0 & e^{i \frac{\theta(B)}{2}}
\end{array}\right),\wt{C}=\left(\begin{array}{c:cc}
C & 0 & 0 \\
\hdashline 0 & e^{i \frac{\theta(C)}{2}} & 0 \\
0 & 0 & e^{i \frac{\theta(C)}{2}}
\end{array}\right).
 \end{split}
\end{align*}
Then $\wt{A},\wt{B},\wt{C}\in \mr{SU}(p+2)$ and $\wt{A}\wt{B}\wt{C}=I_{p+2}$, so 
\begin{align*}
\begin{split}
  \rro(\wt{A})+\rro(\wt{B})+\rro(\wt{C})\leq p+2.
 \end{split}
\end{align*}
The rho invariants satisfy
\begin{align*}
\begin{split}
\rro(\wt{A})+\rro(\wt{B})+\rro(\wt{C})
 & =\rro(A)+\rro(B)+\rro(C)+2(\mr{sgn}(\pi)+\mr{sgn}(\theta(B)/2)\\
 &\quad +\mr{sgn}(\theta(C)/2) -\frac{\pi+\theta(B)/2+\theta(C)/2}{\pi})\\
 &=\rro(A)+\rro(B)+\rro(C)+2,
 \end{split}
\end{align*}
which implies \eqref{eqeta1}.

\item If $ \theta(A)+\theta(B)+\theta(C)=4\pi$, then $\theta(A)\theta(B)\theta(C)\neq 0$, we denote 
\begin{align*}
\begin{split}
  \wt{A}&=\left(\begin{array}{c:cc}
A & 0 & 0 \\
\hdashline 0 & e^{i \frac{\theta(A)}{2}} & 0 \\
0 & 0 & e^{i \frac{\theta(A)}{2}}
\end{array}\right),\wt{B}=\left(\begin{array}{c:cc}
B & 0 & 0 \\
\hdashline 0 & e^{i \frac{\theta(B)}{2}} & 0 \\
0 & 0 & e^{i \frac{\theta(B)}{2}}
\end{array}\right),\\
\wt{C}&=\left(\begin{array}{c:cc}
C & 0 & 0 \\
\hdashline 0 & e^{i \frac{\theta(C)}{2}} & 0 \\
0 & 0 & e^{i \frac{\theta(C)}{2}}
\end{array}\right).
 \end{split}
\end{align*}
Then
\begin{align*}
\begin{split}
  \rro(A)+\rro(B)+\rro(C)+2=\rro(\wt{A})+\rro(\wt{B})+\rro(\wt{C})\leq p+2
 \end{split}
\end{align*}
which implies \eqref{eqeta1}.
\end{itemize}

Therefore, we have completed the proof of \eqref{eqeta1}.

\subsection{Nilpotent conjugacy classes in $\mathfrak{su}(p,q)$}\label{nilpotentsu}

We provide details of the classification of nilpotent conjugacy classes in $\mathfrak{su}(p,q)$. This is a special case of the classification of conjugacy classes in classical groups, due to N. Burgoyne and R. Cushman \cite{BurgoyneCushman}.

\begin{defn}
\label{height}
Say a nilpotent element of $\mathfrak{su}(E,\Omega)$ has \emph{height} $m$, if $N^{m+1}=0$ and $N^m\not=0$. Say $N$ is \emph{uniform} if all its Jordan blocks have the same height.
\end{defn}

\begin{lemma}[{see \cite[Proof of Prop. 4]{BurgoyneCushman}}]
\label{P1}
Let $N\in\mathfrak{su}(E,\Omega)$ be nilpotent of height $m$. Then $E$ admits a decomposition $E=Y\oplus Z$ in $N$-invariant orthogonal subspaces such that $N_{|Y}$ is uniform of height $m$ and $Z\subset \op{Ker}(N^m)$.
\end{lemma}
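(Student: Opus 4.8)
\textbf{Proof plan for Lemma \ref{P1}.}

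The plan is to mimic the standard decomposition of a nilpotent endomorphism on a space with a compatible form, adapting it to the Hermitian situation. First I would consider the subspace $W:=N^mE$. Since $N$ has height $m$, $W\neq 0$ and $N_{|W}=0$, so $W\subset\op{Ker}(N)$; moreover $W\subset\op{Ker}(N^m)$ trivially because any $v=N^mu$ already satisfies $N^mv=N^{2m}u$, which need not vanish in general, so that naive choice is not immediately $Z$. Instead the correct first move is to use the $N$-invariant filtration $0\subset\op{Ker}(N)\subset\op{Ker}(N^2)\subset\cdots$ together with the images $N^jE$, and to build the uniform part from vectors that generate full-length Jordan chains. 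Concretely, pick a complement in $N^mE$ to $N^mE\cap(\text{something})$; better, I would pass to the ``top'' graded piece: let $\bar E_m$ be the image of $N^mE$ in $E/N^{m+1}E=E/\{0\}$... this is where one must be careful. The clean approach is the one sketched in Burgoyne--Cushman: choose vectors $e_1,\dots,e_k\in E$ whose images form a basis of $N^mE$ viewed inside a suitable quotient, lift them, and let $Y$ be the span of the Jordan chains $\{N^je_i\}_{0\le j\le m,\,1\le i\le k}$. By construction $N_{|Y}$ is uniform of height $m$.

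The key steps, in order, would be: (1) Show $N^mE$ together with the Hermitian form $\tau_m(u,v):=\Omega((iN)^mu,v)$ — more precisely the form $\Omega(N^m\cdot,\cdot)$ — is non-degenerate on a suitable space of Jordan-chain generators. Here one uses that $\Omega$ is non-degenerate and $N$ skew-Hermitian ($N^*\Omega+\Omega N=0$), hence $\Omega(N^mu,v)=(-1)^m\Omega(u,N^mv)$, so $\tau_m$ is (up to the factor $(-1)^m$ or $i^m$) a Hermitian or skew-Hermitian form; the point is that its radical on the generator space is exactly governed by chains of length $<m$. (2) Use non-degeneracy of this form to select a subspace $U\subset E$ of chain-generators on which the relevant pairing is non-degenerate, and set $Y=\bigoplus_{j=0}^m N^jU$. (3) Verify $Y$ is $N$-invariant (clear), that $\Omega_{|Y}$ is non-degenerate (this is the heart: one computes $\Omega(N^ju,N^{j'}u')=\pm\Omega(N^{j+j'}u,u')$, which vanishes unless $j+j'\ge m$, so the Gram matrix of $Y$ in the chain basis is anti-block-triangular with the non-degenerate form from step (1) along the anti-diagonal, hence invertible). (4) Set $Z=Y^{\perp_\Omega}$; since $\Omega_{|Y}$ is non-degenerate, $E=Y\oplus Z$ orthogonally, $Z$ is $N$-invariant, and $N_{|Y}$ is uniform of height $m$. (5) Finally show $Z\subset\op{Ker}(N^m)$: if not, $N_{|Z}$ would have height $\ge m$, but height of $N$ on all of $E$ is $m$ with $N^mE\subset Y$ (by our choice of generators exhausting the top graded piece), forcing $N^mZ=0$.

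The main obstacle is step (3) combined with the correct choice in steps (1)--(2): one must choose the generating subspace $U$ so that the antidiagonal pairing $(u,u')\mapsto\Omega(N^mu,u')$ restricted to $U$ is non-degenerate, and simultaneously so that $U$ maps isomorphically onto the top graded piece $N^mE/(N^mE\cap N^{m+1}E\cdots)$ — equivalently, onto a complement controlling all length-$m$ chains. This requires a small linear-algebra argument: the radical of $(u,u')\mapsto\Omega(N^mu,u')$ on the space $E/\op{Ker}(N^m)$ (or on a complement to $\op{Ker}(N^m)$) is $N$-invariant and consists precisely of vectors whose chains ``do not reach the top'', and one shows this radical is zero by using the height-$m$ hypothesis and non-degeneracy of $\Omega$ on $E$. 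Once $U$ is correctly pinned down, the block-anti-triangular structure of the Gram matrix makes step (3) a routine determinant computation, and step (4) is just the orthogonal-complement principle. I would keep the exposition brief and refer to \cite[Proof of Prop. 4]{BurgoyneCushman} for the linear-algebra details of the radical computation, as the excerpt explicitly allows.
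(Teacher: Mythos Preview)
Your plan is essentially the paper's: pick a complement $F$ to $\op{Ker}(N^m)$, set $Y=\bigoplus_{j=0}^m N^jF$, verify $\Omega_{|Y}$ is non-degenerate, and take $Z=Y^\perp$; then $N^mE\subset Y$ gives $Z\subset\op{Ker}(N^m)$. Two small corrections. First, no careful selection of $U$ is needed in your steps (1)--(2): for \emph{any} complement $F$ to $\op{Ker}(N^m)$ the form $\tau_m$ is automatically non-degenerate on $F$, since its radical on $E$ is exactly $\op{Ker}(N^m)$. Second, your inequality in step (3) is reversed: $\Omega(N^ju,N^{j'}u')=\pm\Omega(N^{j+j'}u,u')$ vanishes when $j+j'>m$ (because $N^{m+1}=0$), not when $j+j'<m$; the Gram matrix is therefore block anti-triangular with zeros \emph{below} the anti-diagonal, but your conclusion (invertibility from the non-degenerate anti-diagonal block $\tau_m$) is unaffected. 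The paper replaces this Gram-matrix computation with a direct contradiction: if $0\neq x\in Y\cap Y^\perp$, write $x=(iN)^{j_0}f+x_1$ with $f\in F$ and $x_1\in N^{j_0+1}Y$, and deduce $\Omega(N^mf,e)=0$ for all $e\in E$, hence $N^mf=0$, contradicting $f\in F$.
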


\begin{proof}
Let $(e_j)_{1\leq j\leq n}$ be a Jordan basis for $N$, i.e. for each $j$, $Ne_{j}=e_{j-1}$ or $0$.
Let $F=\op{span}(\{e_j\,;\,N^m e_j \not=0\})$. Then $F$ is a complement to $\op{Ker}(N^m)$. Set
$$
Y=\bigoplus_{j=0}^{m} N^j F.
$$
Then $Y$ is $N$-invariant and uniform of height $m$. Let us show that $\Omega$ is non-degenerate on $Y$. Assume by contradiction that there exists a nonzero $x\in Y\cap Y^\perp$. Then $\Omega(x,(iN)^{j} y)=0$ for all $y\in Y$ and $j\geq0$. Write $x=\sum_{j=0}^{m} (iN)^j f_j=(iN)^{j_0} f+x_1$ where $j_0=\min\{j\,;\,f_j\not=0\}$, $f=f_{j_0}\in F$ and $x_1\in N^{j_0+1}Y$. Then $N^{m-j_0} x_1=0$, so, for all $y\in Y$, $\Omega(x_1,(iN)^{m-j_0} y)=\pm\Omega((iN)^{m-j_0} x_1,y)=0$. In particular, 
\begin{align*}
\forall y\in F,\quad  \Omega((iN)^m f,y)
&=\pm\Omega((iN)^{j_0}f,(iN)^{m-j_0} y)\\
&=\Omega(x-x_1,(iN)^{m-j_0} y)=0.
\end{align*}
Since $F$ is a complement to $\op{Ker}(N^m)$, $\Omega(f,(iN)^m e)=\Omega((iN)^m f, e)=0$ for all $e\in E$, so $N^m f=0$, contradiction. We conclude that $Z=Y^\perp$ is a $N$-invariant complement to $Y$ in $E$. Note that $\op{Im}(N^m)=N^m(F+\op{Ker}(N^m))\subset N^m F\subset Y$. It follows that $Z=Y^\perp\subset \op{Im}(N^m)^\perp\subset\op{Ker}(N^m)$.
\end{proof}

\begin{lemma}[{see \cite[Prop.~2]{BurgoyneCushman}}]
\label{P2}
Let $N\in\mathfrak{su}(E,\Omega)$ be uniform of height $m$. For $j\in\mb N$, let $\tau_j$ denote the Hermitian form on $E$ defined by
$$
\tau_j(u,v)=\Omega((iN)^j u,v).
$$
Then there exists a complement $F$ of $NE$ in $E$, such that
$$
E=\bigoplus_{j=0}^{m} N^j F ,
$$
and all Hermitian forms ${\tau_j}_{|F}$ vanish except $\tau_{m}$ which is non-degenerate.
\end{lemma}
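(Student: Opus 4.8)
\textbf{Proof plan for Lemma \ref{P2}.} The statement is a normal-form result for a uniform nilpotent skew-Hermitian endomorphism, and the natural approach is induction on the height $m$, or equivalently a direct construction of the subspace $F$ complementary to $NE$ on which all the intermediate forms $\tau_j$, $0\le j<m$, vanish while $\tau_m$ is non-degenerate. The plan is to first record the elementary compatibility relations between $N$ and $\Omega$: since $N\in\mathfrak{su}(E,\Omega)$, one has $\Omega(Nu,v)=-\Omega(u,Nv)$, hence $\tau_j(u,v)=\Omega((iN)^ju,v)$ is Hermitian (the $i^j$ absorbs the sign $(-1)^j$), and $\tau_j(Nu,v)=\pm\tau_{j+1}(u,v)$ up to the appropriate power of $i$. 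From uniformity of height $m$ one gets $\tau_j\equiv 0$ automatically for $j>m$ because $N^{m+1}=0$, and $\tau_m$ is the form that ``pairs the top of each Jordan block with the bottom.''

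The core step is to show that $\tau_m$ descends to a \emph{non-degenerate} Hermitian form $\bar\tau_m$ on the quotient $\bar E=E/NE$, equivalently on any complement $F$ of $NE$. Non-degeneracy of $\Omega$ on $E$ forces this: if $f\in F$ satisfies $\tau_m(f,f')=0$ for all $f'\in F$, then because uniformity gives $E=\bigoplus_{j=0}^m N^jF$ for \emph{any} complement $F$ (a statement one checks by a dimension count together with $N^jF\cap N^{j'}F=0$ for $j\ne j'$, using that the height is exactly $m$), one can test $\Omega((iN)^mf,\cdot)$ against all of $E$ and deduce $N^mf=0$, contradicting $f\notin NE$ unless $f=0$. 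This is exactly the argument already used at the end of the proof of Lemma \ref{P1}, and I would reuse it verbatim.

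The remaining, more delicate, point is to \emph{choose} $F$ so that the lower forms $\tau_0,\ldots,\tau_{m-1}$ vanish on it, not merely $\tau_m$ be non-degenerate. Here I would proceed by successive correction: start with an arbitrary complement $F_0$ of $NE$; the forms ${\tau_j}_{|F_0}$ for $j<m$ need not vanish, but one can modify each $f\in F_0$ by adding an element of $NF_0+N^2F_0+\cdots$ chosen so as to kill ${\tau_{m-1}}_{|F}$ first, then ${\tau_{m-2}}_{|F}$, and so on, descending in $j$. At each stage the obstruction to solving lives in a pairing governed by the non-degenerate form $\tau_m$: because $\tau_j(f,N^{m-j}g)=\pm\tau_m(f,g)$ (modulo powers of $i$), adding $N^{m-j}g$ to $f$ shifts ${\tau_j}_{|F}$ by a term controlled by $\tau_m$, which is non-degenerate on $F$, so the correction can always be made; moreover a correction by $N^{m-j}F$ does not disturb the already-arranged vanishing of $\tau_{j'}$ for $j'>j$ (those are ``lower'' in the filtration by $N$ and the extra term lies too deep to contribute). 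The bookkeeping of which corrections affect which forms is the part that needs care; I would organize it as a finite downward induction on $j$ from $m-1$ to $0$, at each step invoking nondegeneracy of $\tau_m$ on $F$ to solve a linear equation, and checking that the decomposition $E=\bigoplus_j N^jF$ persists (it does, since $F$ remains a complement of $NE$).

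\textbf{Main obstacle.} The genuinely non-routine point is the simultaneous vanishing of \emph{all} intermediate forms ${\tau_j}_{|F}$, $j<m$: one must verify that the corrections used to annihilate $\tau_j$ do not resurrect the forms already annihilated, which requires keeping precise track of the $N$-filtration degrees and the exact powers of $i$ in the identities $\tau_j(\cdot,N\cdot)=\pm i^{\mp1}\tau_{j+1}(\cdot,\cdot)$. I expect the cleanest way to handle this is to phrase everything on the graded object $\bigoplus_j N^jF/N^{j+1}E$ and run the argument there, transferring back at the end; but getting the signs and the induction order exactly right is where the actual work lies.
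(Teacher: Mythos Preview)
Your approach is correct and matches the paper's proof essentially step for step: start from any complement $F$ of $NE$, use non-degeneracy of $\Omega$ to see that $\tau_m$ is non-degenerate on $F$, then run a downward induction from $j=m-1$ to $0$, at each stage replacing $F$ by $(\mathrm{Id}+(iN)^{m-j}\phi)(F)$ for a suitable $\phi:F\to F$ that kills ${\tau_j}_{|F}$, and checking that the already-arranged vanishing of ${\tau_{j'}}_{|F}$ for $j'>j$ survives because the cross terms land in $\tau_{j'+m-j}$ with index exceeding $m$. The one detail you leave implicit is that $\phi$ must be chosen $\tau_m$-self-adjoint so that the two cross terms $\tau_m(\phi(u),v)+\tau_m(u,\phi(v))$ combine into $2\tau_m(\phi(u),v)$, which is precisely the computation the paper writes out.
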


\begin{proof}
Since $N$ has height $m$, $\tau_j=0$ if $j>m$. Furthermore, for all $u,v\in E$,
$$
\tau_j((iN)u,v)=\tau_j(u,(iN)v)=\tau_{j+1}(u,v).
$$

Let us start with the complement $F$ to $\op{Ker}(N^m)$ introduced in Lemma \ref{P1}. Since $N$ is uniform, $E=Y=\bigoplus_{j=0}^{m} N^j F$. Since $\Omega$ is non-degenerate, the kernel of $\tau_m$ equals $\op{Ker}(N^m)$, so $\tau_m$ is non-degenerate on $F$. We shall inductively improve $F$ until all $\tau_{j}$ but $\tau_{m}$ vanish on $F$. 
 
Let $k$ be the smallest $j < m$ such that $\tau_{j}\not=0$ on $F$. Let us compute, for $u,v\in F$,
\begin{align*}
\tau_k(u+(iN)^{m-k}u,v+(iN)^{m-k}v)
&=\tau_k(u,v)+2\tau_k((iN)^{m-k}u,v)\\
&+\tau_k((iN)^{m-k}u,(iN)^{m-k}v)\\
&=\tau_k(u,v)+2\tau_{k+m-k}(u,v)+\tau_{k+2(m-k)}(u,v)\\
&=\tau_k(u,v)+2\tau_{m}(u,v),
\end{align*}
since $k+2(m-k)=m+m-k>m$. This does not suffice to kill $\tau_k$. A correction, provided by a $\tau_m$-symmetric linear map $\phi:F\to F$, exists. Indeed, we want to solve
\begin{align*}
\tau_k(u+(iN)^{m-k}\phi(u),v+(iN)^{m-k}\phi(v))
&=\tau_k(u,v)+\tau_{m}(\phi(u),v)+\tau_{m}(u,\phi(v))\\
&=\tau_k(u,v)+2\tau_{m}(\phi(u),v)=0.
\end{align*}
Since $\tau_m$ is non-degenerate on $F$, this equation uniquely determines a $\tau_m$-symmetric linear map $\phi:F\to F$. Then $\tau_k$ vanishes on $(Id+(iN)^{m-k}\circ\phi)(F)$. If $k<j<m$, for all $u,v\in F$,
\begin{align*}
&\tau_j(u+(iN)^{m-k}\phi(u),v+(iN)^{m-k}\phi(v))\\
&=\tau_j(u,v)+\tau_{j+m-k}(\phi(u),v)+\tau_{j+m-k}(u,\phi(v))+\tau_{j+2(m-k)}(\phi(u),\phi(v))\\
&=\tau_j(u,v),
\end{align*}
since $j+m-k>m$ and $j+2(m-k)>m$. So the vanishing of $\tau_j$, $j>k$, is preserved. Therefore one more of the Hermitian forms $\tau_j$ vanishes on the image $F'=(Id+(iN)^{m-k}\circ\phi)(F)$. The Hermitian form $\tau_{m}$ is non-degenerate on every complement to its kernel, so on $F'$. Let us show that the sum $\sum_{j=0}^m (iN)^j F'$ is direct. If $f'_0,\ldots,f'_m\in F'$ and $\sum_{j=0}^m (iN)^j f'_j=0$, write $f'_j=f_j+(iN)^{m-k}\phi(f_j)$ for some $f_j\in F$. Then
\begin{align*}
\sum_{j=0}^{m-k-1} (iN)^j f_j +\sum_{j=m-k}^{m} (iN)^j (f_j+\phi(f_{j-m+k}))=0.
\end{align*}
This implies that $f_0$ vanishes, and recursively that all $f_j$ vanish as well. Thus $E=\bigoplus_{j=0}^{m}N^j F'$. Therefore we can replace $F$ with $F'$, winning the vanishing of $\tau_k$.

After finitely many steps, we get $F$ such that $\tau_j=0$ on $F$ for all $0\leq j\leq m-1$ and $\tau_{m}$ is non-degenerate on $F$.
\end{proof}

\begin{cor}
\label{P3}
Let $N\in\mathfrak{su}(E,\Omega)$ be a single Jordan block. There exists a Jordan basis for $N$ in which the matrix of $\Omega$ is antidiagonal.
\end{cor}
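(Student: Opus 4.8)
The plan is to deduce Corollary \ref{P3} directly from Lemma \ref{P2}, specialized to the case where $N$ is a single Jordan block of size $n$. In that situation $N$ is automatically uniform of height $m=n-1$, and the complement $F$ of $NE$ provided by Lemma \ref{P2} is $1$-dimensional, say $F=\bc f$. The decomposition $E=\bigoplus_{j=0}^{n-1}N^j F$ then exhibits $(f, Nf, N^2f, \ldots, N^{n-1}f)$ as a Jordan basis for $N$ (reading it in the order $N^{n-1}f, \ldots, Nf, f$ if one wants the standard convention $Ne_j=e_{j-1}$).

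The key computation is to evaluate the matrix of $\Omega$ in this basis. First I would record the general identity, valid for any $N\in\mathfrak{su}(E,\Omega)$: since $iN$ is $\Omega$-self-adjoint, $\Omega((iN)^a u,(iN)^b v)=\Omega((iN)^{a+b}u,v)=\tau_{a+b}(u,v)$. Applying this with $u=v=f$ and unwinding the powers of $i$, one gets $\Omega(N^a f, N^b f)=(-i)^a i^{b}\,\Omega((iN)^{a+b}f,f)=(\pm 1)\,\tau_{a+b}(f,f)$ up to an explicit power of $i$. By Lemma \ref{P2}, $\tau_j$ vanishes on $F$ for $j<n-1$ and $\tau_{n-1}$ is non-degenerate on $F$, hence $\tau_{n-1}(f,f)\neq 0$. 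Therefore $\Omega(N^a f, N^b f)=0$ whenever $a+b<n-1$, and $\Omega(N^a f, N^b f)\neq 0$ whenever $a+b=n-1$ (the entries with $a+b>n-1$ also vanish since $N^n=0$). This says precisely that the matrix $(\Omega(N^{a}f,N^{b}f))_{0\le a,b\le n-1}$ is supported exactly on the antidiagonal $a+b=n-1$, i.e.\ it is antidiagonal with nonzero antidiagonal entries, which is the assertion.

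There is no real obstacle here; the only points requiring a little care are purely bookkeeping: matching the indexing/ordering of the Jordan basis to whichever convention ``antidiagonal'' refers to, and tracking the scalar factors $i^{a+b}$ and $(-1)^{a}$ so as to confirm the antidiagonal entries are genuinely nonzero (they are, being nonzero multiples of $\tau_{n-1}(f,f)$). If one wants the cleaner normalization used later in paragraph \ref{exnil} — antidiagonal entries alternately $\pm i$ (for $n$ even) or alternately $\pm 1$ with $1$ or $-1$ in the center (for $n$ odd) — one rescales $f$ and uses that $\Omega$ is skew or Hermitian-symmetric along the antidiagonal (a consequence of $\o{\Omega(u,v)}=\Omega(v,u)$ together with the self-adjointness of $iN$), but this refinement is not needed for Corollary \ref{P3} itself and I would leave it to the discussion of normal forms in Subsection \ref{exnil}.
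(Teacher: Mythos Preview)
Your argument is correct and follows essentially the same route as the paper: specialize Lemma \ref{P2} to a single Jordan block, pick a generator $f$ of the one-dimensional complement $F$, and compute $\Omega$ on the basis $(N^{n-1}f,\ldots,Nf,f)$ using the self-adjointness of $iN$ together with the vanishing of $\tau_j$ on $F$ for $j<n-1$. The only cosmetic difference is that the paper indexes the basis as $e_j=(iN)^{n-j}e_n$ and writes the computation directly as $\omega_{j,k}=\tau_{2n-j-k}(e_n,e_n)$, whereas you track the powers of $i$ by hand; the content is the same.
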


\begin{proof}
In this case, $\op{dim}(E/NE)=1$ and $N$ is $n-1$-uniform. Lemma \ref{P2} provides us with a $1$-dimensional complement $F$ of $NE$. Pick a nonzero vector $e_n\in F$. Set $e_j=(iN)^{n-j}e_n$, $j=1,\ldots,n$. This is a Jordan basis for $N$, in which the entries of $\Omega$ are given by
$$
\omega_{j,k}=\Omega((iN)^{n-j}e_n,(iN)^{n-k}e_n)=\Omega((iN)^{2n-j-k}e_n,e_n)=\tau_{2n-j-k}(e_n,e_n),
$$
which vanish unless $2n-j-k=n-1$, i.e. $j+k=n+1$. 
\end{proof}

\begin{lemma}[{\cite[Proof of Prop.~3]{BurgoyneCushman}}]
\label{P4}
Let $N\in\mathfrak{su}(E,\Omega)$ be uniform of height $m$. Then $E$ admits a decomposition $E=\bigoplus_j Y_j$ in $N$-invariant pairwise orthogonal subspaces such that each $(Y_j,N_{|Y_j})$ is a single Jordan block of height $m$.
\end{lemma}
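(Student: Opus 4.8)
\textbf{Proof proposal for Lemma \ref{P4}.}

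The plan is to induct on the number of Jordan blocks of $N$, producing one orthogonal Jordan block at a time, exactly in the spirit of the proof of Lemma \ref{P2}. Since $N$ is uniform of height $m$, Lemma \ref{P2} gives a complement $F$ of $NE$ in $E$ with $E=\bigoplus_{j=0}^m (iN)^jF$, such that all the Hermitian forms $\tau_j|_F$ vanish for $j<m$ while $\tau_m|_F$ is non-degenerate. First I would pick a nonzero vector $e\in F$ with $\tau_m(e,e)\neq0$; such a vector exists because a non-degenerate Hermitian form on a complex vector space is not totally isotropic (if $\tau_m(e,e)=0$ for all $e$, polarization would force $\tau_m\equiv0$). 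Let $Y_1=\mathrm{span}\{e,(iN)e,\ldots,(iN)^me\}$; this is an $N$-invariant single Jordan block of height $m$.

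The key step is to show that $\Omega$ restricts non-degenerately to $Y_1$, hence that $Y_1^\perp$ is an $N$-invariant complement on which one can iterate. For $u=\sum_{a}\lambda_a(iN)^ae$ and $v=\sum_b\mu_b(iN)^be$ in $Y_1$, one computes
\begin{align*}
\Omega(u,v)=\sum_{a,b}\lambda_a\bar\mu_b\,\Omega((iN)^ae,(iN)^be)=\sum_{a,b}\lambda_a\bar\mu_b\,\tau_{a+b}(e,e),
\end{align*}
and since $\tau_j(e,e)=0$ for $j<m$ and $\tau_j=0$ for $j>m$, this reduces to $\tau_m(e,e)\sum_{a+b=m}\lambda_a\bar\mu_b$. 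The Gram matrix of $\Omega|_{Y_1}$ in the basis $\{(iN)^ae\}$ is therefore $\tau_m(e,e)$ times the antidiagonal matrix, which is invertible; so $\Omega|_{Y_1}$ is non-degenerate. Consequently $E=Y_1\oplus Y_1^\perp$ with both summands $N$-invariant, $\Omega|_{Y_1^\perp}$ non-degenerate, and $N_{|Y_1^\perp}$ again uniform of height $m$ (its height cannot drop because $Y_1$ accounts for only one Jordan block while $N$ is uniform; formally, $N^m\neq0$ on $Y_1^\perp$ whenever $N$ has more than one block). Applying the induction hypothesis to $(Y_1^\perp,\Omega|_{Y_1^\perp},N_{|Y_1^\perp})$ gives the desired orthogonal decomposition $E=\bigoplus_j Y_j$.

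The main obstacle, and the only place where a little care is needed, is the bookkeeping in the inductive step: one must verify that after splitting off $Y_1$ the restriction $N_{|Y_1^\perp}$ is still \emph{uniform} of the \emph{same} height $m$, so that Lemma \ref{P2} applies again to $Y_1^\perp$ rather than having to re-run the height-reduction of Lemma \ref{P1}. This is immediate from dimension count: if $N$ has $k$ Jordan blocks all of height $m$, then $\dim(E/NE)=k$, $\dim Y_1/NY_1=1$, so $\dim(Y_1^\perp/NY_1^\perp)=k-1$ and $N^m|_{Y_1^\perp}\neq0$, whence uniformity of height $m$ persists. The base case $k=1$ is Corollary \ref{P3}. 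I would also remark that this lemma, combined with Lemma \ref{P1}, yields the full orthogonal Jordan decomposition of an arbitrary nilpotent $N\in\mathfrak{su}(E,\Omega)$ into single Jordan blocks, which is precisely statement (1) of Proposition \ref{BurgoyneCushman} and the input needed for Theorem \ref{value}.
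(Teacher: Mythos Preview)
Your proof is correct and rests on the same idea as the paper's: use the complement $F$ from Lemma \ref{P2} and the vanishing of $\tau_j|_F$ for $j<m$ to see that cyclic subspaces generated by $\tau_m$-orthogonal vectors of $F$ are $\Omega$-orthogonal. The only difference is organizational: the paper picks a full $\tau_m$-orthogonal basis $(u_1,\ldots,u_r)$ of $F$ at once and reads off the decomposition $E=\bigoplus_j C(u_j)$ in a single step, whereas you peel off one block and induct (which works, but requires re-invoking Lemma \ref{P2} on each successive $Y_1^\perp$).
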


\begin{proof}
According to Lemma \ref{P2}, there exists a complement $F$ of $NE$ in $E$ such that $E=F\oplus\cdots \oplus N^m F$ and all $\tau_j$ but $\tau_{m}$ vanish on $F$. 

Let $u,v\in F$ be linearly independent vectors such that $\tau_{m}(u,v)=0$. Then for all $j\in\bm N$, $\Omega((iN)^j u,v)=\tau_{j}(u,v)=0$. So the cyclic subspaces $C(u)=\op{span}(u,Nu,\cdot,N^m u)$ and $C(v)$ generated by $u$ and $v$ are $\Omega$-orthogonal. 

Let $(u_1,\ldots,u_r)$ be a $\tau_m$-orthogonal basis of $F$. Since $N$ is uniform of height $m$, $\op{Ker}(N)=N^m E$, $N$ is a bijection $N^j F\to N^{j+1}F$ for all $j<m$. Therefore $N^j u_1,\ldots,N^j u_r$ is a basis of $N^j F$, and the whole collection 
$$
\{N^j u_k\,;\,j=0,\ldots,m,\,k=1,\ldots,r\}
$$
is a basis of $E$. This shows that $E=C(u_1)\oplus\cdots\oplus C(u_r)$, this is the needed $N$-invariant and $\Omega$-orthogonal decomposition in Jordan blocks. 
\end{proof}

The above lemmata complete the proof of Proposition \ref{BurgoyneCushman}.

\subsection{The Milnor-Wood inequality for the Toledo invariant}

In this subsection, we will give a geometric proof for the Milnor-Wood inequality of Toledo invariant for general Hermitian symmetric spaces.

First we mention the classification of isometries for a symmetric space $X$ of noncompact type with $G=\op{Iso}^0(X)$. Let $\ell(\phi)=\inf_{x\in X} d_X(x,\phi(x))$ for $\phi\in G$. We say $\phi$ is (\cite[1.9.1]{Eb} )
\begin{enumerate}
\item  axial if $\ell(\phi)>0$ and realized in $X$.
\item elliptic if $\ell(\phi)=0$ and realized in $X$, i.e., it has a fixed point in $X$.
\item parabolic if $\ell(\phi)$ is not realized in $X$.
\end{enumerate}

If $L$ is parabolic, it has a fixed point at $X(\infty)$ (\cite[Prop. 4.1.1]{Eb} ). Hence it stabilizes a horosphere $H$ based at a fixed point of $L$. 
If $L\in G$ is parabolic, then it is an element of a horospherical subgroup $N_x$ for some point $x\in X(\infty)$, see \cite[Prop. 2.19.18 (5)]{Eb} . But $N_x$ has a property that  for $g\in N_x$
 $$\lim_{t\ra \infty} e^{-tX} g e^{tX}=id,$$ where $X\in \mathfrak p$ is a unit vector whose infinite end point is $x$ in the Cartan decomposition $\mathfrak g=\mathfrak t\oplus\mathfrak p$ at $p\in X$. This implies that for $g\in N_x$
 \begin{eqnarray}\label{contraction}
 \lim_{t\ra\infty} d(e^{tX} p, g e^{tX}p)=0,
 \end{eqnarray} which means that any two geodesic rays starting from $p$ and $gp$ pointing forwards $x$, get closer exponentially fast.

Let $\Sigma$ be a surface with $q$-boundary components and of genus $g$ of negative Euler number.
Considering a boundary component as a puncture, one can find an ideal triangulation $\triangle$ of $\Sigma$, which is just a maximal collection of disjoint essential arcs that are pairwise non-homotopic,
whose vertices are at punctures. If there are $F$ ideal triangles in $\triangle$, there
there are $\frac{3F}{2}$ edges since each edge is shared by the adjacent triangles.
Here by taking a  triangulation carefully, we  can assume that two adjacent triangles are distinct. Hence by the definition of the Euler number
$$F-\frac{3F}{2}=2-2g-q$$ where $q$ is the number of punctures. Hence there are $-4+4g+2q$ ideal triangles in $\triangle$.
Now  considering the {punctures} as boundaries, these ideal triangles wrap around each boundary component infinitely many times, and still denote this triangulation by $\triangle$.
\begin{figure}[hbt]
\begin{center}
\centerline{{\includegraphics{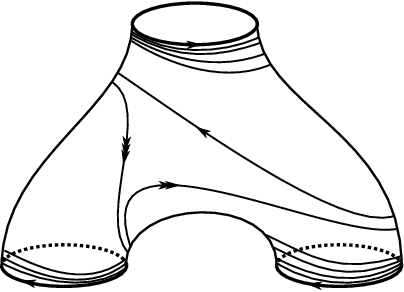}}}
\end{center}
\caption{Ideal triangulation of a pair of pants consisting of two ideal triangles }\label{infinite extreme point}
\end{figure}
Given a representation $\phi:\pi_1(\Sigma)\ra G$, 
the Toledo invariant for the associated Hermitian symmetric space $\mathscr X$ with a  K\"ahler form $\omega$  is given by the equation (\ref{Toledo invariant compact form}), i.e.,
$$  \op{T}(\Sigma,\phi)=\frac{1}{2\pi}\int_\Sigma\left(f^*\omega-\sum_{i=1}^qd(\chi_i f^*\alpha_i)\right)=\frac{1}{2\pi}\int_\Sigma f^*\omega-\frac{1}{2\pi}\sum_{i=1}^q\int_{c_i} f^*\alpha_i,$$  for any $\phi$-equivariant map $f$.

Since we have an identification between the symmetric space $\mathscr{X}$ and the compatible almost complex structures $\mathcal J(E, \Omega)$, for an elliptic boundary we can choose  a $\phi$-equivariant map $\wt{{\bf J}}:\widetilde\Sigma\ra \mathscr{X}\cong\mathcal J(E, \Omega)$ induced by an equivariant complex structure whose restriction to the elliptic boundary  is a constant complex structure $J$ fixed by $\phi(c_i)$ along the boundary, such that $J^*=0$ around the boundary. Hence $\int_{c_i} J^*\alpha_i=0$ for an elliptic boundary $c_i$.

Now we want to prove that the same holds for axial boundary.
Any axial isometry has an invariant real axis $l$ in $\mathscr{X}$, and $\alpha_i$ invariant under this axial isometry has a property that  $\int_l \alpha_i=0$ by Remark \ref{zero}. This shows that $\int_{c_i} f^*\alpha_i=\int_l \alpha_i=0$

Find a $\phi$-equivariant map $\wt{{\bf J}}:\widetilde\Sigma\ra \mathscr{X}$ induced by an equivariant complex structure.  Now we can straighten
$\wt{\bf J}$ such that each ideal triangle $\sigma$ in $\triangle$ is mapped to an ideal geodesic triangle in $D$. Now  a new
 surface $\Sigma'=\Sigma\cup \cup C_i$ is obtained from $\Sigma$ by attaching  a cone $C_i$ to each parabolic boundary $c_i$. One can extend $\op{Str}(\wt{\bf J})$ in an obvious way on each cone to the corresponding horoball neighborhood.
 In more details, 
$$\wt{\mbf{J}}:S^1\times [0,\infty)\ra  H,$$ where $H$ is a horoball  and $\wt{\mathbf{J}}$ maps each geodesic $x\times[0,\infty)$ to an arc-length parametrized geodesic from $\wt{\mathbf{J}}(x,0)$ to the base point of the horoball. 
Note here that the almost complex structure along the  boundary $c_i$ is an orbit
of $J$ under the one-parameter family generated by parabolic element $\phi(c_i)$, hence
the image under $\wt{\mathbf{J}}$ is also the orbit of the one-parameter family generated by parabolic element $\phi(c_i)$. This forces that $\wt{\mathbf{J}}(x \times [0,\infty))$ and $\wt{\mathbf{J}}(y\times [0,\infty))$ get closer exponentially fast as $t\rightarrow \infty$ for any two points $x$ and $y$ on $c_i$.

In more details,
$|\wt{\mathbf{J}}_*({\frac{\partial}{\partial x}}|_{(x,t)})|=e^{-t}|\wt{\mathbf{J}}_*({\frac{\partial}{\partial x}}|_{(x,0)})|$, and $|\wt{\mathbf{J}}_*({\frac{\partial}{\partial t}}|_{(x,t)})|=|\frac{\partial}{\partial t}|=1$, which makes 
$$\left|{\wt{\mathbf{J}}}^*\omega(\frac{\partial}{\partial x}|_{(x,t)},\frac{\partial}{\partial t})\right|=e^{-t}\left|\omega(\wt{\mathbf{J}}_*({\frac{\partial}{\partial x}}|_{(x,0)}),\frac{\partial}{\partial t})\right|\leq e^{-t}C$$ for some universal constant $C$.
Hence
\begin{eqnarray}\label{fast}
\int_{C_i}|{\wt{\mathbf{J}}}^*\omega|\leq \int_{0}^{2\pi}\int_{0}^\infty C e^{-t}<\infty,\end{eqnarray} which makes
 $\wt{\mbf{J}}^*(d\alpha_i)$ a $L^1$ form on  $C_i$. 
 
 If we decompose $C_i$ into two parts $C_i^1=S^1\times[0, t]\cup C_i^2=S^1\times [t,\infty)$, then $\wt{\mbf{J}}^*(d\alpha_i)$ being a $L^1$ form on  $C_i$ implies that
 $\int_{C_i^2} \wt{\mbf{J}}^*(d\alpha_i)\ra 0$ as $t\ra\infty$. Furthermore by noting that
 $|\wt{\mathbf{J}}_*({\frac{\partial}{\partial x}}|_{(x,t)})|=e^{-t}|\wt{\mathbf{J}}_*({\frac{\partial}{\partial x}}|_{(x,0)})|$, $\int_{S^1\times \{t\}} \wt{\mbf{J}}^*\alpha_i=\int_{S^1\times \{0\}} e^{-t}\wt{\mbf{J}}^*\alpha_i$,
   the ordinary Stokes' lemma holds to get for each parabolic boundary $c_i$
 $$\int_\Sigma d(\chi_i {Str(\wt{\bf J})}^*\alpha_i)=\int_{c_i} {\mr{Str}(\wt{\bf J})}^*\alpha_i=-\int_{C_i} d(\chi_i {\mr{Str}(\wt{\bf J})}^*\alpha_i).$$
This implies that
$$\op{T}(\Sigma,\phi)=\frac{1}{2\pi}\int_{\Sigma'} {\mr{Str}(\wt{\mbf{J}})}^*\omega.$$
Also note that we can deform $\triangle$ so that the triangles wrapping around the parabolic boundary $c_i$ can be straightened to the cone point of $C_i$ to include
$C_i$ in $\cup_{\sigma\in\triangle}\sigma$. We still denote the deformed triangulation by $\triangle$.
Since $\Sigma'=\sum_{\sigma_i\in\triangle} \sigma_i$,
 using $|\int_{\sigma_i} \mr{Str}(f)^* \omega|=| \int_{\mr{Str}(f)(\sigma_i)} \omega|\leq 2\pi (\frac{\mathrm{rank}(\mathscr{X})}{2}) $, which follows from the fact that the Gromov norm of $\kappa^b_G\in \mr{H}^2_{c,b}(G,\br)$ is $\frac{\mathrm{rank}(\mathscr{X})}{2}$ \cite{Cl}, we get the Milnor-Wood inequality
\begin{align*}
\begin{split}
  |\op{T}(\Sigma,\phi)|=&\left|\frac{1}{2\pi}\int_{\Sigma'} \mr{Str}(f)^*\omega\right|\\
  &\leq \frac{1}{2\pi}(-4+4g+2q)2\pi \left(\frac{\mathrm{rank}(\mathscr{X})}{2}\right)\\
  &=\mathrm{rank}(\mathscr{X}) |\chi(\Sigma)|.
 \end{split}
\end{align*}

\begin{rem}
First note that by \cite[Corollary 9.4]{Dj}, for any $a\in \op{Sp}(2n,\br)$, $a^2=\exp{X}$ for some $X\in \mathfrak g$. 

Let $\pi_1(\Sigma)=\langle c_1, c_2,\cdots, c_{q-1}, a_i, b_i \rangle$ where $c_i$ are boundary components, and $c_q=\Pi c_i \Pi [a_i,b_i]$. Take an index 2 subgroup $\Gamma< \pi_1(\Sigma)$
containing $c_i^2,\ i=1,\cdots, q-1$. Let $\Sigma_1=\widetilde \Sigma/\Gamma$ and $p:\Sigma_1\ra\Sigma$ be a covering map of degree 2.  Then for each boundary $c_i\ i=1,\cdots, q-1$, there exists a corresponding boundary $B_i$ of $\Sigma_1$ such that $p(B_i)=c_i^2$.  Now it is possible that
$p^{-1}(c_q)$ might be disjoint union of two circles $B_q, B_{q+1}$ which map to $c_q$ homeomorphically.  In this case, take a double cover $\Sigma_2$ of $\Sigma_1$, which corresponds to an index two subgroup of $\pi_1(\Sigma_1)$ containing $B_q^2, B_{q+1}^2$.
Then there exist two boundary components of $\Sigma_2$, $C_q,C_{q+1}$ which project to $B_q^2,
B_{q+1}^2$. Then the covering map $f$ from $\Sigma_2$ to $\Sigma$ has the property that each boundary component of $\Sigma_2$ projects to $c_i^{2k}$ for some $i=1,\cdots,q$.

Now for  this 4-fold covering map $f:\Sigma_2\ra\Sigma$, 
consider the induced representation $\phi_2=\phi\circ f_*:\pi_1(\Sigma_2)\ra \op{Sp}(2n,\br)$. Then for any boundary component $b$ of $\Sigma_2$, $\phi_2(b)=\phi(c_i^{2k})$ for some $c_i$, hence $\phi_2(b)=\exp(2\pi B)$ for some $B$ in the Lie algebra of $\op{Sp}(2n,\br)$. This allows us to define an almost complex structure
$\mbf{J}(x)=\exp(-xB)J\exp(xB)$ along any boundary of $\Sigma_2$. Now
if prove the Milnor-Wood inequality for $\phi_2$, then we prove the Milnor-Wood inequality for $\phi$ since both Toledo invariant and the Euler number for $\Sigma_2$ is $4$ times those of $\Sigma$. Hence we may assume that $\phi(c_i)=\exp(2\pi B_i)$ for some $B_i$.
The same construction works for other Hermitian Lie groups.
\end{rem}


\begin{thebibliography}{99}

\bibitem{AW} S. Agnihotri, C. Woodward. {\it Eigenvalues of products of unitary matrices and quantum Schubert calculus}, Math. Res. Lett. \textbf{5} (1998), 817--836.

\bibitem{Atiyah} M. Atiyah, {\it The logarithm of the Dedekind $\eta$-function}, Math. Ann. {\bf 278} (1987), no. 1-4, 335--380.

\bibitem{APS}	M. F. Atiyah, V. K.  Patodi, I. M. Singer, {\it Spectral asymmetry and Riemannian geometry. I}, Math. Proc. Cambridge Philos. Soc. {\bf 77} (1975), 43--69.

\bibitem{APSII} M. F. Atiyah, V. K.  Patodi, I. M. Singer, {\it Spectral asymmetry and Riemannian geometry. II}, Math. Proc. Cambridge Philos. Soc. {\bf 78} (1975), no. 3, 405--432.

\bibitem{BargeGhys} J. Barge, \'E. Ghys,  {\it Cocycles d'Euler et de Maslov}, Math. Ann. {\bf 294} (1992), no. 2, 235--265.

\bibitem{BGV} N. Berline, E. Getzler, M.  Vergne,  Heat kernels and Dirac operators. Corrected reprint of the 1992 original. Grundlehren Text Editions. Springer-Verlag, Berlin, 2004.

\bibitem{BM} M. T. Benameur, V. Mathai, {\it Index type invariants for twisted signature complexes and homotopy invariance}, Math. Proc. Cambridge Philos. Soc. {\bf 156} (2014), 473--503. 

\bibitem{BGPMR} O. Biquard, O. Garc\'ia-Prada, I. Mundet i Riera, {\it Parabolic Higgs bundles and representations of the fundamental group of a punctured surface into a real group}. Adv. Math. \textbf{372} (2020), 107305, 70 pp.

\bibitem{Bohn} M. Bohn, On the rho invariant of fiber bundle, Dissertation, 2009.  

\bibitem{BT} R. Bott, L. Tu,  Differential forms in algebraic topology, Graduate Texts in Mathematics, 82. Springer-Verlag, New York-Berlin, 1982.

\bibitem{BGG} S. B. Bradlow, O.  Garcia-Prada, P. B. Gothen, {\it Surface group representations and $\mathrm{U}(p,q)$-Higgs bundles}, J. Differential Geom. {\bf 64} (2003), no. 1, 111--170.

\bibitem{BBFIPP} M. Bucher, M. Burger, R. Frigerio, A. Iozzi, C. Pagliantini, M. B. Pozzetti, {\it Isometric embeddings in bounded cohomology}, J. Topol. Anal. {\bf 6} (2014), no. 1, 1-25.


\bibitem{BIW}	M. Burger, A. Iozzi, A. Wienhard, {\it Surface group representations with maximal Toledo invariant}, Ann. Math. 172 (2010) 1--53.

\bibitem{BO} M. Burger, N. Monod, {\it Continuous bounded cohomology and applications to rigidity theory}, Geom. Funct. Anal. {\bf 12} (2002), 219--280.

\bibitem{BurgoyneCushman} N. Burgoyne, R. Cushman, {\it Conjugacy classes in linear groups.} J. Algebra \textbf{44} (1977), no. 2, 339--362. 


\bibitem{Clerc}J. -L. Clerc, {\it L'indice de Maslov g\'en\'eralis\'e}, J. Math. Pures Appl. (9) \textbf{83} (2004), no. 1, 99--114.

\bibitem{Cl}J. -L. Clerc, {\it An invariant for triples in the Shilov boundary of a bounded symmetric domain}, Comm. Anal. Geom. {\bf 15} (2007), 147--173.

\bibitem{Dj}D. Djokovic, {\it On the Exponential map in Classical Lie groups}, Journal of Algebra, {\bf 64} (1980), 76--88.

\bibitem{DK}  J. F. Davis, P. Kirk. Lecture notes in algebraic topology, volume 35 of Graduate Studies in Mathematics. American Mathematical Society, Providence, RI, 2001.

\bibitem{DeroinTholozan} B. Deroin, N. Tholozan, {\it Supra-maximal representations from fundamental groups of punctured spheres to $PSL(2,\br)$}, Ann. Sci. \'Ec. Norm. Sup\'er. (4) \textbf{52} (2019), no. 5, 1305--1329.

\bibitem{DT} A. Domic, D. Toledo, {\it The Gromov norm of the Kaehler class of symmetric domains}, Math. Ann. {\bf 276} (1987), no. 3, 425--432. 


\bibitem{Dupre}  A. M. DuPre, III, {\it Real Borel cohomology of locally compact groups}, Trans. Amer. Math. Soc. {\bf 134} (1968), 239--260.


\bibitem{Eb} P. Eberlein, Geometry of nonpositively curved manifolds, Chicago Lectures in Mathematics, 1996.




\bibitem{GoldPhD} W. M. Goldman, {\it Discontinuous Groups and the Euler class}. Thesis (Ph.D.)--University of California, Berkeley. 1980.



\bibitem{Gromov} M. Gromov, {\it Volume and bounded cohomology}, Publications math\'ematiques de l'I.H.\'E.S., {\bf 56} (1982), 5--99.

\bibitem{Gui}  A. Guichardet, Cohomologie des groupes topologiques et des alg\'ebres de Lie, Textes Math\'ematiques 2, CEDIC, Paris, 1980. 

\bibitem{Gutt} J. Gutt, {\it Normal forms for symplectic matrices}, Port. Math. {\bf 71} (2014), no. 2, 109--139.

\bibitem{Hel}S. Helgason, Differential geometry, Lie groups, and symmetric spaces. Corrected reprint of the 1978 original. Graduate Studies in Mathematics, 34. American Mathematical Society, Providence, RI, 2001.



\bibitem{Iva} N. V. Ivanov, {\it Foundations of the theory of bounded cohomology}, J. Soviet Math, {\bf 37}, 3 (1987), 1090--1115. 

\bibitem{KimPansu} I. Kim, P. Pansu, {\it Density of Zariski density for surface groups}, Duke Math. J. {\bf 163}, 9 (2014), 1737--1794.

\bibitem{Kim} S. Kim, {\it On the equivalence of the definitions of volume of representations}, Pacific J. Math. {\bf 280} (2016), no. 1, 51--68.


\bibitem{KK1} S. Kim and T. Kuessner, {\it Simplicial volume of compact manifolds with amenable boundary}, J. Topol. Anal. {\bf 7} (2015),  no. 1,  23--46. 

\bibitem{KL} P. Kirk, M. Lesch, {\it On the rho invariant for manifolds with boundary}, Algebr. Geom. Topol. {\bf 3} (2003), 623--675.

\bibitem{Kodaira} K. Kodaira, Complex manifolds and deformation of complex structures, Translated from the 1981 Japanese original by Kazuo Akao. Reprint of the 1986 English edition. Classics in Mathematics. Springer-Verlag, Berlin, 2005.

\bibitem{KM} V. Koziarz, J.  Maubon, {\it Harmonic maps and representations of non-uniform lattices of $PU(m,1)$}, Ann. Inst. Fourier (Grenoble) {\bf 58} (2008), no. 2, 507--558.

\bibitem{KM2}V. Koziarz, J.  Maubon, {\it Representations of complex hyperbolic lattices into rank 2 classical Lie groups of Hermitian type}, Geom. Dedicata {\bf 137} (2008), 85--111.

\bibitem{KM1} V. Koziarz, J. Maubon, {\it Maximal representations of uniform complex hyperbolic lattices}, Ann. of Math. (2) {\bf 185} (2017), no. 2, 493--540. 


\bibitem{Lus} G. Lusztig, {\it Novikov's higher signature and families of elliptic operators}, J. Differential Geometry {\bf 7} (1972), 229--256.


\bibitem{Meyer} W. Meyer,  Die Signatur von lokalen Koeffizientensystemen und Faserb\"undeln. (German) Bonn. Math. Schr. 1972, no. 53, viii+59 pp.

\bibitem{Meyer1} W. Meyer, {\it Die Signatur von Fl\"achenb\"undeln}. Math. Ann. \textbf{201} (1973), 239--264.

\bibitem{WM} W. Massey, {\it Singular homology theory}, Graduate Texts in Mathematics, 70. Springer-Verlag, New York-Berlin, 1980. xii+265 pp.

\bibitem{Melrose} R. B. Melrose, {\it The Atiyah-Patodi-Singer index theorem}. Research Notes in Mathematics, 4. A K Peters, Ltd., Wellesley, MA, 1993. xiv+377 pp.

\bibitem{Milnor} J. Milnor, {\it On the existence of a connection with curvature zero}, Comment. Math. Helv. {\bf 21} (1958), 215--223.

\bibitem{Mok}N. Mok, Metric rigidity theorems on Hermitian locally symmetric manifolds. Series in Pure Mathematics, 6. World Scientific Publishing Co., Inc., Teaneck, NJ, 1989.

\bibitem{Monod} N. Monod, Continuous bounded cohomology of locally compact groups, LectureNotes in Mathematics {\bf 1758}, Springer, Berlin, 2001. 

\bibitem{Mc} R. McOwen, {\it Point singularity and conformal metrics on Riemann surfaces}, Proc. Amer. Math. Soc. {\bf 103} (1988), 222--224.

\bibitem{Muller}
W. M\"uller, {\it The eta invariant (some recent developments)}, S\'eminaire Bourbaki, Vol. 1993/94. Ast\'erisque No. {\bf 227} (1995), Exp. No. 787, 5, 335--364. 

\bibitem{Neretin}
Yu. A. Neretin, {\it Lectures on Gaussian Integral Operators and Classical Groups}, EMS Series of Lectures in Mathematics, European Mathematical Society (2011).


\bibitem{Ra} H. Rademacher, {\it Zur Theorie der Dedekindschen Summen}, Math. Zeit \textbf{63} (1956), 445--463.


\bibitem{TholozanToulisse} N. Tholozan, J. Toulisse, {\it Compact connected components in relative character varieties of punctured spheres}, \'Epijournal G\'eom. Alg\'ebrique \textbf{5} (2021), Art. 6, 37 pp.

\bibitem{Toffoli} E. Toffoli, {\it Rho invariants for manifolds with boundary and low-dimensional topology}, Dissertation (2019).

\bibitem{Toledo} D. Toledo, {\it Representations of surface groups in complex hyperbolic space}, J. Differential Geom. {\bf 29} (1989), no. 1, 125--133.

\bibitem{Tro} M. Troyanov, {\it Prescribing curvature on compact surfaces with conical singularities}, Transactions of the American Mathematical Society, {\bf 324}  (1991), 793--821. 

\bibitem{Tur} V. G. Turaev, {\it A cocycle of the symplectic first Chern class and the Maslov index}, Funct. Anal. Appl. {\bf 18} (1984) 35--39.

\bibitem{Van}
W. T. Van Est, {\it Group cohomology and Lie algebra cohomology in Lie groups, I, II}, Nederl. Akad. Wetensch. Proc. Series A. 56, Indag. Math. {\bf 15} (1953), 484--504.

\bibitem{Wood} J. W. Wood, {\it Bundles with totally disconnected structure group,} Comment. Math. Helv. {\bf 46} (1971), 257--273.

\end{thebibliography}
\end{document}